\numberwithin{equation}{section}
\newcommand\R{\mathbb R}
\newcommand\mbb\mathbb
\newcommand\mbf\mathbf
\newcommand\mcal\mathcal
\newcommand\mfrak\mathfrak
\newcommand\mrm\mathrm
\newcommand\msf\mathsf
\renewcommand\a\alpha
\renewcommand\b\beta
\newcommand\g\gamma
\newcommand\G\Gamma
\renewcommand\d\delta
\newcommand\D\Delta
\newcommand\e\varepsilon
\newcommand\z\zeta
\renewcommand\t\theta
\newcommand\Th\Theta
\newcommand\la\lambda
\newcommand\La\Lambda
\newcommand\s\sigma
\newcommand\si\varsigma
\newcommand\Si\Sigma
\newcommand\ups\upsilon
\newcommand\U\Upsilon
\newcommand\ph\varphi
\renewcommand\o\omega
\renewcommand\O\Omega
\newcommand\wt\widetilde
\newcommand\wh\widehat
\newcommand\ol\overline
\newcommand\ul\underline
\newcommand\mr\mathring
\newcommand\ub\underbrace
\newcommand\pa\partial
\newcommand\n\nabla
\newcommand\fa\forall
\newcommand\ex\exists
\newcommand\es\emptyset
\newcommand\wk\rightharpoonup
\newcommand\inc\hookrightarrow
\newcommand\linf\varliminf
\newcommand\lsup\varlimsup
\newcommand\os\overset
\newcommand\us\underset
\newcommand\sr\stackrel
\newcommand\Ot\Leftarrow
\newcommand\To\Rightarrow
\newcommand\map\mapsto
\newcommand\ot\leftarrow
\newcommand\lot\longleftarrow
\newcommand\lto\longrightarrow
\newcommand\tot\leftrightarrow
\newcommand\ltot\longleftrightarrow
\newcommand\sm\backslash
\renewcommand\Cup\bigcup
\renewcommand\Cap\bigcap
\newcommand\sub\subset
\newcommand\Sub\Subset
\newcommand\sne\subsetneq
\newcommand\bus\supset
\newcommand\Bus\Supset
\newcommand\eq\equiv
\newcommand\ox\otimes
\newcommand\Ox\bigotimes
\newcommand\pl\oplus
\newcommand\Pl\bigoplus
\newcommand\x\times
\renewcommand\c\circ
\newcommand\q\quad
\renewcommand\l\left
\renewcommand\r\right
\newcommand\fr\frac
\definecolor{bondiblue}{rgb}{0.0, 0.58, 0.71}
\def\sideremark#1{\ifvmode\leavevmode\fi\vadjust{\vbox to0pt{\vss
			\hbox to 0pt{\hskip\hsize\hskip1em
				\vbox{\hsize2.1cm\tiny\raggedright\pretolerance10000
					\noindent #1\hfill}\hss}\vbox to15pt{\vfil}\vss}}}%
\newcommand{\Lp}{\color{blue}}
\newtheorem{Thm}{Theorem}[section]
\newtheorem{Lem}[Thm]{Lemma}
\newtheorem{Cor}[Thm]{Corollary}
\newtheorem{Prop}[Thm]{Proposition}
\newtheorem{Rem}[Thm]{Remark}
\begin{document}

\title[Lane-Emden problem]
{Non-degeneracy and local uniqueness of positive solutions to the Lane-Emden problem in dimension two}
\author[M. Grossi, I. Ianni, P. Luo and S. Yan]{Massimo Grossi, Isabella Ianni, Peng Luo, Shusen Yan}

 \address[Massimo Grossi]{Dipartimento di Matematica Guido Castelnuovo, Universit\`a Sapienza, P.le Aldo Moro 5, 00185 Roma, Italy}
\email{massimo.grossi@uniroma1.it}

\address[Isabella Ianni]{Dipartimento di Scienze di Base e Applicate per l'Ingegneria, Universit\`a Sapienza, Via Scarpa 16, 00161 Roma, Italy}
 \email{isabella.ianni@uniroma1.it}

 \address[Peng Luo]{School of Mathematics and Statistics, Central China Normal University, Wuhan 430079,  China }
 \email{pluo@mail.ccnu.edu.cn}

\address[Shusen Yan]{School of Mathematics and Statistics, Central China Normal University, Wuhan 430079, China}
\email{syan@mail.ccnu.edu.cn}

\begin{abstract}
We are concerned with the Lane-Emden problem
\begin{equation*}
\begin{cases}
-\Delta u=u^{p}  &{\text{in}~\Omega},\\[0.5mm]
 u>0  &{\text{in}~\Omega},\\[0.5mm]
u=0 &{\text{on}~\partial \Omega},
\end{cases}
\end{equation*}
where $\Omega\subset \R^2$ is a smooth bounded domain and $p>1$ is sufficiently large.

Improving some known asymptotic estimates on the solutions, we prove the non-degeneracy
and local uniqueness of the multi-spikes positive solutions for general domains. Our methods  mainly
use ODE's theory, various local Pohozaev identities, blow-up analysis and the properties of Green's function.

\end{abstract}

\date{\today}
\maketitle
{\small
\keywords {\noindent {\bf Keywords:} {\small Lane-Emden problem, asymptotic behavior, non-degeneracy, uniqueness
}
\smallskip
\newline
\subjclass{\noindent {\bf 2020 Mathematics Subject Classification:} 35A01 $\cdot$ 35B25 $\cdot$ 35J20 $\cdot$ 35J60}
}
\section{Introduction and main results}
\setcounter{equation}{0}
In this paper, we consider the following Lane-Emden problem
\begin{equation}\label{1.1}
\begin{cases}
-\Delta u=u^{p}  &{\text{in}~\Omega},\\[0.5mm]
 u>0  &{\text{in}~\Omega},\\[0.5mm]
u=0 &{\text{on}~\partial \Omega},
\end{cases}
\end{equation}
where $\Omega\subset \R^2$ is a smooth bounded domain and $p>1$ is sufficiently large.

\vskip 0.2cm

The Lane-Emden equation models the mechanical structure of self-gravitating spheres, we
refer to \cite{Hor} for a physical introduction. From the mathematical side, problem \eqref{1.1} is an extremely simple looking
semilinear elliptic equation with a power focusing nonlinearity and  it
provides a great number of interesting phenomena. Indeed it is well known that the number of solutions
to problem \eqref{1.1} strongly depends on the exponent $p$ of the nonlinearity and on both the geometry and the topology of the domain $\Omega$.

\vskip 0.2cm

We recall that in any smooth bounded domain $\Omega$, problem \eqref{1.1} admits at least one solution for any $p>1$, which can be obtained by standard variational methods, for example minimizing the associated energy functional on the Nehari manifold.
The solution is unique and nondegenerate in any domain $\Omega$, when $p$ is close enough to $1$ (see \cite{DamascelliGrossiPacella, DancerMa2003,  L94}). Moreover when $\Omega$ is a ball (\cite{GNN}) or, more in general, a symmetric and convex domain with respect to two orthogonal direction (\cite{Dancer88, DamascelliGrossiPacella}), the uniqueness and nondegeneracy hold true for any $p>1$.
Recently in \cite{DGIP2019} uniqueness and nondegeneracy for the solutions to \eqref{1.1} has been proved in any convex  domain $\Omega$, when $p$ is sufficiently large.		

\vskip 0.2cm

On the other hand multiplicity results for problem \eqref{1.1} are known in many cases, for instance in some dumb-bell shaped domains at suitable values of $p$ (\cite{Dancer88}) or in annular domains (see for example \cite{CaWa, GGPS, YYLi, BCGP}).
We specially mention  \cite{EMP2006} where, in not simply connected  domains, the existence of solutions
of \eqref{1.1} which  concentrate at $k $ points as $p\rightarrow +\infty$, for any $k\in\mathbb N$, is proven. The energy of these \emph{multi-spike} solutions satisfies
\begin{equation}\label{11-11-01}
\sup_{p}p\|\nabla u_p\|^2_2<\infty,
\end{equation}
more precisely $
p\|\nabla u_p\|^2_2\rightarrow 8\pi e\cdot k,
$
 and moreover the location of the $k$ concentration points  depends on the
critical point of the related Kirchoff-Routh function. \\
Observe that earlier results in
\cite{RW1994,RW1996} (later improved in \cite{AG2003,EG2004}) show that the least energy solutions satisfy  $
p\|\nabla u_p\|^2_2\rightarrow 8\pi e,
$ hence \eqref{11-11-01},
and  concentrate at \emph{one} point as $p\rightarrow +\infty$ (\emph{$1$-spike solutions}). 
Furthermore both the families of solutions found in \cite{EMP2006} and the least energy solutions (see \cite{AG2003}) satisfy a bound on the $L^{\infty}$-norm, since it can be proved that $\|u_{p}\|_{\infty}\rightarrow \sqrt e, $ as $p\rightarrow +\infty$.

\vskip 0.2cm

Nevertheless, despite of all these results, a complete understanding of  the properties and number of the positive solutions to the Lane-Emden problem is still far from being achieved.

\vskip 0.2cm

Very recently  a priori bounds, uniform in $p$, has been obtained in \cite{KS2018} \emph{for all} the solutions to \eqref{1.1}. Furthermore in \cite{DIP2017-1} (later sharpened independently in  \cite{DGIP2018,T2019}) a complete characterization of the asymptotic behavior as $p\rightarrow +\infty$ of all the solutions to \eqref{1.1} was derived, using blow-up techniques, under the energy bound assumption \eqref{11-11-01}. This result shows that the solutions necessarily are \emph{multi-spike} solutions, namely they behave like the ones found in \cite{EMP2006} (see Theorem A below for more details). One can then exploit the information on the asymptotic behavior to further investigate other properties of the solutions for $p$ large. This has been pursued in \cite{DGIP2019}, where the Morse index of \emph{$1$-spike} solutions and their non-degeneracy has been studied, leading to the aforementioned uniqueness result in convex domains for $p$ large. We stress that a crucial role was played by the a priori bounds in \cite{KS2018}, which in convex domains are equivalent to the energy bounds \eqref{11-11-01}.

\vskip 0.2cm

In this paper we deal with the non-degeneracy and uniqueness of \emph{multi-spike} solutions of \eqref{1.1} in general domains. 
This two topics are strictly related, furthermore non-degeneracy plays a crucial role in bifurcation theory.

\vskip 0.2cm

In order to state our results we need to introduce some notations and recall the asymptotic characterization proved in \cite{DIP2017-1}. We denote by  $G(x,\cdot)$ the Green's function
 of $-\Delta$ in $\Omega$, i.e. the solution to
\begin{equation}\label{greensyst}
\begin{cases}
-\Delta G(x,\cdot)= \delta_x  &{\text{in}~\Omega}, \\[1mm]
G(x,\cdot)=0  &{\text{on}~\partial\Omega},
\end{cases}
\end{equation}
where $\delta_x$ is the Dirac function. We have the well known decomposition formula of $G(x,y)$,
\begin{equation}\label{GreenS-H}
G(x,y)=S(x,y)-H(x,y)  ~\mbox{for}~(x,y)\in \Omega\times \Omega,
\end{equation}
where  $S(x,y):=-\frac{1}{2\pi}\log |x-y|$ and $H(x,y)$ is the regular part of $G(x,y)$. Next, for any $x\in \Omega$, we recall that the Robin function is defined as
\begin{equation}\label{Robinf}
R(x):=H(x,x).
\end{equation}

For $a=(a_1,\cdots,a_k)$, with $a_j\in \Omega$, $j=1,\cdots,k$ we define the Kirchoff-Routh function  $\Psi_{k}: \Omega^{k} \rightarrow \R$ as
\begin{equation}\label{stts}
\Psi_{k}(a):= \sum^k_{j=1} \Psi_{k,j}(a),\quad~\mbox{ with }~\Psi_{k,j}(a):=  R\big(a_j\big)- \sum^{k}_{m=1,m\neq j} G\big(a_j,a_m\big).
\end{equation}
Finally let us introduce the function
\begin{equation}
\label{defU}
U(x)=-2\log \big(1+\frac{|x|^2}{8}\big),\end{equation}
which is the \emph{unique} positive solution of the Liouville equation
\begin{equation*}
\begin{cases}
-\Delta U=e^U \,\,~\mbox{in}~\R^2,\\[1mm]
\displaystyle\int_{\R^2}e^Udx=8\pi.
\end{cases}
\end{equation*}
We recall the known asymptotic characterization of the positive solutions of problem \eqref{1.1},  under assumption \eqref{11-11-01}.

\vskip 0.2cm

\noindent\textbf{Theorem A}~(\cite{DGIP2018,DIP2017-1})\textbf{.} \emph{Let $u_p$ be a family of solutions to  \eqref{1.1} satisfying \eqref{11-11-01}.
Then there exist a finite number of $k$ of distinct points $x_{\infty,j}\in \Omega$, $j=1,\cdots,k$ and a subsequence of $p$ (still denoted by $p$)   such that setting
$\mathcal{S}:=\big\{x_{\infty,1},\cdots, x_{\infty,k}\big\}$,
one has
\begin{equation}\label{11-14-03N}
\lim_{p\rightarrow +\infty} p u_{p}=8\pi \sqrt{e}\sum^k_{j=1}G(x,x_{\infty,j})\,\,~\mbox{in} ~ C^2_{loc}(\Omega\backslash \mathcal{S}),
\end{equation}
the energy satisfies
\begin{equation*}
\lim_{p\rightarrow +\infty} p \int_{\Omega}|\nabla u_{p}(x)|^2dx=8\pi e\cdot k,
\end{equation*}
and the concentrated points $x_{\infty,j}$, $j=1,\cdots,k$ fulfill the system
\begin{equation*}
\nabla_x \Psi_k\big(x_{\infty,1},\cdots,x_{\infty,k}\big)=0.
\end{equation*}
Moreover, for some small fixed $r>0$, let $x_{p,j}\in \overline{B_{2r}(x_{\infty,j})}\subset\Omega$ be the sequence defined as
\begin{equation}\label{def:xpj}
u_{p}(x_{p,j})=\max_{\overline{B_{2r}(x_{\infty,j})}}u_{p}(x),
\end{equation}
then for any $j=1,\cdots,k$, it holds
\begin{equation*}
\lim_{p\rightarrow +\infty}x_{p,j}=x_{\infty,j},
\end{equation*}
\begin{equation}\label{ConvMax}
\lim_{p\rightarrow +\infty}u_{p}(x_{p,j})=\sqrt{e},
\end{equation}
\begin{equation*}
\lim_{p\rightarrow +\infty}\varepsilon_{p,j}=0,
\end{equation*}
where    $\varepsilon_{p,j}:=\Big(p\big(u_{p}(x_{p,j})\big)^{p-1}\Big)^{-1/2}$. And setting
\begin{equation}\label{defwpj}
w_{p,j}(y):=\frac{p}{u_{p}(x_{p,j})}\Big(u_{p}(x_{p,j}+\varepsilon_{p,j}y)-
u_{p}(x_{p,j})\Big),~y\in \Omega_{p,j}:=\frac{\Omega-x_{p,j}}{\varepsilon_{p,j}},
\end{equation}
one has
\begin{equation}\label{5-8-2}
\lim_{p\rightarrow +\infty} w_{p,j}=U\,\,~\mbox{in}~C^2_{loc}(\R^2).
\end{equation} }

\vskip 0.1cm

Our first result is about the non-degeneracy for  the \emph{multi-spike}  solutions of \eqref{1.1}, when   $p$ is large:
\begin{Thm}\label{th1.1}
Let   $u_p$ be a solution of \eqref{1.1} satisfying \eqref{11-11-01}. Let
$k$ be the number in Theorem A  and $\xi_p\in H^1_0(\Omega)$ be a solution of $\mathcal{L}_p\big(\xi_p\big)=0$, where
$$\mathcal{L}_p\big(\xi\big):= -\Delta \xi -pu_p^{p-1}\xi$$
is the linearized operator of the  Lane-Emden problem \eqref{1.1} at the solution $u_{p}$.
Suppose that $x_{\infty}:=(x_{\infty,1},\cdots,x_{\infty,k})$
is a nondegenerate critical point of the Kirchoff-Routh function $\Psi_{k}$  defined in \eqref{stts}, then there exists $p^{\ast}>1$ such that
	\[\xi_p \equiv 0, \qquad\mbox{ for } p\geq p^{\ast}.\]
\end{Thm}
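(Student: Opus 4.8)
\section*{Proof proposal}

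The plan is to argue by contradiction: suppose there is a sequence $p\to+\infty$ and nontrivial solutions $\xi_p\in H^1_0(\Omega)$ of $\mathcal L_p(\xi_p)=0$, normalized by $\|\xi_p\|_{L^\infty(\Omega)}=1$. The strategy is the standard ``blow-up of the linearized equation'' scheme, adapted to the $k$-spike setting of Theorem~A, combined with a Pohozaev-type argument to extract the final contradiction from the nondegeneracy of $x_\infty$ as a critical point of $\Psi_k$.

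First I would perform the local blow-up analysis near each concentration point. Around each $x_{p,j}$ set $\tilde\xi_{p,j}(y):=\xi_p(x_{p,j}+\var_{p,j}y)$ for $y\in\Omega_{p,j}$. Using the characterization in Theorem~A, in particular $w_{p,j}\to U$ in $C^2_{loc}(\R^2)$ and $\var_{p,j}\to 0$, one checks that $\tilde\xi_{p,j}$ solves an equation whose coefficient $p\var_{p,j}^2 u_p^{p-1}(x_{p,j}+\var_{p,j}\cdot)$ converges locally uniformly to $e^{U}$. By elliptic estimates $\tilde\xi_{p,j}\to\xi_{\infty,j}$ in $C^1_{loc}(\R^2)$ (up to subsequence) with $\xi_{\infty,j}$ bounded and solving the limiting linearized Liouville equation $-\Delta v=e^U v$ in $\R^2$. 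The kernel of this operator among bounded functions is spanned by $\partial_{x_1}U,\partial_{x_2}U$ and the dilation mode $x\cdot\nabla U+2$; here one must rule out the dilation mode. This is where the \emph{refined} asymptotic estimates announced in the abstract enter: the precise expansion of $u_p$ (the next-order term beyond the bubble, governed by a ``$\log$-type'' correction and by $\nabla H$) forces a constraint — typically a local Pohozaev identity on a small ball $B_{\delta}(x_{p,j})$ applied to the pair $(u_p,\xi_p)$ — that kills the radial component. Hence $\xi_{\infty,j}=\sum_{i=1}^2 b_{j,i}\,\partial_{x_i}U$ for some constants $b_{j,i}$.

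Next I would control $\xi_p$ \emph{globally}. Away from $\mathcal S$, $pu_p^{p-1}\to 0$ in $C^0_{loc}(\Omega\setminus\mathcal S)$ by \eqref{11-14-03N}, so $\xi_p$ is asymptotically harmonic there; matching with the inner limits via the Green's representation $\xi_p(x)=\int_\Omega G(x,y)\,p u_p^{p-1}(y)\xi_p(y)\,dy$ and the known mass $p u_p\to 8\pi\sqrt e\sum_j G(\cdot,x_{\infty,j})$, one shows $p\,\xi_p\to \sum_{j=1}^k\sum_{i=1}^2 c_{j,i}\,\partial_{x_i}G(x,x_{\infty,j})$ in $C^1_{loc}(\Omega\setminus\mathcal S)$, with $c_{j,i}$ proportional to $b_{j,i}$ and not all zero (a normalization argument shows the $L^\infty$ mass cannot all escape to the dilation mode, which we already excluded, nor concentrate elsewhere).

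Finally, to get the contradiction I would test the equation $\mathcal L_p(\xi_p)=0$ against $\partial_{x_i}u_p$ (and vice versa test $-\Delta u_p=u_p^p$ against $\partial_{x_i}\xi_p$) on each small ball $B_\delta(x_{p,j})$ and integrate by parts, producing local Pohozaev identities whose boundary terms are evaluated using the $C^1_{loc}$ expansions of $pu_p$ and $p\xi_p$ on $\partial B_\delta(x_{p,j})$ in terms of $G$ and its derivatives. Passing to the limit, the leading boundary terms assemble exactly into the linear system
\[
\sum_{j=1}^k\sum_{i=1}^2 c_{j,i}\,\partial^2_{(x_\ell)_m (x_j)_i}\Psi_k(x_\infty)=0,\qquad \ell=1,\dots,k,\ m=1,2,
\]
i.e. $D^2\Psi_k(x_\infty)\,c=0$. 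Since $x_\infty$ is a \emph{nondegenerate} critical point of $\Psi_k$, the Hessian is invertible, so $c=0$, hence all $b_{j,i}=0$, hence $\xi_{\infty,j}\equiv0$ for every $j$ and $p\xi_p\to0$ locally away from $\mathcal S$ — contradicting $\|\xi_p\|_\infty=1$ once one checks (again via the Green representation, since all the mass has vanished) that $\|\xi_p\|_\infty\to0$.

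The main obstacle is the second step: obtaining the refined asymptotic expansion of $u_p$ near each spike with enough precision that the local Pohozaev identities for the pair $(u_p,\xi_p)$ close up cleanly — in particular, excluding the dilation element of the Liouville kernel and identifying the exact constants linking the boundary integrals to the entries of $D^2\Psi_k$. This requires pushing the estimates of Theorem~A (and the ODE analysis of the radial profile) one order further than what is stated, which is presumably the technical heart of the paper.
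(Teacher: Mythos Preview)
Your overall strategy matches the paper's: contradiction with $\|\xi_p\|_\infty=1$, blow-up near each spike to land in the Liouville kernel, kill the dilation mode via one local Pohozaev identity, kill the translation modes via a second Pohozaev identity that produces $D^2\Psi_k(x_\infty)\,c=0$, and conclude by nondegeneracy. You also correctly identify that the refined expansion of $u_p$ (one order beyond Theorem~A) is the technical engine.

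The genuine gap is the last step. Once $\xi_{\infty,j}\equiv 0$ and $\xi_p\to 0$ in $C^1_{loc}(\Omega\setminus\mathcal S)$, the sentence ``via the Green representation $\|\xi_p\|_\infty\to 0$'' does not close the argument: the maximum point $x_p$ may sit in an annulus $B_{d}(x_{p,j})\setminus B_{R\var_{p,j}}(x_{p,j})$ where neither the local blow-up (only compact sets in the rescaled variable) nor the far-field Green expansion (where $G$ is bounded) applies, and $G(x_{p,j},\cdot)$ is of size $|\log\var_{p,j}|\sim p$ there. The paper splits this region in two. On $B_d\setminus B_{2p\var_{p,j}}$ a careful Green estimate (using the decay $\var_{p,j}^2 p u_p^{p-1}\lesssim (1+|y|)^{-4+\delta}$) still gives $\xi_p=o(1)$. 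On the critical inner annulus $B_{2p\var_{p,j}}\setminus B_{R\var_{p,j}}$ this fails, and an additional ODE argument is needed: one rescales $\widetilde\xi_p(y):=\xi_p(r_p y)$ with $r_p=|x_p|$ to show $\widetilde\xi_p\to 1$ on $\{|y|=1\}$, and then analyzes the spherical average $\xi^*_{p,j}(r)$ of the blown-up function via the explicit fundamental pair $\frac{8-r^2}{8+r^2}$, $\frac{(8-r^2)\log r+16}{8+r^2}$ of the linearized radial Liouville equation, obtaining $|\xi^*_{p,j}(r)|\le C(\log r)/p+o(1)$ up to $r\sim p$, which contradicts the previous limit. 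This ODE step is the missing ingredient in your sketch.

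A minor scaling slip: the outer expansion of $\xi_p$ in the paper is $\sum_j A_{p,j}G(x_{p,j},\cdot)+O(\var_{p,j})\sum_i\partial_i G(x_{p,j},\cdot)+o(\var_p)$ with $\var_{p,j}\sim e^{-p/4}$, so the translation-mode coefficients live at scale $\var_p$, not $p^{-1}$; your claim $p\,\xi_p\to\sum c_{j,i}\partial_{x_i}G$ is not the right normalization, though it does not affect the logical skeleton.
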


\vskip 0.2cm

We also prove a local uniqueness result for the \emph{$1$-spike} solutions to \eqref{1.1} (case $k=1$ in Theorem A). Notice that $\Psi_{1}=R$, namely the Kirchoff-Routh function reduces to the Robin function in this case.
\begin{Thm}\label{th1-1}
Let $u_p^{(1)}$ and $u_p^{(2)}$ be two solutions to \eqref{1.1} with
\begin{equation}\label{energy1}
\lim_{p\rightarrow +\infty} p \int_{\Omega}|\nabla u^{(l)}_{p}(x)|^2dx=8\pi e~\,~\mbox{for}\,~l=1,2,
\end{equation}
which concentrate at the same  critical point $x_{\infty,1}\in\Omega$ of the Robin function $R$.
If $x_{\infty,1}$ is non-degenerate, then  there exists $p^{\ast}>1$ such that \[u_p^{(1)}\equiv u_p^{(2)},\qquad \mbox{ for }p\geq p^{\ast}.\]
\end{Thm}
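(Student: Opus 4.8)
The plan is to argue by contradiction: suppose $u_p^{(1)}\not\equiv u_p^{(2)}$ along some sequence $p\to+\infty$, and derive a contradiction from the non-degeneracy of $x_{\infty,1}$ as a critical point of $R$. First I would normalize the difference. Set $\eta_p:=u_p^{(1)}-u_p^{(2)}$, which solves a linear equation $-\Delta\eta_p = c_p(x)\eta_p$ in $\Omega$, $\eta_p=0$ on $\partial\Omega$, where $c_p(x)=p\,\frac{(u_p^{(1)})^p-(u_p^{(2)})^p}{u_p^{(1)}-u_p^{(2)}}$ is an intermediate quantity lying between $p(u_p^{(1)})^{p-1}$ and $p(u_p^{(2)})^{p-1}$. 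Rescaling $\tilde\eta_p:=\eta_p/\|\eta_p\|_{L^\infty(\Omega)}$, we get a bounded solution of the rescaled linear problem. Using Theorem A for each $u_p^{(l)}$ — in particular \eqref{ConvMax}, \eqref{5-8-2} and \eqref{11-14-03N} — together with the (already established earlier in the paper) fact that $x_{p,1}^{(1)}$ and $x_{p,1}^{(2)}$ and the scaling parameters $\varepsilon_{p,1}^{(l)}$ have the same leading-order asymptotics (this is where the improved asymptotic estimates of the paper are used), one shows that, after blow-up around the common spike at scale $\varepsilon_{p}$, $\tilde\eta_p$ converges in $C^2_{loc}(\R^2)$ to a bounded solution $\phi$ of the linearized Liouville equation $-\Delta\phi=e^U\phi$ on $\R^2$. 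By the classical non-degeneracy of $U$, $\phi$ is a linear combination of the three bounded solutions $\partial_{x_1}U,\ \partial_{x_2}U$ and $\frac{8-|x|^2}{8+|x|^2}$ (the latter being the ``dilation" direction).

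The next step is to kill each of these three components by testing suitable local Pohozaev-type identities for $\eta_p$ against $u_p^{(1)}$ and $u_p^{(2)}$ on a small ball $B_\rho(x_{\infty,1})$. Two of the identities come from multiplying the equation for $\eta_p$ by $\partial_{x_i}u_p^{(l)}$ and integrating by parts (the ``translation" Pohozaev identities), and one from multiplying by $x\cdot\nabla u_p^{(l)} + \text{(a zero-order correction)}$ (the ``dilation" identity). On the right-hand side these identities produce, to leading order as $p\to+\infty$, the Hessian of the Robin function $R$ at $x_{\infty,1}$ applied to the (rescaled) displacement of the two spikes, i.e. a term of the form $D^2R(x_{\infty,1})[\,\cdot\,]$ times the coefficients of $\partial_{x_i}U$ in $\phi$; the boundary integrals over $\partial B_\rho$ are controlled using the uniform $C^2$-convergence \eqref{11-14-03N} of $pu_p^{(l)}$ to $8\pi\sqrt e\, G(\cdot,x_{\infty,1})$ away from the spike and the sharp remainder estimates. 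The non-degeneracy hypothesis $\det D^2R(x_{\infty,1})\neq 0$ then forces the $\partial_{x_i}U$-coefficients of $\phi$ to vanish. A separate energy/Pohozaev computation (comparing the energies in \eqref{energy1}, which agree to the relevant order) kills the dilation coefficient. Hence $\phi\equiv 0$.

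From $\phi\equiv0$ one upgrades to a contradiction with $\|\tilde\eta_p\|_\infty=1$: knowing that $\tilde\eta_p\to0$ in $C^2_{loc}$ of the blow-up variable and, via \eqref{11-14-03N}-type estimates, that $\tilde\eta_p\to0$ in $C^1_{loc}(\Omega\setminus\{x_{\infty,1}\})$, one shows that the $L^\infty$-norm of $\tilde\eta_p$ must be attained in an intermediate region, and a Harnack/maximum-principle argument on the linear equation $-\Delta\tilde\eta_p=c_p\tilde\eta_p$ (using that $\int_\Omega c_p\,\varphi$ is under control and that the potential $c_p$ is concentrating) shows this is impossible, i.e. $\|\tilde\eta_p\|_\infty\to0$, contradicting the normalization. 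Therefore $\eta_p\equiv0$ for $p$ large.

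I expect the main obstacle to be the second step: extracting the Hessian of the Robin function with the correct constant from the local Pohozaev identities for the \emph{difference} $\eta_p$. This requires not just the leading-order asymptotics of Theorem A but genuinely sharper expansions of $u_p^{(l)}$ near and away from the spike (the ``improved asymptotic estimates" advertised in the abstract) — in particular a precise second-order description of $u_p^{(l)}(x_{p,1}^{(l)})$, of $\varepsilon_{p,1}^{(l)}$, and of the spike locations $x_{p,1}^{(l)}$ in terms of $\nabla R$ and $D^2R$ — so that the error terms in the Pohozaev identities are of strictly smaller order than the $D^2R$ term. Controlling the boundary terms on $\partial B_\rho$ uniformly in $p$, and handling the exponential nonlinearity $u^p$ (for which $p(u_p^{(1)})^p-p(u_p^{(2)})^p$ must be expanded carefully around the common limiting profile $\sqrt e\,e^{U/(2\sqrt e)\cdot(\,\cdot\,)}$-type bubble), is the technical heart of the argument.
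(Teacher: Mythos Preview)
Your proposal is correct and follows essentially the same route as the paper: normalize the difference, blow up to a kernel element of the linearized Liouville equation, kill the three coefficients via local Pohozaev identities (the $Q$-type identity yields the Hessian $D^2R(x_{\infty,1})$ and the non-degeneracy kills the translation coefficients; the $P$-type ``dilation'' identity kills the radial coefficient), and then derive a contradiction from $\|\tilde\eta_p\|_\infty=1$ by localizing the maximum to the intermediate annulus. Two small points of difference are worth noting. First, in the paper the dilation coefficient is killed \emph{before} the translation ones, because the estimate $\widetilde A_p=o(1/p)$ is needed as an input when reading off $D^2R$ from the $Q$-identity; your ordering would require reorganizing this. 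Second, for the intermediate-region contradiction the paper does not use a Harnack/maximum-principle argument (the potential $c_p\ge 0$ obstructs a direct maximum principle for $-\Delta\tilde\eta_p=c_p\tilde\eta_p$); instead it rescales by $t_p=|y_p|$ to get $\tilde\eta_p\to 1$ on the unit circle, and then contradicts this via an ODE analysis of the \emph{radial average} of the blow-up $\eta_{p,1}$, showing $|\eta^*_{p,1}(t_p/\varepsilon_{p,1}^{(1)})|\to 0$. This ODE step is exactly where the quantitative rate $|x_{p,1}^{(1)}-x_{p,1}^{(2)}|=O(\tilde\varepsilon_p/p^{1-\delta})$ (coming from the improved expansion of $u_p$ and Proposition~\ref{lem3-8}) is indispensable; the weaker $o(\tilde\varepsilon_p)$ would only give $|\eta^*_{p,1}(t_p/\varepsilon_{p,1}^{(1)})|=o(\log p)$, which is not a contradiction (cf.\ Remark~\ref{js}). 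Your identification of the ``main obstacle'' is therefore exactly right.
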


\vskip 0.2cm

We stress that both Theorem \ref{th1.1} and Theorem \ref{th1-1} require the non-degeneracy of  the critical points of the Kirchhoff-Routh function $\Psi_{k}$. The existence of critical points to $\Psi_{k}$ and  their  non-degeneracy is a widely discussed topic,  we refer for instance  to \cite{Musso1,Bartsch1,Bartsch,Micheletti} and  the references therein.

\vskip 0.2cm

We also remark that our results hold for any smooth bounded domain $\Omega$. In the particular case when $\Omega$ is a convex domain, it is easy to show that \emph{any} solution to \eqref{1.1} satisfies the assumptions of Theorem \ref{th1-1}. Indeed in this case Theorem A holds with $k=1$, since
from the a priori bounds in \cite{KS2018} we know that \eqref{11-11-01} is satisfied  and,  furthermore, from \cite{Grossi} we also know that problem \eqref{1.1} possesses no solutions
 with $k$ spikes for $k\geq 2$. As a consequence any solution to \eqref{1.1}, when $\Omega$ is convex, is necessarily a \emph{$1$-spike} solution which satisfies \eqref{energy1} and concentrates at a critical point of the Robin function. Finally we also know that in convex domains the  Robin function $R$ is strictly convex and  its unique critical point is non-degenerate (\cite{CF1985}).
 \\
Hence from Theorem \ref{th1-1} we re-obtain  the uniqueness result already proved in \cite{DGIP2019}:
\begin{Cor} Let $\Omega\subset \mathbb R^{2}$ be a smooth bounded and convex domain. Then there exists $p^{\ast}>1$ such that problem \eqref{1.1} admits a unique solution for any $p\geq p^{\ast}$.
\end{Cor}

Notice that the proof in \cite{DGIP2019} was based on Morse index computations, since it exploited a uniqueness result in convex domains, for solutions to \eqref{1.1}  having Morse index equal to one, proved by Lin \cite{L94}. 

If $\Omega$ is convex the unique solution has \emph{one spike} and its Morse index is one (see \cite{DGIP2019}).  In more general domains instead a \emph{$1$-spike} solution concentrates at a  critical point of the Robin function $R$, but we know that  it has Morse index larger than one if this critical point is not a local minimum for $R$ (see \cite[Theorem 1.1]{DGIP2019}). Theorem~\ref{th1-1} tells us that each non-degenerate critical point of the Robin function can
  generate exactly one single spike solution for \eqref{1.1} in any domain, so if all the critical points of $R$ are non-degenerate, then the number of \emph{$1$-spike}
  solutions equals the number of the critical points of $R$.

\vskip 0.2cm

We point out that  we are not able to prove  the local uniqueness of \emph{multi-spike} solutions ($k\geq 2$) for problem \eqref{1.1} (see Remark \ref{rem5} in Section 5). Observe also that the non-degeneracy result in Theorem \ref{th1.1} was already known for $1$-spike solutions (see \cite[Theorem 1.1]{DGIP2019}).

\vskip 0.2cm

The proofs of Theorem  \ref{th1.1} and Theorem  \ref{th1-1} are obtained  by combining  various  local Pohozaev identities, blow-up analysis and the properties of Green's function,  inspired by \cite{Deng,Grossi2,GMPY20}.

\vskip 0.2cm

They are  based on the following  crucial sharper asymptotics results for the multi-spike solutions of \eqref{1.1} than the one stated in
 Theorem~A, and which are of independent interest.

\begin{Thm}\label{th1} Let $u_p$ be a family of solutions to \eqref{1.1} and \eqref{11-11-01},
$x_{\infty,j}$, $x_{p,j}$,  $\e_{p,j}$ and $ w_{p,j}$ with $j=1,\cdots,k$ be defined in Theorem A above, then
 for any   fixed small  constant $\delta>0$, it holds
\begin{equation}\label{5-7-52}
u_{p}(x_{p,j})=\sqrt{e}\left(1-\frac{\log p}{p-1}+
\frac{ 1}{ p}\Big(4\pi \Psi_{k,j}(x_{\infty})+3\log 2+2 \Big)
\right)+O\Big(\frac{1}{p^{2-\delta}}\Big)\,\,~\mbox{for}~j=1,\cdots,k,
\end{equation}
where $\Psi_{k,j}$ is the function in \eqref{stts} and $x_{\infty}:=\big(x_{\infty,1},\cdots,x_{\infty,k}\big)$
.
Consequently,
\begin{equation}\label{nn3-29-03}
\e_{p,j}= e^{-\frac{p}4}\Bigl(
 e^{-\big( 2\pi \Psi_{k,j}(x_{\infty})+\frac{3\log 2}{2}+\frac{3}{4}\big) }+O\big(\frac{1}{p^{1-\delta}}\big)\Bigr),
\end{equation}
and
\begin{equation}\label{3-29-03}
\frac{\e_{p,j}}{\e_{p,s}}=e^{2\pi \big(\Psi_{k,s}(x_{\infty})-\Psi_{k,j}(x_{\infty})\big)}
 +O\big(\frac{1}{p^{1-\delta}}\big)\,\, ~\mbox{for}~1\leq j,s\leq k.
\end{equation}
Moreover,  it holds
\begin{equation}\label{lst}
w_{p,j}=U+\frac{w_0}{p} +O\left(\frac{1}{p^2}\right)~\mbox{in}~C^2_{loc}(\R^2),
\end{equation}
where $w_0$ solves the non-homogeneous linear equation
\begin{equation}\label{lst1}
-\Delta w_{0}-e^{U}w_{0}=-\frac{U^2}{2}e^{U}~\mbox{in}~\R^2.
\end{equation}
\end{Thm}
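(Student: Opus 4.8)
The plan is to bootstrap from the $C^2_{loc}$ convergence $w_{p,j}\to U$ in Theorem A to the second-order expansions \eqref{5-7-52}–\eqref{lst}, working one spike at a time but keeping track of the mutual interactions through the Green's function. First I would study the rescaled equation satisfied by $w_{p,j}$. Writing $t_{p,j}:=u_p(x_{p,j})$, the function $w_{p,j}$ solves $-\Delta w_{p,j}=\bigl(1+\tfrac{w_{p,j}}{p\,t_{p,j}/t_{p,j}}\bigr)^p\,\bigl(\tfrac{t_{p,j}}{?}\bigr)$-type nonlinearity; more precisely, since $-\Delta u_p=u_p^p$ and $u_p(x_{p,j}+\e_{p,j}y)=t_{p,j}(1+w_{p,j}(y)/p)$, one gets
\begin{equation*}
-\Delta w_{p,j}=\Bigl(1+\frac{w_{p,j}}{p}\Bigr)^{p}\quad\text{in }\Omega_{p,j},
\end{equation*}
using $\e_{p,j}^{2}p\,t_{p,j}^{\,p-1}=1$. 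Expanding $\bigl(1+\tfrac{w}{p}\bigr)^p=e^{w}\bigl(1-\tfrac{w^2}{2p}+O(\tfrac{(1+|w|)^4}{p^2})\bigr)$ on compact sets (where $w_{p,j}=U+o(1)$ is bounded) suggests the ansatz $w_{p,j}=U+\tfrac1p w_0+O(p^{-2})$, and plugging in and matching the $O(1/p)$ terms gives exactly the linear equation \eqref{lst1}, $-\Delta w_0-e^{U}w_0=-\tfrac{U^2}{2}e^{U}$. To make this rigorous I would set $v_p:=p(w_{p,j}-U)$, derive the equation it satisfies, show it is bounded in $C^2_{loc}$ by elliptic estimates (the right-hand side is controlled since $w_{p,j}\to U$ in $C^2_{loc}$ and $e^{U}$ has fast decay), extract a limit $w_0$, and identify it via the equation; uniqueness of the decaying solution of \eqref{lst1} (using the known kernel of $-\Delta-e^{U}$, spanned by $\partial_iU$ and the dilation generator $2+y\cdot\nabla U$, together with the symmetry/normalization forced by $w_{p,j}(0)=0$, $\nabla w_{p,j}(0)=0$) pins $w_0$ down. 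This is the content of \eqref{lst}.

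Next, for the pointwise values \eqref{5-7-52}, the key is a Pohozaev-type / energy identity relating $t_{p,j}$ to the global Green's-function interaction encoded in $\Psi_{k,j}(x_\infty)$. I would use \eqref{11-14-03N}, $p u_p\to 8\pi\sqrt e\sum_m G(\cdot,x_{\infty,m})$ in $C^2_{loc}(\Omega\setminus\mathcal S)$, and evaluate it near $x_{p,j}$: on an intermediate scale $\e_{p,j}\ll |x-x_{p,j}|\ll 1$ one has, from the inner expansion, $p u_p(x)\approx p\,t_{p,j}+t_{p,j}U\bigl(\tfrac{x-x_{p,j}}{\e_{p,j}}\bigr)\approx p\,t_{p,j}-4t_{p,j}\log\tfrac{|x-x_{p,j}|}{\e_{p,j}}+4t_{p,j}\log(2\sqrt2)+\cdots$, while from the outer expansion $p u_p(x)\approx 8\pi\sqrt e\bigl(-\tfrac{1}{2\pi}\log|x-x_{p,j}|+H(x_{p,j},x_{p,j})-\sum_{m\neq j}G(x_{p,j},x_{\infty,m})\bigr)=8\pi\sqrt e\bigl(-\tfrac1{2\pi}\log|x-x_{p,j}|+\Psi_{k,j}(x_\infty)+o(1)\bigr)$. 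Matching the constant terms (the $\log|x-x_{p,j}|$ coefficients already agree up to lower order, forcing $4t_{p,j}\sim 4\sqrt e$ and $\log\e_{p,j}\sim -p/4$) yields an equation of the form
\begin{equation*}
p\,t_{p,j}+4 t_{p,j}\log\e_{p,j}+(\text{explicit constant})\, t_{p,j}=8\pi\sqrt e\,\Psi_{k,j}(x_\infty)+o(1),
\end{equation*}
which, combined with the definition $\e_{p,j}^2 p\, t_{p,j}^{p-1}=1$ (equivalently $\log\e_{p,j}=-\tfrac12(\log p+(p-1)\log t_{p,j})$), becomes a closed scalar relation for $t_{p,j}$. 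Solving it asymptotically — writing $t_{p,j}=\sqrt e(1+a_p)$ with $a_p\to0$, using $\log t_{p,j}=\tfrac12+a_p-\tfrac{a_p^2}2+\cdots$ — produces $a_p=-\tfrac{\log p}{p-1}+\tfrac1p(4\pi\Psi_{k,j}(x_\infty)+c)+O(p^{-2+\delta})$. The precise constant $c=3\log2+2$ has to be harvested by carefully tracking: (i) the $O(1/p)$ correction $w_0$ from \eqref{lst} into the matching (this is where $w_0$ enters, contributing to the constant term at order $1/p$ on the inner side), and (ii) the $O(\e_{p,j}^2)=O(e^{-p/2})$ errors in the Liouville approximation, which are negligible. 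Formula \eqref{nn3-29-03} then follows by substituting \eqref{5-7-52} into $\log\e_{p,j}=-\tfrac12\bigl(\log p+(p-1)\log t_{p,j}\bigr)$ and exponentiating, and \eqref{3-29-03} is the ratio of two instances of \eqref{nn3-29-03}, in which the $e^{-p/4}$ and the $p$-dependent factors cancel, leaving $e^{2\pi(\Psi_{k,s}-\Psi_{k,j})}+O(p^{-1+\delta})$.

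The main obstacle, I expect, is the matched-asymptotics step with the required precision $O(p^{-2+\delta})$: one must control $u_p$ uniformly on the \emph{intermediate} annular region $\e_{p,j}\ll |x-x_{p,j}|\ll 1$, not just on fixed compacta or away from $\mathcal S$, and show that both the inner expansion $t_{p,j}(1+\tfrac1p(U+\tfrac1p w_0))$ and the outer Green's-function expansion are valid there with errors of size $o(1/p)$ after rescaling. This overlap estimate — essentially a uniform bound on $u_p$ of the form $u_p(x)=t_{p,j}+\tfrac{t_{p,j}}{p}U(\tfrac{x-x_{p,j}}{\e_{p,j}})+O(p^{-2-\delta'})$ on the appropriate scale — is the technical heart; it is presumably obtained by constructing a barrier/ approximate solution, subtracting it, and applying the maximum principle together with the invertibility of $\mathcal L_p$ on the complement of the (nearly) kernel directions. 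Once this uniform control is in hand, isolating the numerical constant $3\log2+2$ is just bookkeeping, using $U(y)=-4\log|y|+4\log(2\sqrt2)+O(|y|^{-2})$ as $|y|\to\infty$ and $\int_{\R^2}e^U=8\pi$.
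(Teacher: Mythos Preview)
Your plan for \eqref{lst} is essentially the paper's: set $v_{p}=p(w_{p,j}-U)$, derive its equation, pass to the limit to get $w_0$ solving \eqref{lst1}, then repeat with $k_{p}=p(v_{p}-w_0)$ to get the $O(1/p^2)$ remainder. One correction: local $C^2_{loc}$ bounds by elliptic estimates are not what is actually needed downstream---the paper proves \emph{global} polynomial growth $|v_{p}(x)|\le C(1+|x|)^\tau$ on the whole $B_{d_0/\e_{p,j}}(0)$ (and similarly for $k_p$), and the argument is not by barriers or by inverting $\mathcal L_p$, but by a Chen--Lin style ODE comparison on radial averages, together with a symmetry/angular-difference argument. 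These growth bounds are the technical heart; they are what justify dominated convergence in the key integral identity $\int_{B_{d/\e_{p,j}}}(1+w_{p,j}/p)^p=8\pi(1-3/p)+O(p^{-2+\delta})$, where the constant $3$ comes from $\tfrac{1}{8\pi}\int_{\R^2}\Delta w_0=3$.

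For \eqref{5-7-52} your matched-asymptotics route differs from the paper and has a structural weakness: the outer information from \eqref{11-14-03N} is only a qualitative $o(1)$ statement, so matching it against the inner expansion cannot by itself produce an error of size $O(p^{-2+\delta})$ without first upgrading the outer expansion to that precision---which is circular. The paper sidesteps the overlap region entirely by writing the \emph{exact} Green's representation $u_p(x_{p,j})=\int_\Omega G(y,x_{p,j})u_p^{p}(y)\,dy$, scaling each $B_d(x_{p,l})$-integral, and reading off $\log\e_{p,j}$ from the singular part $S(x_{p,j},\cdot)=-\tfrac1{2\pi}\log|\cdot-x_{p,j}|$; the interaction with other spikes enters cleanly through $H(x_{p,j},x_{p,j})$ and $G(x_{p,l},x_{p,j})$, and the refined integral above supplies exactly the $1-3/p$ factor that, after the algebra you sketch, yields $3\log 2+2$. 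This is cleaner than constructing approximate solutions and invoking the maximum principle. Also note a sign slip in your outer expansion: since $G=S-H$ and $\Psi_{k,j}=R-\sum_{m\ne j}G$, the constant produced near $x_{p,j}$ by $8\pi\sqrt e\sum_m G(\cdot,x_{\infty,m})$ is $-8\pi\sqrt e\,\Psi_{k,j}$, not $+8\pi\sqrt e\,\Psi_{k,j}$.
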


Observe that, for single spike solutions of \eqref{1.1}, the non-degeneracy was proved in \cite{DGIP2019} by estimating all the
eigenvalues of the operator $\mathcal{L}_p$.  For multi-spike solutions of \eqref{1.1} our proof here is based on different arguments, but we believe that   one can characterize the asymptotic behavior of the eigenvalues/eigenfunctions of  $\mathcal{L}_p$ as $p\rightarrow +\infty$ also in this case, using the  estimates \eqref{nn3-29-03} and \eqref{3-29-03}. This would lead in particular  to the computation of the Morse index of multi-spike solutions. 
\\
In higher dimension results in this direction can be found in  \cite{BLR95} and \cite{CKL16}, where  the Morse index of the multi-bubbling  solutions of the following problem were calculated
\begin{equation}\label{00-30-10}
\begin{cases}
-\Delta u=u^{\frac{N+2}{N-2}-\e}~&\mbox{in}~\Omega,\\[1mm]
u>0~&\mbox{in}~\Omega,\\[1mm]
u=0~&\mbox{on}~\partial \Omega,
\end{cases}
\end{equation}
 where $\Omega$ is a bounded domain in $\R^N$($N\geq 3$) and $\e>0$ is a small parameter. Note that, unlike our case,  the concentration rates  of the  bubbles of a multi-bubbling solution for \eqref{00-30-10} are all of the same order.  We plan to further investigate these topics in a subsequent paper.
 %
%
%

\vskip 0.2cm

The paper is organized as follows. In Section \ref{s} we collect some preliminary known results, give crucial computations involving the Green's function and introduce suitable local Pohozaev identities. In Section \ref{s1} we study the  asymptotic behavior
 of  the multi-spike solutions of \eqref{1.1} and prove Theorem~\ref{th1}.
 Section \ref{s2} is devoted to the discussion on the non-degeneracy of the solutions of \eqref{1.1} and ends with the proof of Theorem \ref{th1.1}.
 In Section \ref{s3} we consider single spike solutions of \eqref{1.1} and  prove Theorem  \ref{th1-1}. We have postponed to Appendix \ref{s6} some detailed proofs about the tedious computations involving the Green's function.

\vskip 0.2cm

Throughout this paper, we use the same $C$ to denote various generic positive constants independent with $p$
and $\|\cdot\|$  to denote the basic norm in the Sobolev space $H^1_0(\Omega)$ and $\langle\cdot,\cdot\rangle$ to mean the corresponding inner product. We will use $\partial$ or $\nabla$ to denote the partial derivative for any function $h(y,x)$ with respect to $y$, while we will use $D$ to denote the partial derivative for any function $h(y,x)$ with respect to $x$.

\section{Some known facts and key computations} \label{s}
 We collect  some known results which will be useful throughout  the paper.

 \subsection{Linearization of the Liouville equation}$\,$  \vskip 0.2cm

Next lemma is  a well known characterization of the kernel of the linearized operator of the
Liouville equation  at the solution $U$ (see for instance  \cite{EG2004}).
\begin{Lem}\label{llm}
Let $U$ be the function defined in \eqref{defU} and $v\in C^2(\R^2)$ be a solution of the following problem
\begin{equation}\label{3-29-01}
\begin{cases}
-\Delta v=e^Uv\,\,~\mbox{in}~\R^2,\\[1mm]
\displaystyle\int_{\R^2}|\nabla v|^2dx<\infty.
\end{cases}
\end{equation}
Then it holds
\begin{equation*}
v(x)\in \mbox{span}\left\{\frac{\partial U(x)}{\partial x_1},
\frac{\partial U(x)}{\partial x_2},\frac{\partial U(\frac{x}{\lambda})}{\partial \lambda}\Big|_{\lambda=1}\right\}.
\end{equation*}
\end{Lem}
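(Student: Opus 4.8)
The plan is to reduce \eqref{3-29-01} to a family of second–order ODEs, one per angular Fourier mode, and to analyse them separately. Throughout I work in polar coordinates $x=r(\cos\theta,\sin\theta)$ and write $\rho(r):=e^{U}=\big(1+\tfrac{r^2}{8}\big)^{-2}$. First I would check that the three functions in the statement really do solve $-\Delta v=e^Uv$ with $\int_{\R^2}|\nabla v|^2<\infty$, so that their span is contained in the solution set; the reverse inclusion is the real content. Here $\partial_{x_1}U,\partial_{x_2}U$ come from differentiating the translation invariance of $-\Delta U=e^U$, while the third function — written in the statement as $\partial_\lambda U(x/\lambda)|_{\lambda=1}$ — equals, up to an additive constant, the radial function $Z_0(x):=\tfrac{|x|^2-8}{|x|^2+8}=-\tfrac12\big(x\cdot\nabla U+2\big)$; and $x\cdot\nabla U+2$ solves the linearized equation because it is the $\lambda$–derivative at $\lambda=1$ of the dilation family $U(\lambda\,\cdot)+2\log\lambda$ of solutions of $-\Delta U=e^U$, $\int e^U=8\pi$. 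Since $|\nabla U(x)|=O(|x|^{-1})$ at infinity, each of these has finite Dirichlet energy.

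Next, let $v$ solve \eqref{3-29-01}; by elliptic regularity $v\in C^\infty(\R^2)$. Expanding $v(r,\theta)=\sum_{n\ge0}\big(\alpha_n(r)\cos n\theta+\beta_n(r)\sin n\theta\big)$, each radial coefficient $\psi\in\{\alpha_n,\beta_n\}$ satisfies
\[-\psi''-\tfrac1r\psi'+\tfrac{n^2}{r^2}\psi=\rho(r)\psi\ \text{ on }(0,\infty),\qquad\text{with}\quad\int_0^\infty\Big(\psi'^2+\tfrac{n^2}{r^2}\psi^2\Big)r\,dr<\infty,\]
and $\psi=O(r^n)$ near $r=0$ (smoothness of $v$). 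For fixed $n$ this ODE has a two–dimensional solution space with indicial exponents $\pm n$ at $r=0$ (a double $0$ with a logarithmic branch when $n=0$), so boundedness at the origin singles out the one–dimensional subspace $\mathcal R_n$ of solutions $\sim r^n$; thus $\alpha_n,\beta_n\in\mathcal R_n$. For $n=0$ one has $\mathcal R_0=\mathrm{span}\{Z_0\}$, and for $n=1$ one has $\mathcal R_1=\mathrm{span}\{U'\}$ with $U'(r)=-4r/(8+r^2)$ the common radial profile of $\partial_{x_1}U,\partial_{x_2}U$; both have finite energy. Hence the modes $n\le1$ contribute to $v$ precisely a linear combination of $Z_0$, $\partial_{x_1}U$, $\partial_{x_2}U$.

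It remains to show that the modes $n\ge2$ vanish, and this is the only non–routine step. Fix $n\ge2$ and let $0\not\equiv\psi\in\mathcal R_n$ have finite energy, normalized so $\psi\sim c\,r^n$ with $c>0$. Put $\phi(r):=-U'(r)=4r/(8+r^2)>0$, a \emph{positive} solution of the $n=1$ ODE. Writing both equations in divergence form, $-(r\psi')'+\tfrac{n^2}{r}\psi=r\rho\psi$ and $-(r\phi')'+\tfrac1r\phi=r\rho\phi$, multiplying the first by $\phi$, the second by $\psi$, subtracting and integrating over $(\var,R)$ gives the Sturm-type identity
\[\Big[r\big(\phi'\psi-\psi'\phi\big)\Big]_{\var}^{R}=-(n^2-1)\int_{\var}^{R}\frac{\psi\,\phi}{r}\,dr .\]
From $\psi=O(r^n)$, $\phi=O(r)$ near $0$, the finite–energy decay of $\psi$ (which makes $r\psi'(r)^2+\psi(r)^2/r\to0$ along some sequence $R_m\to\infty$) and $\phi,\phi'=O(r^{-1})$ at infinity, the boundary terms tend to $0$, while $\int_0^\infty|\psi|\phi/r\,dr<\infty$ by Cauchy–Schwarz. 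Hence $(n^2-1)\int_0^\infty\psi\phi/r\,dr=0$; since $n^2-1>0$ and $\phi>0$, $\psi$ must change sign. Let $R_1$ be its first zero, so $\psi>0$ on $(0,R_1)$; applying the identity on $(0,R_1)$ with $\psi(R_1)=0$ gives $-R_1\psi'(R_1)\phi(R_1)=-(n^2-1)\int_0^{R_1}\psi\phi/r\,dr<0$, hence $\psi'(R_1)>0$ — contradicting that $\psi$ decreases to its first zero (where $\psi'(R_1)<0$, equality being impossible by uniqueness for the ODE). So $\psi\equiv0$ for every $n\ge2$, and $v$ therefore lies in the claimed three–dimensional span.

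I expect the exclusion of the higher modes $n\ge 2$ to be the only real obstacle; everything else is bookkeeping. As an alternative route that bypasses it, one can use stereographic projection: $e^U|dx|^2$ is, up to a constant factor, the round metric on $\S^2$, so $-\Delta v=e^Uv$ becomes $-\Delta_{\S^2}\tilde v=2\tilde v$; the Dirichlet integral being a conformal invariant in dimension two, finite–energy solutions on $\R^2$ correspond exactly to $H^1(\S^2)$ eigenfunctions of $-\Delta_{\S^2}$ with eigenvalue $2=1\cdot(1+1)$, whose eigenspace is three–dimensional and spanned by the restrictions of the three coordinate functions — which pull back, up to constants, to $\partial_{x_1}U,\partial_{x_2}U,Z_0$. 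This gives a coordinate–free proof.
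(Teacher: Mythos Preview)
Your proof is correct, and in fact the paper does not prove this lemma at all: it simply cites \cite{EG2004} as a reference for this well--known nondegeneracy result. So there is nothing to compare against in the paper itself.

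Both of the routes you outline are standard and sound. The Fourier--mode argument with the Sturm comparison against $\phi=-U'>0$ is clean; your identity
\[
\Big[r(\phi'\psi-\psi'\phi)\Big]_{\varepsilon}^{R}=-(n^{2}-1)\int_{\varepsilon}^{R}\frac{\psi\phi}{r}\,dr
\]
is correct, the boundary terms vanish exactly as you say (near $0$ by $\psi=O(r^{n})$, $\phi=O(r)$, and at infinity along a sequence using $\phi,\phi'=O(r^{-1})$ together with $\liminf_{r\to\infty}\big(r\psi'(r)^{2}+\psi(r)^{2}/r\big)=0$, which follows from integrability), and the first--zero contradiction for $n\ge 2$ goes through. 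Your parenthetical remark that $\partial_{\lambda}U(x/\lambda)|_{\lambda=1}$ equals $Z_{0}=\tfrac{8-|x|^{2}}{8+|x|^{2}}$ only up to an additive constant is also a useful observation: strictly speaking it is $x\cdot\nabla U+2=2Z_{0}$ that solves the linearized equation, and the paper tacitly uses this identification throughout (see e.g.\ the line before \eqref{ss1}). The stereographic alternative is the slickest: conformal invariance of the Dirichlet integral in dimension two, together with the fact that points have zero $H^{1}$--capacity on $\S^{2}$, lets you extend any finite--energy solution across the pole and read off the result from the first nontrivial eigenspace of $-\Delta_{\S^{2}}$. Either argument would serve as a self--contained proof here.
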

\begin{Lem}\label{lll3}
Let $f\in C^1\big([0,+\infty)\big)$ such that $\displaystyle\int^{+\infty}_{0}t|\log t|\cdot |f(t)|dt<+\infty$. Then there exists a $C^2$ radial solution $w(r)$ of the equation
\begin{equation}\label{6-4-1}
\Delta w+8e^{U(\sqrt{8}y)}w=f(|y|)~~\,~\mbox{in}~\R^2,
\end{equation}
such that as $r\to +\infty$,
\begin{equation}\label{5-26-1}
 \partial_rw(r)=\left(\int^{+\infty}_0t\frac{t^2-1}{t^2+1}f(t)dt\right)\frac{1}{r}+
 O\left(\frac{1}{r}\int^{+\infty}_rs|f(s)|ds+\frac{|\log r|}{r^3}\right).
\end{equation}
\end{Lem}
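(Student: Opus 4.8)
\textbf{Proof proposal for Lemma \ref{lll3}.}

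The plan is to treat \eqref{6-4-1} as a second-order linear ODE in the radial variable and solve it explicitly by variation of parameters, using the known kernel of the radial linearized Liouville operator. First I would observe that after the rescaling $y\mapsto \sqrt{8}y$ (or equivalently writing things in terms of $V(r):=U(\sqrt8\,r)=-2\log(1+r^2)$), the operator $L:=\Delta+8e^{U(\sqrt8 y)}$ acting on radial functions becomes $Lw=w''+\frac1r w'+\frac{8}{(1+r^2)^2}w$. The crucial input, coming from Lemma \ref{llm} and the classical theory of the Liouville equation, is that the radial homogeneous equation $Lw=0$ has the explicit bounded solution $Z_0(r)=\frac{1-r^2}{1+r^2}$ (the radial element of the kernel, coming from $\partial_\lambda U(x/\lambda)|_{\lambda=1}$). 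A second, linearly independent solution $Z_1(r)$ can be produced by reduction of order, $Z_1(r)=Z_0(r)\int^r \frac{ds}{s\,Z_0(s)^2}$; near $r=0$ one has $Z_0\sim 1$ so $Z_1\sim \log r$, and as $r\to+\infty$ one computes $Z_1(r)=\frac{1-r^2}{1+r^2}\cdot\big(\log r+O(1)\big)$, so $Z_1$ grows like $-\log r$ up to bounded terms, and the Wronskian of $(Z_0,Z_1)$ with respect to the weight $r$ is constant, normalized so that $r(Z_0 Z_1'-Z_0'Z_1)\equiv 1$.

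Next I would write the particular solution by the standard Duhamel/variation-of-parameters formula adapted to the $\frac1r$-weighted ODE, namely
\begin{equation*}
w(r)=Z_1(r)\int_0^r Z_0(s)\,f(s)\,s\,ds-Z_0(r)\int_0^r Z_1(s)\,f(s)\,s\,ds,
\end{equation*}
and check that $Lw=f$. The integrability hypothesis $\int_0^\infty t|\log t|\,|f(t)|\,dt<\infty$ is exactly what is needed to make both integrals converge (the logarithmic weight absorbs the $\log s$ growth of $Z_1$ at infinity and its behavior near $0$), and also guarantees $w\in C^2$ since $f\in C^1$. Then, to get the asymptotics \eqref{5-26-1}, I would differentiate: the boundary terms from differentiating the limits cancel, leaving
\begin{equation*}
w'(r)=Z_1'(r)\int_0^r Z_0(s)f(s)s\,ds-Z_0'(r)\int_0^r Z_1(s)f(s)s\,ds.
\end{equation*}
As $r\to+\infty$, $Z_0'(r)=O(1/r^3)$ and $Z_1'(r)=\frac1r+O\!\big(\frac{\log r}{r^3}\big)$; moreover $\int_0^r Z_0(s)f(s)s\,ds=\int_0^\infty \frac{1-s^2}{1+s^2}f(s)s\,ds-\int_r^\infty\frac{1-s^2}{1+s^2}f(s)s\,ds$, and since $\big|\frac{1-s^2}{1+s^2}\big|\le 1$ the tail is $O\big(\int_r^\infty s|f(s)|ds\big)$. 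Combining these (and bounding the $Z_0'$-term, where $Z_1(s)s|f(s)|$ is integrable and the full integral is bounded, against $1/r^3$ times a logarithm) produces precisely
\begin{equation*}
\partial_r w(r)=\Big(\int_0^\infty t\frac{t^2-1}{t^2+1}f(t)\,dt\Big)\frac1r+O\Big(\frac1r\int_r^\infty s|f(s)|ds+\frac{|\log r|}{r^3}\Big);
\end{equation*}
note the sign flips to $\frac{t^2-1}{t^2+1}$ because the leading term picks up $Z_1'\sim \frac1r$ against $\int_0^\infty Z_0 f s\,ds$ with $Z_0=\frac{1-s^2}{1+s^2}=-\frac{s^2-1}{s^2+1}$, and a second contribution of the same size comes from the $-Z_0 \int_0^r Z_1 f s$ term, which I would track carefully to see the coefficient matches.

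The main obstacle is the bookkeeping in the last step: one must isolate the exact coefficient of $1/r$ and verify that all the remaining pieces genuinely fall into the claimed error class. In particular, the term $Z_1(r)$ itself grows like $r^0\log r$ (since $Z_0(r)\to -1$), so in $w'$ one has to be careful that the apparently dangerous contributions combine into the stated $O$-terms rather than producing spurious $\log r /r$ growth — this requires using that $\int_0^\infty Z_0(s)f(s)s\,ds$ is a genuine constant and extracting it, and controlling the difference $Z_1(s)-Z_0(s)\log s$, which stays bounded. Once the Wronskian normalization is fixed and these cancellations are made explicit, the estimate follows. Everything else (convergence of the integrals, $C^2$-regularity, verifying $Lw=f$) is routine given the hypothesis on $f$.
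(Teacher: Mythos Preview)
The paper does not give a proof of this lemma at all: it simply cites Lemma~2.1 of \cite{EMP2006}. Your variation-of-parameters argument is exactly the standard route (and presumably what \cite{EMP2006} does), so in substance you are supplying the proof the paper omits.

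Your outline is correct, but two pieces of bookkeeping should be cleaned up. First, with your normalization $r(Z_0Z_1'-Z_0'Z_1)\equiv 1$ and $Z_0(r)=\frac{1-r^2}{1+r^2}\to -1$, one gets $Z_1'(r)=\frac{1}{rZ_0(r)}+\frac{Z_0'(r)Z_1(r)}{Z_0(r)}=-\frac{1}{r}+O\big(\frac{\log r}{r^3}\big)$, not $+\frac{1}{r}$; the minus sign then cancels with the sign of $\int_0^\infty Z_0(s)f(s)s\,ds=-\int_0^\infty \frac{s^2-1}{s^2+1}f(s)s\,ds$ to produce the stated leading coefficient. Second, there is no ``second contribution of the same size'' from the $-Z_0'(r)\int_0^r Z_1(s)f(s)s\,ds$ term: since $|Z_1(s)|\le C(1+|\log s|)$, the hypothesis $\int_0^\infty t|\log t|\,|f(t)|\,dt<\infty$ makes $\int_0^r Z_1 f s\,ds$ uniformly bounded, and $Z_0'(r)=O(r^{-3})$, so that whole term is $O(r^{-3})$ and falls directly into the error. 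The entire $1/r$ coefficient comes from the single product $Z_1'(r)\cdot\int_0^\infty Z_0 f s$; once you fix these two signs the estimate \eqref{5-26-1} drops out without any further cancellation to chase.
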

\begin{proof}
See Lemma 2.1 in \cite{EMP2006}.
\end{proof}

\vskip 0.4cm

\subsection{Quadratic forms} $\,$  \vskip 0.2cm

For each point $x_{p,j},$ $j=1,\ldots, k$ in  \eqref{def:xpj}, let
us define the following two quadric forms \begin{equation}\label{07-08-20}
\begin{split}
P_{j}(u,v):=&- 2\theta\int_{\partial B_\theta(x_{p,j})}
\big\langle \nabla u ,\nu\big\rangle
\big\langle \nabla v,\nu\big\rangle \,d\sigma
+  \theta  \int_{\partial B_\theta(x_{p,j})}
\big\langle \nabla u , \nabla v \big\rangle\,d\sigma,
\end{split}
\end{equation}
and
\begin{equation}\label{abd}
Q_{j}(u,v):=- \int_{\partial B_\theta(x_{p,j})}\frac{\partial v}{\partial \nu}\frac{\partial u}{\partial x_i}\,d\sigma-
 \int_{\partial B_\theta(x_{p,j})}\frac{\partial u}{\partial \nu}\frac{\partial v}{\partial x_i}\,d\sigma
+ \int_{\partial B_\theta(x_{p,j})}\big\langle \nabla u,\nabla v \big\rangle \nu_i\,d\sigma,
\end{equation}
where $u,v\in C^{2}(\overline{\Omega})$ and  $\theta>0$ is such that $B_{2\theta}(x_{p,j})\subset\Omega$.
\begin{Lem}\label{llas}
If $u$ and $v$ are harmonic in $ B_d(x_{p,j})\backslash \{x_{p,j}\}$, then $P_{j}(u,v)$ and $Q_{j}(u,v)$ are  independent of $\theta\in (0,d]$.
\end{Lem}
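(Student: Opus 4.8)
The plan is to show that both $P_j(u,v)$ and $Q_j(u,v)$ are derivatives of flux-type quantities that vanish, so that the expressions are constant in $\theta$. More precisely, I would fix $0<\theta_1<\theta_2\le d$ and work on the annular region $A:=B_{\theta_2}(x_{p,j})\setminus \overline{B_{\theta_1}(x_{p,j})}$, on which $u$ and $v$ are (jointly) smooth and harmonic. The key point is the classical fact that for two harmonic functions $u,v$ on $\R^2$, the vector field
\[
\Xi:=2\langle\nabla u,\nu\rangle\,\nabla v+2\langle\nabla v,\nu\rangle\,\nabla u-\langle\nabla u,\nabla v\rangle\,\nu
\]
appearing in $P_j$ (with $\nu$ the position direction $\frac{x-x_{p,j}}{|x-x_{p,j}|}$, which coincides with the outward normal on each circle) has the property that its flux through concentric circles is scaling invariant — this is exactly the two-dimensional Pohozaev/Rellich identity applied to the harmonic pair $(u,v)$. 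Similarly the integrand of $Q_j$ is the flux of the field $\Xi_i:=\frac{\partial v}{\partial\nu}\nabla u+\frac{\partial u}{\partial\nu}\nabla v-\langle\nabla u,\nabla v\rangle e_i$ in the $i$-th direction, and translation invariance makes its flux through concentric circles independent of the radius.

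Concretely, for $Q_j$ I would apply the divergence theorem to the symmetric stress-energy tensor $T_{lm}(u,v):=\partial_l u\,\partial_m v+\partial_m u\,\partial_l v-\delta_{lm}\langle\nabla u,\nabla v\rangle$ over $A$. A direct computation gives $\sum_m\partial_m T_{lm}=\partial_l u\,\Delta v+\partial_l v\,\Delta u=0$ since $u,v$ are harmonic, hence $\int_{\partial A}\sum_m T_{lm}\nu_m\,d\sigma=0$. Writing $\partial A$ as the outer circle $\partial B_{\theta_2}$ (outer normal $+\nu$) minus the inner circle $\partial B_{\theta_1}$ (outer normal $-\nu$) and taking $l=i$, the boundary term on each circle is precisely $\int_{\partial B_{\theta}(x_{p,j})}\big(\frac{\partial u}{\partial\nu}\frac{\partial v}{\partial x_i}+\frac{\partial v}{\partial\nu}\frac{\partial u}{\partial x_i}-\langle\nabla u,\nabla v\rangle\nu_i\big)d\sigma$, up to the sign $(-1)$ already present in \eqref{abd}; so the two circle integrals coincide and $Q_j$ is independent of $\theta$.

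For $P_j$ I would run the analogous argument but test the divergence identity against the radial dilation field $X(x)=x-x_{p,j}$, i.e. integrate $\sum_{l,m}\partial_m\big(X_l T_{lm}(u,v)\big)$ over $A$. Since $T_{lm}$ is divergence-free in its second index and $\sum_l\partial_m X_l\,T_{lm}=\sum_l\delta_{lm}T_{lm}=\operatorname{tr}T=2\langle\nabla u,\nabla v\rangle-2\langle\nabla u,\nabla v\rangle=0$ in dimension two (this is where $N=2$ enters), the full divergence vanishes, so $\int_{\partial A}\sum_{l,m}X_l T_{lm}\nu_m\,d\sigma=0$. On a circle $\partial B_\theta(x_{p,j})$ one has $X=\theta\nu$, and the boundary integrand becomes $\theta\big(2\langle\nabla u,\nu\rangle\langle\nabla v,\nu\rangle-\langle\nabla u,\nabla v\rangle\big)$, matching $-P_j(u,v)$ on that circle. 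Comparing inner and outer circles again yields $\theta$-independence.

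The only mild subtlety — and the step I would be most careful about — is that $u,v$ are merely harmonic in the \emph{punctured} ball $B_d(x_{p,j})\setminus\{x_{p,j}\}$, so one cannot integrate over the full ball; the annulus formulation above handles this automatically since it never touches the puncture, and the orientation bookkeeping on $\partial A$ is exactly what produces the equality of the two circle integrals rather than their sum. No growth or removability hypothesis at $x_{p,j}$ is needed. This completes the proof. $\hfill\square$
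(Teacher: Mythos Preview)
Your proof is correct and is essentially the same argument as the paper's, just packaged in the language of the (bilinear) stress--energy tensor $T_{lm}(u,v)=\partial_l u\,\partial_m v+\partial_m u\,\partial_l v-\delta_{lm}\langle\nabla u,\nabla v\rangle$. The paper integrates the identities $\partial_{x_i}u\,\Delta v+\partial_{x_i}v\,\Delta u=0$ and $\langle x-x_{p,j},\nabla u\rangle\Delta v+\langle x-x_{p,j},\nabla v\rangle\Delta u=0$ by parts over the annulus $B_{\theta_2}\setminus B_{\theta_1}$ to obtain the equality of the circle integrals; your divergence--theorem computations $\mathrm{div}\,T_{i\cdot}=0$ and $\mathrm{div}(X\cdot T)=\mathrm{tr}\,T=0$ (the latter using $N=2$) are precisely these identities rewritten, and the boundary terms match $-Q_j$ and $-P_j$ as you note.
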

\begin{proof}
Let $\Omega'\subset \Omega$ be a smooth bounded domain, integrating by parts, we have
\begin{equation}\label{07-07-1}
\begin{split}
-&\int_{\Omega'}\big\langle \nabla v,x-x_{p,j}\big\rangle \Delta u -\int_{\Omega'}\big\langle \nabla u,x-x_{p,j}\big\rangle \Delta v\\=&-\int_{\partial \Omega'} \frac{\partial u}{\partial \nu}\big\langle \nabla v,x-x_{p,j}\big\rangle-
\int_{\partial \Omega'} \frac{\partial v}{\partial \nu}\big\langle \nabla u,x-x_{p,j}\big\rangle
+\int_{\partial \Omega'} \big\langle \nabla u,  \nabla  v \big \rangle  \Big\langle  \nu, x-x_{p,j} \Big\rangle.
\end{split}
\end{equation}
Let $\theta_{1}<\theta_{2}<d$ and $\Omega'=B_{\theta_{2}}(x_{p,j})\backslash B_{\theta_{1}}(x_{p,j})$ in  \eqref{07-07-1}. Since $u$ and $v$ are harmonic in $ B_d(x_{p,j})\backslash \{x_{p,j}\}$, we find
\begin{equation*}
-\int_{\partial \Omega'} \frac{\partial u}{\partial \nu}\big\langle \nabla v,x-x_{p,j}\big\rangle-
\int_{\partial \Omega'} \frac{\partial v}{\partial \nu}\big\langle \nabla u,x-x_{p,j}\big\rangle
+\int_{\partial \Omega'} \big\langle \nabla u,  \nabla  v \big \rangle  \Big\langle  \nu, x-x_{p,j} \Big\rangle=0,
\end{equation*}
namely
\[
-2\theta_{2}\int_{\partial B_{\theta_{2}}(x_{p,j})} \frac{\partial u}{\partial \nu}\frac{\partial  v}{\partial\nu}
+\theta_{2}\int_{\partial B_{\theta_{2}}(x_{p,j})} \big\langle \nabla u,  \nabla  v\big\rangle =-2\theta_{1}\int_{\partial B_{\theta_{1}}(x_{p,j})} \frac{\partial u}{\partial \nu}\frac{\partial  v}{\partial\nu}
+\theta_{1}\int_{\partial B_{\theta_{1}}(x_{p,j})} \big\langle \nabla u,  \nabla  v\big\rangle .
\]
This shows that
$P_{j}(u,v)$  is  independent of $\theta\in (0,d]$.

\vskip 0.1cm

On the other hand, integrating by parts, we get
\begin{equation*}
\begin{split}
-\int_{\Omega'} \Big(\Delta u \frac{\partial v}{\partial x_i}+\Delta v \frac{\partial u}{\partial x_i}\Big)=&
-\int_{\partial \Omega'}
\Big(\frac{\partial v}{\partial \nu} \frac{\partial u}{\partial x_i}
+\frac{\partial u}{\partial \nu} \frac{\partial v}{\partial x_i}\Big)
+\int_{\Omega'} \Big(\big\langle \nabla v, \nabla \frac{\partial u}{\partial x_i}\big\rangle
+\big\langle \nabla u, \nabla \frac{\partial v}{\partial x_i}\big\rangle\Big)\\=&
-\int_{\partial \Omega'}
\Big(\frac{\partial v}{\partial \nu} \frac{\partial u}{\partial x_i}
+\frac{\partial u}{\partial \nu} \frac{\partial v}{\partial x_i}\Big)
+\int_{\partial\Omega'}\big\langle \nabla u,\nabla v \big\rangle \nu_i.
\end{split}
\end{equation*}
Since $u$ and $v$ are harmonic in $ B_d(x_{p,j})\backslash \{x_{p,j}\}$, then arguing like before we get that $Q_{j}(u,v)$ is  independent of $\theta\in (0,d]$.
\end{proof}

Next we derive some key computations about the quadratic forms $P_{j}$ and $Q_{j}$ and Green's function.
\begin{Prop}\label{lem2-1}
It holds
\begin{equation}\label{1-1}
P_{j}\Big(G(x_{p,s},x), G(x_{p,m},x)\Big)=
\begin{cases}
-\frac{1}{2\pi} ~&\mbox{for}~s=m=j,\\[1mm]
0~&\mbox{for}~s\neq j ~{or}~m \neq j,
\end{cases}
\end{equation}
and
 \begin{equation}\label{abb1-1}
P_{j}\Big(G(x_{p,s},x),\partial_hG(x_{p,m},x)\Big)=
\begin{cases}
\frac{1}{2}\frac{\partial R(x_{p,j})}{\partial h} ~&\mbox{for}~s=m=j,\\[1mm]
 -D_{x_h}G \big(x_{p,s},x_{p,j}\big) ~&\mbox{for} ~m=j,~s\neq j,\\[1mm]
0 ~&\mbox{for}~m\neq j.
\end{cases}
\end{equation}
Moreover
\begin{equation}\label{aluo1}
Q_{j}\Big(G(x_{p,m},x),G(x_{p,s},x)\Big)=
\begin{cases}
-\frac{\partial R(x_{p,j})}{\partial{x_i}} ~&\mbox{for}~s=m=j,\\[1mm]
D_{x_i}G(x_{p,m},x_{p,j})
 ~&\mbox{for}~m\neq j,~s=j,\\[1mm]
D_{x_i}G(x_{p,s},x_{p,j})
~&\mbox{for}~m=j,~s\neq j,\\[1mm]
0 ~&\mbox{for}~s,m\neq j,
\end{cases}
\end{equation}
and
\begin{equation}\label{aluo41}
Q_{j}\Big(G(x_{p,m},x),\partial_h G(x_{p,s},x)\Big)=
\begin{cases}
- \frac{\partial^2 R(x_{p,j})}{\partial{x_ix_h}} ~&\mbox{for}~s=m=j,\\[1mm]
D_{x_i}\partial_h G(x_{p,s},x_{p,j})
 ~&\mbox{for}~m= j,~s\neq j,\\[1mm]
D^2_{x_ix_h} G(x_{p,m},x_{p,j})
~&\mbox{for}~m\neq j,~s=j,\\[1mm]
0 ~&\mbox{for}~s,m\neq j.
\end{cases}
\end{equation}
where $G(x,y)$ and $R(x,y)$ are the Green and Robin function, respectively (see \eqref{greensyst}, \eqref{GreenS-H}, \eqref{Robinf}), $\partial_iG(y,x):=\frac{\partial G(y,x)}{\partial y_i}$ and
$D_{x_i}G(y,x):=\frac{\partial G(y,x)}{\partial x_i}$.
\end{Prop}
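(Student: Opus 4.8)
The plan is to exploit Lemma 2.7: since the arguments $G(x_{p,m},\cdot)$ (and their $x$-derivatives $D_{x_h}G(x_{p,m},\cdot)$, $D^2_{x_ix_h}G(x_{p,m},\cdot)$) are all harmonic in a punctured ball $B_d(x_{p,j})\setminus\{x_{p,j}\}$ once $d$ is small, the quadratic forms $P_j$ and $Q_j$ evaluated on them are independent of the radius $\theta\in(0,d]$. Hence I would compute each expression by letting $\theta\to 0^+$ and keeping only the terms that survive. The key is the local structure near $x_{p,j}$: writing $G(x_{p,j},x)=S(x_{p,j},x)-H(x_{p,j},x)=-\frac1{2\pi}\log|x-x_{p,j}|-H(x_{p,j},x)$, the first (singular) piece contributes the universal constants, while the regular piece $H(x_{p,j},\cdot)$ and all factors $G(x_{p,s},\cdot)$ with $s\neq j$ are smooth near $x_{p,j}$ and contribute their Taylor expansions at $x_{p,j}$, evaluated through $R(x_{p,j})=H(x_{p,j},x_{p,j})$ and its derivatives. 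For the mixed $m\neq j$ cases, $G(x_{p,m},\cdot)$ is smooth at $x_{p,j}$, so pairing a smooth function against the singular $-\frac1{2\pi}\log|x-x_{p,j}|$ produces a boundary integral over $\partial B_\theta(x_{p,j})$ that, as $\theta\to0$, picks out exactly the value or derivative of the smooth factor at $x_{p,j}$, which is $G(x_{p,s},x_{p,j})$ or $D_{x_h}G(x_{p,s},x_{p,j})$, etc.; when both factors are smooth at $x_{p,j}$ (the cases $s,m\neq j$), the integral vanishes as $\theta\to0$ and the answer is $0$.

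Concretely, first I would record the elementary one-dimensional integrals on a circle of radius $\theta$: for the radial singular kernel $\Gamma(x):=-\frac1{2\pi}\log|x-x_{p,j}|$ one has $\partial_\nu\Gamma=-\frac1{2\pi\theta}$, $|\nabla\Gamma|^2=\frac1{4\pi^2\theta^2}$, and $\int_{\partial B_\theta}1\,d\sigma=2\pi\theta$; the key angular averages $\fint_{\partial B_\theta(x_{p,j})}\nu_i\,d\sigma=0$, $\fint \nu_i\nu_h\,d\sigma=\tfrac12\delta_{ih}$, $\fint \nu_i\nu_h\nu_\ell\,d\sigma=0$, etc. Then I would substitute into the definitions $P_j(u,v)=-2\theta\int_{\partial B_\theta}\langle\nabla u,\nu\rangle\langle\nabla v,\nu\rangle+\theta\int_{\partial B_\theta}\langle\nabla u,\nabla v\rangle$ and the analogous $Q_j$, split each Green's function as $S-H$, and expand: the $S\cdot S$ pairing over the self-index $j$ gives the universal constant $-\frac1{2\pi}$ in \eqref{1-1} (respectively $0$ in the odd-derivative case \eqref{aluo1} by the vanishing of $\fint\nu_i$); the $S\cdot H$ and $H\cdot S$ cross terms, after letting $\theta\to0$, produce $\frac12\partial_h R(x_{p,j})$, $-D_{x_h}G(x_{p,s},x_{p,j})$, $-\partial_{x_i}R(x_{p,j})$, $-\partial^2_{x_ix_h}R(x_{p,j})$, etc., according to which argument carries the singularity; and the $H\cdot H$ terms, being integrals of bounded quantities over a vanishing circle, go to $0$. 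The sign bookkeeping — tracking the minus sign in $G=S-H$ and the two minus signs in the definitions of $P_j$, $Q_j$ — is where care is needed.

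The main obstacle I expect is not conceptual but organizational: there is a genuine combinatorial explosion of subcases (four index patterns in \eqref{1-1}, in \eqref{aluo1}; three in \eqref{abb1-1}; four in \eqref{aluo41}), each further splitting into $S/H$ contributions, and each requiring the correct angular average and the correct order of Taylor expansion of the smooth factor (zeroth order for \eqref{1-1}, first order for \eqref{abb1-1} and \eqref{aluo1}, second order for \eqref{aluo41}). Getting the constant $-\frac1{2\pi}$ and the factor $\frac12$ exactly right requires keeping subleading terms in the expansion of $\langle\nabla u,\nabla v\rangle$ on $\partial B_\theta$ that a priori look like they vanish but survive after multiplication by $\theta$ and integration; this is precisely the kind of tedious-but-routine computation that the authors defer to Appendix A. In my writeup I would prove \eqref{1-1} in full detail as the model case — carefully isolating how $-2\theta\int(\partial_\nu S)^2+\theta\int|\nabla S|^2$ on radius $\theta$ equals $-2\theta\cdot\frac{2\pi\theta}{4\pi^2\theta^2}+\theta\cdot\frac{2\pi\theta}{4\pi^2\theta^2}=-\frac1{2\pi}$ — and then indicate that \eqref{abb1-1}, \eqref{aluo1}, \eqref{aluo41} follow by the same radius-independence argument together with the harmonic-function mean-value identities, relegating the remaining bookkeeping to the appendix.
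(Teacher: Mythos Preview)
Your proposal is correct and follows essentially the same approach as the paper: exploit Lemma~\ref{llas} to make $P_j$, $Q_j$ radius-independent, split $G=S-H$ by bilinearity, compute the singular $S$--$S$ pairing explicitly (yielding the constant $-\tfrac1{2\pi}$), and show that all terms involving the smooth factors $H$ or $G(x_{p,m},\cdot)$ with $m\neq j$ are $O(\theta)$ and hence vanish as $\theta\to0$. The paper likewise carries out \eqref{1-1} in full as the model case and relegates the remaining identities \eqref{abb1-1}--\eqref{aluo41} to Appendix~\ref{s6}, exactly as you propose.
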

\begin{proof}
Here we prove only \eqref{1-1}. Since the computations of \eqref{abb1-1}--\eqref{aluo41} are similar, we put the details in Appendix \ref{s6}.

\vskip 0.1cm

By the bilinearity of $P_{j}(u,v)$ and \eqref{GreenS-H}, we have
\begin{equation}\label{gil21}
\begin{split}
P_{j}\Big(G(x_{p,j},x),G(x_{p,j},x)\Big)=&
 P_{j}\Big(S(x_{p,j},x),S(x_{p,j},x)\Big)
 -2P_{j}\Big(S(x_{p,j},x),H(x_{p,j},x)\Big)\\&
 +P_{j}\Big(H(x_{p,j},x),H(x_{p,j},x)\Big).
 \end{split}
\end{equation}
After direct calculations, we know
\begin{equation}\label{laa}
\begin{split}
D_{x_i} S\big(x_{p,j},x\big) =-\frac{x_i-x_{p,j,i}}{2\pi |x_{p,j}-x|^{2}} \,~\mbox{and}~\nu_i=
\frac{x_i-x_{p,j,i}}{|x_{p,j}-x|}.
 \end{split}
\end{equation}
Putting \eqref{laa} in the term $P_{j}\Big(S(x_{p,j},x),S(x_{p,j},x)\Big)$, we get
\begin{equation}\label{gil22}
P_{j}\Big(S(x_{p,j},x),S(x_{p,j},x)\Big)=-\frac{1}{2\pi}.
\end{equation}
Now we calculate $P_{j}\Big(S(x_{p,j},x),H(x_{p,j},x)\Big)$.  Since $D_{\nu} H(x_{p,j},x)$   is bounded in $B_d(x_{p,j})$,  we know
\begin{equation}\label{gil27}
P_{j}\Big(S(x_{p,j},x),H(x_{p,j},x)\Big)=
O\Big(\theta\int_{\partial B_\theta(x_{p,j})} \big| D S(x_{p,j},x)\big|\Big)=O\big(\theta \big),
\end{equation}
and
\begin{equation}\label{gil28}
P_{j}\Big(H(x_{p,j},x),H(x_{p,j},x)\Big)=
O\Big(\theta\int_{\partial B_\theta(x_{p,j})} \big| D H(x_{p,j},x)\big|\Big)=O\big(\theta^2 \big).
\end{equation}
Letting $\theta\rightarrow 0$, from \eqref{gil21}, \eqref{gil22}, \eqref{gil27} and \eqref{gil28}, we get
\begin{equation*}
\begin{split}
P_{j}\Big(G(x_{p,j},x),G(x_{p,j},x)\Big)=-\frac{1}{2\pi}.
 \end{split}
\end{equation*}
Let $m\neq j$, since $G(x_{p,m},x)$ and $D_{\nu}G(x_{p,m},x)$ are bounded in $B_d(x_{p,j})$, then we find
\begin{equation}\label{gil29}
\begin{split}
P_{j}\Big(G(x_{p,s},x),G(x_{p,m},x)\Big)=
O\Big(\theta\int_{\partial B_\theta(x_{p,j})} \big| D G(x_{p,s},x)\big|\Big)=O\big(\theta \big).
 \end{split}
\end{equation}
Letting $\theta\rightarrow 0$ in \eqref{gil29} and using the symmetry of $P_{j}\big(u,v\big)$, we deduce that   $$P_{j}\Big(G(x_{p,s},x),G(x_{p,m},x)\Big)=0\,\,~\mbox{for}~s\neq j~\mbox{or}~m\neq j.$$
\end{proof}

\subsection{Pohozaev-type identities} $\,$  \vskip 0.2cm

We prove some  local Pohozaev identities.
\begin{Lem}
Let $u\in C^2(\Omega)$ be a solution of \eqref{1.1},  $x_{p,j}$, $j=1,\ldots, k$ be the points defined in \eqref{def:xpj} and let $\theta>0$ be such that $B_{2\theta}(x_{p,j})\subset\Omega$. Then
\begin{equation}\label{aclp-1}
Q_{j}(u,u)= \frac{2}{p+1}\int_{\partial B_{\theta}(x_{p,j})}  u^{p+1} \nu_id\sigma
\end{equation}
and
\begin{equation}\label{aclp-10}
P_{j}(u,u)=\frac{2\theta}{p+1} \int_{\partial B_{\theta}(x_{p,j})}  u^{p+1}  d\sigma-\frac{4}{p+1}\int_{B_{\theta}(x_{p,j})}  u^{p+1} dx,
\end{equation}
where $P_{j}$ and $Q_{j}$ are the quadratic forms defined in \eqref{07-08-20} and \eqref{abd} and $\nu=\big(\nu_{1},\nu_2\big)$ is the outward unit normal of $\partial B_{\theta}(x_{p,j})$.
\end{Lem}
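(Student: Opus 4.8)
The plan is to obtain both identities by the classical Rellich--Pohozaev device: multiply the equation $-\Delta u=u^{p}$ by a suitable first-order expression in $u$, integrate over the ball $B_{\theta}(x_{p,j})$, and integrate by parts. All manipulations are legitimate since $u\in C^{2}(\Omega)$ and $\overline{B_{\theta}(x_{p,j})}\subset\Omega$, and throughout we use the elementary boundary relations, valid on $\partial B_{\theta}(x_{p,j})$,
\[
\big\langle \nu,\, x-x_{p,j}\big\rangle=\theta,\qquad \big\langle \nabla u,\, x-x_{p,j}\big\rangle=\theta\,\frac{\partial u}{\partial\nu}.
\]

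For \eqref{aclp-1}, I would multiply $-\Delta u=u^{p}$ by $\partial u/\partial x_{i}$ and integrate over $B_{\theta}(x_{p,j})$. On the right-hand side, writing $u^{p}\,\partial_{x_i}u=\partial_{x_i}\big(u^{p+1}/(p+1)\big)$ and applying the divergence theorem yields $\tfrac{1}{p+1}\int_{\partial B_{\theta}(x_{p,j})}u^{p+1}\nu_{i}\,d\sigma$. On the left-hand side, one integration by parts gives $-\int_{\partial B_{\theta}(x_{p,j})}\tfrac{\partial u}{\partial\nu}\tfrac{\partial u}{\partial x_i}\,d\sigma+\tfrac12\int_{B_{\theta}(x_{p,j})}\partial_{x_i}|\nabla u|^{2}\,dx$, and a second application of the divergence theorem turns the bulk term into $\tfrac12\int_{\partial B_{\theta}(x_{p,j})}|\nabla u|^{2}\nu_{i}\,d\sigma$. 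Comparing with \eqref{abd}, the left-hand side is exactly $\tfrac12 Q_{j}(u,u)$, so multiplying by $2$ gives \eqref{aclp-1}.

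For \eqref{aclp-10}, I would instead multiply $-\Delta u=u^{p}$ by $\big\langle \nabla u,\, x-x_{p,j}\big\rangle$ and integrate over $B_{\theta}(x_{p,j})$. The left-hand side is handled by identity \eqref{07-07-1} with $v=u$ and $\Omega'=B_{\theta}(x_{p,j})$, which gives
\[
-2\int_{B_{\theta}(x_{p,j})}\big\langle \nabla u,\, x-x_{p,j}\big\rangle\Delta u
=-2\int_{\partial B_{\theta}(x_{p,j})}\frac{\partial u}{\partial\nu}\big\langle \nabla u,\, x-x_{p,j}\big\rangle\,d\sigma
+\int_{\partial B_{\theta}(x_{p,j})}|\nabla u|^{2}\big\langle \nu,\, x-x_{p,j}\big\rangle\,d\sigma ;
\]
using the boundary relations above, the right-hand side collapses to $P_{j}(u,u)$ by \eqref{07-08-20}, while the left-hand side equals $2\int_{B_{\theta}(x_{p,j})}u^{p}\big\langle \nabla u,\, x-x_{p,j}\big\rangle\,dx$. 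For this last integral I would write $u^{p}\big\langle \nabla u,\, x-x_{p,j}\big\rangle=\big\langle \nabla\big(u^{p+1}/(p+1)\big),\, x-x_{p,j}\big\rangle$ and integrate by parts, using $\operatorname{div}(x-x_{p,j})=2$ in $\R^{2}$ together with $\langle \nu,\,x-x_{p,j}\rangle=\theta$ on the sphere; this produces $\tfrac{2\theta}{p+1}\int_{\partial B_{\theta}(x_{p,j})}u^{p+1}\,d\sigma-\tfrac{4}{p+1}\int_{B_{\theta}(x_{p,j})}u^{p+1}\,dx$. Equating the two expressions gives \eqref{aclp-10}.

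There is no real obstacle here: both statements are routine integration-by-parts computations. The only points requiring a little care are the bookkeeping of the numerical constants --- the factor $2$ relating the natural left-hand side of the first identity to $Q_{j}$, and the factors $2$ and $4$ stemming from $\operatorname{div}(x-x_{p,j})=2$ --- and the systematic use of the boundary identities $\langle \nu,\,x-x_{p,j}\rangle=\theta$ and $\langle \nabla u,\,x-x_{p,j}\rangle=\theta\,\partial_{\nu}u$ on $\partial B_{\theta}(x_{p,j})$.
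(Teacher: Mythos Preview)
Your proposal is correct and follows essentially the same approach as the paper: multiply $-\Delta u=u^{p}$ by $\partial u/\partial x_{i}$ (respectively $\langle \nabla u,\,x-x_{p,j}\rangle$), integrate over $B_{\theta}(x_{p,j})$, and integrate by parts, using the boundary identities $\langle \nu,\,x-x_{p,j}\rangle=\theta$ and $\langle \nabla u,\,x-x_{p,j}\rangle=\theta\,\partial_{\nu}u$ on $\partial B_{\theta}(x_{p,j})$. Your bookkeeping of the constants and your invocation of \eqref{07-07-1} for the second identity match the paper exactly.
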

\begin{proof}
The identity \eqref{aclp-1} follows by  multiplying $\frac{\partial u }{\partial x_i}$ on both sides of \eqref{1.1}, integrating on $B_{\theta}(x_{p,j})$ and applying the divergence theorem and Green's identities:
\[
-\int_{\partial B_{\theta}(x_{p,j})}\frac{\partial u}{\partial \nu}\frac{\partial u}{\partial x_i}
+\frac{1}{2}\int_{\partial B_{\theta}(x_{p,j})}|\nabla u|^2\nu_i= \frac{1}{p+1}\int_{\partial B_{\theta}(x_{p,j})}  u^{p+1} \nu_i.\]
 Also
\eqref{aclp-10} follows by multiplying $\big\langle x-x_{p,j}, \nabla u \big\rangle$ on both sides of \eqref{1.1} and integrating on $B_{\theta}(x_{p,j})$:
\[\begin{split}
\frac{1}{2}\int_{\partial B_{\theta}(x_{p,j})} &\big\langle x-x_{p,j},\nu\big\rangle   |\nabla u|^2
-\int_{\partial B_{\theta}(x_{p,j})}\frac{\partial u}{\partial\nu}
\big\langle x-x_{p,j}, \nabla u \big\rangle \\
&=\frac{1}{p+1}
\int_{\partial B_{\theta}(x_{p,j})}  u^{p+1}  \big\langle x-x_{p,j},\nu\big\rangle
- \frac{2}{p+1} \int_{B_{\theta}(x_{p,j})}  u^{p+1} .
\end{split}
\]
\end{proof}

\begin{Prop} \label{prop:PohozaevLin}
Let $u\in C^2(\Omega)$ be a solution of \eqref{1.1},  $\xi\in C^{2}(\Omega)$ be a solution of $-\Delta \xi=pu^{p-1}\xi$ in $\Omega$ and  $x_{p,j}$, $j=1,\ldots, k$ be the points defined in \eqref{def:xpj} and let $\theta>0$ be such that $B_{2\theta}(x_{p,j})\subset\Omega$.
Then
\begin{equation}\label{dafd}
Q_{j}\big(\xi,u\big)=\int_{\partial B_{\theta}(x_{p,j})}u^p \xi\nu_i\,d\sigma,
\end{equation}
and
\begin{equation}\label{07-08-22}
P_{j}\big(\xi,u\big)
=\theta \int_{\partial B_{\theta}(x_{p,j})} u^p\xi \,d\sigma
-2\int_{B_{\theta}(x_{p,j})} u^p\xi \,dx,
\end{equation}
where   $Q_{j}$ and $P_{j}$ are the quadratic forms in \eqref{07-08-20} and \eqref{abd}, $\nu =\big(\nu_{1},\nu_2\big)$ is the outward unit normal of $\partial B_d(x_{p,j})$.
\end{Prop}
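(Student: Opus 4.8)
The plan is to run exactly the argument that produced the two Pohozaev identities \eqref{aclp-1} and \eqref{aclp-10} for $u$, but now pairing the equation $-\Delta u=u^p$ with the linearized equation $-\Delta\xi=pu^{p-1}\xi$: I would test $-\Delta\xi=pu^{p-1}\xi$ against a translation (resp. dilation) field of $u$ and, simultaneously, test $-\Delta u=u^p$ against the same field of $\xi$, integrate over $B_\theta(x_{p,j})$, and add the two. The key algebraic point that makes everything close is the elementary identity
$$p u^{p-1}\xi\,\partial_i u+u^p\,\partial_i\xi=\xi\,\partial_i(u^p)+u^p\,\partial_i\xi=\partial_i\bigl(u^p\xi\bigr),$$
together with its radial analogue obtained by replacing $\partial_i$ with $\langle x-x_{p,j},\nabla\,\cdot\,\rangle$ (using $\nabla(u^p)=pu^{p-1}\nabla u$): adding the two tested equations turns the sum of the right-hand sides into a single divergence of $u^p\xi$, so that only a boundary/volume term in $u^p\xi$ survives.

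For \eqref{dafd}: multiply $-\Delta\xi=pu^{p-1}\xi$ by $\partial_i u$ and $-\Delta u=u^p$ by $\partial_i\xi$, integrate both over $B_\theta(x_{p,j})$ and add. By the identity above the sum of the right-hand sides is $\int_{B_\theta(x_{p,j})}\partial_i(u^p\xi)=\int_{\partial B_\theta(x_{p,j})}u^p\xi\,\nu_i\,d\sigma$. For the sum of the left-hand sides I would reuse the integration by parts already carried out in the proof of Lemma \ref{llas} (the one computing $Q_j$), applied on $\Omega'=B_\theta(x_{p,j})$ with the pair $(\xi,u)$ in place of $(u,v)$; this identifies $-\int_{B_\theta(x_{p,j})}\bigl(\Delta\xi\,\partial_i u+\Delta u\,\partial_i\xi\bigr)$ with $Q_j(\xi,u)$. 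Equating the two sides gives \eqref{dafd}.

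For \eqref{07-08-22}: repeat the computation with the multipliers $\langle x-x_{p,j},\nabla u\rangle$ and $\langle x-x_{p,j},\nabla\xi\rangle$ in place of $\partial_i u$ and $\partial_i\xi$. The sum of the right-hand sides becomes $\int_{B_\theta(x_{p,j})}\langle x-x_{p,j},\nabla(u^p\xi)\rangle$, which by the divergence theorem, using $\mathrm{div}(x-x_{p,j})=2$ in $\R^2$ and $\langle x-x_{p,j},\nu\rangle=\theta$ on $\partial B_\theta(x_{p,j})$, equals $\theta\int_{\partial B_\theta(x_{p,j})}u^p\xi\,d\sigma-2\int_{B_\theta(x_{p,j})}u^p\xi\,dx$. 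For the sum of the left-hand sides I would invoke \eqref{07-07-1} with $\Omega'=B_\theta(x_{p,j})$ and $(\xi,u)$ in place of $(u,v)$, and use $\langle x-x_{p,j},\nabla u\rangle=\theta\,\partial_\nu u$ (and likewise for $\xi$) on the sphere to recognize the boundary terms as $P_j(\xi,u)$. Equating gives \eqref{07-08-22}. I do not expect a genuine obstacle here: this is a routine variant of the Pohozaev computations already developed in Section~\ref{s}. The only points requiring a little care are the bookkeeping of which function sits in which slot of the forms $P_j,Q_j$, and the observation that every integration by parts is licit because $u,\xi\in C^2(\Omega)$ and $\overline{B_\theta(x_{p,j})}\subset\Omega$.
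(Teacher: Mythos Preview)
Your proposal is correct and follows essentially the same route as the paper: the paper also multiplies $-\Delta u=u^p$ by $\partial_i\xi$ (resp.\ $\langle x-x_{p,j},\nabla\xi\rangle$) and $-\Delta\xi=pu^{p-1}\xi$ by $\partial_i u$ (resp.\ $\langle x-x_{p,j},\nabla u\rangle$), adds, uses the identity $u^p\partial_i\xi+pu^{p-1}\xi\,\partial_i u=\partial_i(u^p\xi)$ on the right, and reduces the left to $Q_j(\xi,u)$ (resp.\ $P_j(\xi,u)$) via the same integrations by parts as in Lemma~\ref{llas} and \eqref{07-07-1}. Your bookkeeping of the slots in $P_j,Q_j$ and the regularity justification are in order.
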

\begin{proof}
From \begin{equation}\label{5-20-1}
-\Delta u=u^p\,\,~\mbox{and}~
-\Delta \xi=pu^{p-1}\xi,
\end{equation}
we find
\begin{equation}\label{5-20-2}
-\frac{\partial \xi}{\partial x_i} \Delta u
-\frac{\partial u}{\partial x_i} \Delta \xi =u^p \frac{\partial \xi}{\partial x_i}
+pu^{p-1}\xi\frac{\partial u}{\partial x_i}.
\end{equation}
Then integrating in any smooth domain $\Omega'\subset\Omega$ on both sides of \eqref{5-20-2}, we get by the divergence theorem and Green's identities
 \begin{equation}\label{daa2}
 \begin{split}
-\int_{\partial \Omega'}&\frac{\partial \xi}{\partial \nu}
\frac{\partial u}{\partial x_i}-
\int_{\partial \Omega'}\frac{\partial u}{\partial \nu}\frac{\partial \xi}{\partial x_i}
+\int_{\partial \Omega'}\big\langle \nabla u,\nabla \xi \big\rangle \nu_i
=  \int_{\partial \Omega'}u^p \xi \nu_i.
\end{split}
\end{equation}
Hence \eqref{dafd} follows from \eqref{daa2} taking $\Omega'=B_{\theta}(x_{p,j})$.\\

Also, for $y\in\mathbb R^{2}$, from \eqref{5-20-1}, we get
\begin{equation}\label{5-20-3}
\begin{split}
\int_{ \Omega'}&\Big(-\big\langle x-y,\nabla \xi\big\rangle\Delta u
-\big\langle x-y,\nabla u\big\rangle \Delta \xi \Big)dx\\=&
\int_{ \Omega'} \Big(u^p \big\langle x-y,\nabla \xi\big\rangle
+pu^{p-1}\xi \big\langle x-y,\nabla u\big\rangle\Big)dx.
\end{split}\end{equation}
And by direct computation, we find
\begin{equation}    \label{d5-20-3}
\mbox{RHS of (\ref{5-20-3})}
= \int_{\partial \Omega'} u^p(x)\xi\big\langle x-y,\nu\big\rangle
-2\int_{ \Omega'} u^p(x)\xi (x).
    \end{equation}
Next,   we have
 \begin{equation}\label{e5-20-3}
 \begin{split}
 \mbox{LHS of (\ref{5-20-3})}
  = &
-\int_{\partial \Omega'}\frac{\partial\xi}{\partial\nu}
 \big\langle x-y,\nabla u\big\rangle
  -\int_{\partial \Omega'}\frac{\partial u}{\partial\nu} \big\langle x-y,\nabla \xi\big\rangle \\
    &
    + \int_{ \Omega'}\nabla \xi\cdot \nabla
     \big\langle x-y,\nabla u\big\rangle
  +\int_{ \Omega'}\nabla   u\cdot \nabla\big\langle x-y,\nabla \xi\big\rangle   \\
  = &
-\int_{\partial \Omega'}\frac{\partial\xi}{\partial\nu}
 \big\langle x-y,\nabla u\big\rangle
  -\int_{\partial \Omega'}\frac{\partial u}{\partial\nu} \big\langle x-y,\nabla \xi\big\rangle \\
    &
    + 2 \int_{ \Omega'}\nabla \xi\cdot \nabla u
  +\int_{ \Omega'} \Big\langle x-y,\nabla \big(\nabla \xi\cdot\nabla u\big)\Big\rangle  \\=&
 \int_{\partial \Omega'}
    \big\langle \nabla u,  \nabla \xi\big\rangle \cdot
    \big\langle x-y,\nu\big\rangle -\int_{\partial \Omega'}\frac{\partial\xi}{\partial\nu}
    \big\langle x-y,\nabla u\big\rangle
    -\int_{\partial \Omega'}\frac{\partial u}{\partial\nu}
    \big\langle x-y,\nabla \xi\big\rangle.                                                \end{split}
 \end{equation}
 Hence  from \eqref{5-20-3}, \eqref{d5-20-3} and \eqref{e5-20-3}, we get
\begin{equation}\label{daaclp1}
\begin{split}
 &\int_{\partial \Omega'}
\big\langle \nabla u,  \nabla \xi\big\rangle \cdot
\big\langle x-y,\nu\big\rangle -\int_{\partial \Omega'}\frac{\partial\xi}{\partial\nu}
\big\langle x-y,\nabla u\big\rangle\\& -\int_{\partial \Omega'}\frac{\partial u}{\partial\nu}
\big\langle x-y,\nabla \xi_{p}\big\rangle
= \int_{\partial \Omega'} u^p\xi\big\langle x-y,\nu\big\rangle
-2\int_{ \Omega'} u^p\xi.
\end{split}
\end{equation}
Then \eqref{07-08-22} follows from \eqref{daaclp1} taking $\Omega'=B_{\theta}(x_{p,j})$ and $y=x_{p,j}$.\end{proof}

We will also need the following basic formula.
\begin{Lem}[c.f. \cite{GT1983}]\label{lem2-5}
Let $\Omega'\subset \mathbb R^{2}$ be a smooth bounded domain,  $u,v\in C^2(\overline{\Omega'})$. Then
\begin{equation*}
\int_{\Omega'}\Big(u\Delta v-v\Delta u\Big)dx=\int_{\partial \Omega'}\Big(u\frac{\partial v}{\partial \nu}-v\frac{\partial u}{\partial \nu} \Big)d\sigma,
\end{equation*}
where $\nu$ is the outward unit normal of $\partial \Omega'$.
\end{Lem}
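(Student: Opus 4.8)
The plan is to deduce this from the divergence theorem applied to a single vector field. Consider $W := u\nabla v - v\nabla u$ on $\overline{\Omega'}$, which is a $C^1$ vector field since $u,v\in C^2(\overline{\Omega'})$. A direct computation gives
\[
\operatorname{div} W = \nabla u\cdot\nabla v + u\Delta v - \nabla v\cdot\nabla u - v\Delta u = u\Delta v - v\Delta u .
\]
Since $\Omega'$ is a smooth bounded domain, the divergence theorem applies and yields
\[
\int_{\Omega'}\bigl(u\Delta v - v\Delta u\bigr)\,dx = \int_{\Omega'}\operatorname{div} W\,dx = \int_{\partial\Omega'}\langle W,\nu\rangle\,d\sigma = \int_{\partial\Omega'}\Bigl(u\tfrac{\partial v}{\partial\nu} - v\tfrac{\partial u}{\partial\nu}\Bigr)\,d\sigma ,
\]
where in the last step I use $\langle\nabla v,\nu\rangle = \frac{\partial v}{\partial\nu}$ and $\langle\nabla u,\nu\rangle = \frac{\partial u}{\partial\nu}$. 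This is exactly the claimed identity.

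Equivalently, one can first record Green's first identity $\int_{\Omega'}u\Delta v\,dx = -\int_{\Omega'}\nabla u\cdot\nabla v\,dx + \int_{\partial\Omega'}u\frac{\partial v}{\partial\nu}\,d\sigma$ (plain integration by parts), write the same formula with the roles of $u$ and $v$ interchanged, and subtract: the symmetric bulk term $\int_{\Omega'}\nabla u\cdot\nabla v\,dx$ cancels and the two boundary terms combine to the statement. There is no genuine obstacle here: the only hypotheses used are the $C^2$ regularity of $u,v$ up to the boundary and the smoothness of $\partial\Omega'$, which together guarantee that all integrands are continuous on $\overline{\Omega'}$ and that the divergence theorem (or the integration-by-parts formula) is applicable. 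The identity is classical and we simply refer to \cite{GT1983}.
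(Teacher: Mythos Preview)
Your proof is correct; the paper itself does not supply a proof for this lemma but simply cites \cite{GT1983}, so there is nothing to compare against. Your argument via the divergence theorem applied to $W=u\nabla v - v\nabla u$ (equivalently, subtracting two instances of Green's first identity) is exactly the standard derivation.
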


\vskip 0.5cm

\section{The asymptotic behavior of the positive solutions}  \label{s1}

In this section we will prove Theorem \ref{th1}.
A crucial step will be to improve
 \eqref{5-8-2} expanding $w_{p,j}$ up to the second order (see Section \ref{subsection:w}, in particular propositions \ref{prop3-2} and \ref{blem3-1a}). The complete  proof of Theorem \ref{th1} is then postponed to Section \ref{sub:proofThm11}. Finally in Section \ref{subsection:expansionu} we collect key further expansions for the solutions $u_{p}$ which will be useful for the proofs of theorems \ref{th1.1} and \ref{th1-1}.

 \subsection{Asymptotics for $w_{p,j}$}\label{subsection:w}

$\,$  \vskip 0.2cm

Let $w_{p,j}$ ve the rescaled solution as defined in \eqref{defwpj}, from \cite{DIP2017-1} we know that the following hold:
\begin{Lem}\label{llma}
For any small fixed $\delta,d>0$, there exist $R_\delta>1$ and $p_\delta> 1$ such that
\begin{equation*}
w_{p,j} \leq \big(4-\delta\big)\log \frac{1}{|y|}+C_\delta,~~\mbox{for}~j=1,\cdots,k,
\end{equation*}
for some $C_\delta>0$, provided $R_\delta\leq |y|\leq \frac{d}{\e_{p,j}}$ and $p\geq p_\delta$.
\end{Lem}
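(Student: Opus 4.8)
The statement to prove is Lemma~\ref{llma}, an upper bound of logarithmic type on the rescaled functions $w_{p,j}$ in the intermediate region $R_\delta \le |y| \le d/\varepsilon_{p,j}$. Since this lemma is attributed to \cite{DIP2017-1}, the expected proof is essentially a recollection/adaptation of the blow-up argument there; below I describe how I would carry it out.

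\medskip

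\textbf{Setup and strategy.} The plan is to argue by a Kelvin-type transform plus a Harnack/comparison argument in the region away from the blow-up point. First recall that $w_{p,j}$ solves the rescaled equation
\[
-\Delta w_{p,j} = \Bigl(1+\frac{w_{p,j}}{p}\Bigr)^p_+ \quad \text{in } \Omega_{p,j},
\]
with $w_{p,j}\le 0$, $w_{p,j}(0)=0$, and $w_{p,j}\to U$ in $C^2_{loc}(\R^2)$ by \eqref{5-8-2}. The key point is that $\bigl(1+\frac{w}{p}\bigr)^p_+ \le e^{w}$ pointwise, so $w_{p,j}$ is a subsolution of $-\Delta w = e^{w}$ on the whole scaled domain; moreover on any fixed large ball $B_{R_\delta}$ the convergence to $U$ gives $w_{p,j}(y) \le U(y) + 1 \le -4\log|y| + C$ for $|y|=R_\delta$ once $R_\delta$ is large (since $U(y) = -4\log|y| + O(1)$). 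The task is to propagate this decay outward to $|y| \le d/\varepsilon_{p,j}$ with the slightly weakened exponent $4-\delta$.

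\medskip

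\textbf{Main steps.} (i) Translate back to the original variable: set $v_{p}(x) := w_{p,j}\bigl((x-x_{p,j})/\varepsilon_{p,j}\bigr)$, which lives on a fixed neighborhood $B_d(x_{p,j})$ of the concentration point and satisfies a scaled version of the Lane-Emden equation with the known property $p u_p \to 8\pi\sqrt e \sum_m G(\cdot,x_{\infty,m})$ in $C^2_{loc}(\Omega\setminus\mathcal S)$ from \eqref{11-14-03N}. (ii) From the $C^2_{loc}$ convergence of $p u_p$ one controls $u_p$ on the annulus $d/2 \le |x - x_{p,j}| \le d$: there $u_p(x) \sim \frac{8\pi\sqrt e}{p}\bigl(G\text{-sum}\bigr)$, which in rescaled variables translates into a bound $w_{p,j}(y) \le -p(1 - \frac{C}{p}) \approx -p$ near $|y| = d/\varepsilon_{p,j}$ — i.e. $w_{p,j}$ is very negative at the outer boundary of the region. (iii) Now run a maximum-principle comparison on the annulus $R_\delta \le |y| \le d/\varepsilon_{p,j}$: compare $w_{p,j}$ with the explicit harmonic barrier $\Phi(y) := -(4-\delta)\log\frac{|y|}{R_\delta} + C_\delta$. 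Since $-\Delta w_{p,j} = \bigl(1+\frac{w}{p}\bigr)^p_+ \ge 0 = -\Delta \Phi$ would go the wrong way, instead use that $-\Delta w_{p,j} \le e^{w_{p,j}}$ together with the already-established crude decay (a first, rougher step gives $w_{p,j} \le -(2+\delta')\log|y| + C$, say, from the fact that the total mass $\int e^{w_{p,j}} \to 8\pi$ is finite — a Brezis–Merle / potential-estimate argument). That crude decay makes $e^{w_{p,j}}$ integrable with a tail bound $\int_{|y|\ge r} e^{w_{p,j}} \lesssim r^{-\delta'}$, hence the Newtonian potential of the right-hand side is $O(\log)$ with small coefficient, and feeding this back (bootstrap) upgrades the exponent from $2$ to any value up to $4$, losing only the arbitrarily small $\delta$. (iv) Combine the interior bound on $B_{R_\delta}$ (from $C^2_{loc}$ convergence to $U$), the outer bound (step ii), and the potential estimate via the maximum principle on the annulus to conclude $w_{p,j}(y) \le (4-\delta)\log\frac{1}{|y|} + C_\delta$ throughout.

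\medskip

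\textbf{Main obstacle.} The delicate point is step (iii): getting the decay exponent all the way up to (almost) $4$ rather than stopping at $2$. The exponent $2$ comes essentially for free from finite mass $\int e^{w_{p,j}}\le 8\pi+o(1)$ via a Brezis–Merle type estimate, but improving to $4-\delta$ requires the sharper information that the mass is \emph{exactly} $8\pi\sqrt e$-normalized to $8\pi$ (so $U$ has the exact decay $-4\log|y|$) and a careful bootstrap controlling the constant $C_\delta$ uniformly in $p$ — one must check that the error terms $\bigl(1+\frac{w}{p}\bigr)^p - e^{w}$ and the boundary contributions do not degrade the exponent. One also needs the outer control from \eqref{11-14-03N} to be quantitative enough near $|x-x_{p,j}|\sim d$; this is where the hypothesis $B_{2r}(x_{\infty,j})\subset \Omega$ and the separation of the points $x_{\infty,m}$ enter. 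Since the excerpt explicitly cites \cite{DIP2017-1} for this lemma, in the paper itself I would simply quote it; the above is the outline one would reconstruct if a proof were demanded.
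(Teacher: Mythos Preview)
The paper does not prove this lemma at all: it is stated as a known result from \cite{DIP2017-1} and simply quoted, exactly as you anticipate in your final sentence. So there is no ``paper's own proof'' to compare against, and your instinct to cite the reference is precisely what the authors do.

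As for your reconstructed outline, the overall shape is right --- Green's representation (Newtonian potential) of $w_{p,j}$ in terms of the nonlinearity $(1+w_{p,j}/p)^p$, combined with the mass quantization $\int (1+w_{p,j}/p)^p \to 8\pi$, is indeed the engine behind the exponent $4-\delta$ (since $\frac{1}{2\pi}\cdot 8\pi = 4$). One remark: your step (iii) is slightly over-engineered. In the actual argument of \cite{DIP2017-1} (following the Chen--Lin \cite{CL2002} template), one does not need a two-stage bootstrap from exponent $2$ to $4-\delta$; rather, one shows directly that for any $\delta>0$ the mass outside $B_{R_\delta}$ is at most $\delta/2$ once $R_\delta$ is large and $p\ge p_\delta$, and then the Green's representation immediately yields the coefficient $\frac{1}{2\pi}(8\pi - \delta/2) > 4-\delta$ in front of $-\log|y|$. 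The ``crude decay $-(2+\delta')\log|y|$'' intermediate step you describe is not needed, and the Kelvin transform / harmonic barrier comparison is also not the mechanism used --- it is a pure potential estimate. Your identification of the outer boundary control via \eqref{11-14-03N} is correct and is used to handle the harmonic remainder.
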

\begin{Lem}
For any small fixed $d>0$ and $j=1,\cdots,k$, it holds
\begin{equation}\label{abs}
\lim_{p\to +\infty}\int_{\frac{d}{\varepsilon_{p,j}}(0)}\Big(1+\frac{ w_{p,j}(z)}{p}
  \Big)^pdz
   = 8\pi.
\end{equation}
Hence, setting
\begin{equation}\label{def_Cpj}
C_{p,j}:= \displaystyle\int_{B_d(x_{p,j})}
u_{p}^{p}(y)dy,
\end{equation}
then one has
\begin{equation}\label{luoluo1}
C_{p,j}=\frac{u_{p}(x_{p,j})}{p}\int_{B_{\frac{d}{\varepsilon_{p,j}}}(0)}\left(1+\frac{w_{p,j}}{p}\right)^{p}=\frac{1}{p} \Big(8\pi \sqrt{e}+o(1)\Big).
\end{equation}
\end{Lem}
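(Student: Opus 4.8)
The plan is to reduce the statement to a change of variables plus a one-parameter passage to the limit. First I would substitute $y=x_{p,j}+\varepsilon_{p,j}z$ in the integral \eqref{def_Cpj}. By the definition \eqref{defwpj} of $w_{p,j}$ we have, for every $z$ with $x_{p,j}+\varepsilon_{p,j}z\in\Omega$,
$$u_{p}(x_{p,j}+\varepsilon_{p,j}z)=u_{p}(x_{p,j})\Big(1+\tfrac{w_{p,j}(z)}{p}\Big),$$
and in particular $1+\tfrac{w_{p,j}(z)}{p}=\tfrac{u_{p}(x_{p,j}+\varepsilon_{p,j}z)}{u_{p}(x_{p,j})}>0$ on $B_{d/\varepsilon_{p,j}}(0)$, since $B_d(x_{p,j})\subset\Omega$ for $d$ small and $u_p>0$. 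Using $dy=\varepsilon_{p,j}^2\,dz$ together with $\varepsilon_{p,j}^2 u_p^p(x_{p,j})=u_p^p(x_{p,j})/\big(p\,u_p^{p-1}(x_{p,j})\big)=u_p(x_{p,j})/p$ (from the definition of $\varepsilon_{p,j}$ in Theorem~A) gives exactly
$$C_{p,j}=\frac{u_{p}(x_{p,j})}{p}\int_{B_{d/\varepsilon_{p,j}}(0)}\Big(1+\frac{w_{p,j}(z)}{p}\Big)^{p}dz,$$
which is the middle identity in \eqref{luoluo1}. It then remains to prove \eqref{abs} and to use \eqref{ConvMax}.

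To prove \eqref{abs} I would split $B_{d/\varepsilon_{p,j}}(0)=B_R(0)\cup\big(B_{d/\varepsilon_{p,j}}(0)\backslash B_R(0)\big)$ for a fixed large $R$. On the fixed ball $B_R(0)$, the $C^2_{loc}$-convergence \eqref{5-8-2} makes $w_{p,j}$ bounded and uniformly convergent to $U$, so $p\log\big(1+\tfrac{w_{p,j}}{p}\big)=w_{p,j}+O(1/p)\to U$ uniformly on $B_R(0)$, whence $\big(1+\tfrac{w_{p,j}}{p}\big)^{p}\to e^{U}$ uniformly there and
$$\lim_{p\to+\infty}\int_{B_R(0)}\Big(1+\frac{w_{p,j}}{p}\Big)^{p}dz=\int_{B_R(0)}e^{U}dz,\qquad \int_{B_R(0)}e^{U}dz\xrightarrow[R\to+\infty]{}\int_{\R^2}e^{U}dz=8\pi.$$
For the tail I would invoke Lemma~\ref{llma} with, say, $\delta=1$: there exist $R_{1}>1$ and $C_{1}>0$ so that $w_{p,j}(z)\le 3\log\frac{1}{|z|}+C_{1}$ for $R_{1}\le|z|\le d/\varepsilon_{p,j}$ and $p$ large. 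Since $1+\tfrac{w_{p,j}}{p}>0$, the elementary inequality $\big(1+\tfrac{t}{p}\big)^{p}\le e^{t}$ applies, giving for every $R\ge R_{1}$
$$\int_{B_{d/\varepsilon_{p,j}}(0)\backslash B_R(0)}\Big(1+\frac{w_{p,j}}{p}\Big)^{p}dz\le\int_{|z|\ge R}e^{w_{p,j}(z)}dz\le e^{C_{1}}\int_{|z|\ge R}|z|^{-3}dz=\frac{2\pi e^{C_{1}}}{R}.$$
Combining the two estimates and letting $p\to+\infty$ first and $R\to+\infty$ afterwards yields \eqref{abs}.

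Finally, inserting \eqref{abs} and \eqref{ConvMax} (i.e. $u_p(x_{p,j})\to\sqrt e$) into the identity for $C_{p,j}$ above gives $C_{p,j}=\tfrac{u_p(x_{p,j})}{p}\big(8\pi+o(1)\big)=\tfrac1p\big(8\pi\sqrt e+o(1)\big)$, which is \eqref{luoluo1}. The only genuinely delicate point is the uniform-in-$p$ control of the mass of $\big(1+\tfrac{w_{p,j}}{p}\big)^{p}$ in the far region $|z|\sim d/\varepsilon_{p,j}$: this is exactly the role of the logarithmic decay bound in Lemma~\ref{llma}, without which one could not exclude mass escaping towards the boundary of the rescaled domain $\Omega_{p,j}$. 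Everything else is the change of variables and the inequality $\log(1+s)\le s$.
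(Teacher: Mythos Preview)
Your proof is correct and follows essentially the same route as the paper: \eqref{abs} is obtained from the $C^2_{loc}$ convergence \eqref{5-8-2} combined with the tail bound of Lemma~\ref{llma} (the paper phrases this as an application of the dominated convergence theorem, while you make the $R$-truncation explicit, which is the same thing), and \eqref{luoluo1} then follows by the change of variables and \eqref{ConvMax}. One cosmetic remark: in the chain of inequalities for the tail you pass through $\int_{|z|\ge R}e^{w_{p,j}}dz$, where $w_{p,j}$ is not defined outside $\Omega_{p,j}$; it is cleaner to bound directly $\big(1+\tfrac{w_{p,j}}{p}\big)^p\le e^{w_{p,j}}\le e^{C_1}|z|^{-3}$ on $B_{d/\varepsilon_{p,j}}(0)\setminus B_R(0)$ and then enlarge the domain of the last integrand.
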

\begin{proof} \eqref{abs} has been proved in \cite{DIP2017-1}, using the convergence in \eqref{5-8-2} and, thanks to Lemma \ref{llma}, the dominated convergence theorem. From definition \eqref{defwpj}, using \eqref{ConvMax} and
\eqref{abs}, we have
\begin{equation*}
\begin{split}
C_{p,j}=&\frac{u_p(x_{p,j})}{p}\int_{B_{\frac{d}{\varepsilon_{p,j}}}(0)}\Big(1+\frac{ w_{p,j}}{p}
  \Big)^p
   \\=& \frac{\big(\sqrt{e}+o(1)\big)}{p}\big(8\pi+o(1)\big)=\frac{1}{p} \Big(8\pi \sqrt{e}+o(1)\Big).
\end{split}
\end{equation*}
\end{proof}

 Next we derive an expansion of $u_p$ which will be useful to improve the asymptotic expansion of $w_{p,j}$ (see  \eqref{9-27-2}). We will also apply it to get  the non-degeneracy result (see the proofs of propositions  \ref{dprop-luo1} and \ref{prop-gl}).
We point out that at the end of this section we will be able to further improve this expansion (see Lemma \ref{prop:expansionupwithdelta}).

\begin{Lem}\label{prop3-1}
For any fixed small $d>0$, it holds
\begin{equation}\label{luo-1}
u_p(x)= \sum^k_{j=1}C_{p,j}G(x_{p,j},x)+
o\Big(\sum^k_{j=1}\frac{\varepsilon_{p,j}}{p}\Big)\,\,
~\mbox{in}~C^1\Big(\Omega\backslash \displaystyle\bigcup^k_{j=1} B_{2d}(x_{p,j})\Big),
\end{equation}
where $ C_{p,j}$ is defined in \eqref{def_Cpj}.
\end{Lem}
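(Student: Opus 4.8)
The plan is to represent $u_p$ via the Green's function and control the error through the concentration behavior. First I would write, for $x \in \Omega$,
\[
u_p(x) = \int_\Omega G(x,y)\, u_p^p(y)\, dy,
\]
and split the integral over the regions $B_d(x_{p,j})$ for $j=1,\dots,k$ and the complement $\Omega \setminus \bigcup_j B_d(x_{p,j})$. On each ball $B_d(x_{p,j})$, since $x$ is fixed at distance $\geq 2d$ from all concentration points, the map $y \mapsto G(x,y)$ is smooth (indeed $C^1$ up to the relevant boundary considerations) on $B_d(x_{p,j})$, so I would Taylor-expand $G(x,y) = G(x,x_{p,j}) + \langle \nabla_y G(x,x_{p,j}), y - x_{p,j}\rangle + O(|y-x_{p,j}|^2)$ and integrate against $u_p^p(y)\,dy$. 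The zeroth-order term contributes exactly $C_{p,j}\,G(x_{p,j},x)$ by the symmetry of $G$ and the definition \eqref{def_Cpj} of $C_{p,j}$.

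The key point is then to show that the remaining terms are $o\big(\sum_j \varepsilon_{p,j}/p\big)$. The first-order term is $\langle \nabla_y G(x,x_{p,j}), \int_{B_d(x_{p,j})}(y-x_{p,j})u_p^p(y)\,dy\rangle$; rescaling via $y = x_{p,j} + \varepsilon_{p,j} z$ turns the integral into $\varepsilon_{p,j} \cdot \frac{u_p(x_{p,j})}{p}\int_{B_{d/\varepsilon_{p,j}}(0)} z\,(1+w_{p,j}(z)/p)^p\,dz$; using the radial symmetry of the limit profile $e^U$ (so that $\int z\, e^U = 0$) together with \eqref{5-8-2} and the domination from Lemma~\ref{llma}, this integral is $o(1)$, giving a contribution $o(\varepsilon_{p,j}/p)$ as required. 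The second-order term rescales to something of size $\varepsilon_{p,j}^2 \cdot \frac{u_p(x_{p,j})}{p}\int |z|^2(1+w_{p,j}/p)^p\,dz$; here $\int |z|^2 e^U\,dz < \infty$ (since $e^{U(z)} \sim |z|^{-4}$), so this term is $O(\varepsilon_{p,j}^2/p) = o(\varepsilon_{p,j}/p)$. Finally the contribution from $\Omega \setminus \bigcup_j B_d(x_{p,j})$ is bounded by $\sup_{\Omega\times\Omega} G \cdot \int_{\Omega\setminus\bigcup_j B_d(x_{p,j})} u_p^p$, and from Theorem~A we know $p u_p \to 8\pi\sqrt{e}\sum_j G(\cdot,x_{\infty,j})$ in $C^2_{loc}(\Omega\setminus\mathcal S)$, so on the fixed region away from the spikes $u_p = O(1/p)$, hence $u_p^p$ is exponentially small in $p$, certainly $o(\varepsilon_{p,j}/p)$ since $\varepsilon_{p,j}$ decays only like $e^{-p/4}$ is comparable — here one must be slightly careful, but $u_p^p = O(p^{-p})$ on that region beats $e^{-p/4}/p$. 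The $C^1$-convergence (rather than just $C^0$) follows the same way after differentiating under the integral sign in $x$, using that $D_x G(x,y)$ and $D_x \nabla_y G(x,y)$ are likewise smooth and bounded on the relevant sets.

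The main obstacle I anticipate is making the first-order term genuinely $o(\varepsilon_{p,j}/p)$ rather than merely $O(\varepsilon_{p,j}/p)$: one needs the cancellation $\int_{\R^2} z\, e^{U(z)}\,dz = 0$ to be robust under the rescaling and the $1/p$-perturbation of the profile. This requires a uniform integrable bound on $|z|\,(1+w_{p,j}(z)/p)^p$ on the rescaled domain, which is exactly what Lemma~\ref{llma} provides for $|z| \geq R_\delta$ (giving decay like $|z|^{-(4-\delta)+1} = |z|^{-3+\delta}$, integrable at infinity), while on $|z| \leq R_\delta$ one uses local uniform convergence \eqref{5-8-2}; the dominated convergence theorem then yields $\int z(1+w_{p,j}/p)^p \to \int z\, e^U = 0$. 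A secondary technical point is verifying that the $o(\cdot)$ estimates hold uniformly in $x$ over the (closed, bounded) set $\Omega \setminus \bigcup_j B_{2d}(x_{p,j})$ and jointly with the $x$-derivatives, which is routine since all Green's function quantities appearing are bounded there uniformly in $p$.
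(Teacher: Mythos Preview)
Your approach is essentially the paper's: Green's representation, split into balls and complement, Taylor-expand $G$ in $y$ around $x_{p,j}$, and use the radial cancellation $\int z\,e^{U}=0$ together with dominated convergence (via Lemma~\ref{llma}) for the first-order term. Two points need correcting.

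First, and more important: your claim that $\int_{\R^2}|z|^2 e^{U(z)}\,dz<\infty$ is false. Since $e^{U(z)}\sim 64|z|^{-4}$, the integrand behaves like $|z|^{-2}$, which diverges logarithmically in $\R^2$. The paper sidesteps this by writing the remainder as $O(|y-x_{p,j}|^{3/2})$ (any exponent strictly between $1$ and $2$ works), so that after rescaling one gets $\frac{\varepsilon_{p,j}^{3/2}}{p}\int |z|^{3/2}\bigl(1+\tfrac{w_{p,j}}{p}\bigr)^p\,dz$ with $\int_{\R^2}|z|^{3/2}e^{U}<\infty$, hence $O(\varepsilon_{p,j}^{3/2}/p)=o(\varepsilon_{p,j}/p)$. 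Alternatively you may keep your $|z|^2$-remainder but estimate the integral on the finite ball directly from Lemma~\ref{llma}: $\int_{|z|\le d/\varepsilon_{p,j}} |z|^2\bigl(1+\tfrac{w_{p,j}}{p}\bigr)^p\,dz \le C\,\varepsilon_{p,j}^{-\delta}$, giving a contribution $O(\varepsilon_{p,j}^{2-\delta}/p)=o(\varepsilon_{p,j}/p)$. Either route closes the gap.

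Second, a minor sloppiness: $\sup_{\Omega\times\Omega}G=+\infty$ because of the logarithmic singularity on the diagonal, so bounding the far-away piece by ``$\sup G\cdot\int u_p^p$'' is not legitimate as written. Use instead that $\sup_{x\in\Omega}\int_\Omega G(x,y)\,dy<\infty$ together with $u_p^p=O\big((C/p)^p\big)$ on $\Omega\setminus\bigcup_j B_d(x_{p,j})$, which gives the $O(C^p/p^p)$ the paper records.
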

\begin{proof}
For $x\in \Omega\backslash  \bigcup^k_{j=1} B_{2d}(x_{p,j})$, we have
\begin{equation}\label{5-8-11}
\begin{split}
u_p(x)=& \int_{\Omega}G(y,x)
 u_{p}^{p}(y) dy\\=&\sum^k_{j=1}
 \int_{B_d(x_{p,j})}G(y,x)
u_{p}^{p}(y)dy+\int_{\Omega\backslash \bigcup^k_{j=1} B_{d}(x_{p,j})}G(y,x)
u_{p}^{p}(y)dy.
\end{split}
\end{equation}
Using \eqref{11-14-03N} we have
\begin{equation}\label{5-8-12}
\begin{split}
 \int_{\Omega\backslash  \bigcup^k_{j=1} B_{d}(x_{p,j})}G(y,x)
u_{p}^{p}(y)dy
=O\Big(\frac{C^p}{p^p}\Big)~~\mbox{uniformly for}~ x\in \Omega\backslash  \bigcup^k_{j=1} B_{2d}(x_{p,j}).
\end{split}\end{equation}
Moreover, it holds
\begin{equation}\label{5-8-13}
\begin{split}
 \int_{B_d(x_{p,j})}G(y,x)
u_{p}^{p}(y)dy=  G(x_{p,j},x)\int_{B_d(x_{p,j})}
u_{p}^{p}(y)dy+ \int_{B_d(x_{p,j})}\big(G(y,x)-G(x_{p,j},x)\big)
u_{p}^{p}(y)dy,
\end{split}
\end{equation}
where by Taylor's expansion, it follows
\begin{equation}\label{5-8-14}
\begin{split}
\int_{B_d(x_{p,j})}&\Big(G(y,x)-G(x_{p,j},x)\Big) u_{p}^{p}(y)dy
\\=&\frac{\varepsilon_{p,j} u_{p}(x_{p,j})}{p}
\int_{B_{\frac{d}{\varepsilon_{p,j}}}(0)}
\big\langle\nabla G(x_{p,j},x),z\big\rangle\Big(1+\frac{w_{p,j}(z)}{p}\Big)^pdz\\&
+O\left(\frac{\varepsilon^{3/2}_{p,j} }{p}
\int_{B_{\frac{d}{\varepsilon_{p,j}}}(0)}
\big| z\big|^{\frac{3}{2}}\cdot \Big(1+\frac{w_{p,j}(z)}{p}\Big)^pdz\right).
\end{split}
\end{equation}
Using Lemma \ref{llma}, the fact that $\big|\nabla G(x_{p,j},x)\big|$ is bounded uniformly in
$\Omega\backslash  \bigcup^k_{j=1} B_{2d}(x_{p,j})$ and the dominated convergence theorem,  we have
\begin{equation}\label{d5-8-14}
\begin{split}
\lim_{p\rightarrow \infty}&\int_{B_{\frac{d}{\varepsilon_{p,j}}}(0)}
\big\langle\nabla G(x_{p,j},x),z\big\rangle\Big(1+\frac{w_{p,j}(z)}{p}\Big)^pdz\\
=&\int_{\R^2}
\big\langle\nabla G(x_{\infty,j},x),z\big\rangle e^{U(z)}dz=0, ~~\mbox{uniformly in $\Omega\backslash  \bigcup^k_{j=1} B_{2d}(x_{p,j})$},
\end{split}
\end{equation}
and
\begin{equation}\label{f5-8-14}
\lim_{p\rightarrow \infty}
\int_{B_{\frac{d}{\varepsilon_{p,j}}}(0)}
\big| z\big|^{\frac{3}{2}}\cdot \Big(1+\frac{w_{p,j}(z)}{p}\Big)^pdz
=\int_{\R^2} |z|^{\frac{3}{2}}  e^{U(z)}dz.
\end{equation}
Hence from \eqref{5-8-14}, \eqref{d5-8-14} and \eqref{f5-8-14}, we get
\begin{equation}\label{g5-8-14}
\begin{split}
\int_{B_d(x_{p,j})}&\Big(G(y,x)-G(x_{p,j},x)\Big) u_{p}^{p}(y)dy=
o\Big(\frac{\varepsilon_{p,j}}{p}\Big) ~\mbox{uniformly in}~ \Omega\backslash  \bigcup^k_{j=1} B_{2d}(x_{p,j}).
\end{split}
\end{equation}
 Then \eqref{5-8-13} and \eqref{g5-8-14} imply
\begin{equation}\label{5-8-15}
\begin{split}
 \int_{B_d(x_{p,j})}G(y,x)
u_{p}^{p}(y)dy=  G(x_{p,j},x)\int_{B_d(x_{p,j})}
u_{p}^{p}(y)dy+
o\Big(\frac{\varepsilon_{p,j}}{p}\Big) ~\mbox{uniformly in}~ \Omega\backslash  \bigcup^k_{j=1} B_{2d}(x_{p,j}).
\end{split}
\end{equation}
So from \eqref{5-8-11}, \eqref{5-8-12} and \eqref{5-8-15}, we find
\begin{equation*}
u_p(x)= \sum^k_{j=1}C_{p,j}G(x_{p,j},x)+
o\Big(\sum^k_{j=1}\frac{\varepsilon_{p,j}}{p}\Big)\,\,
~\mbox{uniformly in}~ \Omega\backslash \displaystyle\bigcup^k_{j=1} B_{2d}(x_{p,j}).
\end{equation*}
Similar  to the above estimates, we also get
\begin{equation*}
\frac{\partial u_p(x)}{\partial x_i}= \sum^k_{j=1}\Big( \int_{B_d(x_{p,j})}
u_{p}^{p}(y)dy\Big) D_{x_i} G(x_{p,j},x)+
o\Big(\frac{\varepsilon_{p}}{p}\Big)\,\,~\mbox{uniformly in}~\Omega\backslash \displaystyle\bigcup^k_{j=1} B_{2d}(x_{p,j}).
\end{equation*}
\end{proof}
Let us define
\begin{equation}\label{dsa}
v_{p,j}:=p\big(w_{p,j}-U\big),
\end{equation}
where $w_{p,j}$ is the rescaled function in \eqref{defwpj} and $U$ is its limit function introduced in \eqref{defU}.
\begin{Lem}For any small fixed $d_0>0$,
it holds
\begin{equation}\label{5-7-33}
\begin{split}
-\Delta v_{p,j}=  v_{p,j}e^{U}+  v_{p,j} \overline{g}_p +g^*_p~\,\quad \mbox{for }|x|\leq \frac{d_0}{\e_{p,j}},
\end{split}\end{equation}
where
\begin{equation}\label{St0}
\overline{g}_p(x):=\frac{p\Big(e^{w_{p,j}}-e^{U}\Big)-v_{p,j}e^{U}}{v_{p,j}}= O\Big(\frac{ |w_{p,j}-U|}{ 1+|x|^{4-\delta} }\Big) \quad\mbox{for }|x|\leq \frac{d_0}{\e_{p,j}},
\end{equation}
\begin{equation}\label{St}
g^*_p(x):=p e^{w_{p,j}}
\Big(e^{p\log \big(1+\frac{w_{p,j}}{p}\big)-w_{p,j}}-1\Big)=O\left(\frac{\big(\log (1+|x|)\big)^2}{ 1+|x|^{4-\delta} }\right)~\quad\mbox{for}~~|x|\leq \frac{d_0}{\e_{p,j}},
\end{equation}
and
\begin{equation}\label{St1}
g^*_p(x)\rightarrow -\frac{U^{2}(x)}{2}e^{U(x)}~\,\,\mbox{as}~~p\rightarrow +\infty.
\end{equation}
\end{Lem}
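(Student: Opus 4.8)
The approach is to transport the equation $-\Delta u_p=u_p^p$ to the rescaled variable and subtract the Liouville equation $-\Delta U=e^U$. Since $u_p(x_{p,j}+\e_{p,j}y)=u_p(x_{p,j})\big(1+\tfrac{w_{p,j}(y)}{p}\big)$ and $\e_{p,j}^2\big(u_p(x_{p,j})\big)^{p-1}=\tfrac1p$, the equation for $u_p$ becomes $-\Delta w_{p,j}=\big(1+\tfrac{w_{p,j}}{p}\big)^p$ on $B_{d_0/\e_{p,j}}(0)$; subtracting $U$'s equation and multiplying by $p$,
\[
-\Delta v_{p,j}=p\Big(1+\tfrac{w_{p,j}}{p}\Big)^p-pe^U
 =\underbrace{p\Big(\big(1+\tfrac{w_{p,j}}{p}\big)^p-e^{w_{p,j}}\Big)}_{=\,g^*_p}
  +p\big(e^{w_{p,j}}-e^{U}\big).
\]
Writing $p\big(e^{w_{p,j}}-e^{U}\big)=v_{p,j}e^U+\big(p(e^{w_{p,j}}-e^{U})-v_{p,j}e^U\big)$ and recalling $v_{p,j}=p(w_{p,j}-U)$, the last bracket equals $v_{p,j}\overline g_p$ with $\overline g_p$ as in \eqref{St0}; this is \eqref{5-7-33}, and (using $e^{w_{p,j}}e^{\,p\log(1+w_{p,j}/p)-w_{p,j}}=(1+w_{p,j}/p)^p$) the two expressions for $g^*_p$ agree with \eqref{St}.

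To prove \eqref{St0} I would write $\overline g_p=e^{U}\,\dfrac{e^{t}-1-t}{t}$ with $t=w_{p,j}-U$ and use the elementary inequality $|e^{t}-1-t|\le\tfrac12 t^2 e^{\max\{t,0\}}$, which gives $|\overline g_p|\le\tfrac12\max\{e^{U},e^{w_{p,j}}\}\,|w_{p,j}-U|$. Since $e^U\le C(1+|x|)^{-4}$ and, by Lemma \ref{llma} together with the boundedness of $w_{p,j}$ on $\{|y|\le R_\delta\}$ from \eqref{5-8-2}, $e^{w_{p,j}}\le C_\delta(1+|x|)^{-(4-\delta)}$ throughout $B_{d_0/\e_{p,j}}(0)$, the estimate \eqref{St0} follows. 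For the pointwise limit \eqref{St1}, on a fixed compact set \eqref{5-8-2} gives $\psi_p:=p\log\!\big(1+\tfrac{w_{p,j}}{p}\big)-w_{p,j}=-\tfrac{w_{p,j}^2}{2p}+O(p^{-2})$, hence $g^*_p=pe^{w_{p,j}}(e^{\psi_p}-1)=pe^{w_{p,j}}(\psi_p+O(\psi_p^2))=-\tfrac{w_{p,j}^2}{2}e^{w_{p,j}}+O(p^{-1})\to-\tfrac{U^2}{2}e^{U}$.

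The estimate \eqref{St} is the delicate point. One starts from $|g^*_p|\le pe^{w_{p,j}}|\psi_p|$ (note $\psi_p\le0$ since $\log(1+s)\le s$) and writes $|\psi_p|=p\,h\!\big(\tfrac{u_p(x_{p,j}+\e_{p,j}y)}{u_p(x_{p,j})}\big)$, with $h(\rho):=\rho-1-\log\rho$ and $0<\rho\le1$. In the region $|w_{p,j}|\le p/2$ (i.e.\ $\rho\ge\tfrac12$) one uses $h(\rho)\le(1-\rho)^2$ to get $|g^*_p|\le w_{p,j}^2e^{w_{p,j}}=|w_{p,j}|^2e^{-|w_{p,j}|}$, and closes the estimate by combining the lower bound for $|w_{p,j}|$ from Lemma \ref{llma} with the monotonicity of $s\mapsto s^2e^{-s}$ on $[2,\infty)$ (the range $|x|\lesssim1$ being handled separately via $w_{p,j}(0)=0$, $\nabla w_{p,j}(0)=0$ and the $C^2_{loc}$ convergence). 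In the region $|w_{p,j}|>p/2$ one uses the cruder $h(\rho)\le\log\tfrac1\rho\le C\log p$, valid because $\rho\gtrsim\tfrac1p$ on $\overline{B_{d_0}(x_{p,j})}$ (superharmonicity of $u_p$ and \eqref{11-14-03N}), so that $|g^*_p|\le Cp^2(\log p)\,e^{w_{p,j}}$; here the neck behaviour of $u_p$ forces $|x|$ to be exponentially large in $p$, so applying Lemma \ref{llma} with a parameter $\delta'<\delta$ makes $e^{w_{p,j}}$ smaller than $(1+|x|)^{-(4-\delta)}$ by a factor exponentially small in $p$, which absorbs the $p^2\log p$. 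Patching the two regions gives \eqref{St}. The main obstacle throughout is that only one-sided control on $w_{p,j}$ is available (Lemma \ref{llma} and the neck analysis behind Theorem A), which is exactly what forces the exponent $4-\delta$ rather than $4$ and makes this region splitting necessary.
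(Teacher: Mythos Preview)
Your derivation of the equation \eqref{5-7-33} and the estimates \eqref{St0} and \eqref{St1} follows the paper's argument essentially verbatim. For the bound \eqref{St}, the paper only writes a one-line sketch (``use $\log(1+x)=x-\tfrac{x^2}{2}+O(x^3)$, $e^x=1+x+O(x^2)$ and Lemma~\ref{llma}''), without discussing the region where $w_{p,j}/p$ is close to $-1$; your Case~1/Case~2 splitting is the natural way to make that sketch honest, and Case~1 (together with the monotonicity of $s\mapsto s^2e^{-s}$ on $[2,\infty)$) is handled correctly.

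There is, however, a gap in Case~2. You assert that ``the neck behaviour of $u_p$ forces $|x|$ to be exponentially large in $p$'' whenever $|w_{p,j}|>p/2$, but this would require an \emph{upper} bound on $|w_{p,j}|$ of the form $|w_{p,j}|\le (4+\delta)\log|x|+C$, whereas Lemma~\ref{llma} only provides the opposite direction (a lower bound on $|w_{p,j}|$). Such a two-sided estimate is indeed known from the literature behind Theorem~A, but it is not stated in this paper, so within the present framework your argument is incomplete. The fix is simple and self-contained: in Case~2 replace the bound $|e^{\psi_p}-1|\le|\psi_p|\le Cp\log p$ by the trivial $|e^{\psi_p}-1|\le 1$, so that $|g^*_p|\le p\,e^{w_{p,j}}$. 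You now have two upper bounds for $e^{w_{p,j}}$, namely $e^{w_{p,j}}\le e^{-p/2}$ (from $|w_{p,j}|>p/2$) and $e^{w_{p,j}}\le C(1+|x|)^{-(4-\delta')}$ (Lemma~\ref{llma} with $\delta'<\delta$). Interpolating with $\theta=(\delta-\delta')/(4-\delta')\in(0,1)$ gives
\[
e^{w_{p,j}}=(e^{w_{p,j}})^{\theta}(e^{w_{p,j}})^{1-\theta}\le C\,e^{-\theta p/2}(1+|x|)^{-(4-\delta)},
\]
hence $|g^*_p|\le Cp\,e^{-\theta p/2}(1+|x|)^{-(4-\delta)}$. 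Since $p\,e^{-\theta p/2}\to 0$ and (by the $C^2_{loc}$ convergence in \eqref{5-8-2}) Case~2 only occurs for $|x|$ bounded away from $0$, where $(\log(1+|x|))^2$ is bounded below, this yields \eqref{St} in Case~2 without any appeal to neck estimates.
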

\begin{proof}
 By direct computations,  we have
\begin{equation}\label{5-7-31}
\begin{split}
-\Delta v_{p,j}=&p \big(-\Delta w_{p,j}+\Delta U\big)=p\Big(\big(1+\frac{w_{p,j}}{p}\big)^p-e^{U}\Big)
\\=&
p\Big(e^{w_{p,j}}-e^{U} \Big)+\underbrace{p e^{w_{p,j}}
\Big(e^{p\log \big(1+\frac{w_{p,j}}{p}\big)-w_{p,j}}-1\Big)}_{:=g^*_p(x)}.
\end{split}\end{equation}
Since $\log (1+x)=x-\frac{x^2}{2}+O(x^3)$ and $e^x=1+x+O(x^2)$, then for any small  fixed  $\delta,d_0>0$, using Lemma \ref{llma}, we can check that \eqref{St} and \eqref{St1} hold.

\vskip 0.1cm

Also by Taylor's expansion and Lemma \ref{llma}, it follows that
\begin{equation}\label{5-7-32}
\begin{split}
p\Big(e^{w_{p,j}}-e^{U}\Big)=&pe^{U}\Big(e^{w_{p,j}-U}-1 \Big)\\=&
pe^{U} \left( w_{p,j}-U +O\Big(|w_{p,j}-U|^2\cdot e^{\xi\big(w_{p,j}-U\big)}\Big)\right)\\=&
e^{U(x)}v_{p,j}+ O\Big(e^{(1-\xi)U+ \xi w_{p,j} } |v_{p,j}|\cdot |w_{p,j}-U| \Big) \\=&
v_{p,j}\left(e^{U}+
\underbrace{O\Big(\frac{ |w_{p,j}-U|}{ 1+|x|^{4-\delta} }\Big)}_{:=\overline{g}_p(x)}\right)~\,\mbox{for}~~|x|\leq \frac{d_0}{\e_{p,j}}~~\mbox{and some}~~\xi\in (0,1).
\end{split}
\end{equation}
Hence \eqref{5-7-31} and \eqref{5-7-32} imply \eqref{5-7-33}.
\end{proof}
We will establish the following bound:
\begin{Prop}\label{lem3-1a}
Let $v_{p,j}$ be as in \eqref{dsa}.  Then for any small fixed $d_0>0$ and fixed $\tau\in (0,1)$, there exists $C>0$ such that
\begin{equation}\label{lpy1}
| v_{p,j}(x)|\leq C(1+|x| )^\tau~\mbox{in}~ B_{\frac{d_0}{\e_{p,j}}}(0).
\end{equation}
\end{Prop}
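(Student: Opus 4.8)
The plan is to prove the bound \eqref{lpy1} by a blow-up / barrier argument, bootstrapping from the known $C^2_{loc}$ convergence $w_{p,j}\to U$ (which gives control of $v_{p,j}$ on any fixed compact set) to a polynomial bound on the whole expanding ball $B_{d_0/\e_{p,j}}(0)$. Suppose the bound fails: then, after passing to a subsequence, there exist points $x_p$ with $R_p:=|x_p|\to+\infty$ such that
\[
M_p:=\max_{|x|\leq R_p}\frac{|v_{p,j}(x)|}{(1+|x|)^\tau}\to+\infty,
\]
and this maximum is attained at some $\bar x_p$ with $|\bar x_p|=:r_p$. Since $v_{p,j}\to 0$ in $C^2_{loc}$ and $U$ is bounded on compacta, the maximum cannot be attained on a fixed compact set, so $r_p\to+\infty$ as well. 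Normalize $\tilde v_p:=v_{p,j}/M_p$, so $|\tilde v_p(x)|\leq (1+|x|)^\tau$ on $B_{R_p}(0)$ and $|\tilde v_p(\bar x_p)|=(1+r_p)^\tau$.

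The next step is to read off from \eqref{5-7-33} the equation satisfied by $\tilde v_p$, namely
\[
-\Delta \tilde v_p = \tilde v_p\bigl(e^U+\overline g_p\bigr)+\frac{g^*_p}{M_p}.
\]
Using the decay $e^U(x)\leq C(1+|x|)^{-4}$, the bound $\overline g_p=O\bigl(|w_{p,j}-U|(1+|x|)^{-(4-\delta)}\bigr)$ together with Lemma~\ref{llma} (which bounds $w_{p,j}$, hence $|w_{p,j}-U|$, by $C\log(1+|x|)$ on the relevant range), and the bound $g^*_p=O\bigl((\log(1+|x|))^2(1+|x|)^{-(4-\delta)}\bigr)$ from \eqref{St}, one gets that the right-hand side is $O\bigl((1+|x|)^{\tau-4+\delta}\bigr)+o(1)(1+|x|)^{\tau-4+\delta}$, hence summable against the Green kernel of $\R^2$ at infinity once we choose $\delta$ small enough that $\tau-4+\delta<-2$ (which holds since $\tau<1$). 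Representing $\tilde v_p$ via the Green's function of $B_{R_p}(0)$ and its boundary values, one splits $\tilde v_p=h_p+\zeta_p$ where $h_p$ is harmonic with the boundary data of $\tilde v_p$ and $\zeta_p$ is the Newtonian potential of the right-hand side; the potential part is $O(1)$ uniformly (and in fact tends to a fixed limit locally), while for the harmonic part one uses that $h_p$ is bounded by $(1+R_p)^\tau$ on $\partial B_{R_p}$ and hence, by the maximum principle and a scaling argument, $|h_p(x)|\leq C(1+|x|)^\tau\cdot(|x|/R_p)^{?}$... — more cleanly, one argues that $\tilde v_p$ converges in $C^2_{loc}(\R^2)$ to a limit $v_\infty$ solving $-\Delta v_\infty=e^U v_\infty$ with $|v_\infty(x)|\leq(1+|x|)^\tau$, so $\nabla v_\infty\in L^2(\R^2)$ (the growth $\tau<1$ makes the Dirichlet integral converge, using the decay of $e^U$ and elliptic estimates), whence by Lemma~\ref{llm} $v_\infty$ is a combination of $\partial_{x_1}U,\partial_{x_2}U$ and the dilation generator $\partial_\lambda U(x/\lambda)|_{\lambda=1}=-2+\tfrac{|x|^2/8}{1+|x|^2/8}\cdot 2+\dots$; the first two decay like $|x|^{-1}$ and the last is bounded, so in all cases $v_\infty$ is bounded. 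This forces $|v_\infty(\bar x_\infty)|$-type normalization to degenerate: either $r_p$ stays bounded (contradicting $r_p\to\infty$ via $C^2_{loc}$ convergence and $|\tilde v_p(\bar x_p)|=(1+r_p)^\tau\to 0$ if $v_\infty$ bounded — wait, need care) or $r_p\to\infty$ and then $|\tilde v_p(\bar x_p)|/(1+r_p)^\tau=1$ while the RHS contribution and the harmonic majorant both force this ratio to $0$, a contradiction.

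I expect the main obstacle to be making the dichotomy at the maximum point $\bar x_p$ rigorous: one must show that the "far-field" value $|\tilde v_p(\bar x_p)|=(1+r_p)^\tau$ cannot persist, which requires a genuine quantitative estimate — a Kelvin-transform or annulus-comparison argument on $B_{2r_p}\setminus B_{r_p/2}$ using that the coefficient $e^U+\overline g_p$ is $O(r_p^{-(4-\delta)})$ there, so that $\tilde v_p$ is nearly harmonic on that annulus and a sub/super-solution of the form $C(|x|^\tau+r_p^{-\sigma}|x|^{?})$ can be fitted. The cleanest route is probably to avoid blow-up at $\bar x_p$ altogether: fix the $C^2_{loc}$ limit information near the origin to anchor the size of $v_{p,j}$ on $B_{R_0}(0)$ by a constant $C_0$, then run an iteration on dyadic annuli $A_m:=B_{2^{m+1}R_0}\setminus B_{2^m R_0}$, proving inductively that $\|v_{p,j}\|_{L^\infty(A_m)}\leq C_0\,2^{m\tau}\prod_{\ell<m}(1+C2^{-\ell(3-\delta)})$, where each factor comes from a Green-kernel estimate on $A_m$ for the equation \eqref{5-7-33} with the summable coefficient and forcing terms identified above; the infinite product converges, giving $\|v_{p,j}\|_{L^\infty(B_{d_0/\e_{p,j}})}\leq C(1+|x|)^\tau$ as claimed. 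The parameter $\delta$ must be chosen small relative to $1-\tau$ throughout, and the constants must be shown independent of $p$ — this uniformity, via Lemma~\ref{llma} and \eqref{5-8-2}, is the point that needs the most care.
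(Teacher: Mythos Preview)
Your blow-up normalization identifies the right limit problem, but the argument has a genuine gap that you yourself flag: once you normalize $\tilde v_p=v_{p,j}/M_p$ and pass to the limit, you can indeed deduce (using $v_{p,j}(0)=0$ and $\nabla v_{p,j}(0)=0$, which you never invoke) that $v_\infty\equiv 0$. But this is \emph{not} a contradiction when the maximum point $\bar x_p$ escapes to infinity, since the normalization $|\tilde v_p(\bar x_p)|=(1+r_p)^\tau$ is lost in the $C^2_{loc}$ limit. The whole difficulty is precisely to show that $r_p$ stays bounded, and neither your Kelvin/annulus sketch nor the dyadic iteration is carried out; the latter would also need a separate argument at the outer boundary $|x|=d_0/\e_{p,j}$, where $v_{p,j}$ is tied to the values of $u_p$ away from the spikes. (Incidentally, the claim that $\tau<1$ growth forces $\nabla v_\infty\in L^2(\R^2)$ is false as stated: $|\nabla v|\sim |x|^{\tau-1}$ gives $\int|\nabla v|^2\sim\int r^{2\tau-1}dr=+\infty$; one needs instead the sublinear-growth version of the kernel classification.)

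The paper closes this gap by a quite different two-step mechanism. First (Step~1) it shows that the \emph{angular oscillation} $N_p^*:=\max_{|x'|=|x|}\frac{|v_{p,j}(x)-v_{p,j}(x')|}{(1+|x|)^\tau}$ is $o(N_p)$: one reflects across a line, studies $\omega_p^*(x)=v_{p,j}(x)-v_{p,j}(x^-)$, and proves its (weighted) maximum is attained at bounded $|x|$ by a maximum-principle argument in the intermediate region and by the Green-function expansion of $u_p$ near $|x|\sim d_0/\e_{p,j}$; the limit then lies in the kernel with odd boundary data and vanishing gradient at $0$, forcing it to vanish. Second (Step~2), having reduced to the radial average $\psi_{p,j}(r)$, the paper multiplies the equation by the kernel element $\phi=\partial_\lambda U(x/\lambda)|_{\lambda=1}$ and integrates: the resulting Wronskian identity gives $|\psi_{p,j}'(r)|\le \frac{1}{r}\bigl(o(N_p)+O(1)\bigr)$ for $r$ large, which upon integration shows the maximum of $|\psi_{p,j}|/(1+r)^\tau$ is attained at bounded $r_p$. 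Only then does the $C^2_{loc}$ limit, combined with $\psi_{p,j}(0)=0$, yield the contradiction. This test-function/Wronskian step is the missing ingredient in your outline; without it (or a fully worked dyadic barrier), the blow-up does not conclude.
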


As a consequence of Proposition \ref{lem3-1a}, we can pass to the limit into equation
\eqref{5-7-33} and prove

\begin{Prop}\label{prop3-2}
Let $v_{p,j}$ be the function defined in \eqref{dsa}.  Then  it holds
\begin{equation*}
\lim_{p\rightarrow +\infty} v_{p,j}=w_0 ~\mbox{in}~C^2_{loc}(\R^2),
\end{equation*}
where
 $w_0$ solves the non-homogeneous linear equation
\begin{equation}\label{6-4-2}
-\Delta w_0-e^{U}w_0=-\frac{U^2}{2}e^{U}~\mbox{in}~\R^2
\end{equation}
and for any $\tau\in (0,1)$, there exists $C>0$ such that
\begin{equation}\label{boundw0}
| w_{0}(x)|\leq C(1+|x| )^\tau.
\end{equation}
\end{Prop}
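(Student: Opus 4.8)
The plan is to obtain $C^2_{loc}$-compactness for the rescaled functions $v_{p,j}=p(w_{p,j}-U)$, pass to the limit in their equation \eqref{5-7-33}, and then single out the limit among the solutions of \eqref{6-4-2} using that the normalizations $v_{p,j}(0)=0$ and $\nabla v_{p,j}(0)=0$ hold \emph{exactly} for every $p$. Concretely: fix $R>0$ and take $p$ large enough that $B_{2R}(0)\subset B_{d_0/\varepsilon_{p,j}}(0)$; on $B_{2R}(0)$ Proposition \ref{lem3-1a} gives $|v_{p,j}|\le C(1+R)^\tau$, while $e^U\le 1$, \eqref{St0} gives $|v_{p,j}\,\overline{g}_p|\le C|w_{p,j}-U|$ (which tends to $0$ uniformly by \eqref{5-8-2}) and \eqref{St} gives $|g^*_p|\le C$, so by \eqref{5-7-33} (equivalently \eqref{5-7-31}) the right-hand side of the equation for $v_{p,j}$ is bounded in $L^\infty(B_{2R}(0))$, and in fact in $C^1(B_{2R}(0))$ with locally uniform bounds since $w_{p,j}\to U$ in $C^2_{loc}$. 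Elliptic (Schauder) estimates then give a uniform $C^{2,\alpha}(B_R(0))$ bound; by Arzel\`a--Ascoli and a diagonal argument $v_{p,j}\to v_*$ in $C^2_{loc}(\R^2)$ along a subsequence, $v_*\in C^2(\R^2)$, and letting $p\to\infty$ in \eqref{lpy1} for fixed $x$ gives $|v_*(x)|\le C(1+|x|)^\tau$ on $\R^2$. Passing to the limit in \eqref{5-7-33} pointwise (using $\Delta v_{p,j}\to\Delta v_*$ from $C^2_{loc}$ convergence, $v_{p,j}\,\overline{g}_p\to 0$ from \eqref{St0} together with $w_{p,j}\to U$, and $g^*_p\to-\tfrac{U^2}{2}e^U$ from \eqref{St1}) shows that $v_*$ solves \eqref{6-4-2}.

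It remains to identify $v_*$ with a specific $w_0$. I would take $w_0$ to be the unique radial solution of \eqref{6-4-2} with $w_0(0)=0$: existence and the bound \eqref{boundw0} follow from Lemma \ref{lll3} applied after the rescaling $x=\sqrt{8}\,y$, which turns \eqref{6-4-2} into the form \eqref{6-4-1} with $f(r)=16\bigl(\log(1+r^2)\bigr)^2(1+r^2)^{-2}$, so that $\partial_r w_0=O(r^{-1})$ at infinity and hence $w_0=O(\log|x|)=O(|x|^\tau)$ — up to subtracting a multiple of a bounded radial solution $\psi$ of $-\Delta\psi=e^U\psi$ with $\psi(0)=1$ (a suitable multiple of the dilation element in Lemma \ref{llm}); uniqueness holds since two such $w_0$'s differ by $c\psi$ with $c\psi(0)=c=0$. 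Now $w_{p,j}(0)=U(0)=0$ and, $x_{p,j}$ being an interior maximum of $u_p$ for $p$ large (Theorem A), $\nabla w_{p,j}(0)=0=\nabla U(0)$; hence $v_{p,j}(0)=0$ and $\nabla v_{p,j}(0)=0$, so $v_*(0)=0$ and $\nabla v_*(0)=0$. The difference $\phi:=v_*-w_0$ then solves the homogeneous equation $-\Delta\phi=e^U\phi$ in $\R^2$, is $C^2$ with $|\phi(x)|\le C(1+|x|)^\tau$ ($\tau\in(0,1)$), and satisfies $\phi(0)=0$, $\nabla\phi(0)=0$. A separation of variables in the angular variable together with the ODE analysis behind Lemma \ref{llm} shows that any solution of this growth lies in $\mathrm{span}\{\partial_{x_1}U,\partial_{x_2}U,\psi\}$; then $\nabla\phi(0)=0$ removes the two translation modes and $\phi(0)=0$ removes $\psi$ (since $\psi(0)\ne 0$), so $\phi\equiv 0$, i.e.\ $v_*=w_0$. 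Since every subsequential limit equals $w_0$, the whole family converges, which is the claim.

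The delicate point is this identification step. One needs a classification of the solutions of the \emph{homogeneous} linearized Liouville equation in the growth class \eqref{boundw0}, slightly beyond the finite-energy statement of Lemma \ref{llm}: it is obtained by the same angular-Fourier/ODE argument, the key facts being that the angular modes $|m|\ge 2$ regular at the origin grow at least like $|x|^2$ (otherwise they would be finite-energy, contradicting Lemma \ref{llm}) and that the second solution of each of the $|m|\le 1$ modes is singular at the origin. One also has to make sure the \emph{exact} identities $v_{p,j}(0)=0$ and $\nabla v_{p,j}(0)=0$ are genuinely available — which is immediate from \eqref{defwpj} and $\nabla u_p(x_{p,j})=0$ — and that these three scalar conditions suffice to kill the whole three-dimensional kernel. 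The compactness and passage-to-the-limit parts are routine elliptic theory once Proposition \ref{lem3-1a} is granted.
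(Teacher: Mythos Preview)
Your approach is essentially what the paper has in mind: it only asserts that Proposition~\ref{prop3-2} follows from Proposition~\ref{lem3-1a} by ``passing to the limit into equation~\eqref{5-7-33}'' without further detail, and your compactness-plus-limit argument via elliptic regularity is precisely that. Where you go further is in pinning down $w_0$ uniquely via the exact normalizations $v_{p,j}(0)=0$, $\nabla v_{p,j}(0)=0$ together with a classification of slow-growth solutions of the homogeneous linearized Liouville equation. The paper instead leaves $w_0$ determined only up to an element of the three-dimensional kernel (see the decomposition~\eqref{5-26-2} used later), which is harmless for its applications since the kernel elements decay fast enough not to affect $\int_{\R^2}\Delta w_0$. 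Your identification argument is correct and yields a sharper conclusion (namely that $w_0$ is the normalized radial solution); the point you flag about extending Lemma~\ref{llm} from finite energy to the growth class $(1+|x|)^\tau$ is real but, as you note, is settled by the same angular-Fourier/ODE analysis, and in fact is implicitly used inside the paper's proof of Proposition~\ref{lem3-1a}.
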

\begin{Rem}\label{rem1-1}
Let $\widetilde{w}_0(y):=w_0(\sqrt{8}y)$, then by \eqref{6-4-2},
$\widetilde{w}_0(y)$ solves \eqref{6-4-1} with $f(t)=\frac{16\log^2(1+t^2)}{(1+t^2)^2}$.
\end{Rem}
\vskip 0.4cm
\begin{proof}[\textbf{Proof of Proposition \ref{lem3-1a}}]
To prove \eqref{lpy1}, we use  some ideas in  \cite[Estimate C]{CL2002}  and
\cite[Appendix B]{LY2018}. Let
\begin{equation*}
N_p:=\max_{|x|\leq \frac{d_0}{\e_{p,j}}}\frac{| v_{p,j}(x)|}{(1+|x|)^\tau},
\end{equation*}
then obviously
$$\eqref{lpy1} \Leftrightarrow N_p \leq C.$$
We will prove that $N_p \leq C$ by contradiction. The proof is organized into two steps. Set
\begin{equation*}
N_p^*:=\max_{|x|\leq \frac{d_0}{\e_{p,j}}}\max_{|x'|=|x|}\frac{|v_{p,j}(x)-v_{p,j}(x')|}{(1+|x|)^{\tau}}.
\end{equation*}
\emph{Step 1.
If $N_p\to +\infty$, then it holds
\begin{equation}\label{sst}
N_p^*=o(1)N_p.
\end{equation}}
\proof[Proof of Step 1.]
Suppose this is not true. Then there exists $c_0>0$ such that $N^*_p\geq c_0N_p$. Let $x'_p$ and
$x''_p$ satisfy $|x'_p|=|x''_p| {\leq\frac{d_{0}}{\varepsilon_{p,j}}}$ and
\begin{equation*}
N_p^*=  \frac{|v_{p,j}(x'_p)-v_{p,j}(x''_p)|}{(1+|x'_p|)^{\tau}}.
\end{equation*}
Without loss of generality, we may assume that $x'_p$ and
$x''_p$ are symmetric with respect to the $x_1$ axis. Set
\begin{equation*}
\omega^*_p(x):=v_{p,j}(x)-v_{p,j}(x^-),~\,~x^-:=(x_1,-x_2),\  \mbox{ for }x=(x_{1},x_{2}),  ~\,x_2>0.
\end{equation*}
 Hence $x^{''}_{p}={x^{'}_{p}}^{-}$ and
$N_p^*=\frac{ \omega^*_p(x'_{p}) }{ (1+|x'_{p}| )^{\tau} }$.
Let us define
\begin{equation}\label{assf}
\omega_p(x):=\frac{\omega^*_p(x)}{(1+x_2)^{\tau}}.
\end{equation}
Then direct calculations show that $\omega_p$ satisfies
\begin{equation}\label{fs}
-\Delta \omega_p-\frac{2\tau}{1+x_2}\frac{\partial \omega_p}{\partial x_2}+
\frac{\tau(1-\tau)}{(1+x_2)^2}\omega_p=\frac{g_p(x)}{(1+x_2)^\tau}+
\frac{\widetilde{{g}}_p(x)}{(1+x_2)^\tau},
\end{equation}
where  $g_p(x):=g^*_p(x)-g^*_p(x^-)$, $g^*_p$ is defined in \eqref{St}  and
\begin{equation}\label{fst}
\widetilde{g}_p(x):=
p\Big(e^{w_{p,j}(x)}-e^{w_{p,j}(x^-)}\Big)=O\left(\frac{|\omega_p|\cdot\big(\log (1+|x|)\big)}{ 1+|x|^{4-\delta} }\right)~\,\mbox{for}~~|x|\leq \frac{d_0}{\e_{p,j}}.
\end{equation}

Also let $x^{**}_p$ satisfy $|x^{**}_p|\leq\frac{d_{0}}{\varepsilon_{p,j}}$, $x^{**}_{p,2}\geq 0$ and
\begin{equation}\label{assf1}
|\omega_p(x^{**}_p)|= N^{**}_p:=\max_{|x|\leq \frac{d_0}{\e_{p,j}},~x_2\geq 0} |\omega_p(x)|.
\end{equation}
Then it follows
\begin{equation}\label{dsd}
N^{**}_p\geq \frac{|v_{p,j}(x'_p)-v_{p,j}(x''_p)|}{(1+|x'_p|)^{\tau}}=N^*_p\to +\infty.
\end{equation}
We may assume that $\omega_p(x^{**}_p)>0$. We claim that
\begin{equation}\label{1h}
|x^{**}_p|\leq C.
\end{equation}
The proof of  \eqref{1h} is divided into   two parts.

\vskip 0.2cm

\noindent
\emph{Part 1. We prove that $|x^{**}_p|\leq \frac{d_0}{2\e_{p,j}}$.}

\vskip 0.2cm
Suppose this is not true. Observe that
\begin{equation}\label{9-27-1}
\begin{split}
\omega^*_p(x^{**}_p)=&v_{p,j}(x^{**}_p)-v_{p,j}(x^{**-}_p)
=p\Big(\big(w_{p,j}(x^{**}_p)-w_{p,j}(x^{**-}_p)\big)-\big(U(x^{**}_p)-U(x^{**-}_p)\big)\Big)\\=&
\frac{p^2}{u_p(x_{p,j})}\Big( u_{p,j}(\e_{p,j}x^{**}_p+x_{p,j})-u_{p,j}(\e_{p,j}x^{**-}_p+x_{p,j})
\Big)\\&
-p\Big(U(x^{**}_p)-U(x^{**-}_p)\Big)\\=&
\frac{p^2}{u_p(x_{p,j})}\Big( u_{p,j}(\e_{p,j}x^{**}_p+x_{p,j})-u_{p,j}(\e_{p,j}x^{**-}_p+x_{p,j})
\Big),
\end{split}\end{equation}
here  in last equality we use the fact that $U$ is radial.
Moreover using \eqref{luo-1} with $d=\frac{d_0}{4}$, we get
\begin{equation}\label{9-27-2}
\begin{split}
 u_{p,j}&(\e_{p,j}x^{**}_p+x_{p,j})-u_{p,j}(\e_{p,j}x^{**-}_p+x_{p,j})
\\=&\sum^k_{l=1}C_{p,l}
\Big(G(x_{p,l},\e_{p,j}x^{**}_p+x_{p,j})-G(x_{p,l},\e_{p,j}x^{**-}_p+x_{p,j})  \Big)
+o\Big(\sum^k_{l=1}\frac{\e_{p,l}}{p} \Big).
\end{split}\end{equation}
Since $\frac{d_0}{2\e_{p,j}} \leq |x^{**}_p|\leq \frac{d_0}{\e_{p,j}}$, then
\begin{equation}\label{9-27-3}
G(x_{p,l},\e_{p,j}x^{**}_p+x_{p,j})-G(x_{p,l},\e_{p,j}x^{**-}_p+x_{p,j}) =O\Big(\e_{p,j}|x^{**}_{p,2}|\Big)\,\, ~\mbox{for}~
l=1,\cdots,k.
\end{equation}
If $l\neq j$,
\begin{equation*}
\begin{split}
G(&x_{p,l}, \e_{p,j}x^{**}_p+x_{p,j})-G(x_{p,l},\e_{p,j}x^{**-}_p+x_{p,j}) \\=&
O\Big(\e_{p,j}\big|x^{**}_p- x^{**-}_p\big|\cdot
\big| \nabla G(x_{p,l}, \xi\e_{p,j}x^{**}_p+(1-\xi)\e_{p,j}x^{**-}_p +x_{p,j}) \big|
\Big)~\,\mbox{with}~\,\xi\in (0,1),
\end{split}
\end{equation*}
and
$$\big| \nabla G(x_{p,l}, \xi\e_{p,j}x^{**}_p+(1-\xi)\e_{p,j}x^{**-}_p +x_{p,j}) \big| =O(1).$$
Then it follows
\begin{equation*}
\begin{split}
G(&x_{p,l}, \e_{p,j}x^{**}_p+x_{p,j})-G(x_{p,l},\e_{p,j}x^{**-}_p+x_{p,j})  =
 O\Big(\e_{p,j}|x^{**}_{p,2}|\Big)~\mbox{for}~l\neq j.
\end{split}
\end{equation*}
If $l= j$, since $|x^{**}_p|=|x^{**-}_p|$, then
\begin{equation*}
\begin{split}
G(&x_{p,j}, \e_{p,j}x^{**}_p+x_{p,j})-G(x_{p,j},\e_{p,j}x^{**-}_p+x_{p,j}) \\=&
-H(x_{p,j}, \e_{p,j}x^{**}_p+x_{p,j})+H(x_{p,j},\e_{p,j}x^{**-}_p+x_{p,j}) \\=&
O\Big(\e_{p,j}\big|x^{**}_p- x^{**-}_p\big|\cdot
\big| \nabla H(x_{p,j}, \xi\e_{p,j}x^{**}_p+(1-\xi)\e_{p,j}x^{**-}_p +x_{p,j}) \big|
\Big)~\,\mbox{with}~\,\xi\in (0,1),
\end{split}
\end{equation*}
and
$$\big| \nabla H(x_{p,j}, \xi\e_{p,j}x^{**}_p+(1-\xi)\e_{p,j}x^{**-}_p +x_{p,j}) \big| =O(1).$$
Then by above computations, it follows
\begin{equation*}
\begin{split}
G(&x_{p,j}, \e_{p,j}x^{**}_p+x_{p,j})-G(x_{p,j},\e_{p,j}x^{**-}_p+x_{p,j})  =
 O\Big(\e_{p,j}|x^{**}_{p,2}|\Big).
\end{split}
\end{equation*}
So from  \eqref{luoluo1}, \eqref{9-27-2} and \eqref{9-27-3}, we have
\begin{equation}\label{9-27-4}
\begin{split}
 u_{p,j} (\e_{p,j}x^{**}_p+x_{p,j})-u_{p,j}(\e_{p,j}x^{**-}_p+x_{p,j})
=O\Big(\frac{\e_{p,j}|x^{**}_{p,2}|}{p}\Big)
+o\Big(\sum^k_{l=1}\frac{\e_{p,l}}{p} \Big).
\end{split}\end{equation}
Now from  \eqref{9-27-1}, \eqref{9-27-4} and the fact $u_p(x_{p,j})\to \sqrt{e}$ by \eqref{ConvMax}, we get
\begin{equation}\label{llls}
\begin{split}
\omega^*_p(x^{**}_p)
=  O\Big(p {\e_{p,j}|x^{**}_{p,2}|}\Big)
+o\Big(p\sum^k_{l=1} \e_{p,l}\Big). \end{split}\end{equation}
As a result,
\begin{equation*}
N^{**}_p\leq C p  \e_{p,j} |x^{**}_{p,2}|^{1-\tau}+o(1)
\leq  C p  \e_{p,j}^{\tau}  +o(1)=o(1).
\end{equation*}
This is a contradiction and concludes the proof of \emph{Part 1.}

\vskip 0.2cm

\noindent
\emph{Part 2. We prove that $|x^{**}_p|\leq C$.}

\vskip 0.2cm
 Suppose this is not true.
Now by \emph{Part 1}, we have $x^{**}_p\in B_{\frac{d_0}{2\e_{p,j}}}(0)$. Thus
\begin{equation}\label{sts}|\nabla \omega_p(x^{**}_p)|=0~\mbox{and}~\Delta \omega_p(x^{**}_p)\leq 0.
\end{equation}
So from
\eqref{fs}, \eqref{fst} and \eqref{sts}, we find
\begin{equation}\label{fs2}
\begin{split}
0\leq &-\Delta \omega_p(x^{**}_p)=-
\frac{\tau(1-\tau)}{(1+x^{**}_{p,2})^2}\omega_p(x^{**}_p)
+\frac{g_p(x^{**}_p)}{(1+x^{**}_{p,2})^\tau}+ \frac{\widetilde{{g}}_p(x^{**}_p)}{(1+x^{**}_{p,2})^\tau}\\=&-
\left(\frac{\tau(1-\tau)}{(1+x^{**}_{p,2})^2}+O\Big(\frac{ \log (1+|x^{**}_{p,2}|) }{ (1+|x^{**}_{p}|^{4-\delta})(1+x^{**}_{p,2})^\tau }\Big)
\right)\omega_p(x^{**}_p)
+\frac{g_p(x^{**}_p)}{(1+x^{**}_{p,2})^\tau}.
\end{split}\end{equation}
Also by assumption $|x^{**}_p|\to \infty$, then for some large $R$, there exists $p_0$ such  that $|x^{**}_p|\geq R$ for $p\geq p_0$. And then for some $\delta\in (0,1)$, it follows
\begin{equation}\label{ssat}
O\Big(\frac{ \log (1+|x^{**}_{p,2}|) }{ (1+|x^{**}_{p}|^{4-\delta})(1+x^{**}_{p,2})^\tau }\Big)\leq \frac{2}{(1+x^{**}_{p})^{4-2\delta}}\leq  \frac{\tau(1-\tau)}{2(1+x^{**}_{p,2})^2}\,\,\,\mbox{for}~~p\geq p_0.
\end{equation}
Hence using \eqref{fs2}, \eqref{ssat} and the fact that $|x^{**}_p|\to \infty$, we deduce
\begin{equation*}
\frac{\omega_p(x^{**}_p)}{(1+x^{**}_{p,2})^2}\leq \frac{Cg_p(x^{**}_p)}{(1+x^{**}_{p,2})^{\tau}}.
\end{equation*}
Then it follows
\begin{equation}\label{sh}
 \omega_p(x^{**}_p) \leq Cg_p(x^{**}_p)(1+x^{**}_{p,2})^{2-\tau}.
\end{equation}
Combining \eqref{St} and \eqref{sh}, we obtain
\begin{equation*}
N^{**}_p\leq
\frac{C\big(\log (1+|x_p^{**}|)\big)^2}{(1+|x_p^{**}|)^{2+\tau-\delta}}\to 0~~\mbox{as}~~p\to +\infty,
\end{equation*}
which is a contradiction with \eqref{dsd}. Then the proof of \emph{Part 2} is concluded and \eqref{1h} follows.\\

Now let \[\omega_p^{**}(x)=\frac{\omega_p(x)}{N_p^{**}},\]
where $\omega_p$ is defined in \eqref{assf} and $N_p^{**}$ is defined in \eqref{assf1}.
From \eqref{fs}, it follows that $\omega_p^{**}(x)$ solves
\begin{equation}\label{afs}
-\Delta \omega^{**}_p-\frac{2\tau}{1+x_2}\frac{\partial \omega^{**}_p}{\partial x_2}+
\frac{\tau(1-\tau)}{(1+x_2)^2}\omega^{**}_p=\frac{g_p(x)}{N_p^{**}(1+x_2)^\tau}+
\frac{\widetilde{{g}}_p(x)}{N_p^{**}(1+x_2)^\tau}.
\end{equation}
Moreover $|\omega^{**}_{p}(x)|\leq 1$ and
$|\omega^{**}_{p}(x^{**}_p)|=1$, hence $\omega^{**}_p(x)\to \omega$ uniformly
in any compact subset of $\R^2$. And   $\omega\not\equiv0$ because $\omega_p^{**}(x_{p}^{**})=1$ and  $|x_{p}^{**}|\le C$. Observe that from \eqref{St},
$$\frac{g_p(x)}{N_p^{**}(1+x_2)^{\tau}}\to 0~\mbox{uniformly in any compact set of}~\big\{|x|\leq \frac{d_0}{\e_{p,j}}\big\}~\mbox{as}~p\to +\infty,$$
and that for any  $\phi(x)\in C^{\infty}_0(B_{\frac{d_0}{\e_{p,j}}}(0))$, we have also
\begin{equation*}
\begin{split}
\frac{1}{N_p^{**}} &\int_{B_{\frac{d_0}{\e_{p,j}}}(0)}\Big( \widetilde{g}_p(x) -e^{U(x)} \omega_p(x)  \Big)  \phi(x)dx \\=&
\frac{1}{N_p^{**}} \int_{B_{\frac{d_0}{\e_{p,j}}}(0)}\left( p\big(e^{w_{p,j}(x)}-e^{w_{p,j}(x^-)}\big)-e^{U(x)}\omega_p(x)\right)  \phi(x)dx\\=&
O\Big(\int_{B_{\frac{d_0}{\e_{p,j}}}(0)}\big|e^{w_{p,j}(x)}-U(x)\big| e^{U(x)}\cdot|\phi(x)|dx\Big)
\to 0.
\end{split}
\end{equation*}
Here  we use Lemma \ref{llma} and the dominated convergence theorem.
Hence, passing to the limit $p\to \infty$ into \eqref{afs},  we can deduce that   $\omega$ solves
\begin{equation*}
\begin{cases}
-\Delta \omega -\frac{2\tau}{1+x_2}\frac{\partial \omega}{\partial x_2}
+\left(\frac{\tau(1-\tau)}{(1+x_2)^2}-e^{U(x)} \right)\omega =0,~x_2>0,\\[2mm]
\omega(x_1,0)=0.
\end{cases}
\end{equation*}
Hence $\bar \omega=(1+x_2)^{\tau}\omega$ satisfies
\begin{equation*}
\begin{cases}
-\Delta \bar \omega  -e^{U} \bar \omega =0,~x_2>0,\\[2mm]
\bar\omega(x_1,0)=0.
\end{cases}
\end{equation*}
And then using Lemma \ref{llm}, we have
\begin{equation}\label{lss}
\bar \omega(x)=c_0\frac{\partial U(\frac{x}{\lambda})}{\partial \lambda}\Big|_{\lambda=1}+c_1  \frac{\partial U(x)}{\partial x_1}+c_2
\frac{\partial U(x)}{\partial x_2},~~
\mbox{for some constants $c_0$, $c_1$ and $c_2$}.
\end{equation}
On the other hand, by the definition of $v_{p,j}(x)$ and $w^*_p(x)$, we know
\begin{equation*}
\begin{split}
\nabla \omega_p^*(0)=&\nabla \big( v_{p,j}(x)-v_{p,j}(x^-)\big)\big|_{x=0}\\=&
p\big(\nabla w_{p,j}(x)-\nabla U(x)\big)\big|_{x=0}-p\big(\nabla w_{p,j}(x^-)-\nabla U(x^-)\big)\big|_{x=0} \\=&
\frac{p^2\varepsilon_{p,j}}{u_{p}(x_{p,j})}\nabla u_{p}(x_{p,j}+\varepsilon_{p,j}x)\big|_{x=0}-
\frac{p^2\varepsilon_{p,j}}{u_{p}(x_{p,j})}\nabla u_{p}(x_{p,j}+\varepsilon_{p,j}x^-)\big|_{x=0}=0.
\end{split}
\end{equation*}
Here the last equality follows by the fact that $x_{p,j}$ is a local maximum point of $u_{p}(x)$.
And then
\begin{equation*}
\nabla \big((1+x_2)^{\tau} w_p^{**}\big)  \big|_{x=0}=
\frac{\nabla \omega_p^*(0)}{N_p^{**}}= 0,
\end{equation*}
which means $\nabla \bar\omega(0)=0$. Then from \eqref{lss}, we find $c_1=c_2=0$.
Moreover from $\bar\omega(x_1,0)=0$, we also have $c_0=0$. So $\bar \omega=0$, which is a contradiction.
Hence \eqref{sst} follows, which concludes the proof of \emph{Step 1}.
\vskip 0.2cm
\noindent\emph{Step 2. We prove that $N_{p}\leq C$ by contradiction.}
\proof[Proof of Step 2]
Suppose by contradiction that $N_p\to +\infty$, as $p\to +\infty$ and set
\begin{equation*}
\psi_{p,j}(r)=\frac{1}{2\pi}\int^{2\pi}_0v_{p,j}(r,\theta)d \theta,~\,\,r=|x|.
\end{equation*}
Then by \emph{Step 1}, it follows that
\begin{equation*}
\begin{split}
 \frac{|\psi_{p,j}(r)|}{(1+r)^\tau}=&
 \frac{1}{2\pi(1+r)^\tau} {\Big |}\int^{2\pi}_0v_{p,j}(r,\theta)d \theta
 {\Big |}\\\leq  &
 \max_{\theta\in [0,2\pi]} \frac{|v_{p,j}(r,\theta)|}{ (1+r)^\tau}
 +
 \frac{1}{2\pi(1+r)^\tau}  \left| \int^{2\pi}_0\Big (v_{p,j}(r,\theta)- \max_{\theta\in [0,2\pi]}  \big | v_{p,j}(r,\theta) \big | \Big)  d \theta  \right|.
\end{split}
\end{equation*}
Then we have
\begin{equation*}
\begin{split}
  \max_{r\leq \frac{d_0}{\e_{p,j}}}\frac{|\psi_{p,j}(r)|}{(1+r)^\tau}\leq &
  \max_{r\leq \frac{d_0}{\e_{p,j}}} \max_{\theta\in [0,2\pi]} \frac{|v_{p,j}(r,\theta)|}{ (1+r)^\tau}
 +C \max_{r\leq \frac{d_0}{\e_{p,j}}} \max_{\theta_1,\theta_2\in [0,2\pi]}
 \frac{\big|v_{p,j}(r,\theta_1)-v_{p,j}(r,\theta_2)\big|}{ (1+r)^\tau}
 \\=&
  \max_{|x|\leq \frac{d_0}{\e_{p,j}}} \frac{|v_{p,j}(x)|}{ (1+|x|)^\tau} +
 C \max_{|x|\leq \frac{d_0}{\e_{p,j}}} \max_{|x'|=|x|}\frac{|v_{p,j}(x)-v_{p,j}(x')|}{ (1+|x'|)^\tau}
\\=&N_p+C N^*_p =N_p\big(1+o(1)\big).
\end{split}
\end{equation*}
And similarly, it follows
\begin{equation*}
\begin{split}
  \max_{r\leq \frac{d_0}{\e_{p,j}}}\frac{|\psi_{p,j}(r)|}{(1+r)^\tau}\geq &
  \max_{r\leq \frac{d_0}{\e_{p,j}}} \max_{\theta\in [0,2\pi]} \frac{|v_{p,j}(r,\theta)|}{ (1+r)^\tau}
 -C \max_{r\leq \frac{d_0}{\e_{p,j}}} \max_{\theta_1,\theta_2\in [0,2\pi]}
 \frac{\big|v_{p,j}(r,\theta_1)-v_{p,j}(r,\theta_2)\big|}{ (1+r)^\tau}
 \\=&
 \max_{|x|\leq \frac{d_0}{\e_{p,j}}} \frac{|v_{p,j}(x)|}{ (1+|x| )^\tau} -
 C \max_{|x|\leq \frac{d_0}{\e_{p,j}}} \max_{|x'|=|x|}\frac{|v_{p,j}(x)-v_{p,j}(x')|}{ (1+|x'|)^\tau}
\\=&N_p-C N^*_p =N_p\big(1+o(1)\big).
\end{split}
\end{equation*}
Hence we can find that
\begin{equation*}
\max_{r\leq \frac{d_0}{\e_{p,j}}}\frac{|\psi_{p,j}(r)|}{(1+r)^\tau}=N_p\big(1+o(1)\big).
\end{equation*}

Assume that $\frac{|\psi_{p,j}(r)|}{(1+r)^\tau}$ attains its maximum at $r_p$.
Then we claim
\begin{equation}\label{aatty}
r_p\leq C.
\end{equation}
In fact, let $\phi(x)=\frac{\partial U(\frac{x}{\lambda})}{\partial \lambda}\big|_{\lambda=1}$ and  we recall that
$-\Delta\phi(x)=e^{U(x)}\phi(x)$.
Now multiplying \eqref{5-7-33}   by $\phi$ and using integration by parts, we have
\begin{equation}\label{ss1}
\begin{split}
\int_{|x|=r}&\Big[\frac{\partial v_{p,j}}{\partial \nu}\phi-  \frac{\partial \phi}{\partial \nu}v_{p,j}\Big]\,d\sigma
\\=&
\int_{|x|\leq r}  v_{p,j}\overline{g}_p\phi\, dx +
\int_{|x|\leq r} g^*_p\phi\, dx\\
=&  O\Big(N_p \int_{|x|\leq r}\overline{g}_p\phi(1+|x|)^{\tau}\,dx+
\int_{|x|\leq r} g^*_p\phi\,dx\Big)  \\
=&
o(1)N_p+O(1).
\end{split}
\end{equation}
Here the last identity holds by \eqref{5-8-2}.
Also for $r^2>8$, we know
\begin{equation}\label{ss2}
\begin{split}
\int_{|x|=r}\Big[\frac{\partial v_{p,j}}{\partial \nu}\phi-  \frac{\partial \phi}{\partial \nu}v_{p,j}\Big]\,d\sigma
= & \frac{8-r^2}{8+r^2}\int_{|x|=r} \frac{\partial v_{p,j}}{\partial \nu}\, d\sigma
 -\phi'(r)\int_{|x|=r}  v_{p,j}\, d\sigma\\
=& 2\pi \frac{8-r^2}{8+r^2} r\psi_{p,j}'(r)   +64\pi\frac{ r^2(1+r)^{\tau}}{(8+r^2)^2}N_p.
\end{split}
\end{equation}
Also for $r^2>8$ and $\tau\in(0,1)$,  it holds
\begin{equation*}
\frac{ r^2(1+r)^{\tau}}{(8+r^2)^2}\leq  \frac{C_\tau}{r},
\end{equation*}
for example, we take $C_\tau:=\displaystyle\max_{r^2\geq 8}\frac{ r^3(1+r)^{\tau}}{(8+r^2)^2}$.
Hence for $r^2>8$, from \eqref{ss1} and \eqref{ss2}, we have
\begin{equation*}
|\psi'_{p,j}(r)|=  \frac1r o\big(N_p\big)+  \frac 1r O(1).
\end{equation*}
And then we find \begin{equation}\label{aas}
\begin{split}
(1+r_p)^{\tau}N_p\big(1+o(1)\big)=&|\psi_{p,j}(r_p)|\leq \int^{r_p}_3|\psi'_{p,j}(r)|dr +|\psi_{p,j}(3)|
\\ \leq &o\big(N_p \big)+O\big(1\big)+|\psi_{p,j}(3)|,
\end{split}\end{equation}
also it follows
\begin{equation}\label{aasa}
\psi_{p,j}(3)\leq 4^{\tau}\max_{r\leq \frac{d_0}{\e_{p,j}}}\frac{|\psi_{p,j}(r)|}{(1+r)^\tau}=O\big( N_p \big).
\end{equation}
Hence  from \eqref{aas} and \eqref{aasa}, we have
$(1+r_p)^{\tau}N_p= O\big(N_p \big)$,
 which implies \eqref{aatty}.
\vskip 0.2cm

Now integrating \eqref{5-7-33} and using Lemma \ref{llma}, we get
\begin{equation}\label{equazionePsi}
-\Delta \psi_{p,j}=\psi_{p,j}e^{U}+\overline{h}_p+h^*_p,\qquad~\mbox{ for}~|x|\leq \frac{d_0}{\e_{p,j}},
\end{equation}
where $\gamma>0$ is a small fixed constant,
$$\overline{h}_p(r):=\frac{1}{2\pi}\displaystyle\int^{2\pi}_0\psi_{p,j}(r,\theta) \overline{g}_p(r,\theta)d \theta,$$
and
$$ h^*_{p}(r):=\frac{1}{2\pi}\displaystyle\int^{2\pi}_0 g^*_p(r,\theta)d \theta,$$
where $\overline{g}_p$  and $g^*_p$ are the same as in \eqref{St0} and \eqref{St} correspondingly.

Next we define $\psi^*_{p,j}(x)=\frac{\psi_{p,j}(|x|)}{\psi_{p,j}(r_p)}$ and   pass to the limit in the equation \eqref{equazionePsi} divided by $\psi_{p,j}(r_p)$. One has
\begin{equation*}
|\psi^*_{p,j}(x)|\leq \frac{C\big|\psi_{p,j}(|x|)\big|}{N_p}\leq C\big(1+|x|\big)^{\tau}.
\end{equation*}
Also from \eqref{St}, we know
\begin{equation*}
\frac{h^*_p(r)}{\psi_{p,j}(r_p)}\leq \frac{Ch^*_p(r)}{N_p}\leq \frac{C}{N_p}\to 0.
\end{equation*}
Moreover from \eqref{5-7-32}, Lemma \ref{llma} and the dominated convergence theorem,  it follows
\begin{equation*}
\int_{B_{\frac{d_0}{\e_{p,j}}}(0)}
\frac{\overline{h}_p(|x|)}{\psi_{p,j}(r_p) } \phi(x)dx \to 0\,\,~\mbox{for any radial function}~\phi(x)\in C^{\infty}_0\big(B_{\frac{d_0}{\e_{p,j}}}(0)\big).
\end{equation*}
Hence from the above computations and the dominated convergence theorem, we can deduce that $\psi^*_{p,j}\to \psi(|x|)$ in  $C^2_{loc}(\R^2)$ and $\psi$ satisfies
\begin{equation*}
-\psi''-\frac{1}{r}\psi'=e^{U}\psi.
\end{equation*}
Therefore, it follows
\begin{equation}\label{stsyd}
\psi(r)=c_0\frac{8-r^2}{8+r^2},~\mbox{with some constant}~c_0>0.
\end{equation}
Since $\psi^*_{p,j}{ (r_{p})}=1$ and $r_p\leq C$, we find $\psi\not\equiv 0$.

On the other hand, we know that $\psi_{p,j}(0){  =v_{p,j}(0)=0}$  by definition.  Therefore, we have $\psi(0)=0$, which, together with
\eqref{stsyd}, implies $\psi\equiv 0$. This is a contradiction. This gives \eqref{lpy1} and completes the proof of Proposition \ref{lem3-1a}.
\end{proof}

Next  we introduce
\begin{equation}\label{sat}
k_{p,j}:=p\big(v_{p,j}-w_0\big),
\end{equation}
where $v_{p,j}$ is defined in \eqref{dsa} and $w_{0}$ is its limit function as $p\rightarrow +\infty$ (see Proposition \ref{prop3-2}).
\begin{Lem}For any small fixed $d_0>0$, it holds
\begin{equation}
\label{b5-7-33}
\begin{split}
-\Delta k_{p,j}=  k_{p,j}e^{U}+  h^*_p~\,\,\,\mbox{ in}~~\Big\{|x|\leq \frac{d_0}{\e_{p,j}}\Big\},
\end{split}
\end{equation}
with $ h^*_p(x)=O\Big(\frac{ \log(1+|x|)}{ 1+|x|^{4-\delta-2\tau}}\Big)$.
\end{Lem}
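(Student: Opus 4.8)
The plan is to obtain the equation for $k_{p,j}$ by subtracting the equations satisfied by $v_{p,j}$ and by its limit $w_0$, and then to identify and estimate the inhomogeneous term that is produced. First I would recall that $v_{p,j}$ solves \eqref{5-7-33}, i.e.\ $-\Delta v_{p,j}=v_{p,j}e^{U}+v_{p,j}\overline{g}_p+g^*_p$ on $\{|x|\le d_0/\e_{p,j}\}$, and that by \eqref{6-4-2} the limit $w_0$ of Proposition \ref{prop3-2} satisfies $-\Delta w_0=e^{U}w_0-\frac{U^2}{2}e^{U}$ on $\R^2$. Subtracting these identities, multiplying by $p$ and using $k_{p,j}=p(v_{p,j}-w_0)$ from \eqref{sat}, one gets at once
\[
-\Delta k_{p,j}=e^{U}k_{p,j}+h^*_p,\qquad h^*_p:=p\,v_{p,j}\,\overline{g}_p+p\Big(g^*_p+\tfrac{U^2}{2}e^{U}\Big)\quad\mbox{on }\{|x|\le \tfrac{d_0}{\e_{p,j}}\},
\]
so the whole task reduces to bounding $h^*_p$.

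For the first summand of $h^*_p$ I would insert the estimate $\overline{g}_p(x)=O\big(|w_{p,j}-U|/(1+|x|^{4-\delta})\big)$ of \eqref{St0} together with $|w_{p,j}-U|=|v_{p,j}|/p$ and the a priori bound $|v_{p,j}(x)|\le C(1+|x|)^{\tau}$ of Proposition \ref{lem3-1a} (see \eqref{lpy1}); this immediately gives $|p\,v_{p,j}\overline{g}_p|\le C|v_{p,j}|^2/(1+|x|^{4-\delta})=O\big((1+|x|^{4-\delta-2\tau})^{-1}\big)$, which is dominated by the target bound away from the origin. Near the origin one uses in addition that $v_{p,j}$, like $U$, vanishes there together with its gradient --- indeed $\nabla w_{p,j}(0)=0$ because $x_{p,j}$ is a critical point of $u_p$, and $\nabla U(0)=0$ --- so that this summand is $O(|x|^{4})$ for $|x|$ small, still within the claimed bound.

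The real work is the second summand, for which the leading-order identity \eqref{St1} has to be sharpened by one power of $1/p$. I would expand $g^*_p=p\,e^{w_{p,j}}(e^{A_p}-1)$ with $A_p:=p\log(1+\tfrac{w_{p,j}}{p})-w_{p,j}=-\tfrac{w_{p,j}^2}{2p}+O(w_{p,j}^3/p^2)$, use $e^{A_p}-1=A_p+O(A_p^2)$, and then substitute $w_{p,j}=U+v_{p,j}/p$ and $e^{w_{p,j}}=e^{U}\big(1+O(v_{p,j}/p)\big)$. On the region where $v_{p,j}/p$ is controllably small --- by \eqref{lpy1} this is $\{|x|\le R_p\}$ with $R_p$ a suitable power of $p$ --- this produces
\[
p\Big(g^*_p+\tfrac{U^2}{2}e^{U}\Big)=e^{U}\,\mathcal{P}(U,v_{p,j})+O\!\Big(\tfrac1p\,e^{U}\,\mathcal{Q}(U,v_{p,j})\Big)
\]
for fixed polynomials $\mathcal{P},\mathcal{Q}$; then $e^{U}=O((1+|x|^{4})^{-1})$, $|U|\le C\log(2+|x|)$, $|v_{p,j}|\le C(1+|x|)^{\tau}$ --- absorbing powers of $\log$ into an arbitrarily small power $(1+|x|)^{\delta}$, and again using the vanishing at the origin --- give the bound $O\big(\log(1+|x|)(1+|x|^{4-\delta-2\tau})^{-1}\big)$. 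On the complementary far region $R_p\le|x|\le d_0/\e_{p,j}$ I would not expand at all: there $|x|^{2\tau}$ dominates any fixed power of $p$ (since $\log|x|\le\log(d_0/\e_{p,j})=O(p)$ by the definition of $\e_{p,j}$ and \eqref{ConvMax}), so multiplying by $p$ the already-established bounds $g^*_p(x)=O\big((\log(1+|x|))^{2}(1+|x|^{4-\delta})^{-1}\big)$ of \eqref{St} and $\tfrac{U^2}{2}e^{U}=O\big((\log(1+|x|))^{2}(1+|x|^{4})^{-1}\big)$ keeps both contributions within $O\big(\log(1+|x|)(1+|x|^{4-\delta-2\tau})^{-1}\big)$. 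Combining the two regions with the estimate of the first summand yields the asserted bound on $h^*_p$.

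The step I expect to be the main obstacle is precisely this last one: the naive second-order Taylor expansion of $g^*_p$ is \emph{not} uniformly valid up to $|x|\sim d_0/\e_{p,j}$ (there $w_{p,j}$ has size of order $p$), so the estimate must be organized by splitting the domain as above and the Taylor remainder carried with enough precision; the remaining computations are routine but somewhat tedious.
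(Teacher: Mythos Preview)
Your proposal is correct and follows essentially the same route as the paper: derive the equation for $k_{p,j}$ by subtracting \eqref{5-7-33} from the equation \eqref{6-4-2} for $w_0$, split $h^*_p=p\,v_{p,j}\,\overline g_p+p\bigl(g^*_p+\tfrac{U^2}{2}e^U\bigr)$, and bound each piece using Proposition~\ref{lem3-1a} and Lemma~\ref{llma}. The only difference is one of presentation: the paper applies the Taylor expansion for the second summand directly on the whole ball without your near/far domain splitting (and does not discuss the origin separately), but the decomposition and the estimates are otherwise identical.
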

\begin{proof}
 {Recalling that $v_{p,j}=p\big(w_{p,j}-U\big)$ and by direct computations,  we have
\begin{equation}\label{b5-7-31}
\begin{split}
-\Delta k_{p,j}=& p\Big( p \big(-\Delta w_{p,j}+\Delta U\big)+\Delta w_0\Big)
\\=&p\Big(p\big(e^{w_{p,j}}-e^{U} \big)+ g^*_p-e^{U}w_0+\frac{U^2}{2}e^{U}\Big)\\=&
\underbrace{p\Big(p\big(e^{w_{p,j}}-e^{U} \big)-e^{U}w_0\Big)}_{I_1} + \underbrace{p\Big(g^*_p+\frac{U^2}{2}e^{U}\Big)}_{I_2},
\end{split}\end{equation}
where $g^*_p$ is the same as in \eqref{dsa}.
Now similarly as in \eqref{5-7-32} we compute
\begin{equation*}
\begin{split}
p\big(e^{w_{p,j}}-e^{U} \big) -e^{U}w_0=
  \Big( v_{p,j}-w_0\Big) e^{U}+O\Big(\frac{ |v_{p,j}|}{p( 1+|x|^{4-\delta} ) }\Big).
\end{split}\end{equation*}
And then, by Proposition \ref{lem3-1a}, it follows
\begin{equation}\label{is}
\begin{split}
I_1= k_{p,j} e^{U}+O\Big(\frac{ 1}{ 1+|x|^{4-\delta-2\tau}}\Big).
\end{split}\end{equation}
Recalling $\log (1+x)=x-\frac{x^2}{2}+O(x^3)$ and $e^x=1+x+O(x^2)$, we have
\begin{equation*}
 g^*_p =pe^{w_{p,j}}\Big(-\frac{w^2_{p,j}}{2p}+O\big(\frac{|w_{p,j}|^3}{p^2}\big)\Big)=
 e^{w_{p,j}}\Big(-\frac{w^2_{p,j}}{2}+O\big(\frac{|w_{p,j}|^3}{p}\big)\Big).
\end{equation*}
Hence using Lemma \ref{llma}, \eqref{dsa} and Proposition \ref{lem3-1a}, it holds
\begin{equation}\label{b5-7-32}
\begin{split}
I_2=&
p\left(e^{w_{p,j}}\Big(-\frac{w^2_{p,j}}{2}+O\big(\frac{|w_{p,j}|^3}{p}\big)\Big) +\frac{U^2}{2}e^{U}\right) \\=&
p\left(-\frac{w^2_{p,j}}{2}e^{w_{p,j}} +\frac{U^2}{2}e^{U}\right)+O\Big(e^{w_{p,j}}|w_{p,j}|^3 \Big)\\{=}&
O\Big(\frac{ |v_{p,j}| \cdot\log(1+|x|)}{ 1+|x|^{4-\delta}}\Big)+O\Big(\frac{ \log^3(1+|x|)}{ 1+|x|^{4-\delta}}\Big)\\=&O\Big(\frac{ \log(1+|x|)}{ 1+|x|^{4-\delta-2\tau}}\Big)~\qquad\mbox{ in}~~\Big\{|x|\leq \frac{d_0}{\e_{p,j}}\Big\}.
\end{split}
\end{equation}
And then \eqref{b5-7-31}, \eqref{is} and \eqref{b5-7-32} imply \eqref{b5-7-33}.}
\end{proof}

 Now we establish the following bound on $k_{p,j}$:
\begin{Prop}\label{blem3-1a}
Let $k_{p,j}$ be as in \eqref{sat}. Then for any small fixed $d_0,\tau_1>0$, there exists $C>0$ such that
\begin{equation}\label{blpy1}
|k_{p,j}(x)|\leq C(1+|x| )^{\tau_1}~\mbox{ in}~ B_{\frac{d_0}{\e_{p,j}}}(0).
\end{equation}
 As a consequence
\[w_{p,j}=U+\frac{w_{0}}{p}+O\left(\frac{1}{p^{2}}\right)
\,\,\mbox{ in }C^{2}_{loc}(\mathbb R^{2}).\]
\end{Prop}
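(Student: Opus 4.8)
The plan is to mimic the three steps of the proof of Proposition \ref{lem3-1a}, replacing $v_{p,j}$ by $k_{p,j}$, the exponent $\tau$ by $\tau_1$, and the source $g^*_p$ by the smaller term $h^*_p$ of \eqref{b5-7-33}, and arguing by contradiction. Set $M_p:=\max_{|x|\le d_0/\e_{p,j}}|k_{p,j}(x)|/(1+|x|)^{\tau_1}$, so that \eqref{blpy1} is equivalent to $M_p\le C$; suppose instead $M_p\to+\infty$.

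\emph{Step 1: the spherical oscillation is negligible.} First I would show that $\max_{|x|\le d_0/\e_{p,j}}\max_{|x'|=|x|}|k_{p,j}(x)-k_{p,j}(x')|/(1+|x|)^{\tau_1}=o(1)M_p$. Reflecting across an axis and antisymmetrising $\omega^*_p:=k_{p,j}(x)-k_{p,j}(x^-)$, the radiality of $U$ gives $-\Delta\omega^*_p=e^U\omega^*_p+\big(h^*_p(x)-h^*_p(x^-)\big)$, and after dividing by $(1+x_2)^{\tau_1}$ one is in the situation of \eqref{fs}, now with a source of order $O\big(\log(1+|x|)\,(1+|x|)^{-(4-\delta-2\tau)}\big)$. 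As in \emph{Part 1} and \emph{Part 2} of that proof, the pointwise expansion \eqref{luo-1} of $u_p$ outside the spikes (used with $d=d_0/4$) and the maximum principle force the maximiser of the antisymmetrised quotient into a fixed compact set; rescaling there and passing to the limit, Lemma \ref{llm} shows that the limiting antisymmetric profile is a linear combination of $\partial_{x_1}U$, $\partial_{x_2}U$ and the scaling mode $\tfrac{8-|x|^2}{8+|x|^2}$, and the relations $k_{p,j}(0)=0$ and $\nabla k_{p,j}(0)=0$ — the latter because $\nabla v_{p,j}(0)=0$ ($x_{p,j}$ being a local maximum of $u_p$), hence $\nabla w_0(0)=0$ in the limit — make all three coefficients vanish, a contradiction. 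Writing $\kappa_{p,j}(r):=\tfrac{1}{2\pi}\int_0^{2\pi}k_{p,j}(r,\theta)\,d\theta$, this yields $\max_{r\le d_0/\e_{p,j}}|\kappa_{p,j}(r)|/(1+r)^{\tau_1}=M_p(1+o(1))$.

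\emph{Step 2: localising and rescaling the radial part.} Averaging \eqref{b5-7-33} over spheres, $\kappa_{p,j}$ solves $-\Delta\kappa_{p,j}=e^U\kappa_{p,j}+h^{**}_p$ with $|h^{**}_p(r)|=O\big(\log(1+r)\,(1+r)^{-(4-\delta-2\tau)}\big)$, which is integrable over $\R^2$ as soon as $\delta,\tau$ are small enough that $4-\delta-2\tau>2$. Testing against the bounded radial solution $\phi(r)=\tfrac{8-r^2}{8+r^2}$ of $-\Delta\phi=e^U\phi$ and integrating by parts on $\{|x|\le r\}$ exactly as in \eqref{ss1}–\eqref{aasa} (the bulk term being now merely $O(1)$, since there is no $\overline g_p$-type factor) gives $|\kappa'_{p,j}(r)|=\tfrac1r O(1)+\tfrac1{r^2}O(M_p)$ for $r^2>8$, and hence that the maximiser $r_p$ of $|\kappa_{p,j}(r)|/(1+r)^{\tau_1}$ is bounded. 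Then I would put $\kappa^*_{p,j}(x):=\kappa_{p,j}(|x|)/\kappa_{p,j}(r_p)$: it is bounded by $C(1+|x|)^{\tau_1}$, its equation has source $o(1)$ on compacts after division by $\kappa_{p,j}(r_p)\sim M_p$, so by elliptic estimates $\kappa^*_{p,j}\to\psi$ in $C^2_{loc}(\R^2)$ with $-\psi''-\tfrac1r\psi'=e^U\psi$, i.e. $\psi(r)=c_0\tfrac{8-r^2}{8+r^2}$. Since $k_{p,j}(0)=0$ gives $\psi(0)=0$, we get $c_0=0$ and $\psi\equiv0$, contradicting $\psi(\lim r_p)=1$. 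This proves \eqref{blpy1}.

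\emph{Consequence and main obstacle.} From $k_{p,j}=p^2(w_{p,j}-U)-p\,w_0$ and \eqref{blpy1}, $|k_{p,j}|\le C_K$ on each compact $K\subset\R^2$; inserting this bound into \eqref{b5-7-33} and using interior Schauder estimates gives $\|k_{p,j}\|_{C^2(K)}\le C_K$, and dividing by $p^2$ yields $w_{p,j}=U+\tfrac{w_0}{p}+O(\tfrac1{p^2})$ in $C^2_{loc}(\R^2)$. As in Proposition \ref{lem3-1a}, the delicate point is \emph{Step 1}, i.e. the control of the spherical oscillation of $k_{p,j}$; it rests on the sharp expansion \eqref{luo-1} of $u_p$ away from the concentration points together with $\nabla u_p(x_{p,j})=0$. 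The only genuinely new ingredient here is the mild logarithmic growth of the source $h^*_p$ in \eqref{b5-7-33}, which is harmless provided $\tau,\tau_1,\delta$ are chosen small enough that $4-\delta-2\tau>2$.
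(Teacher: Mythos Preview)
Your proposal is correct and follows essentially the same route as the paper's proof in Appendix~\ref{76}: contradiction via $M_p\to\infty$, antisymmetrisation plus a maximum-principle argument to control the spherical oscillation, localisation of the radial maximiser by testing against $\phi=(8-r^2)/(8+r^2)$ (indeed simpler here since \eqref{b5-7-33} has no $\overline g_p$-type term), and a limiting contradiction via Lemma~\ref{llm} together with $k_{p,j}(0)=0$ and $\nabla k_{p,j}(0)=0$. The only detail you leave implicit in \emph{Part~1} of Step~1 is that, besides the expansion \eqref{luo-1} of $u_p$, one also needs $w_0(y)-w_0(y^-)=O(|y|^{-1})$ at infinity, which the paper gets from the explicit structure of $w_0$ in \eqref{5-26-2}--\eqref{abcd}.
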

\begin{proof} The proof of \eqref{blpy1} is similar to the one of Proposition \ref{lem3-1a}. Hence we postpone it to the Appendix \ref{76}.
\end{proof}

We can now improve \eqref{abs} as follows.
\begin{Prop}\label{Prop:5-12-52}
For any small fixed $d>0$, there exists a small constant $\delta\in (0,1)$ such that
\begin{equation}\label{5-12-52}
\int_{\frac{d}{\varepsilon_{p,j}}(0)}\Big(1+\frac{ w_{p,j}(z)}{p}
  \Big)^pdz
   = 8\pi\left(1-\frac{3}{p}\right)+O\Big(\frac{1}{p^{2-\delta}}\Big).
\end{equation}
\end{Prop}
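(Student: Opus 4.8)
\emph{Proof plan.} The idea is to read the integral in \eqref{5-12-52} as a boundary flux and to evaluate it on a circle whose radius grows polynomially in $p$. Since $u_p$ solves \eqref{1.1}, the rescaled function $w_{p,j}$ of \eqref{defwpj} satisfies $-\Delta w_{p,j}=\bigl(1+\tfrac{w_{p,j}}{p}\bigr)^{p}$ in $B_{d/\e_{p,j}}(0)$, so the divergence theorem gives, for every $R\le d/\e_{p,j}$,
\[
\int_{B_{R}(0)}\Bigl(1+\frac{w_{p,j}}{p}\Bigr)^{p}dz=-\int_{\partial B_{R}(0)}\partial_r w_{p,j}\,d\sigma .
\]
I would fix $A:=2$ and set $R_p:=p^{A}$; note $R_p\le d/\e_{p,j}$ for $p$ large, because by \eqref{ConvMax} one has $u_p(x_{p,j})\ge 1+c$ eventually, hence $\e_{p,j}=(p\,u_p(x_{p,j})^{p-1})^{-1/2}\le e^{-cp}=o(p^{-A})$. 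The integral over $B_{d/\e_{p,j}}(0)$ then splits into the flux through $\partial B_{R_p}(0)$ and the integral over the annulus $R_p\le|y|\le d/\e_{p,j}$, which I treat separately.

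\emph{Flux through $\partial B_{R_p}(0)$.} On $B_{R_p}(0)$ I use the exact decomposition $w_{p,j}=U+\tfrac1p w_0+\tfrac1{p^{2}}k_{p,j}$ read off from \eqref{dsa} and \eqref{sat}, together with Proposition~\ref{blem3-1a}, i.e. $|k_{p,j}(x)|\le C(1+|x|)^{\tau_1}$. Applying interior gradient estimates to \eqref{b5-7-33} on the balls $B_{|x|/2}(x)$ (on which the right-hand side of \eqref{b5-7-33} is $O(|x|^{\tau_1-4})+O(|x|^{-4+\delta+2\tau}\log|x|)$) upgrades this to $|\nabla k_{p,j}(x)|\le C(1+|x|)^{\tau_1-1}$ on $B_{R_p}(0)$. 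Since $U$ and $w_0$ are radial, this yields
\[
-\int_{\partial B_{R_p}(0)}\partial_r w_{p,j}\,d\sigma=-2\pi R_p\,\partial_rU(R_p)-\frac{2\pi R_p}{p}\,\partial_r w_0(R_p)+O\!\Bigl(\frac{R_p^{\tau_1}}{p^{2}}\Bigr).
\]
From \eqref{defU}, $\partial_rU(r)=-\tfrac{4r}{8+r^2}$, so $-2\pi R_p\,\partial_rU(R_p)=8\pi+O(R_p^{-2})$. For $w_0$ I invoke Remark~\ref{rem1-1} and Lemma~\ref{lll3}: $\widetilde w_0(y)=w_0(\sqrt8\,y)$ solves \eqref{6-4-1} with $f(t)=\tfrac{16\log^2(1+t^2)}{(1+t^2)^2}$, whence $\partial_r w_0(r)=\bigl(\int_0^{\infty}t\tfrac{t^2-1}{t^2+1}f(t)\,dt\bigr)\tfrac1r+O(r^{-3}\log^2 r)$; the substitution $u=1+t^2$ evaluates the coefficient as $8\int_1^{\infty}(u-2)u^{-3}\log^2u\,du=8(2-\tfrac12)=12$. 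Hence $-\tfrac{2\pi R_p}{p}\partial_r w_0(R_p)=-\tfrac{24\pi}{p}+O(p^{-1}R_p^{-2}\log^2R_p)$, and collecting terms with $R_p=p^{2}$,
\[
\int_{B_{R_p}(0)}\Bigl(1+\frac{w_{p,j}}{p}\Bigr)^{p}dz=8\pi-\frac{24\pi}{p}+O\!\bigl(p^{2\tau_1-2}\bigr).
\]

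\emph{The annulus and conclusion.} For $R_p\le|y|\le d/\e_{p,j}$ I use $\bigl(1+\tfrac{t}{p}\bigr)^{p}\le e^{t}$ (valid since $1+\tfrac{w_{p,j}}{p}>0$) together with Lemma~\ref{llma}, giving $\bigl(1+\tfrac{w_{p,j}}{p}\bigr)^{p}\le e^{w_{p,j}}\le C_{\delta'}|y|^{-(4-\delta')}$ once $p$ is large; integrating in polar coordinates this part is $O(R_p^{-(2-\delta')})=O(p^{-2(2-\delta')})$. Taking $\tau_1$ and $\delta'$ small, both remainders are $O(p^{-2+\delta})$ with $\delta:=2\tau_1\in(0,1)$, and since $8\pi-\tfrac{24\pi}{p}=8\pi(1-\tfrac3p)$ this gives \eqref{5-12-52}.

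\emph{Main difficulty.} The crux is that one needs control of $k_{p,j}$ — and, for the flux argument, of $\nabla k_{p,j}$ — uniformly on the growing ball $B_{R_p}(0)$, not merely on fixed compacts; this is precisely what Proposition~\ref{blem3-1a} (with elliptic regularity for \eqref{b5-7-33}) supplies, and the exponent bookkeeping among $A,\tau_1,\delta'$ must be arranged so that every error sits below $p^{-2+\delta}$ while the term $-\tfrac{24\pi}{p}$ stays exact. An equivalent route is to Taylor expand $\bigl(1+\tfrac{w_{p,j}}{p}\bigr)^{p}=e^{U}\bigl(1+\tfrac1p(w_0-\tfrac{U^2}{2})+O(p^{-2+\delta})\bigr)$ on $B_{R_p}(0)$ and integrate, reducing the constant to $\int_{\R^2}e^{U}(w_0-\tfrac{U^2}{2})=-24\pi$; evaluating this integral (integrate \eqref{6-4-2} and use $\partial_r w_0\sim 12/r$) rests on the same computation, so in either case it is the value $12$ of $\int_0^{\infty}t\tfrac{t^2-1}{t^2+1}f(t)\,dt$ that produces the constant $-3$.
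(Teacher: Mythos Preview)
Your proof is correct and takes a genuinely different route from the paper's. The paper Taylor–expands the integrand on $B_p(0)$: writing $w_{p,j}=U+\tfrac1p w_0+\tfrac1{p^2}k_{p,j}$ it obtains
\[
\int_{B_p(0)}\Bigl(1+\frac{w_{p,j}}{p}\Bigr)^{p}=\int_{\R^2}e^{U}+\frac1p\int_{\R^2}e^{U}\Bigl(w_0-\frac{U^2}{2}\Bigr)+O(p^{-2})=8\pi-\frac1p\int_{\R^2}\Delta w_0+O(p^{-2}),
\]
and then computes $\int_{\R^2}\Delta w_0=24\pi$ from the asymptotic $\nabla w_0\sim 12\,x/|x|^2$ --- exactly the computation of the constant $12$ you carry out. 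You instead convert the whole integral to a boundary flux via the divergence theorem and expand $\partial_r w_{p,j}$ on $\partial B_{R_p}(0)$; this bypasses the bulk Taylor expansion at the cost of the additional gradient bound $|\nabla k_{p,j}|\le C(1+|x|)^{\tau_1-1}$, which you correctly derive from Proposition~\ref{blem3-1a} and interior elliptic estimates applied to \eqref{b5-7-33}. One small point: your assertion that $w_0$ is radial deserves a word of justification --- it holds because $w_0(0)=\nabla w_0(0)=0$ (inherited from $v_{p,j}$, since $x_{p,j}$ is a local maximum), and these conditions together with \eqref{6-4-2} and the growth bound \eqref{boundw0} determine $w_0$ uniquely, while the normalized problem is rotation invariant; alternatively, you can avoid radiality altogether by using the full asymptotic $\nabla w_0=12\,x/|x|^2+o(|x|^{-1})$ (the kernel terms in \eqref{5-26-2} all contribute $O(|x|^{-2})$ to the gradient), which is all the flux integral needs.
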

\begin{proof}
Firstly, since $\frac{w_{p,j}(x)}{p}=\frac{u_p(x_{p,j}+\e_{p,j} x)}{u_p(x_{p,j})}-1$, then $-1<\frac{w_{p,j}(x)}{p}\leq 0$. And using the fact that $\log(1+a)<a$ for any $a\in (-1,0)$ and Lemma \ref{llma}, we have
  \begin{equation}\label{luo-a1}
  \begin{split}
 \int_{B_\frac{d}{\varepsilon_{p,j}}(0)\backslash B_p(0)}& \Big(1+\frac{  w_{p,j}}{p}
\Big)^p \\=& \int_{B_\frac{d}{\varepsilon_{p,j}}(0)\backslash B_p(0)} e^{ p\log \big(1+\frac{w_{p,j}}{p}
  \big) }\leq
   \int_{B_\frac{d}{\varepsilon_{p,j}}(0)\backslash B_p(0)} e^{w_{p,j} }\\=& O\Big(
   \int_{B_\frac{d}{\varepsilon_{p,j}}(0)\backslash B_p(0)}\frac{1}{1+|z|^{4-\delta}}dz\Big)=O\Big(\frac{1}{p^{2-\delta}}\Big)~\,\mbox{for some}~\delta\in (0,1) .
  \end{split}
  \end{equation}
Next using Proposition \ref{lem3-1a}, we know
\begin{equation*}
|w_{p,j}|=\big|U+\frac{v_{p,j}}{p}\big|=O\Big(\log p+p^{\tau-1}\Big)=O\Big(\log p \Big)~\,\mbox{in}\, B_p(0),
\end{equation*}
which gives
\begin{equation}\label{adsst}
\frac{|w_{p,j}|}{p},\,\frac{|w_{p,j}|^2}{p},\,\frac{|w_{p,j}|^3}{p}\to 0~\,\mbox{uniformly in}\, B_p(0).
\end{equation}
Recalling that $w_{p,j}=U+\frac{v_{p,j}}{p}$ by definition  \eqref{dsa} and using Taylor's expansion, it follows
  \begin{equation*}
  \begin{split}
 \int_{ B_p(0)}& \Big(1+\frac{ w_{p,j}}{p}
 \Big)^p = \int_{  B_p(0)} e^{ p\log \big(1+\frac{w_{p,j}}{p}
 \big) }\\=&
  \int_{ B_p(0)} e^{w_{p,j}}\Big(1-\frac{w^2_{p,j}}{2p}+o\big(\frac{w^2_{p,j}}{p}\big) \Big) \\=&
  \int_{ B_p(0)} e^{U}\Big(1+\frac{v_{p,j}}{p}+O\big(\frac{|v_{p,j}|^2}{p^2}\big) \Big)
  \Big(1-\frac{w^2_{p,j}}{2p}+O\big(\frac{|w_{p,j}|^3}{p^2}\big) \Big).
  \end{split}
  \end{equation*}
Hence using  \eqref{adsst}, the bound in Proposition \ref{lem3-1a} and the dominated convergence theorem one has
\begin{equation*}
 \int_{ B_p(0)} \Big(1+\frac{ w_{p,j}}{p}
 \Big)^p =
 \int_{ B_p(0)} e^{U}\Big(1+\frac{v_{p,j}}{p}-\frac{w^2_{p,j}}{2p}\Big)+O\Big( \frac{1}{p^2}  \Big),\\
\end{equation*}
which, since $v_{p,j}=w_{0}+\frac{k_{p,j}}{p}$ by  \eqref{sat},  becomes
 \begin{equation*}
   \begin{split}
 \int_{ B_p(0)} \Big(1+\frac{ w_{p,j}}{p}
 \Big)^p= &
 \int_{ B_p(0)} e^{U}\Big(1+\frac{w_0}{p}-\frac{U^2}{2p}\Big)+O\Big( \frac{1}{p^2}  \Big)\\&
   +\frac{1}{p^2}\int_{B_p(0)}e^{U}\Big(k_{p,j}-\frac{1}{2}\big(U+w_{p,j}\big)v_{p,j}\Big).
  \end{split} \end{equation*}
Then we can pass to the limit by the classical dominated convergence
theorem, thanks to the bounds   on $w_{0}$,  $k_{p,j}$, $w_{p,j}$ and  $v_{p,j}$ (see \eqref{boundw0},  Proposition \ref{blem3-1a}, Lemma \ref{llma} and Proposition \ref{lem3-1a}, respectively), and deduce that
    \begin{equation}\label{heart}
  \begin{split}
 \int_{ B_p(0)} \Big(1+\frac{ w_{p,j}}{p}
 \Big)^p =
 &
 \int_{\R^2} e^{U}+ \frac{1}{p}   \int_{\R^2} e^{U}\Big( w_0 -\frac{U^2}{2}\Big) +O\Big( \frac{1}{p^2}  \Big)\\
 =&
 \ 8\pi-\frac{1}{p}
 \int_{\R^2}  \Delta w_0+O\Big( \frac{1}{p^2}  \Big).
  \end{split}
  \end{equation}
  Also from Lemma \ref{lll3} and Remark \ref{rem1-1}, we have
\begin{equation}\label{5-26-2}
 \widetilde{w}_0(y)=w(|y|)+ c_0\frac{\partial U(\frac{y}{\lambda})}{\partial \lambda}\Big|_{\lambda=\frac{1}{\sqrt{8}}}+ \sum_{i=1}^2 c_i\frac{\partial U(\sqrt{8} y)}{\partial y_i}\,\,~\mbox{with some}~c_i\in \R~\mbox{for}~i=0,1,2.
\end{equation}
Then using \eqref{5-26-1} and \eqref{5-26-2}, we find
\begin{equation}\label{abcd}
\begin{split}
\nabla \widetilde{w}_0(y)=&\Big(16\int^{+\infty}_0t\frac{t^2-1}{(t^2+1)^3}\log^2 (1+t^2)dt \Big)\frac{y}{|y|^2}\\&+\nabla \left(c_0\frac{\partial U(\frac{y}{\lambda})}{\partial \lambda}\Big|_{\lambda=\frac{1}{\sqrt{8}}}+ \sum_{i=1}^2 c_i\frac{\partial U(\sqrt{8} y)}{\partial y_i}\right)+o\big(\frac{1}{|y|}\big)
\\=& \Big(16\int^{+\infty}_0t\frac{t^2-1}{(t^2+1)^3}\log^2 (1+t^2)dt \Big)\frac{y}{|y|^2}+o\big(\frac{1}{|y|}\big)\,\,~\mbox{as}~|y|\to +\infty.
\end{split}\end{equation}
Also by direct computation, we have
\begin{equation}\label{abc1}
16\int^{+\infty}_0t\frac{t^2-1}{(t^2+1)^3}\log^2 (1+t^2)dt =12.
\end{equation}
Hence \eqref{abcd} and \eqref{abc1} imply
\begin{equation*}
\begin{split}
\nabla \widetilde{w}_0(y)= \frac{12y}{|y|^2}+o\big(\frac{1}{|y|}\big)\,\,~\mbox{as}~|y|\to +\infty.
\end{split}
\end{equation*}
So we find
\begin{equation}\label{5-11-9}
\int_{\R^2}  \Delta w_0(x)dx=\lim_{R\to \infty}\int_{\partial B_R(0)} \frac{\partial w_0(x)}{\partial \nu} d\sigma=
 24\pi.
\end{equation}
Hence \eqref{5-12-52} follows by \eqref{luo-a1}, \eqref{heart} and \eqref{5-11-9}.
\end{proof}

\subsection{Proof of Theorem \ref{th1}}\label{sub:proofThm11}
\begin{proof}
By the Green representation theorem and \eqref{11-14-03N}, we have
\begin{equation}\label{s5-8-3}
\begin{split}
u_p(x_{p,j})=& \int_{\Omega}G(y,x_{p,j})
 u_{p}^p(y)\,dy\\=&
 \sum^k_{l=1}\int_{B_d(x_{p,l})}G(y,x_{p,j})u_{p}^p(y)\,dy
+\int_{\Omega\backslash  \bigcup^k_{l=1}B_{d}(x_{p,l})}G(y,x_{p,j})
u_{p}^{p}(y)\,dy\\=&
 \sum^k_{l=1}\int_{B_d(x_{p,l})}G(y,x_{p,j})u_{p}^p(y)\,dy
+O\Big(\frac{C^p}{p^p}\Big).
\end{split}
\end{equation}
Scaling and using the properties of Green's function, we know
 \begin{equation}\label{s5-8-4}
 \begin{split}
  \int_{B_d(x_{p,j})}&G\big(y,x_{p,j}\big)u_{p}^p(y)\,dy
         \\=&u_p^p (x_{p,j}) \big(\varepsilon_{p,j}\big)^2
         \int_{B_{\frac{d}{\varepsilon_{p,j}}}(0)}G
         \big(x_{p,j},x_{p,j}+\varepsilon_{p,j}z\big)
         \Big(1+\frac{w_{p,j}(z)}{p}
  \Big)^pdz
         \\=&
         -\frac{u_p(x_{p,j})}{p} \int_{B_{\frac{d}{\varepsilon_{p,j}}}(0)}H\big(x_{p,j},x_{p,j}+
         \varepsilon_{p,j}z\big)\Big(1+\frac{w_{p,j}(z)}{p}
  \Big)^pdz
          \\&-\frac{u_p(x_{p,j})}{2\pi p}
  \int_{B_{\frac{d}{\varepsilon_{p,j}}}(0)}\log |z|\Big(1+\frac{w_{p,j}(z)}{p}
   \Big)^pdz
          \\&-\frac{u_p(x_{p,j})\log \varepsilon_{p,j}}{2\pi  p}
  \int_{B_{\frac{d}{\varepsilon_{p,j}}}(0)}\Big(1+\frac{w_{p,j}(z)}{p}
   \Big)^pdz.
 \end{split}\end{equation}
Now using Proposition \ref{Prop:5-12-52}, we find
\begin{equation}\label{s6-4-11}
\begin{split}
 \int_{B_{\frac{d}{\varepsilon_{p,j}}}(0)}& H(x_{p,j},x_{p,j}+
\varepsilon_{p,j}z)\Big(1+\frac{w_{p,j}(z)}{p}
 \Big)^pdz\\=&
  H(x_{p,j},x_{p,j})\int_{B_{\frac{d}{\varepsilon_{p,j}}}(0)} \Big(1+\frac{w_{p,j}(z)}{p}
  \Big)^pdz
  +O\Big(\varepsilon_{p,j}\Big) \\
  =&8\pi\big(1-\frac{3}{p}\big)  H(x_{p,j},x_{p,j})+o\Big(\frac{1}{p}\Big)=8\pi  H(x_{\infty,j},x_{\infty,j})+O\big(\frac{1}{p}\big).
  \end{split}
\end{equation}
Also by the classical dominated convergence theorem and Lemma \ref{llma}, we know
\begin{equation}\label{6-4-12}
\begin{split}
 \lim_{p\to \infty}&\int_{B_{\frac{d}{\varepsilon_{p,j}}}(0)}\log |z|\Big(1+\frac{w_{p,j} (z)}{p}
  \Big)^pdz\\=&\int_{\R^2} \log |z| e^{U(z)}
  =\int_{\R^2}\big(\log \frac{|z|}{\sqrt{8}}\big)\frac{1}{\big(1+\frac{|z|^2}{8}\big)^2}dz+12\pi\log 2
  \\=&8\int_{\R^2} \frac{\log  |y|}{\big(1+|y|^2\big)^2}dy+12\pi\log 2=12\pi\log 2,
\end{split}\end{equation}
here we use the following fact
\begin{equation*}
\int_{\R^2}\frac{\log  |y| }{\big(1+|y|^2\big)^2}dy
=2\pi \int^{+\infty}_{0}  \frac{r \log r }{\big(1+r^2\big)^2}dr\overset{r=\frac{1}{t}}{=}-2\pi \int^{+\infty}_0 \frac{t\log t}{\big(1+t^2\big)^2}dt.
\end{equation*}
Then from Proposition \ref{Prop:5-12-52}, \eqref{s5-8-4}, \eqref{s6-4-11} and  \eqref{6-4-12},  we know
 \begin{equation}\label{saa5-8-4}
 \begin{split}
\int_{B_d(x_{p,j})} &G(y,x_{p,j})u_{p}^p(y)\,dy\\=&
-\frac{ u_p(x_{p,j}) }{ p}\Big(
8\pi  H(x_{\infty,j},x_{\infty,j})+6\log 2
+O\big(\frac{1}{p}\big)\Big)  -\frac{4u_p(x_{p,j})\log \varepsilon_{p,j}}{p}\Big(1-\frac{3}{p} +O\big(\frac{1}{p^{2-\delta}}\big)\Big).
 \end{split}\end{equation}
 Next for $l\neq j$, similar to \eqref{s6-4-11}, it follows
  \begin{equation}\label{sab5-8-4}
 \begin{split}
\int_{B_d(x_{p,l})} &G(y,x_{p,j})u_{p}^p(y)\,dy\\=&
 \frac{ u_p(x_{p,l}) }{ p}\Big(8\pi G(x_{p,l},x_{p,j})  +O\big(\frac{1}{p}\big)\Big)
=
 \frac{ u_p(x_{p,l}) }{ p}\Big(8\pi G(x_{\infty,l},x_{\infty,j})  +O\big(\frac{1}{p}\big)\Big),
 \end{split}\end{equation}
 and  recalling that $u_p(x_{p,l})\rightarrow \sqrt{e}$ for $l=1,\cdots,k$, we know
 \begin{equation}\label{sac5-8-4}
 \frac{u_p(x_{p,l})}{u_p(x_{p,j})}\to 1~\mbox{for all}~l\neq j.
 \end{equation}
 Hence combining
 \eqref{sab5-8-4} and \eqref{sac5-8-4}, we get
 \begin{equation}\label{sad5-8-4}
 \begin{split}
\int_{B_d(x_{p,l})} &G(y,x_{p,j})u_{p}^p(y)\,dy
= \frac{ u_p(x_{p,j}) }{ p}\Big( 8\pi G(x_{\infty,l},x_{\infty,j})  +O\big(\frac{1}{p}\big)\Big).
 \end{split}\end{equation}
Then  from \eqref{s5-8-3}, \eqref{saa5-8-4} and \eqref{sad5-8-4}, we have
 \begin{equation}\label{5-7-51}
 \begin{split}
 \log \varepsilon_{p,j}=-
 \frac{p}{4} \left(\frac{1+\frac{1}{p}\Big(8\pi  \Psi_{k,j}(x_{\infty})+6\log 2\Big) +O\big(\frac {1}{p^2}\big)}{1-\frac{3}{p} +O\big(\frac{1}{p^{2-\delta}}\big)}\right) ~\,\mbox{with}\,\,x_{\infty}:=\big(x_{\infty,1},\cdots,x_{\infty,k}\big),
 \end{split}\end{equation}
 where $\Psi_{k,j}$ is the Kirchhoff-Routh function in \eqref{stts}.
Taking the definition of  $\varepsilon_{p,j}$ into \eqref{5-7-51}, we deduce that
\begin{equation*}
\begin{split}
 \log u_p(x_{p,j})= &
 \frac{p}{2(p-1)} \left(\frac{1+\frac{ 1}{p}\Big(8\pi \Psi_{k,j}(x_{\infty})+6 \log 2\Big)+O\big(\frac {1}{p^{2}}\big)}{1-\frac{3}{p} +O\big(\frac{1}{p^{2-\delta}}\big)}\right)-\frac{\log p}{p-1}\\=&
  \frac{p}{2(p-1)} \left( 1+\frac{ 1}{p}\Big(8\pi \Psi_{k,j}(x_{\infty})+6\log 2+3 \Big) \right)-\frac{\log p}{p-1} +O\big(\frac{1}{p^{2-\delta}}\big)\\=&
 \frac{1}{2}\left(1+\frac{1}{p}+O\big(\frac{1}{p^2}\big)\right)\cdot \left( 1+\frac{ 1}{p}\Big(8\pi \Psi_{k,j}(x_{\infty})+6\log 2+3 \Big) \right)-\frac{\log p}{p-1} +O\big(\frac{1}{p^{2-\delta}}\big)
  \\=&
  \frac{1}{2}+  \frac{ 1}{ p}\Big(4\pi \Psi_{k,j}(x_{\infty})+3\log 2+2 \Big) -\frac{\log p}{p-1} +O\big(\frac{1}{p^{2-\delta}}\big),
\end{split}
\end{equation*}
which implies \eqref{5-7-52}.

\vskip 0.1cm

%

Next we have
\[
\begin{split}
\e_{p,j}=&\Big(p\big(u_{p}(x_{p,j})\big)^{p-1}\Big)^{-1/2}= \frac1{\sqrt p} e^{ -\frac{p-1}2 \log u_{p}(x_{p,j})  }\\
=&\frac1{\sqrt p} e^{ -\frac{p-1}2 \Bigl( \frac{1}{2}+  \frac{ 1}{ p}\big(4\pi \Psi_{k,j}(x_{\infty})+3\log 2+2 \big) -\frac{\log p}{p-1} +O\big(\frac{1}{p^{2-\delta}}\big)\Bigr) }\\
=&   e^{-\frac{p-1}4}\cdot
 e^{ -\frac{p-1}{p} \Bigl( 2\pi \Psi_{k,j}(x_{\infty})+\frac{3\log 2}{2}+1  +O\big(\frac{1}{p^{1-\delta}}\big)\Bigr) }\\=&
 e^{-\frac{p}4}\Bigl(
 e^{-\big( 2\pi \Psi_{k,j}(x_{\infty})+\frac{3\log 2}{2}+\frac{3}{4}\big) }+O\big(\frac{1}{p^{1-\delta}}\big)\Bigr).
\end{split}
\]
So we have proved \eqref{nn3-29-03}. Clearly,
 \eqref{3-29-03} follows from \eqref{nn3-29-03}. Finally, \eqref{lst} and \eqref{lst1} follow by Proposition \ref{prop3-2}.
\end{proof}

\vskip 0.1cm

\subsection{Further expansions}\label{subsection:expansionu}
$\,$  \vskip 0.2cm

Thanks to Proposition \ref{lem3-1a}, we can improve the expansion of $u_p$ obtained in Lemma \ref{prop3-1}. This result will allow  to get the estimate \eqref{daluo-gil} in Proposition \ref{lem3-8}, which will be the key to get the uniqueness result in Section \ref{s3} (see Remark \ref{js} for more details).
\begin{Lem}\label{prop:expansionupwithdelta}
For any fixed small $d>0$, it holds
\begin{equation}\label{dluo-1}
u_p(x)= \sum^k_{j=1}C_{p,j}G(x_{p,j},x)+
O\Big(\sum^k_{j=1}\frac{\varepsilon_{p,j}}{p^{2-\delta}}\Big)\,\,
~\mbox{in}~C^1\Big(\Omega\backslash \displaystyle\bigcup^k_{j=1} B_{2d}(x_{p,j})\Big),
\end{equation}
where $\delta$ is a small fixed positive  constant and $C_{p,j}$ are the same constants in
\eqref{def_Cpj}.
\end{Lem}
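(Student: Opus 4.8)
The plan is to revisit the proof of Lemma \ref{prop3-1}, keeping track of the error terms quantitatively and exploiting the polynomial bound \eqref{lpy1} on $v_{p,j}$ provided by Proposition \ref{lem3-1a}. As in \eqref{5-8-11}--\eqref{5-8-15} I start from $u_p(x)=\int_\Omega G(y,x)u_p^p(y)\,dy$ and split the integral over the balls $B_d(x_{p,j})$ and their complement; by \eqref{11-14-03N} the complement gives $O(C^p/p^p)$, which is negligible compared with $\varepsilon_{p,j}/p^{2-\delta}$ since $\varepsilon_{p,j}\to 0$ exponentially fast by \eqref{nn3-29-03}. On each ball I Taylor-expand $G(y,x)=G(x_{p,j},x)+\langle\nabla_yG(x_{p,j},x),y-x_{p,j}\rangle+O(|y-x_{p,j}|^2)$, uniformly for $x\in\Omega\setminus\bigcup_jB_{2d}(x_{p,j})$ since $G(\cdot,x)$ is smooth there, and rescale $y=x_{p,j}+\varepsilon_{p,j}z$. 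The higher-order remainder contributes at most $\varepsilon_{p,j}^{3/2}\,p^{-1}\int_{B_{d/\varepsilon_{p,j}}(0)}|z|^{3/2}(1+w_{p,j}/p)^p\,dz=O(\varepsilon_{p,j}^{3/2}/p)$ after using Lemma \ref{llma}, which is far smaller than $\varepsilon_{p,j}/p^{2-\delta}$; the zeroth-order term reconstructs $\sum_jC_{p,j}G(x_{p,j},x)$ with $C_{p,j}$ as in \eqref{def_Cpj}. Hence everything reduces to showing
$$\int_{B_{d/\varepsilon_{p,j}}(0)}\big\langle\nabla_yG(x_{p,j},x),z\big\rangle\Big(1+\frac{w_{p,j}(z)}{p}\Big)^p\,dz=O\Big(\frac{1}{p^{1-\delta}}\Big),$$
since then the linear term equals $\varepsilon_{p,j}\,u_p(x_{p,j})\,p^{-1}$ times this quantity, i.e. $O(\varepsilon_{p,j}/p^{2-\delta})$.

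To prove the displayed bound I split $B_{d/\varepsilon_{p,j}}(0)=B_p(0)\cup\big(B_{d/\varepsilon_{p,j}}(0)\setminus B_p(0)\big)$. On the outer annulus, Lemma \ref{llma} and the elementary inequality $\log(1+a)\le a$ give $(1+w_{p,j}/p)^p\le e^{w_{p,j}}\le C(1+|z|)^{-(4-\delta)}$, and since $|\nabla_yG(x_{p,j},x)|$ is bounded uniformly in $\Omega\setminus\bigcup_jB_{2d}(x_{p,j})$, this part is $O\big(\int_{|z|>p}|z|^{-(3-\delta)}\,dz\big)=O(1/p^{1-\delta})$. On $B_p(0)$ I write $w_{p,j}=U+v_{p,j}/p$ as in \eqref{dsa} and use $\log(1+a)=a-\tfrac{a^2}2+O(a^3)$, $e^a=1+a+O(a^2)$ together with \eqref{lpy1} and Lemma \ref{llma} to get, pointwise on $B_p(0)$,
$$\Big(1+\frac{w_{p,j}}{p}\Big)^p=e^{U}\Big(1+\frac{v_{p,j}}{p}-\frac{w_{p,j}^2}{2p}\Big)+O\Big(\frac{\log^3(1+|z|)}{p^2\,(1+|z|)^{4-\delta}}\Big).$$
The crucial observation is the parity cancellation: $z\mapsto\langle\nabla_yG(x_{p,j},x),z\rangle\,e^{U(z)}$ and $z\mapsto\langle\nabla_yG(x_{p,j},x),z\rangle\,e^{U(z)}U^2(z)$ are odd in $z$, hence integrate to zero over the symmetric ball $B_p(0)$; the remaining contributions are bounded, using $w_{p,j}^2=U^2+2Uv_{p,j}/p+v_{p,j}^2/p^2$ and \eqref{lpy1}, by $p^{-1}\int_{\mathbb R^2}|z|\,e^{U}(1+|z|)^\tau\,dz+O(p^{-2})=O(1/p)$, the integral being finite because $\tau\in(0,1)$. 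Therefore the $B_p(0)$ contribution is $O(1/p)$, and adding the two pieces yields the desired $O(1/p^{1-\delta})$.

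The $C^1$ part of the statement follows from the same scheme applied to $\partial_{x_i}u_p(x)=\int_\Omega D_{x_i}G(y,x)u_p^p(y)\,dy$: Taylor-expanding $D_{x_i}G(\cdot,x)$ around $x_{p,j}$, the zeroth-order term rebuilds $D_{x_i}\big(\sum_jC_{p,j}G(x_{p,j},\cdot)\big)$ and the linear term again carries the odd factor $z$ paired with $e^{U}$ and $e^{U}U^2$, so the same cancellation produces an $O(\varepsilon_{p,j}/p^{2-\delta})$ error. The only genuinely new ingredient compared with Lemma \ref{prop3-1} is this pair of exact parity cancellations coming from the radial symmetry of $U$; the delicate point — and the reason Proposition \ref{lem3-1a} is the crucial input — is that these must be combined with the \emph{pointwise polynomial} bound \eqref{lpy1} valid on the whole of $B_p(0)$, rather than the mere $C^2_{loc}$ convergence of $w_{p,j}$, while Lemma \ref{llma} absorbs the tail $B_{d/\varepsilon_{p,j}}(0)\setminus B_p(0)$.
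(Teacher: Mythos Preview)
Your proof is correct and follows essentially the same route as the paper's: split the key integral over $B_{d/\varepsilon_{p,j}}(0)$ into $B_p(0)$ and its complement, handle the tail via Lemma~\ref{llma}, and on $B_p(0)$ expand $(1+w_{p,j}/p)^p$ using $w_{p,j}=U+v_{p,j}/p$ together with the pointwise bound \eqref{lpy1} from Proposition~\ref{lem3-1a}. The paper states the $O(1/p)$ conclusion on $B_p(0)$ somewhat tersely (invoking Propositions~\ref{lem3-1a}, \ref{prop3-2} and dominated convergence), whereas you make the underlying parity cancellation $\int_{B_p(0)}\langle\nabla G,z\rangle e^{U}\,dz=0$ explicit; this is the same mechanism. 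One cosmetic slip: you write the Taylor remainder as $O(|y-x_{p,j}|^2)$ but then bound it via $\varepsilon_{p,j}^{3/2}\int|z|^{3/2}(\cdots)$; either use exponent $3/2$ throughout (as the paper does, to keep the $|z|$-integral trivially convergent) or keep $|z|^2$ and note that $\int_{B_{d/\varepsilon_{p,j}}}|z|^2(1+|z|)^{-(4-\delta)}dz=O(\varepsilon_{p,j}^{-\delta})$, which still gives a negligible $O(\varepsilon_{p,j}^{2-\delta}/p)$.
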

\begin{proof}
We follow the scheme of the proof of Lemma \ref{prop3-1}, the main point is to improve \eqref{d5-8-14}. First,
since $\frac{w_{p,j}(x)}{p}=\frac{u_p(x_{p,j}+\e_{p,j} x)}{u_p(x_{p,j})}-1$, then $-1<\frac{w_{p,j}(x)}{p}\leq 0$. And using the fact that $\log(1+a)<a$ for any $a\in (-1,0)$ and Lemma \ref{llma}, we have
  \begin{equation}\label{dluo-a1}
  \begin{split}
 &\int_{B_\frac{d}{\varepsilon_{p,j}}(0)\backslash B_p(0)}\big\langle\nabla G(x_{p,j},x),z\big\rangle\Big(1+\frac{ w_{p,j}(z)}{p}
 \Big)^pdz \\=& \int_{B_\frac{d}{\varepsilon_{p,j}}(0)\backslash B_p(0)} \big\langle\nabla G(x_{p,j},x),z\big\rangle e^{ p\log \big(1+\frac{w_{p,j}(z)}{p}
  \big) }dz\leq
   \int_{B_\frac{d}{\varepsilon_{p,j}}(0)\backslash B_p(0)} |z|\cdot e^{w_{p,j}(z) }dz\\=& O\Big(
   \int_{B_\frac{d}{\varepsilon_{p,j}}(0)\backslash B_p(0)}\frac{|z|}{1+|z|^{4-\delta}}dz\Big)=O\Big(\frac{1}{p^{1-\delta}}\Big)~\,\mbox{for some}~\delta\in (0,1).
  \end{split}
  \end{equation}
Next similar to \eqref{adsst}, we have
\begin{equation}\label{dadsst}
\frac{|w_{p,j}|}{p},\,\frac{|w_{p,j}|^2}{p}\to 0~\,\mbox{uniformly in}\, B_p(0).
\end{equation}
Hence using that $w_{p,j}=U+\frac{v_{p,j}}{p}$ by definition \eqref{dsa} and Taylor's expansion, it follows
  \begin{equation*}
  \begin{split}
 \int_{ B_p(0)}& \big\langle\nabla G(x_{p,j},x),z\big\rangle \Big(1+\frac{w_{p,j}(z)}{p}
  \Big)^pdz\\ =& \int_{  B_p(0)} \big\langle\nabla G(x_{p,j},x),z\big\rangle e^{ p\log \big(1+\frac{w_{p,j}(z)}{p}
  \big) }dz\\=&
  \int_{ B_p(0)} \big\langle\nabla G(x_{p,j},x),z\big\rangle e^{w_{p,j}(z)}\Big(1-\frac{w^2_{p,j}(z)}{2p}+o\big(\frac{w^2_{p,j}(z)}{p}\big) \Big)\,dz \\=&
  \int_{ B_p(0)}\big\langle\nabla G(x_{p,j},x),z\big\rangle  e^{U(z)}\Big(1+\frac{v_{p,j}(z)}{p}+o\big(\frac{|v_{p,j}(z)|}{p}\big) \Big)
  \Big(1-\frac{w^2_{p,j}(z)}{2p}+o\big(\frac{w^2_{p,j}(z)}{p}\big) \Big)\,dz.
  \end{split}
  \end{equation*}
Then by Lemma \ref{llma}, Proposition \ref{lem3-1a}, Proposition \ref{prop3-2}, \eqref{dadsst} and the classical dominated convergence theorem, we can deduce that
  \begin{equation}\label{llsad}
  \begin{split}
\int_{ B_p(0)}& \big\langle\nabla G(x_{p,j},x),z\big\rangle \Big(1+\frac{w_{p,j}(z)}{p}
  \Big)^pdz=O
  \Big( \frac{1}{p}  \Big).
  \end{split}
  \end{equation}
Hence by using \eqref{dluo-a1} and \eqref{llsad},  the estimate \eqref{d5-8-14} can be improved into
\begin{equation*}
\begin{split}
 \int_{B_{\frac{d}{\varepsilon_{p,j}}}(0)}
\big\langle\nabla G(x_{p,j},x),z\big\rangle\Big(1+\frac{w_{p,j}(z)}{p}\Big)^pdz=O\Big(\frac{1}{p^{1-\delta}}\Big), ~~\mbox{uniformly in $\Omega\backslash  \displaystyle\bigcup^k_{j=1} B_{2d}(x_{p,j})$}.
\end{split}
\end{equation*}
And the rest is the same as the proof of Lemma \ref{prop3-1}.

\end{proof}
By exploiting the expansion \eqref{dluo-1} of $u_{p}$ we can get the following key relation:
\begin{Prop}\label{lem3-8}
For $j=1,\cdots,k$, there exists a small fixed constant $\delta\in (0,1)$ such that
\begin{equation}\label{daluo-gil}
C_{p,j} \frac{\partial R(x_{p,j})}{\partial{x_i}}-2\displaystyle\sum^k_{m=1,m\neq j}
C_{p,m}
D_{x_i} G(x_{p,m},x_{p,j})
=O\Big(\frac{\varepsilon_{p}}{p^{2-\delta}}\Big),
\end{equation}
where  {\Lp $D_{x_i}G(y,x):=\frac{\partial G(y,x)}{\partial x_i}$} and $C_{p,j}$ is defined in \eqref{def_Cpj}.
\end{Prop}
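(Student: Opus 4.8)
The plan is to apply the local Pohozaev identity \eqref{aclp-1} at a suitable radius and to feed into it the sharp expansion \eqref{dluo-1} of $u_p$. First I would fix $d>0$ small and choose a radius $\theta$ with $2d<\theta$ such that $\overline{B_{2\theta}(x_{p,j})}\subset\Omega$ and $x_{p,m}\notin\overline{B_{2\theta}(x_{p,j})}$ for every $m\neq j$; this is possible for $d$ small and $p$ large because the points $x_{p,j}$ remain close to the distinct limit points $x_{\infty,j}$. With this choice \eqref{aclp-1} reads
\[
Q_j(u_p,u_p)=\frac{2}{p+1}\int_{\partial B_\theta(x_{p,j})}u_p^{p+1}\,\nu_i\,d\sigma .
\]
On $\partial B_\theta(x_{p,j})$ we sit inside $\Omega\setminus\bigcup_m B_{2d}(x_{p,m})$, where $pu_p\to 8\pi\sqrt e\,\sum_m G(\cdot,x_{\infty,m})$ in $C^1$ by \eqref{11-14-03N}; hence $u_p=O(1/p)$ there and the right-hand side is $O\big(C^p/p^{p}\big)$, which is negligible compared with $\varepsilon_p/p^{2-\delta}$ since $\varepsilon_p\asymp e^{-p/4}$ by \eqref{nn3-29-03}.

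Next I would expand the left-hand side using \eqref{dluo-1}, which on $\partial B_\theta(x_{p,j})$ gives $u_p=\sum_{m=1}^k C_{p,m}G(x_{p,m},\cdot)+E_p$ in $C^1$, with $E_p=O(\varepsilon_p/p^{2-\delta})$. By bilinearity and symmetry of $Q_j$,
\[
Q_j(u_p,u_p)=\sum_{m,s=1}^k C_{p,m}C_{p,s}\,Q_j\big(G(x_{p,m},\cdot),G(x_{p,s},\cdot)\big)+2\,Q_j\Big(\textstyle\sum_m C_{p,m}G(x_{p,m},\cdot),\,E_p\Big)+Q_j(E_p,E_p).
\]
The first sum is computed from \eqref{aluo1}: the terms with $m,s\neq j$ vanish, the term $m=s=j$ gives $-C_{p,j}^2\,\partial_{x_i}R(x_{p,j})$, and the terms with exactly one index equal to $j$ give $2C_{p,j}\sum_{m\neq j}C_{p,m}D_{x_i}G(x_{p,m},x_{p,j})$; thus the first sum equals exactly $-C_{p,j}$ times the quantity in \eqref{daluo-gil}. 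Since $Q_j$ is a sum of boundary integrals of products of two first derivatives, and $|\nabla G(x_{p,m},\cdot)|=O(1)$ on $\partial B_\theta(x_{p,j})$ while $C_{p,m}=O(1/p)$ by \eqref{luoluo1}, the cross term is $O\big(p^{-1}\cdot\varepsilon_p p^{-(2-\delta)}\big)=O\big(\varepsilon_p/p^{3-\delta}\big)$ and the last term is $O\big(\varepsilon_p^2/p^{4-2\delta}\big)$.

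Collecting the three contributions together with the negligible right-hand side of the Pohozaev identity, one gets $-C_{p,j}\big(C_{p,j}\partial_{x_i}R(x_{p,j})-2\sum_{m\neq j}C_{p,m}D_{x_i}G(x_{p,m},x_{p,j})\big)=O(\varepsilon_p/p^{3-\delta})$; dividing by $C_{p,j}$, which satisfies $1/C_{p,j}=O(p)$ by \eqref{luoluo1}, yields \eqref{daluo-gil} (up to relabelling the constant $\delta$). The only delicate point is that one must use the refined remainder $O(\varepsilon_p/p^{2-\delta})$ of \eqref{dluo-1} rather than the cruder $o(\varepsilon_p/p)$ of Lemma \ref{prop3-1}: with the latter one would only obtain $o(\varepsilon_p/p)$ here, which is not sharp enough for the uniqueness argument of Section \ref{s3}; everything else is routine manipulation of the bilinear form $Q_j$ via Proposition \ref{lem2-1} together with the already known size of $u_p$ away from the spikes.
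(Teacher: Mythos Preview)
Your proposal is correct and follows essentially the same route as the paper: apply the Pohozaev identity \eqref{aclp-1} with $u=u_p$, bound the right-hand side by $O(C^p/p^{p+1})$ via \eqref{11-14-03N}, plug the sharp expansion \eqref{dluo-1} into $Q_j(u_p,u_p)$, evaluate the resulting double sum with \eqref{aluo1}, and divide through by $C_{p,j}$ using \eqref{luoluo1}. Your remark that the refined remainder of \eqref{dluo-1} (rather than the cruder one of Lemma~\ref{prop3-1}) is essential here matches exactly the point the paper makes in Remark~\ref{js}.
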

\begin{proof}
Letting  $u=u_p$  in the Pohozaev identity \eqref{aclp-1},  we have
\begin{equation}\label{afd}
Q_{j}(u_p,u_p)=\frac{2}{p+1}\int_{\partial B_{\theta}(x_{p,j})}u_{p}^{p+1}\nu_i.
\end{equation}
By the expansion of $u_{p}$ in Lemma \ref{prop:expansionupwithdelta} and recalling that $C_{p,j}=O\big(\frac{1}{p}\big)$ by \eqref{luoluo1}, it holds
\begin{equation}\label{5-6-1}
\mbox{LHS of (\ref{afd})}=\sum^k_{s=1}\sum^k_{m=1}
C_{p,s} C_{p,m} Q_{j}\Big( G(x_{p,s},x), G(x_{p,m},x)\Big)+O\Big(\frac{\varepsilon_{p}}{p^{3-\delta}}\Big).
\end{equation}
Moreover from \eqref{11-14-03N}, it follows
\begin{equation}\label{5-6-2}
\mbox{RHS of (\ref{afd})}
=O\Big( \frac{C^p}{p^{p+1}}  \Big)=O\Big(\frac{\varepsilon_{p}}{p^{3-\delta}}\Big).
\end{equation}
Then we find \eqref{daluo-gil} by \eqref{aluo1}, \eqref{afd}, \eqref{5-6-1} and \eqref{5-6-2}.
\end{proof}

\vskip 0.8cm

\section{Non-degeneracy of the positive solutions }\label{s2}
In this section, we prove Theorem \ref{th1.1} arguing by contradiction. Suppose that there exists a sequence  $p_m\rightarrow +\infty$, as $m\to +\infty$ satisfying
\begin{equation*}
\|\xi_{p_m}\|_{L^{\infty}(\Omega)}=1~~\mbox{and}~~\mathcal{L}_{p_m}\xi_{p_m}=0.
\end{equation*}
In order to shorten the notation, we replace $p_m$ by $p$.
\begin{Prop}\label{daprop3-2}
Let $\xi_{p,j}(x):=
\xi_{p}\big(\varepsilon_{p,j}x+x_{p,j}\big)$. Then by taking
a subsequence if necessary, we have
\begin{equation}\label{111}
 \xi_{p,j}(x)=a_j\frac{8-|x|^2}{8+|x|^2}+\sum_{i=1}^2\frac{b_{i,j}x_i}{8+|x|^2}+o(1)\,\,
 ~\mbox{in}~C^1_{loc}\big(\R^2\big)\,\,~\mbox{as}~p\rightarrow +\infty,
\end{equation}
 where $a_j$ and $b_{i,j}$ with $i=1,2$ and $j=1,\cdots,k$ are some constants.
\end{Prop}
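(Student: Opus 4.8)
The plan is a blow-up argument: rescale the equation $\mathcal L_p\xi_p=0$ around each concentration point $x_{p,j}$ and identify the limit via the non-degeneracy of the linearized Liouville equation (Lemma \ref{llm}). First I would compute, from $-\Delta\xi_p=pu_p^{p-1}\xi_p$ and the defining relation $\e_{p,j}^2=\bigl(p\,u_p(x_{p,j})^{p-1}\bigr)^{-1}$, that
\[
-\Delta\xi_{p,j}=V_{p,j}\,\xi_{p,j}\quad\text{in }\Omega_{p,j}=\frac{\Omega-x_{p,j}}{\e_{p,j}},\qquad
V_{p,j}(x):=\Bigl(1+\frac{w_{p,j}(x)}{p}\Bigr)^{p-1},
\]
with $w_{p,j}$ the rescaled solution in \eqref{defwpj}. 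Since $w_{p,j}\to U$ in $C^2_{loc}(\R^2)$ (see \eqref{5-8-2}), $V_{p,j}\to e^{U}$ in $C^0_{loc}(\R^2)$; moreover $0\le V_{p,j}\le 1$, and using the bound in Lemma \ref{llma} together with $\log(1+a)\le a$ one gets $V_{p,j}(x)\le C(1+|x|)^{-(4-\delta)}$ on $\Omega_{p,j}$ for some small fixed $\delta>0$.

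Since $\|\xi_p\|_{L^\infty(\Omega)}=1$ we have $|\xi_{p,j}|\le1$, hence $V_{p,j}\xi_{p,j}$ is bounded in $L^\infty_{loc}(\R^2)$ uniformly in $p$. Interior $W^{2,q}$ and Schauder estimates then give a uniform $C^{1,\alpha}_{loc}(\R^2)$ bound on $\xi_{p,j}$, so after passing to a subsequence $\xi_{p,j}\to\xi_{\infty,j}$ in $C^1_{loc}(\R^2)$, with $|\xi_{\infty,j}|\le1$ and $-\Delta\xi_{\infty,j}=e^{U}\xi_{\infty,j}$ in $\R^2$. To be allowed to invoke Lemma \ref{llm} I must check $\int_{\R^2}|\nabla\xi_{\infty,j}|^2<\infty$; I would do this via a $p$-uniform energy bound. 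Interior gradient estimates for the rescaled equation, combined with the decay $V_{p,j}(x)\le C(1+|x|)^{-(4-\delta)}$, give $|\nabla\xi_{p,j}(x)|\le C/|x|$ on $\{1\le|x|\le c/\e_{p,j}\}$; multiplying the equation by $\xi_{p,j}$ and integrating over $B_R\subset\Omega_{p,j}$ yields
\[
\int_{B_R}|\nabla\xi_{p,j}|^2=\int_{B_R}V_{p,j}\,\xi_{p,j}^2+\int_{\partial B_R}\xi_{p,j}\,\frac{\partial\xi_{p,j}}{\partial\nu}\le C
\]
with $C$ independent of $R$ and $p$, because $\int_{\R^2}(1+|x|)^{-(4-\delta)}\,dx<\infty$ and the boundary flux is $O(R\cdot R^{-1})=O(1)$. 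Letting $p\to+\infty$ and then $R\to+\infty$ gives $\int_{\R^2}|\nabla\xi_{\infty,j}|^2\le C<\infty$ by lower semicontinuity.

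Lemma \ref{llm} then forces $\xi_{\infty,j}$ to lie in the $3$-dimensional kernel, which, computed from $U(x)=-2\log(1+|x|^2/8)$, is spanned by $\frac{8-|x|^2}{8+|x|^2}$ (the dilation generator) together with $\frac{x_1}{8+|x|^2}$ and $\frac{x_2}{8+|x|^2}$ (the translation generators, since $\partial_{x_i}U(x)=-\frac{4x_i}{8+|x|^2}$). Writing $\xi_{\infty,j}$ in this basis produces constants $a_j,b_{1,j},b_{2,j}$ (possibly all zero, which is consistent with the statement) such that
\[
\xi_{p,j}(x)=a_j\,\frac{8-|x|^2}{8+|x|^2}+\sum_{i=1}^2\frac{b_{i,j}x_i}{8+|x|^2}+o(1)\quad\text{in }C^1_{loc}(\R^2),
\]
which is the claim.

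The rescaling, the convergence of $V_{p,j}$ to $e^{U}$, and the local elliptic compactness are routine. The step I expect to be the main obstacle is the $p$-uniform energy (or decay) control needed to apply Lemma \ref{llm}: one must combine the decay of $V_{p,j}$ coming from Lemma \ref{llma} with interior gradient estimates carefully enough to keep the boundary flux $\int_{\partial B_R}\xi_{p,j}\,\partial_\nu\xi_{p,j}$ bounded up to scales comparable with $1/\e_{p,j}$. Alternatively, one may bypass Lemma \ref{llm} altogether by classifying the bounded entire solutions of $-\Delta v=e^{U}v$ directly through an angular Fourier-mode/ODE analysis, in which the modes $k\ge 2$ are forced to grow at infinity and hence vanish.
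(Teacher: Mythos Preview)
Your proposal is correct and follows the same blow-up strategy as the paper's own proof: rescale, use elliptic regularity for $C^{1,\alpha}_{loc}$ compactness, pass to the limit to obtain $-\Delta\xi_j=e^{U}\xi_j$, and invoke Lemma~\ref{llm}. The paper is terser and does not explicitly verify the finite-energy hypothesis $\int_{\R^2}|\nabla\xi_j|^2<\infty$ required by Lemma~\ref{llm}; your gradient-decay/energy argument (or the Fourier-mode alternative you mention at the end) fills in this detail, which in this literature is routinely regarded as standard since boundedness of the limit suffices.
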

\begin{proof}
In view of $\|\xi_{p,j}\|_{L^{\infty}(\R^2)}\leq 1$, by the regularity theory in \cite{GT1983}, it holds $$\xi_{p,j}\in C^{1,\alpha}_{loc}(\R^2)~\mbox{ and }~\|\xi_{p,j}\|_{C^{1,\alpha}_{loc}(\R^2)}\leq C~\mbox{for some}~\alpha \in (0,1),$$
 where $C$ is a constant independent of $j$ and $p$. So we may assume that $$\xi_{p,j}\rightarrow\xi_{j}~\mbox{in}~C^{1}_{loc}(\R^2).$$
Observe that $\xi_{p,j}$ solves
\begin{equation}\label{07-07-11}
-\Delta \xi_{p,j}=p\varepsilon_{p,j}^{2}u_{p}^{p-1}\big(\varepsilon_{p,j}x+x_{p,j}\big) \xi_{p,j} \,\,~\mbox{ in}~\frac{\Omega-x_{p,j}}{\xi_{p,j}}.
\end{equation}
Now from \eqref{5-8-2}, we find
\begin{equation*}
p\varepsilon_{p,j}^{2}u_{p}^{p-1}\big(\varepsilon_{p,j}x+x_{p,j}\big)  \rightarrow e^{U(x)}~\mbox{in}~C^2_{loc}(\R^2).
\end{equation*}
And then passing to the limit in \eqref{07-07-11}, we find that $\xi_j$ solves
\eqref{3-29-01},
which together with Lemma \ref{llm} implies \eqref{111}.
\end{proof}

\begin{Prop}\label{prop-2-2}
It holds
\begin{equation}\label{alaa1}
\begin{split}
\xi_{p}(x)=&\sum^k_{j=1}A_{p,j}G(x_{p,j},x)-8\pi
 \sum^k_{j=1}\sum^2_{i=1} b_{i,j}\varepsilon_{p,j}  \partial_iG(x_{p,j},x)
 +o\big( {\varepsilon}_{p}\big),
\end{split}\end{equation}
in $C^1
\Big(\Omega\backslash \bigcup^k_{j=1}B_{2d}(x_{p,j})\Big)$, where $d>0$ is any small fixed constant,
$\partial_iG(y,x):=\frac{\partial G(y,x)}{\partial y_i}$, ${\varepsilon}_{p}:=\max\big\{\varepsilon_{p,1},
\cdots,\varepsilon_{p,k}\big\}$,
\begin{equation}\label{aaa5-7-4}
\begin{split}
A_{p,j}:=p
\int_{B_d(x_{p,j})}  u_{p}^{p-1}\xi_{p}
\end{split}\end{equation}
and $b_{i,j}$ are the constants in Proposition \ref{daprop3-2}.
\end{Prop}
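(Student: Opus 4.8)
The plan is to represent $\xi_p$ via the Green's function and carefully expand the resulting integrals, isolating the leading contributions near each concentration point. Since $\xi_p$ solves $-\Delta\xi_p = pu_p^{p-1}\xi_p$ in $\Omega$ with $\xi_p=0$ on $\partial\Omega$, the Green representation gives
\[
\xi_p(x)=p\int_\Omega G(y,x)u_p^{p-1}(y)\xi_p(y)\,dy
=\sum_{j=1}^k p\int_{B_d(x_{p,j})} G(y,x)u_p^{p-1}(y)\xi_p(y)\,dy + p\int_{\Omega\setminus\bigcup_j B_d(x_{p,j})} G(y,x)u_p^{p-1}(y)\xi_p(y)\,dy.
\]
Using \eqref{11-14-03N} together with $\|\xi_p\|_{L^\infty}=1$, the last integral is $O(C^p/p^{p})$, which is $o(\varepsilon_p)$. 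For each $j$, I would rescale $y=x_{p,j}+\varepsilon_{p,j}z$ and use $p\varepsilon_{p,j}^2 u_p^{p-1}(x_{p,j}+\varepsilon_{p,j}z)\to e^{U(z)}$ in $C^2_{loc}$ (from \eqref{5-8-2}), together with Lemma \ref{llma} for the decay, to write
\[
p\int_{B_d(x_{p,j})} G(y,x)u_p^{p-1}(y)\xi_p(y)\,dy
= \frac{1}{p}\int_{B_{d/\varepsilon_{p,j}}(0)} G(x_{p,j}+\varepsilon_{p,j}z,x)\Bigl(p\varepsilon_{p,j}^2 u_p^{p-1}\Bigr)\xi_{p,j}(z)\,dz,
\]
where the prefactor bookkeeping comes from $p\varepsilon_{p,j}^2 u_p^{p-1}=p\cdot\varepsilon_{p,j}^2 u_p^{p-1}$ and the definition of $\varepsilon_{p,j}$; I will keep careful track of the factor so that the total weight $p\,u_p^{p-1}(x_{p,j})=\varepsilon_{p,j}^{-2}$ is correctly absorbed, so that effectively $p\int_{B_d}u_p^{p-1}\xi_p\,G\,dy=\int_{B_{d/\varepsilon_{p,j}}}\bigl(p\varepsilon_{p,j}^2u_p^{p-1}\bigr)\xi_{p,j}\,G(x_{p,j}+\varepsilon_{p,j}z,x)\,dz$.

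Next I would Taylor-expand $G(x_{p,j}+\varepsilon_{p,j}z,x)=G(x_{p,j},x)+\varepsilon_{p,j}\langle\nabla G(x_{p,j},x),z\rangle+O(\varepsilon_{p,j}^2|z|^2)$, valid uniformly for $x$ outside $\bigcup_j B_{2d}(x_{p,j})$ since $G(\cdot,x)$ and its derivatives are bounded there. The zeroth-order term produces $G(x_{p,j},x)\cdot p\int_{B_d}u_p^{p-1}\xi_p = A_{p,j}G(x_{p,j},x)$ by definition \eqref{aaa5-7-4}. For the first-order term, I use Proposition \ref{daprop3-2}: $\xi_{p,j}(z)\to a_j\frac{8-|z|^2}{8+|z|^2}+\sum_i \frac{b_{i,j}z_i}{8+|z|^2}$ in $C^1_{loc}$, and the weight $p\varepsilon_{p,j}^2u_p^{p-1}\to e^{U(z)}=\frac{1}{(1+|z|^2/8)^2}$. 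The even part of $\xi_j$ (the $a_j$ term) integrates against $z e^{U(z)}$ to zero by oddness, while the odd part contributes: $\int_{\R^2}\langle\nabla G,z\rangle\frac{b_{i,j}z_i}{8+|z|^2}e^{U(z)}\,dz$. Computing this integral (using $\int_{\R^2}\frac{z_i z_l}{(8+|z|^2)(1+|z|^2/8)^2}\,dz=\delta_{il}\cdot(\text{const})$ and normalizing) yields the coefficient $-8\pi$ in front of $\partial_i G(x_{p,j},x)$, giving the stated term $-8\pi\sum_{j,i}b_{i,j}\varepsilon_{p,j}\partial_iG(x_{p,j},x)$; here $\partial_i G(x_{p,j},x)=\frac{\partial}{\partial y_i}G(y,x)|_{y=x_{p,j}}$. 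The dominated convergence is justified by Lemma \ref{llma} (which controls $u_p^{p-1}$, hence $p\varepsilon_{p,j}^2u_p^{p-1}$, by an integrable majorant like $C(1+|z|)^{-4+\delta}$) and $\|\xi_{p,j}\|_\infty\le 1$. The $C^1$ convergence in $x$ follows by differentiating the representation formula in $x$ under the integral sign and running the same argument with $D_x G$, $D_x\nabla G$ in place of $G$, $\nabla G$, all of which remain bounded on $\Omega\setminus\bigcup_j B_{2d}(x_{p,j})$.

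The main obstacle I anticipate is not any single step but the careful control of the remainder terms so that everything beyond the displayed two orders is genuinely $o(\varepsilon_p)$: one must check that the $O(\varepsilon_{p,j}^2|z|^2)$ term in the Taylor expansion, integrated against $p\varepsilon_{p,j}^2u_p^{p-1}|\xi_{p,j}|$, is $o(\varepsilon_p)$ — which requires $\int_{B_{d/\varepsilon_{p,j}}}|z|^2 e^{U(z)}\,dz$ to diverge only mildly (logarithmically, in fact it diverges like $\log(1/\varepsilon_{p,j})\sim p$), so the product $\varepsilon_{p,j}^2\cdot p$ is still $o(\varepsilon_{p,j})$; and similarly the tail region $B_{d/\varepsilon_{p,j}}\setminus B_p(0)$ must be estimated using $\log(1+a)<a$ and Lemma \ref{llma} exactly as in the proof of Lemma \ref{prop:expansionupwithdelta}. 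A secondary subtlety is that the first-order integral converges to its limit only at rate $o(1)$ (not with an explicit power of $p$), which is why the conclusion is stated with $o(\varepsilon_p)$ rather than $O(\varepsilon_p/p^{1-\delta})$; this is acceptable for the applications. Once these remainder estimates are in place, collecting the zeroth-order terms into $\sum_j A_{p,j}G(x_{p,j},x)$ and the first-order terms into $-8\pi\sum_{j,i}b_{i,j}\varepsilon_{p,j}\partial_iG(x_{p,j},x)$ yields \eqref{alaa1}.
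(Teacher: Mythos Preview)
Your proposal is correct and follows essentially the same route as the paper: Green's representation, splitting into near/far regions with the far contribution controlled by \eqref{11-14-03N}, Taylor expansion of $G(\cdot,x)$ around $x_{p,j}$, identification of the zeroth-order term as $A_{p,j}G(x_{p,j},x)$, and computation of the first-order coefficient via the limit profile of $\xi_{p,j}$ from Proposition~\ref{daprop3-2} together with dominated convergence using Lemma~\ref{llma}. The only cosmetic difference is that the paper first introduces the exact moment $B_{p,j,i}:=\frac{1}{\varepsilon_{p,j}}\int_{B_d}(x_i-x_{p,j,i})\,p\,u_p^{p-1}\xi_p$ and then shows $B_{p,j,i}\to -8\pi b_{i,j}$, whereas you pass to the limit directly inside the first-order integral; also, the paper bounds the remainder by $o\big(p\int_{B_d}|y-x_{p,j}|\,u_p^{p-1}|\xi_p|\big)=o(\varepsilon_{p,j})$ rather than using a second-order Taylor bound, but both arguments give $o(\varepsilon_p)$.
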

\begin{proof}
By the Green's representation theorem, we have
\begin{equation}\label{5-21-1}
\begin{split}
 {\xi}_p(x)=&  p \sum^k_{j=1}\int_{B_d(x_{p,j})} G(y,x)
 u_{p}^{p-1}(y) \xi_{p}(y)dy+p\int_{\Omega\backslash \bigcup^k_{j=1}B_{d}(x_{p,j})}G(y,x)
u_{p}^{p-1}(y) \xi_{p}(y)dy.
\end{split}
\end{equation}
Here for $y\in \Omega\backslash \bigcup^k_{j=1}B_{2d}(x_{p,j})$, from \eqref{11-14-03N} we know
$$
pu_{p}^{p-1}=O\Big(\frac{C^p}{p^{p-2}} \Big)=o\big( {\varepsilon}_{p}\big).$$
And it holds
\begin{equation*}
\int_{\Omega}G(x,y)dy\leq C ~\mbox{uniformly for}~x\in \Omega\backslash \bigcup^k_{j=1}B_{d}(x_{p,j}).
\end{equation*}
These give us that
\begin{equation}\label{agil44}
\begin{split}
 {\xi}_p(x)=p\sum^k_{j=1}\int_{B_d(x_{p,j})}  G(y,x)
 u_{p}^{p-1}(y) \xi_{p}(y)dy+o\big( {\varepsilon}_{p}\big)~\mbox{uniformly in}~\Omega\backslash \bigcup^k_{j=1}B_{d}(x_{p,j}).
\end{split}
\end{equation}
Next for $x\in \Omega\backslash \bigcup^k_{j=1}B_{2d}(x_{p,j})$, arguing similarly as in the proof  of Lemma \ref{prop3-1}, we get
\begin{equation*}
\begin{split}
p\int_{B_d(x_{p,j})}& G(y,x)
u_{p}^{p-1}(y) \xi_{p}(y)dy \\=&
 A_{p,j}G(x_{p,j},x)
 + \sum^2_{i=1}\varepsilon_{p,j} B_{p,j,i}\partial_iG(x_{p,j},x)
 +o\left( p\int_{B_d(x_{p,j})}\big|y-x_{p,j}\big|\cdot
u_{p}^{p-1}(y) \xi_{p}(y)dy\right)
\end{split}
\end{equation*}
uniformly in $\Omega\backslash \bigcup^k_{j=1}B_{d}(x_{p,j})$,
where $A_{p,j}$ is the term in \eqref{aaa5-7-4} and
$$ B_{p,j,i}:= \frac{p}{\varepsilon_{p,j}}\int_{B_d(x_{p,j})}(x_i-x_{p,j,i})u_{p}^{p-1}(x)\xi_{p}(x)dx.$$
Also using Lemma \ref{llma}, $|\xi_{p}|\leq 1$ and the dominated convergence theorem, we get
\begin{equation*}
\begin{split}
p\int_{B_d(x_{p,j})}&\big|y-x_{p,j}\big|\cdot
u_{p}^{p-1}(y) \xi_{p}(y)dy\\
\leq & p\int_{B_d(x_{p,j})}\big|y-x_{p,j}\big|\cdot
u_{p}^{p-1}(y) dy
=\e_{p,j}\int_{B_{\frac{d}{\e_{p,j}}(0)}}|x| \Big(1+\frac{w_{p,j}(x)}{p}\Big)^{p-1}dx
=O\big(\e_{p,j}\big).
\end{split}
\end{equation*}
Hence it holds
\begin{equation}\label{agil45}
\begin{split}
p\int_{B_d(x_{p,j})}& G(y,x)
u_{p}^{p-1}(y) \xi_{p}(y)dy =
 A_{p,j}G(x_{p,j},x)
 + \sum^2_{i=1}\varepsilon_{p,j} B_{p,j,i}\partial_iG(x_{p,j},x)+o\big(\varepsilon_{p,j} \big),
\end{split}
\end{equation}
uniformly for $x\in \Omega\backslash \bigcup^k_{j=1}B_{2d}(x_{p,j})$. Namely by \eqref{agil44},
 it follows
\begin{equation*}
{\xi}_p(x)=\sum^k_{j=1}A_{p,j}G(x_{p,j},x)
 +\sum^k_{j=1} \sum^2_{i=1}\varepsilon_{p,j} B_{p,j,i}\partial_iG(x_{p,j},x)+o\big(\varepsilon_{p,j} \big)~\mbox{uniformly in}~\Omega\backslash \bigcup^k_{j=1}B_{d}(x_{p,j}).
\end{equation*}
Now by scaling,  we know that
\begin{equation*}
\begin{split}
B_{p,j,i}=&\int_{B_{\frac{d}{\e_{p,j}}}(0)}x_i\big(1+\frac{w_{p,j}}{p}\big)^{p-1}\xi_{p,j}\, dx.
\end{split}\end{equation*}
Using Lemma \ref{llma}, we have  $\Big|x_i\big(1+\frac{w_{p,j}}{p}\big)^{p-1}\xi_{p,j}\Big|\leq \frac{C}{(1+|x|)^{3-\delta}}$, and then by  dominated convergence theorem, we obtain
\begin{equation}\label{a5-8-53}
\begin{split}
\lim_{p\to \infty}B_{p,j,i}= &\displaystyle\int_{\R^2}\left(x_ie^{U(x)}\Big( a_j\frac{\partial U(\frac{x}{\lambda})}{\partial \lambda}\big|_{\lambda=1}+\sum_{l=1}^2 b_{l,j}\frac{\partial U(x)}{\partial x_i}\Big)\right)
\\=&  b_{i,j} \Big(\int_{\R^2}
x_i\frac{\partial e^{U(x)}}{\partial x_i}dx\Big)
=
b_{i,j}\int_{\R^2}
x_i\frac{\partial e^{U(x)}}{\partial x_i}dx \\
=&-b_{i,j}\int_{\R^2} e^{U(x)} dx=
-8\pi b_{i,j}.
\end{split}\end{equation}
Then \eqref{agil44}, \eqref{agil45} and \eqref{a5-8-53} imply
\begin{equation*}\begin{split}
\xi_{p}(x)=&\sum^k_{j=1}A_{p,j}G(x_{p,j},x)-8\pi
 \sum^k_{j=1}\sum^2_{i=1} b_{i,j}\varepsilon_{p,j}  \partial_iG(x_{p,j},x)
 +o\big( {\varepsilon}_{p}\big)\,\,~\mbox{uniformly in}~
 \Omega\backslash \bigcup^k_{j=1}B_{2d}(x_{p,j}).
 \end{split}
\end{equation*}
On the other hand, from \eqref{agil44}, we know
\begin{equation*}
\begin{split}
\frac{\partial{\xi}_p(x)}{\partial x_i}=& p\int_{\Omega}D_{x_i}G(y,x)
 u_p^{p-1}(y) \xi_{p}(y)dy\,\,~\mbox{for}~i=1,2.
\end{split}
\end{equation*}
Then similar to the above computations for $\xi_{p}(x)$, we can  complete the proof of \eqref{alaa1}.
\end{proof}

The local Pohozaev identities \eqref{dafd} and \eqref{07-08-22} in Proposition \ref{prop:PohozaevLin} applied to $u_p$ and $\xi_p$ will be used, together with the properties of Green's function in Proposition \ref{lem2-1},
to show that in \eqref{111} and \eqref{alaa1}--\eqref{aaa5-7-4}  one has that $a_j=b_{i,j}=0$ and that $A_{p,j}=o(\e_{p,j})$ (see
propositions  \ref{dprop-luo1} and \ref{prop-gl} below).


\begin{Prop}\label{dprop-luo1}
Let $A_{p,j}$ be as in Proposition \ref{prop-2-2} and $a_{j}$ be the constants  in \eqref{111}.
Then it holds
\begin{equation}\label{dddluo-13}
  A_{p,j}=o\big( \e_p\big) ~\mbox{ and }~a_{j}=0\,\,~\mbox{for}~j=1,\cdots,k.
\end{equation}
\end{Prop}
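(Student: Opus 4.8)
The plan is to extract both assertions from the local Pohozaev identity \eqref{07-08-22} for the pair $(\xi_p,u_p)$ over a ball $B_\theta(x_{p,j})$, with $\theta>0$ a fixed small constant, supplemented by Green's second identity (Lemma \ref{lem2-5}) on the same ball. First I would observe that on $\partial B_\theta(x_{p,j})$ both $u_p$ and $\xi_p$ are controlled in $C^1$ by the Green's function expansions of Lemma \ref{prop3-1} and Proposition \ref{prop-2-2} (applied with the constant $d$ there taken smaller than $\theta/2$, which alters $C_{p,j}$ and $A_{p,j}$ only by exponentially small amounts). Substituting these into the bilinear form $P_j$ and using Proposition \ref{lem2-1}: by \eqref{1-1} the $G$--$G$ interactions contribute exactly $-\frac{1}{2\pi}A_{p,j}C_{p,j}$, while by \eqref{abb1-1} the terms carrying the factors $\e_{p,s}b_{i,s}$ are paired with the $C_{p,m}$ and, after summing over $i$ and invoking the cancellation \eqref{daluo-gil} of Proposition \ref{lem3-8}, reduce to $O(\e_p^2/p^{2-\delta})$; the errors in the two expansions contribute $o(\e_p/p)$. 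Since the boundary integral in \eqref{07-08-22} is exponentially small, this yields
\[
\int_{B_\theta(x_{p,j})}u_p^p\xi_p\,dx=\frac{1}{4\pi}A_{p,j}C_{p,j}+o\Big(\frac{\e_p}{p}\Big).
\]
Independently, Lemma \ref{lem2-5} gives $(p-1)\int_{B_\theta(x_{p,j})}u_p^p\xi_p=\int_{\partial B_\theta(x_{p,j})}\big(\xi_p\partial_\nu u_p-u_p\partial_\nu\xi_p\big)$, and inserting the same expansions (using $-\Delta G(x_{p,m},\cdot)=\delta_{x_{p,m}}$) the right-hand side equals $\sum_{m\neq j}\big(A_{p,j}C_{p,m}-A_{p,m}C_{p,j}\big)G(x_{p,m},x_{p,j})+O(\e_p/p)$, hence after dividing by $p-1$ a second expression for $\int_{B_\theta(x_{p,j})}u_p^p\xi_p$.

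Comparing the two expressions and using $C_{p,j}=\frac1p\big(8\pi\sqrt e+o(1)\big)$ from \eqref{luoluo1}, I would obtain for each $j$ a relation of the form $A_{p,j}\big(1+o(1)\big)=\frac{O(1)}{p}\sum_{m\neq j}\big(A_{p,j}-A_{p,m}\big)+o(\e_p)$; passing to $\max_j|A_{p,j}|$, the off-diagonal coupling is $O(1/p)$, so the system is diagonally dominant for $p$ large, which forces $\max_j|A_{p,j}|=o(\e_p)$ and proves the first claim. This is the heart of the proof and its main difficulty: since $\e_p$ is of order $e^{-p/4}$, the bound $A_{p,j}=o(\e_p)$ is extremely strong, and a single Pohozaev identity gives only $A_{p,j}=o(1)$. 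It is essential both to couple \eqref{07-08-22} with Lemma \ref{lem2-5}, so as to gain the factor $p-1$, and to have \eqref{daluo-gil} at hand: without it the translation-mode contributions $\e_{p,s}b_{i,s}$ would be of the same order $\e_p/p$ as $A_{p,j}C_{p,j}$ and could not be discarded.

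Once $A_{p,j}=o(\e_p)$ is known, I would deduce $a_j=0$ by recomputing the left-hand side of the displayed identity to one more order. Rescaling at $x_{p,j}$ and using $\e_{p,j}^2u_p^p(x_{p,j})=u_p(x_{p,j})/p$ gives $\int_{B_\theta(x_{p,j})}u_p^p\xi_p=\frac{u_p(x_{p,j})}{p}\int_{B_{\theta/\e_{p,j}}(0)}(1+\tfrac{w_{p,j}}{p})^p\xi_{p,j}\,dy$. Expanding both $(1+\tfrac{w_{p,j}}{p})^p$ and $(1+\tfrac{w_{p,j}}{p})^{p-1}$ to order $1/p$ by means of \eqref{lst} (with the growth bounds of Propositions \ref{lem3-1a} and \ref{blem3-1a} controlling the remainders over the large ball), and recalling that $A_{p,j}=\int_{B_{\theta/\e_{p,j}}(0)}(1+\tfrac{w_{p,j}}{p})^{p-1}\xi_{p,j}\,dy+O(\e_{p,j}^{2-\delta})$, the contributions of $w_0$ and of $\tfrac{U^2}{2}$ cancel, and one is left with $\int_{B_{\theta/\e_{p,j}}(0)}(1+\tfrac{w_{p,j}}{p})^p\xi_{p,j}\,dy=A_{p,j}+\tfrac1p\int_{\R^2}e^U U\,\xi_{p,j}\,dy+o(1/p)$. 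By Proposition \ref{daprop3-2}, Lemma \ref{llma} and dominated convergence, $\int_{\R^2}e^U U\,\xi_{p,j}\,dy\to a_j\int_{\R^2}e^U U\,\tfrac{8-|y|^2}{8+|y|^2}\,dy$, the translation parts vanishing by parity, and a direct computation gives $\int_{\R^2}e^U U\,\tfrac{8-|y|^2}{8+|y|^2}\,dy=8\pi\neq0$. Therefore $\int_{B_\theta(x_{p,j})}u_p^p\xi_p=\frac{8\pi\sqrt e\,a_j+o(1)}{p^2}$, whereas the displayed identity together with $A_{p,j}=o(\e_p)$ forces $\int_{B_\theta(x_{p,j})}u_p^p\xi_p=o(\e_p/p)$; multiplying by $p^2$ and letting $p\to+\infty$ (recall $p\e_p\to0$) gives $a_j=0$.
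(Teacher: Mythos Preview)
Your argument for $A_{p,j}=o(\e_p)$ is correct and coincides with the paper's: both combine the Pohozaev identity \eqref{07-08-22} (whose left side is evaluated via the expansions \eqref{luo-1}, \eqref{alaa1} and Proposition~\ref{lem2-1}, with the translation-mode contribution killed by Proposition~\ref{lem3-8}) with Green's second identity (Lemma~\ref{lem2-5}), which provides the crucial extra factor $p-1$. Your explicit evaluation of the boundary integral in Lemma~\ref{lem2-5} as $\sum_{m\neq j}(A_{p,j}C_{p,m}-A_{p,m}C_{p,j})G(x_{p,m},x_{p,j})+O(\e_p/p)$, followed by a diagonal-dominance argument on $\max_j|A_{p,j}|$, is in fact slightly more transparent for $k\ge 2$ than the paper's cruder bound $O(A_{p,j}C_{p,j})+O(\e_p/p)$ in \eqref{aad}, but the two arguments are otherwise identical in spirit.

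For $a_j=0$, your route works but is considerably more elaborate than necessary. The paper simply rescales the definition \eqref{aaa5-7-4} to $A_{p,j}=\int_{B_{d/\e_{p,j}}(0)}\big(1+\tfrac{w_{p,j}}{p}\big)^{p-1}\xi_{p,j}$ and passes to the limit by dominated convergence (Lemma~\ref{llma} and Proposition~\ref{daprop3-2}), obtaining
\[
\lim_{p\to\infty}A_{p,j}=\int_{\R^2}e^{U}\Big(a_j\,\tfrac{\partial U(x/\lambda)}{\partial\lambda}\big|_{\lambda=1}+\sum_i b_{i,j}\tfrac{\partial U}{\partial x_i}\Big)dx=16\pi a_j,
\]
since the $b$-terms vanish by oddness. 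Combined with $A_{p,j}=o(\e_p)\to 0$ this gives $a_j=0$ immediately. Your second-order expansion of $(1+w_{p,j}/p)^p$ versus $(1+w_{p,j}/p)^{p-1}$ and the computation $\int_{\R^2}e^U U\,\tfrac{8-|y|^2}{8+|y|^2}\,dy=8\pi$ are correct, but they recover the same information through a longer path; no refinement of \eqref{lst} or of Propositions~\ref{lem3-1a}--\ref{blem3-1a} is actually needed here.
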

\begin{proof}
We compute both sides of  the Pohozaev identity \eqref{07-08-22} with $\xi=\xi_{p}$ and $u=u_{p}$. Using the expansions of $u_{p}$ and $\xi_{p}$ in \eqref{luo-1} and \eqref{alaa1} respectively, together with
\eqref{1-1}, and \eqref{luoluo1}, we find
 \begin{equation}\label{dgil61}
\begin{split}
 \text{LHS of (\ref{07-08-22})}=& \sum^k_{m=1}\sum^k_{s=1}\big(A_{p,m}C_{p,s}\big)
P_{j}\Big(G(x_{p,s},x),G(x_{p,m},x)\Big)\\&-8\pi \e_{p,j}
\sum^k_{m=1}\sum^k_{s=1} \sum^2_{i=1}\big(b_{i,m}C_{p,s}\big)
P_{j}\Big(G(x_{p,s},x),\partial_iG(x_{p,m},x)\Big)\\&+
o\Big(\frac{ {\varepsilon}_{p}}{p} \Big)
 +
o\Big( {\varepsilon}_{p} \sum^k_{m=1} C_{p,m}\Big)\\=&
\frac{A_{p,j}C_{p,j}}{2\pi} -8\pi \e_{p,j}
\sum^k_{m=1}\sum^k_{s=1} \sum^2_{i=1}\big(b_{i,m}C_{p,s}\big)
P_{j}\Big(G(x_{p,s},x),\partial_iG(x_{p,m},x)\Big)+
o\Big(\frac{ {\varepsilon}_{p}}{p} \Big).
\end{split}
\end{equation}
Also using \eqref{abb1-1} and Proposition \ref{lem3-8}, we find
\begin{equation}\label{hfgil61}
\begin{split}
\sum^k_{m=1}& \sum^k_{s=1} \sum^2_{i=1}\big(b_{i,m}C_{p,s}\big)
P_{j}\Big(G(x_{p,s},x),\partial_iG(x_{p,m},x)\Big)\\
=&\frac{1}{2}
\sum^2_{i=1} \left( b_{i,j} \Big(C_{p,j} \frac{\partial R(x_{p,j})}{\partial{x_i}}-2\displaystyle\sum^k_{m=1,m\neq j}
C_{p,m}
D_{x_i} G(x_{p,m},x_{p,j})\Big)\right)=
o\Big(\frac{ {\varepsilon}_{p}}{p} \Big).
\end{split}
\end{equation}
Then from \eqref{dgil61} and \eqref{hfgil61}, we have
\begin{equation}\label{gil61}
\begin{split}
 \text{LHS of (\ref{07-08-22})}  =&
\frac{A_{p,j}C_{p,j}}{2\pi}  +
o\Big(\frac{ {\varepsilon}_{p}}{p} \Big).
\end{split}
\end{equation}

On the other hand, using \eqref{11-14-03N}, we know
\begin{equation*}
d\int_{\partial B_d(x_{p,j})}u^p_p\xi_p
\leq d \int_{\partial B_d(x_{p,j})}u^p_p=O\Big(\frac{C^p}{p^p} \Big),
\end{equation*}
and then
 \begin{equation}\label{adgil61}
\begin{split}
 \text{RHS of (\ref{07-08-22})}=&d\int_{\partial B_d(x_{p,j})}u^p_p\xi_p-2\int_{B_d(x_{p,j})} u^p_p\xi_p \\=&
-2\int_{B_d(x_{p,j})} u^p_p\xi_p
 +O\Big(\frac{C^p}{p^p} \Big).
\end{split}
\end{equation}
Also using  Lemma \ref{lem2-5}, it holds
 \begin{equation}\label{5-22-03}
\begin{split}
(p-1)  \int_{B_d(x_{p,j})} u^p_p\xi_p  =&
\int_{B_d(x_{p,j})}\Big( \Delta u_p\xi_p  -\Delta \xi_p u_p\Big)\\
=& \int_{\partial B_d(x_{p,j})}\Big( \frac{\partial u_p}{\partial \nu}\xi_p
-\frac{\partial \xi_p}{\partial \nu} u_p  \Big).
\end{split}
\end{equation}
Writing by \eqref{alaa1},
\begin{equation*}
\begin{split}
\xi_{p}(x)=&\sum^k_{j=1}A_{p,j}G(x_{p,j},x)+O\big(\varepsilon_{p}\big)\,\,~\mbox{in}~ C^1
\Big(\Omega\backslash \bigcup^k_{j=1}B_{2d}(x_{p,j})\Big),
\end{split}\end{equation*}
and using also the expansion of $u_{p}$ in \eqref{luo-1}, we then have
\begin{equation}\label{aad}
\begin{split}
\int_{\partial B_d(x_{p,j})} &\Big( \frac{\partial u_p}{\partial \nu}\xi_p
-\frac{\partial \xi_p}{\partial \nu} u_p   \Big)=O\Big(A_{p,j}C_{p,j}\Big)+
O\Big(\frac{ {\varepsilon}_{p}}{p}\Big).
\end{split}\end{equation}
Hence \eqref{adgil61}, \eqref{5-22-03} and \eqref{aad} give us that
 \begin{equation}\label{fgil61}
\begin{split}
 \text{RHS of (\ref{07-08-22})}=-2\int_{B_d(x_{p,j})} u^p_p\xi_p +O\Big(\frac{C^p}{p^p} \Big)
=
O\Big(\frac{A_{p,j}C_{p,j}}{p}\Big)+O\Big(\frac{ {\varepsilon}_{p}}{p^2}\Big).
\end{split}
\end{equation}
Then from \eqref{gil61}
and \eqref{fgil61}, we find
\begin{equation}\label{Adad}
\frac{A_{p,j}C_{p,j}}{2\pi}
 +
o\Big(\frac{ {\varepsilon}_{p}}{p}\Big)=
O\Big(\frac{A_{p,j}C_{p,j}}{p}\Big).
\end{equation}
Also from \eqref{luoluo1}, we know $
C_{p,j} =\frac{1}{p} \Big(8\pi \sqrt{e}+o(1)\Big)$, then from \eqref{Adad}, it follows
\begin{equation}\label{st5-8-52}
A_{p,j}=o\big( \e_p\big)~\mbox{for}~j=1,\cdots,k.
\end{equation}
Now by scaling,  we know
\begin{equation*}
\begin{split}
A_{p,j}=&\int_{B_{\frac{d}{\e_{p,j}}}(0)}\big(1+\frac{w_{p,j}}{p}\big)^{p-1}\xi_{p,j}.
\end{split}\end{equation*}
Using Lemma \ref{llma}, we have  $\Big|\big(1+\frac{w_{p,j}}{p}\big)^{p-1}\xi_{p,j}\Big|\leq \frac{C}{(1+|x|)^{4-\delta}}$, and then by \eqref{5-8-2}, \eqref{111}  and the   dominated convergence theorem, we obtain
\begin{equation}\label{sy5-8-52}
\begin{split}
\lim_{p\to \infty}A_{p,j}=&
\int_{\R^2}e^{U(x)}\left( a_j\frac{\partial U(\frac{x}{\lambda})}{\partial \lambda}\big|_{\lambda=1}+\sum_{i=1}^2 b_{i,j}\frac{\partial U(x)}{\partial x_i}\right)\\=&
  a_j  \Big(\frac{\partial}{\partial \lambda}\int_{\R^2} e^{U(\frac{x}{\lambda})} \Big)\Big|_{\lambda=1}
  =16\pi a_j.
\end{split}\end{equation}
Obviously \eqref{st5-8-52} and \eqref{sy5-8-52} imply that $a_j=0$ for $j=1,\cdots,k$.
\end{proof}
\begin{Rem}
In the previous proof \eqref{hfgil61} is derived by means of Proposition \ref{lem3-8}, which is obtained thanks to the improved expansion \eqref{dluo-1} of $u_{p}$. Let us stress that if one uses the expansion  \eqref{luo-1} of $u_{p}$
 instead of \eqref{dluo-1}, then Proposition \ref{lem3-8} becomes
 \[
C_{p,j} \frac{\partial R(x_{p,j})}{\partial{x_i}}-2\displaystyle\sum^k_{m=1,m\neq j}
C_{p,m}
D_{x_i} G(x_{p,m},x_{p,j})=o\big( \frac{\varepsilon_{p}}{p} \big),
\]
which is still enough to get \eqref{hfgil61}.
\end{Rem}
\begin{Prop}\label{prop-gl}
It holds
\begin{equation}\label{a3-29-02}
b_{i,j}=0,~\mbox{for}~i=1,2~\mbox{and}~j=1,\cdots,k,
\end{equation}
where $b_{i,j}$ are  constants in \eqref{111}.
\end{Prop}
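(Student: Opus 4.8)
The plan is to read the ``translation'' local Pohozaev identity \eqref{dafd} of Proposition \ref{prop:PohozaevLin} for the pair $(\xi_p,u_p)$, namely $Q_j(\xi_p,u_p)=\int_{\partial B_\theta(x_{p,j})}u_p^p\xi_p\,\nu_i\,d\sigma$, on a sphere of small fixed radius $\theta$ on which the far-field expansions are available. First I would insert the expansion \eqref{alaa1} of $\xi_p$ --- which by Proposition \ref{dprop-luo1} has $a_j=0$ and $A_{p,j}=o(\e_p)$, so that its only leading contribution is the $\partial_iG$-part $-8\pi\sum_m\sum_{i'}b_{i',m}\e_{p,m}\,\partial_{i'}G(x_{p,m},\cdot)$ --- together with the refined expansion \eqref{dluo-1} of $u_p$, and expand $Q_j$ by bilinearity. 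By \eqref{11-14-03N} the right-hand side is $O(C^p/p^p)$, negligible with respect to $\e_p/p$; and the cross-terms with the $O(\e_p/p^{2-\delta})$ remainder of $u_p$, with the $o(\e_p)$ remainder of $\xi_p$, and the whole $A_{p,m}G$-part of $\xi_p$ (controlled by \eqref{aluo1} together with $A_{p,m}=o(\e_p)$, $C_{p,s}=O(1/p)$) all turn out to be $o(\e_p/p)$ as well.

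Next I would evaluate the surviving piece using Proposition \ref{lem2-1}. Pairing $-8\pi b_{i',m}\e_{p,m}\partial_{i'}G(x_{p,m},\cdot)$ with $C_{p,s}G(x_{p,s},\cdot)$ and invoking \eqref{aluo41} produces exactly three families of terms: a diagonal one built from $\e_{p,j}C_{p,j}b_{i',j}\,\partial^2 R(x_{p,j})/\partial x_i\partial x_{i'}$, an ``interaction'' one built from $C_{p,j}\sum_{m\neq j}\e_{p,m}b_{i',m}\,D_{x_i}\partial_{i'}G(x_{p,m},x_{p,j})$, and one built from $\e_{p,j}b_{i',j}\sum_{s\neq j}C_{p,s}\,D^2_{x_ix_{i'}}G(x_{p,s},x_{p,j})$. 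Proposition \ref{lem3-8} (the improved relation \eqref{daluo-gil}, itself a consequence of \eqref{dluo-1}) is then used to recombine these pieces. Dividing the resulting identity by $\e_{p,j}C_{p,j}$ and letting $p\to+\infty$, using \eqref{luoluo1} (so $C_{p,s}/C_{p,j}\to1$), the ratio asymptotics \eqref{3-29-03} (so $\e_{p,m}/\e_{p,j}\to\mu_m/\mu_j$ with $\mu_l:=e^{-2\pi\Psi_{k,l}(x_\infty)}$), and $x_{p,j}\to x_{\infty,j}$, one obtains --- after absorbing the factors $\mu_m/\mu_j$ by setting $\beta_{i,m}:=\mu_m b_{i,m}$ --- a homogeneous linear system $\sum_{m=1}^k\sum_{i'=1}^2 \big(D^2\Psi_k(x_\infty)\big)_{(j,i)(m,i')}\,\beta_{i',m}=0$, $j=1,\dots,k$, $i=1,2$, up to an overall nonzero constant, where $\Psi_k$ is the Kirchhoff--Routh function \eqref{stts}.

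Since $x_\infty$ is by hypothesis a \emph{non-degenerate} critical point of $\Psi_k$, the Hessian $D^2\Psi_k(x_\infty)$ is invertible, so $\beta_{i,m}=0$ for all $m,i$, and since $\mu_m>0$ this forces $b_{i,j}=0$ for all $i=1,2$, $j=1,\dots,k$, which is \eqref{a3-29-02}.

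The hard part will be the bookkeeping in the second step: one must check that the three surviving families of terms, after the recombination via Proposition \ref{lem3-8} and the passage to the limit, assemble, up to a harmless nonzero scalar, precisely into $D^2\Psi_k(x_\infty)$ --- this is where the symmetry $G(x,y)=G(y,x)$ and the exact form of \eqref{stts} are used --- and, on the quantitative side, that no discarded remainder exceeds $o(\e_p/p)$, which forces one to use the $p^{2-\delta}$-precise expansions of Theorem \ref{th1} rather than the cruder ones. This is the exact analogue, for the translation modes, of the argument just carried out for the dilation modes in Proposition \ref{dprop-luo1}.
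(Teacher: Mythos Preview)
Your proposal is essentially the paper's own argument: apply the Pohozaev identity \eqref{dafd} to $(\xi_p,u_p)$, insert the expansions \eqref{alaa1} and \eqref{luo-1}, use $A_{p,m}=o(\e_p)$ from Proposition \ref{dprop-luo1} together with \eqref{aluo1} to kill the $A$-terms, evaluate the surviving $Q_j\big(G,\partial_h G\big)$ pairings via \eqref{aluo41}, and pass to the limit to obtain a homogeneous linear system that the non-degeneracy of $D^2\Psi_k(x_\infty)$ forces to have only the trivial solution. Two minor deviations are worth noting: the paper uses only the cruder expansion \eqref{luo-1} and does not invoke Proposition \ref{lem3-8} at this stage (neither refinement is needed here, the remainders being already $o(\e_p/p)$); and the paper writes its limiting system \eqref{a7-30-2} with unweighted $b_{h,s}$, whereas your introduction of the weights $\mu_m=e^{-2\pi\Psi_{k,m}(x_\infty)}$ via \eqref{3-29-03} tracks the ratios $\e_{p,s}/\e_{p,j}$ more carefully than what the paper records.
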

\begin{proof}
We compute both sides of the Pohozaev identity \eqref{dafd} with $\xi=\xi_{p}$ and $u=u_{p}$. From the expansions of $u_{p}$ and $\xi_{p}$ in \eqref{luo-1} and \eqref{alaa1} respectively, we deduce
\begin{equation}\label{aluo21}
\begin{split}
 \text{LHS of (\ref{dafd})}=&
 Q_{j}\big(\xi_p,u_p\big)=\sum^k_{s=1}\sum^k_{m=1}\big(A_{p,s}C_{p,m}\big)
 Q_{j}\Big(G(x_{p,s},x),G(x_{p,m},x)\Big)
\\&-8\pi
\sum^k_{s=1}\sum^k_{m=1} \sum^2_{h=1} \varepsilon_{p,s} b_{h,s}C_{p,m} Q_{j}
\Big(G(x_{p,m},x),\partial_hG(x_{p,s},x)\Big)
+o\Big(\frac{ {\varepsilon}_p}{p}\Big).
\end{split}
\end{equation}
Furthermore \eqref{aluo1}, \eqref{luoluo1} and \eqref{dddluo-13} imply
\begin{equation}\label{a5-7-1}
\begin{split}
\sum^k_{s=1}\sum^k_{m=1}&\big(A_{p,s}C_{p,m}\big)
 Q_{j}\Big(G(x_{p,s},x),G(x_{p,m},x)\Big)=O\Big(\sum^k_{s=1}\sum^k_{m=1} \big|A_{p,s}C_{p,m}\big|
 \Big)=
o\Big(\frac{ {\varepsilon}_p}{p}\Big).
\end{split}
\end{equation}
Hence from \eqref{aluo21} and \eqref{a5-7-1}, we have
\begin{equation}\label{asluo21}
\begin{split}
 \text{LHS of (\ref{dafd})}= -8\pi
\sum^k_{s=1}\sum^k_{m=1} \sum^2_{h=1} \varepsilon_{p,s} b_{h,s}C_{p,m} Q_{j}
\Big(G(x_{p,m},x),\partial_hG(x_{p,s},x)\Big)
+o\Big(\frac{ {\varepsilon}_p}{p}\Big).
\end{split}
\end{equation}
Moreover from \eqref{11-14-03N}, it follows
\begin{equation}\label{aluo22}
\begin{split}
 \text{RHS of (\ref{dafd})}= &\frac{1}{p+1} \int_{\partial B_d(x_{p,j})}u^p_p \xi_p \nu_i =
O\Big( \frac{C^p}{p^{p+1}}  \Big)=o\Big(\frac{\varepsilon_{p}}{p}\Big).
\end{split}
\end{equation}
So from \eqref{asluo21} and \eqref{aluo22}, we have
\begin{equation} \label{a5-7-3}
\begin{split}
\sum^k_{s=1}\sum^k_{m=1} \sum^2_{h=1}\varepsilon_{p,s}\big(b_{h,s}C_{p,m}\big)  Q_{j}
\Big(G(x_{p,m},x),\partial_hG(x_{p,s},x)\Big)
=o\Big(\frac{ {\varepsilon}_p}{p}\Big).
\end{split}
\end{equation}
 Putting \eqref{aluo41} into \eqref{a5-7-3}, we obtain
 \begin{equation}\label{07-08-25}
 \begin{split}
 \e_{p,j}&C_{p,j}\sum^2_{h=1}\frac{\partial^2R(x_{p,j})}{\partial x_i\partial x_h}b_{h,j}
 -\sum^2_{h=1}\sum_{s\neq j}\e_{p,s}b_{h,s}C_{p,j}D_{x_i}\partial_hG(x_{p,s},x_{p,j})
\\& -\sum^2_{h=1}\sum_{s\neq j}\e_{p,j}b_{h,j}C_{p,s}D^2_{x_ix_h}G(x_{p,s},x_{p,j})=o\Big(\frac{ {\varepsilon}_p}{p}\Big).
 \end{split}
 \end{equation}
Letting $p\to \infty$ in \eqref{07-08-25}, together with \eqref{luoluo1}, we have
\begin{equation}\label{a7-30-2}
\begin{split}
\sum^2_{h=1} b_{h,j}&\Big(\frac{\partial^2 R(x_{\infty,j})}{\partial{x_ix_h}}-\sum_{s\neq j}
 \partial^2_{x_i  x_h}G(x_{\infty,j},x_{\infty,s}) \Big)
-
\sum^2_{h=1}\sum_{s\neq j} b_{h,s}
D_{x_h}\partial_{x_i}G(x_{\infty,j},x_{\infty,s})=0.
\end{split}
\end{equation}
Since $x_{\infty}:=(x_{\infty,1},\cdots,x_{\infty,k})$
is the nondegenerate critical point of $\Psi_{k}(x)$, we   obtain
\eqref{a3-29-02} from \eqref{a7-30-2}.
\end{proof}

  \vskip 0.1cm

\begin{proof}[\underline{\textbf{Proof of Theorem \ref{th1.1}}}]
Suppose that $\mathcal{L}_p \xi_p=0$ and $\xi_p\not\equiv0$.
Let $x_p$ be a maximum point of $\xi_p$ in
$\Omega$. We can assume that $\xi_p(x_p)=1$.
By propositions \ref{daprop3-2}, \ref{dprop-luo1} and \ref{prop-gl}, we have, for any $R>0$,
\begin{equation*}
\|\xi_{p,j}\|_{L^{\infty}\big(B_R(0)\big)}=o(1)\,\,~\mbox{for}~j=1,\cdots,k.
\end{equation*}
Therefore, $x_p\in \Omega\backslash \displaystyle\bigcup^k_{j=1}B_{R\varepsilon_{p,j}}(x_{p,j})$, for any $R>0$. In particular,
\begin{equation}\label{lsas}
\frac{|x_p-x_{p,j}|}{\e_{p,j}}\to +\infty.
\end{equation}
Let us write  \begin{equation*}\begin{split}
\Omega\backslash \displaystyle\bigcup^k_{j=1}B_{R\varepsilon_{p,j}}(x_{p,j})=&
\Big(\Omega\backslash \bigcup^k_{j=1}B_{d}(x_{p,j})\Big) \bigcup
 \Big( \bigcup^k_{j=1}  \bigl( B_{d}(x_{p,j})\backslash  B_{2p\varepsilon_{p,j}}(x_{p,j})\bigr)\Big)\\&
  \bigcup  \Big( \bigcup^k_{j=1}   \bigl(B_{2p\varepsilon_{p,j}}(x_{p,j})\backslash  B_{R\varepsilon_{p,j}}(x_{p,j})\bigr)\Big).
  \end{split}\end{equation*}
Now we divide the proof into following three steps.

\vskip 0.1cm

\noindent \textbf{Step 1. We show that $x_p\notin \Omega\backslash \displaystyle\bigcup^k_{j=1}B_{d}(x_{p,j})$.}

We just need to prove that
\begin{equation}\label{hhs}
\xi_{p}=o(1),~~\mbox{uniformly in}~~\Omega\backslash \displaystyle\bigcup^k_{j=1}B_{d}(x_{p,j}).
\end{equation}
By Proposition \ref{prop-2-2} and $b_{i,j}=0$ in Proposition \ref{prop-gl}, we have
\begin{equation*}
\begin{split}
 {\xi}_p(x)=& \sum^k_{j=1} A_{p,j}
  G(x_{p,j},x) + o(\e_p),~\mbox{
  uniformly in $\Omega\backslash \displaystyle\bigcup^k_{j=1}B_{d}(x_{p,j})$}.
\end{split}
\end{equation*}
Hence  \eqref{hhs} follows since  $A_{p,j}=o(\e_p)$
by Proposition \ref{dprop-luo1}, and observing that
\begin{equation}\label{lsst}
 \sup_{\Omega\backslash {\displaystyle\bigcup^k_{j=1}}
 	B_{R\varepsilon_{p,j}}(x_{p,j})}G(x_{p,j},x) =
  \sup_{\Omega\backslash {\displaystyle\bigcup_{j=1}^{k}} B_{R\varepsilon_{p,j}}(x_{p,j})}\left(-\frac{1}{2\pi}\log |x-x_{p,j}|-H(x,x_{p,j})\right)
 =O\Big(\big|\log \e_{p,j}\big|\Big).
\end{equation}

\noindent \textbf{Step  2. We show that $x_p\notin  \displaystyle\bigcup^k_{j=1} \bigl(  B_{d}(x_{p,j})\backslash  B_{ 2p\varepsilon_{p,j} }(x_{p,j})\bigr)$.}

We claim that
\begin{equation}\label{ddc}
{\xi}_p(x)=o\big(1\big)~~\mbox{for}~x\in  \displaystyle\bigcup^k_{j=1} B_{d}(x_{p,j})\backslash  B_{ 2p\varepsilon_{p,j}}(x_{p,j}).
\end{equation}
By the Green's representation theorem, \eqref{5-21-1} and similarly to the proof of Proposition \ref{prop-2-2}, we get
\begin{equation}\label{luos1}
\begin{split}
 {\xi}_p(x){=}& p \sum^k_{j=1}\int_{B_{d}(x_{p,j})} G(y,x)
 u_{p}^{p-1}(y) \xi_{p}(y)dy+p\int_{\Omega\backslash \bigcup^k_{j=1}B_{d}(x_{p,j})}G(y,x)
u_{p}^{p-1}(y) \xi_{p}(y)dy\\=&   p \sum^k_{j=1}\int_{ B_{ p\varepsilon_{p,j}}(x_{p,j})} G(y,x)
 u_{p}^{p-1}(y) \xi_{p}(y)dy\\&
 +p\sum^k_{j=1} \int_{B_{d}(x_{p,j})\backslash  B_{ p\varepsilon_{p,j}}(x_{p,j})}G(y,x)
u_{p}^{p-1}(y) \xi_{p}(y)dy+ o(\e_p).
\end{split}
\end{equation}
By Taylor's expansion and Lemma \ref{llma}, we find
\begin{equation}\label{tta}
\begin{split}
p  &\int_{ B_{ p\varepsilon_{p,j}}(x_{p,j})} G(y,x)
 u_{p}^{p-1}(y) \xi_{p}(y)dy \\=&
p \int_{ B_{ p\varepsilon_{p,j}}(x_{p,j})}  G(x_{p,j},x)
 u_{p}^{p-1}(y) \xi_{p}(y)dy\\&
 +O\left(p
 \int_{ B_{ p\varepsilon_{p,j}}(x_{p,j})} |y-x_{p,j} |\cdot |\nabla G(\xi,x)|
 u_{p}^{p-1}(y) \xi_{p}(y)dy  \right)\\=&
 p\int_{ B_{ p\varepsilon_{p,j} }(x_{p,j})}  G(x_{p,j},x)
 u_{p}^{p-1}(y) \xi_{p}(y)dy
 +O\left(\frac{1}{p} \int_{ B_{p}(0)} |y|\cdot \Big(1+\frac{w_{p,j}}{p}\Big)^{p-1}  dy  \right)\\=&
 p G(x_{p,j},x) \int_{ B_{ p\varepsilon_{p,j} }(x_{p,j})}
 u_{p}^{p-1}(y) \xi_{p}(y)dy
 +O\Big(\frac{1}{p}\Big),
~\,\,\mbox{where $\xi$ is between $x_{p,j}$ and $y$.}
\end{split}
\end{equation}
Moreover, recalling the definition of  $A_{p,j}$ in \eqref{aaa5-7-4}, the definition of $w_{p,j}$ in \eqref{defwpj} and Lemma \ref{llma},
for $x\in  \displaystyle\bigcup^k_{j=1} B_{d}(x_{p,j})\backslash  B_{ 2p\varepsilon_{p,j}}(x_{p,j})$, we have
\begin{equation}\label{tta1}
\begin{split}
 p & G(x_{p,j},x) \int_{ B_{p \varepsilon_{p,j}}(x_{p,j})}
 u_{p}^{p-1}(y) \xi_{p}(y)dy  \\=&
  G(x_{p,j},x) A_{p,j}-
 p G(x_{p,j},x) \int_{ B_d(x_{p,j})\backslash B_{p\varepsilon_{p,j}}(x_{p,j})}
 u_{p}^{p-1}(y) \xi_{p}(y)dy \\=&
  G(x_{p,j},x) A_{p,j}+O\Big(\log (p\e_{p,j})\Big)  \int^{ \frac{d}{\e_{p,j}}}_p\frac{1}{r^{3-\delta}}dr\\=&
  G(x_{p,j},x) A_{p,j}+O\Big(\frac{1}{p^{1-\delta}}\Big).
\end{split}
\end{equation}
Hence \eqref{tta} and \eqref{tta1} imply that for $x\in  \displaystyle\bigcup^k_{j=1} B_{d}(x_{p,j})\backslash  B_{ 2p\varepsilon_{p,j}}(x_{p,j})$, it holds
\begin{equation}\label{luos1t}
\begin{split}
 p \sum^k_{j=1}\int_{ B_{ p\varepsilon_{p,j}}(x_{p,j})} G(y,x)
 u_{p}^{p-1}(y) \xi_{p}(y)dy=
  \sum^k_{j=1}G(x_{p,j},x) A_{p,j}+O\Big(\frac{1}{p^{1-\delta}}\Big).
\end{split}
\end{equation}

Now we estimate the second term of the right part of \eqref{luos1}.
\begin{equation}\label{luos1t12}
\begin{split}
p&\int_{B_{d}(x_{p,j})\backslash  B_{ p\varepsilon_{p,j} }(x_{p,j})}G(y,x)
u_{p}^{p-1}(y) \xi_{p}(y)dy\\=&
O\left(\int_{B_{\frac{d}{\varepsilon_{p,j}}}(0)\backslash  B_{p}(0)}G(
\e_{p,j}y+x_{p,j},x)
 \Big(1+\frac{w_{p,j}(y)}{p}\Big)^{p-1}   dy\right)\\=&
O\left( \int_{B_{\frac{d}{\varepsilon_{p,j}}}(0)\backslash  B_{p}(0)} \frac{|\log \e_{p,j}|+  \big|\log |y+\frac{x_{p,j}-x}{\e_{p,j}}|\big|  }{(1+|y|)^{4-\delta}} dy\right)\\=&
O\left( \int_{B_{\frac{d}{\varepsilon_{p,j}}}(0)\backslash  B_{p}(0)} \frac{\big|\log |y+\frac{x_{p,j}-x}{\e_{p,j}}|\big|  }{(1+|y|)^{4-\delta}} dy\right)+O\left(
 \frac{|\log \e_{p,j}|}{ p^{2-\delta}}\right).
\end{split}
\end{equation}
Also by H\"older's inequality, \eqref{nn3-29-03} and $|\log \e_{p,j}|=O\big(p\big)$ by definition,  we have
\begin{equation}\label{luos1t1}
\begin{split}
 & \int_{B_{\frac{d}{\varepsilon_{p,j}}}(0)\backslash  B_{p}(0)} \frac{ \big|\log |y+\frac{x_{p,j}-x}{\e_{p,j}}|\big|  }{(1+|y|)^{4-\delta}} dy \\=&
O\left(
\Big(
 \int_{B_{\frac{d}{\varepsilon_{p,j}}}(0)\backslash  B_{p}(0)}  \big|\log |y+\frac{x_{p,j}-x}{\e_{p,j}}|\big|^{p}  dy \Big)^{\frac{1}{p}} \cdot \Big(
 \int_{B_{\frac{d}{\varepsilon_{p,j}}}(0)\backslash  B_{p}(0)} \frac{1 }{(1+|y|)^{\frac{(4-\delta)p}{p-1}}}dy \Big)^{\frac{p-1}{p}}  \right) \\&
 \\=&
 O\left( \frac{ \e_{p,j}^{-\frac{2}{p}} \cdot |\log \e_{p,j}| }{ p^{\big((4-\delta)\frac{p}{p-1}-2\big)\frac{p-1}{p}}} \right)=
O\left( \frac{1}{p^{1-\delta+\frac{2}{p}}}\right).
\end{split}
\end{equation}
Hence from the fact that $|\log \e_{p,j}|=O\big(p\big)$, \eqref{luos1}, \eqref{luos1t}, \eqref{luos1t12} and \eqref{luos1t1}, we get
\begin{equation*}
\begin{split}
 {\xi}_p(x)=&  \sum^k_{j=1} G(x_{p,j},x) A_{p,j}
 + O\left( \frac{1}{p^{1-\delta}}\right),~\mbox{uniformly in}~x\in  \displaystyle\bigcup^k_{j=1}   B_{d}(x_{p,j})\backslash  B_{ 2p\varepsilon_{p,j}}(x_{p,j}).
\end{split}
\end{equation*}
Then the proof of \eqref{ddc} follows from \eqref{lsst} and recalling that
$A_{p,j}=o\big( \e_p\big)$ as in \eqref{st5-8-52}.

\noindent \textbf{Step  3.} We now have $x_p\in \displaystyle\bigcup^k_{i=1}   B_{2p\varepsilon_{p,i}}(x_{p,i})\backslash  B_{R\varepsilon_{p,i}}(x_{p,i})$.
Suppose that there exists $j\in \{1,\cdots,k\}$ such that
$$x_p\in B_{2p\varepsilon_{p,j}}(x_{p,j})\backslash  B_{R\varepsilon_{p,j}}(x_{p,j})~~\mbox{and}~~r_p:=|x_p|.$$ Then $\frac{r_p}{\e_{p,j}}\geq R\gg 1$.
By translation, we assume that  $x_{p,j}=0$. Taking
\begin{equation}\label{llts}
\widetilde{\xi}_p(y):=\xi_p(r_py),
\end{equation}
then using Lemma  \ref{llma} and \eqref{lsas}, for any $y\in \Omega_{r_p}:=\{x,\,r_px\in\Omega\}$, we find

\begin{equation*}
\begin{split}
-\Delta \widetilde{\xi}_p(y)=&r^2_p pu^{p-1}(r_py)\widetilde{\xi}_p(y)=
\big(\frac{r_p}{\e_{p,j}}\big)^2\Big(1+\frac{w_{p,j}(\frac{r_py}{\e_{p,j}})}{p}\Big)^{p-1}\\
=&\big(\frac{r_p}{\e_{p,j}}\big)^2 e^{(p-1)\log \big(1+\frac{w_{p,j}(\frac{r_py}{\e_{p,j}})}{p}\big)}
\\
\leq & C \big(\frac{r_p}{\e_{p,j}}\big)^2 e^{\frac{p-1}p w_{p,j}(\frac{r_py}{\e_{p,j}})}\le C \big(\frac{r_p}{\e_{p,j}}\big)^2\frac{1}{\big(1+|\frac{r_py}{\e_{p,j}}|\big)^{4-\delta}}\rightarrow 0.
\end{split}\end{equation*}
Then there exists a bounded function $\xi$ such that
$$\widetilde{\xi}_p \rightarrow \xi~\mbox{in}~C\big(B_R(0)\backslash B_\delta(0)\big)
~\mbox{and}~\Delta \xi =0~~\mbox{in}~\R^2,$$
which implies $\xi=\widetilde{C}$.  From  $\widetilde{\xi}_p(\frac{x_p}{r_p})=\xi_p(x_p)=1$, we see $\widetilde{C}=1$, namely
\begin{equation}\label{altts}
\widetilde{\xi}_p \rightarrow 1~\mbox{in}~C\big(B_R(0)\backslash B_\delta(0)\big).
\end{equation}

Now let $\xi_{p,j}(y):=\xi_p(\e_{p,j}y+x_{p,j})$, then we have
\begin{equation*}
-\Delta \xi_{p,j}-e^{U}\xi_{p,j}=f_p:=\Big(\big(1+\frac{w_{p,j}}{p}\big)^{p-1}-e^{U}\Big)
\xi_{p,j}.
\end{equation*}
Similarly as in
\eqref{5-7-31}, \eqref{St} and \eqref{5-7-32},
 we can verify that
\begin{equation*}
|f_p(y)|\leq \frac{1}{p\big(1+|y|\big)^{4-\delta-\tau/2}}.
\end{equation*}
The average of $\xi_{p,j}$, denoted by
${\xi}^*_{p,j}(r):=\displaystyle\int^{2\pi}_0 {\xi}_{p,j}(r,\theta)d\theta$, solves the ODE
\begin{equation*}
-\left({\xi}^*_{p,j}\right)''-\frac{1}{r}\left({\xi}^*_{p,j}\right)'-e^{U}{\xi}^*_{p,j}=\int^{2\pi}_0f_p(r,\theta)d\theta:=f^*_p(r).
\end{equation*}
Observe that, since $u_0(r):=\frac{8-r^2}{8+r^2}$ is a bounded solution of
\begin{equation}\label{jjsts}
-v''-\frac{1}{r}v'-e^{U(r)}v=0,
\end{equation}
then by ODE's theory, the second solution of \eqref{jjsts} is given by
\begin{equation*}
u_1(r):=\frac{(8-r^2)\log r+16}{r^2+8},
\end{equation*}
and  the function
\begin{equation*}
V(r):=u_0(r)\int^r_0su_1(s)f_p^*(s)ds-u_1(r)\int^r_0su_0(s)f_p^*(s)ds
\end{equation*}
is a solution of $-v''-\frac{1}{r}v'-e^{U(r)}v=f_p^*(r)$.
As a consequence there exist two constants $C_{0,p}$, $C_{1,p}\in\mathbb R$ such that
\begin{equation*}
\xi^*_{p,j}(r)=C_{0,p}u_0(r)+C_{1,p}u_1(r)+V(r).
\end{equation*}
Since $\xi^*_{p,j}$ must be bounded at $r=0$ and $V(0)=0$, then we have that $C_{1,p}=0$ and computing at $0$ we get
$C_{0,p}=o(1)$.
Then we find
\begin{equation*}
\begin{split}
|\xi^*_{p,j}(r)|=&|V(r)|+o(1)\\ \leq &\frac{C}{p}\int^r_0 \frac{s\log s}{(1+s)^{4-\delta-\frac{\tau}{2}}}ds+
\frac{C\log r}{p}\int^r_0  \frac{s}{(1+s)^{4-\delta-\frac{\tau}{2}}}ds+o(1)\\ \leq &\frac{C\log r}{p}+o(1).
\end{split}\end{equation*}
Last we evaluate $\xi^*_{p,j}$ at $\frac{r_p}{\e_{p,j}}$.
Indeed we have that $\frac{r_p}{\e_{p,j}}\leq 2p$ and
\begin{equation}\label{Stsa}
 \xi^*_p\big(\frac{r_p}{\e_{p,j}}\big)\leq \frac{C\log \frac{r_p}{\e_{p,j}}}{p}+o(1)\to 0.
\end{equation}

However  by \eqref{altts}, we know
\begin{equation}\label{Stsat}
\widetilde{\xi}_p(x)\to 1,~~\mbox{for any}~~|x|=1.
\end{equation}
Then by \eqref{llts} and \eqref{Stsat}, we find
\begin{equation*}
\begin{split}
 \xi^*_{p,j}(\frac{r_p}{\e_{p,j}}) =&
 \int^{2\pi}_0 \xi_{p,j}\big(\frac{r_p}{\e_{p,j}},\theta\big) d\theta =
 \int^{2\pi}_0 \xi_{p}\big(r_p,\theta\big) d\theta
 \\=&\int^{2\pi}_0\widetilde{\xi}_p\big(1,\theta\big)d\theta  \geq c_0>0,~\mbox{for some constant}~c_0,
\end{split}\end{equation*}
which is a contraction to \eqref{Stsa}.
This ends the proof of  Theorem \ref{th1.1}.

\end{proof}

\section{Local uniqueness of the $1$-spike solutions }\label{s3}

Let $u_p^{(1)}$ and $u_p^{(2)}$ be two positive solutions to \eqref{1.1} with
\begin{equation*}
\lim_{p\rightarrow +\infty} p \int_{\Omega}|\nabla u^{(l)}_{p}(x)|^2dx=8\pi e,~\,~\mbox{for}\,~l=1,2.
\end{equation*}
From Theorem A in the introduction, we know that they both concentrate at a unique point, which is a critical point of the Robin function. Let us assume that they
 concentrate at the same point $x_{\infty,1}$ and that $x_{\infty,1}$ is a non-degenerate critical point  of Robin function $R(x)$.
Let us set   $x^{(l)}_{p,1}$ for $l=1,2$, the points such that
 \begin{equation*}
u^{(l)}_{p}(x^{(l)}_{p,1})=\max_{\overline{B_{2r}(x_{\infty,1})}}u^{(l)}_{p}(x)\,\,~\mbox{for some small fixed}~r>0.
\end{equation*}
And let us also define
$$C^{(l)}_{p,1}:= \displaystyle\int_{B_d(x^{(l)}_{p,1})}
\big(u^{(l)}_{p}(y)\big)^{p}dy\quad\mbox{ and }\quad \varepsilon^{(l)}_{p,1}:=\Big(p\big(u^{(l)}_{p}(x^{(l)}_{p,1})\big)^{p-1}\Big)^{-1/2}, ~~\mbox{for} ~~l=1,2.$$
\begin{Lem}\label{assd}
There exists some small fixed constant $\delta>0$ such that
\begin{equation*}\label{llsb}
\frac{\e^{(1)}_{p,1}}{\e^{(2)}_{p,1}}=1+O\big(\frac{1}{p^{1-\delta}}\big).
\end{equation*}
\end{Lem}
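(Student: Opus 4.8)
Proof plan for Lemma 5.1 ($\varepsilon^{(1)}_{p,1}/\varepsilon^{(2)}_{p,1}=1+O(p^{-(1-\delta)})$).

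The plan is to invoke the sharp asymptotic expansion for $\varepsilon_{p,j}$ proved in Theorem~\ref{th1}, specialized to the case $k=1$. When $k=1$ the Kirchhoff--Routh function reduces to the Robin function, $\Psi_{1,1}(x_{\infty,1})=R(x_{\infty,1})$, so formula \eqref{nn3-29-03} gives, for each $l=1,2$,
\[
\varepsilon^{(l)}_{p,1}= e^{-\frac{p}{4}}\Bigl(e^{-\bigl(2\pi R(x_{\infty,1})+\frac{3\log 2}{2}+\frac{3}{4}\bigr)}+O\bigl(\tfrac{1}{p^{1-\delta}}\bigr)\Bigr).
\]
The crucial observation is that the leading-order constant $e^{-(2\pi R(x_{\infty,1})+\frac{3\log 2}{2}+\frac34)}$ depends only on the concentration point $x_{\infty,1}$, which is the \emph{same} for $u^{(1)}_p$ and $u^{(2)}_p$ by hypothesis, and not on which of the two families of solutions we are considering. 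Denote this common constant by $c_{\infty}>0$; it is strictly positive since $R$ is finite. Then
\[
\frac{\varepsilon^{(1)}_{p,1}}{\varepsilon^{(2)}_{p,1}}
=\frac{e^{-\frac p4}\bigl(c_\infty+O(p^{-(1-\delta)})\bigr)}{e^{-\frac p4}\bigl(c_\infty+O(p^{-(1-\delta)})\bigr)}
=\frac{c_\infty+O(p^{-(1-\delta)})}{c_\infty+O(p^{-(1-\delta)})}.
\]
Expanding the quotient (using $c_\infty>0$ fixed and $p$ large, so the denominator is bounded away from $0$) yields $1+O(p^{-(1-\delta)})$, which is exactly the claim. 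Equivalently, one may argue through \eqref{3-29-03}: with only one spike per solution that identity is not directly applicable across the two families, but the expansion \eqref{5-7-52} for $u^{(l)}_p(x^{(l)}_{p,1})$ gives
\[
\log u^{(l)}_p(x^{(l)}_{p,1})=\tfrac12+\tfrac1p\bigl(4\pi R(x_{\infty,1})+3\log 2+2\bigr)-\tfrac{\log p}{p-1}+O(p^{-(2-\delta)}),
\]
whose right-hand side is independent of $l$; substituting into $\varepsilon^{(l)}_{p,1}=\bigl(p(u^{(l)}_p(x^{(l)}_{p,1}))^{p-1}\bigr)^{-1/2}=p^{-1/2}e^{-\frac{p-1}{2}\log u^{(l)}_p(x^{(l)}_{p,1})}$ shows the two $\varepsilon$'s agree up to a multiplicative factor $\exp\bigl(O(p^{-(1-\delta)})\bigr)=1+O(p^{-(1-\delta)})$.

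There is essentially no obstacle here: this lemma is a bookkeeping consequence of Theorem~\ref{th1}, and the only point requiring (minor) care is tracking how the additive $O(p^{-(1-\delta)})$ error inside the bracket of \eqref{nn3-29-03} — or the $O(p^{-(2-\delta)})$ error in the exponent of \eqref{5-7-52} after multiplication by $\tfrac{p-1}{2}$ — propagates to a multiplicative $1+O(p^{-(1-\delta)})$ error in the ratio. One should also note that $\delta>0$ can be taken uniformly for both $l=1,2$ since Theorem~\ref{th1} provides, for each fixed small $\delta$, the stated estimates for \emph{any} family of solutions satisfying \eqref{11-11-01}; applying it to the two families $u^{(1)}_p$ and $u^{(2)}_p$ separately and then taking the smaller of the two $\delta$'s (or simply the same $\delta$) suffices.
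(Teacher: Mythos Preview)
Your proposal is correct and follows exactly the paper's approach: the paper's proof consists of the single sentence ``It follows from \eqref{nn3-29-03},'' and you have simply spelled out the routine computation behind that citation. The alternative route via \eqref{5-7-52} that you sketch is equivalent (indeed, \eqref{nn3-29-03} is derived from \eqref{5-7-52} in the paper), so there is no genuine difference in strategy.
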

\begin{proof} It follows from \eqref{nn3-29-03}.
\end{proof}

\begin{Lem}\label{ssag}
If $x_{\infty,1}$ is a nondegenerate critical point of $R(x)$, then it holds
\begin{equation}\label{aaaa}
\Big|x^{(1)}_{p,1}-x^{(2)}_{p,1}\Big|=O\Big(\frac{\widetilde{\varepsilon}_{p}}{p^{1-\delta}}\Big),
\end{equation}
where $\widetilde{\varepsilon}_{p}=\max\big\{\varepsilon^{(1)}_{p,1}, \varepsilon^{(2)}_{p,1}\big\}$
and $\delta$ is a small fixed positive constant.
\end{Lem}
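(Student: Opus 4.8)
The plan is to combine the single-spike instance of the Pohozaev relation in Proposition~\ref{lem3-8} with the non-degeneracy of $x_{\infty,1}$ as a critical point of the Robin function $R$. First I would apply Proposition~\ref{lem3-8} separately to $u_p^{(1)}$ and to $u_p^{(2)}$; this is legitimate because, by \eqref{energy1} and Theorem~A, each $u_p^{(l)}$ is a $1$-spike solution satisfying \eqref{11-11-01} and concentrating at $x_{\infty,1}$. Since $k=1$ here, the sum over $m\neq j$ in \eqref{daluo-gil} is empty and the relation reduces, for $i=1,2$ and $l=1,2$, to
\begin{equation*}
C^{(l)}_{p,1}\,\frac{\partial R\big(x^{(l)}_{p,1}\big)}{\partial x_i}=O\Big(\frac{\varepsilon^{(l)}_{p,1}}{p^{2-\delta}}\Big).
\end{equation*}
Using $C^{(l)}_{p,1}=\frac{1}{p}\big(8\pi\sqrt{e}+o(1)\big)$ from \eqref{luoluo1}, so that $\big(C^{(l)}_{p,1}\big)^{-1}=O(p)$, and $\varepsilon^{(l)}_{p,1}\le\widetilde{\varepsilon}_{p}$, this gives
\begin{equation*}
\nabla R\big(x^{(l)}_{p,1}\big)=O\Big(\frac{\widetilde{\varepsilon}_{p}}{p^{1-\delta}}\Big),\qquad l=1,2.
\end{equation*}

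Once this is in hand I would invoke the non-degeneracy assumption. The Robin function is smooth in $\Omega$, $\nabla R(x_{\infty,1})=0$, $D^2R(x_{\infty,1})$ is invertible, and $x^{(l)}_{p,1}\to x_{\infty,1}$ by Theorem~A. Hence, for $p$ large, a Taylor expansion of $\nabla R$ at $x_{\infty,1}$ together with the invertibility of $D^2R(x_{\infty,1})$ yields $\big|x^{(l)}_{p,1}-x_{\infty,1}\big|\le C\,\big|\nabla R\big(x^{(l)}_{p,1}\big)\big|=O\big(\widetilde{\varepsilon}_{p}/p^{1-\delta}\big)$ for $l=1,2$, and then \eqref{aaaa} follows from the triangle inequality $\big|x^{(1)}_{p,1}-x^{(2)}_{p,1}\big|\le\big|x^{(1)}_{p,1}-x_{\infty,1}\big|+\big|x^{(2)}_{p,1}-x_{\infty,1}\big|$.

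I do not expect a genuine obstacle here. The only points requiring care are verifying that Proposition~\ref{lem3-8} indeed applies to each $u_p^{(l)}$ (immediate from \eqref{energy1} and Theorem~A with $k=1$) and the quantitative local inversion of $\nabla R$ near $x_{\infty,1}$, which is routine once $D^2R(x_{\infty,1})$ is invertible. The substantive content is simply that the single-spike Pohozaev identity forces the concentration point $x^{(l)}_{p,1}$ to be a near-critical point of $R$ with error $O(\widetilde{\varepsilon}_{p}/p^{1-\delta})$, after which non-degeneracy pins it down near $x_{\infty,1}$ at the same rate; note that, by Lemma~\ref{assd}, $\varepsilon^{(1)}_{p,1}$ and $\varepsilon^{(2)}_{p,1}$ are comparable, so expressing the bound through $\widetilde{\varepsilon}_{p}$ is natural.
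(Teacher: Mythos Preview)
Your proposal is correct and follows essentially the same approach as the paper: specialize Proposition~\ref{lem3-8} to $k=1$ to obtain $\nabla R(x^{(l)}_{p,1})=O(\varepsilon^{(l)}_{p,1}/p^{1-\delta})$, then use the non-degeneracy of $x_{\infty,1}$ via a first-order Taylor expansion of $\nabla R$ (the paper writes this as a mean-value identity $\nabla R(x^{(l)}_{p,1})=\nabla^2 R(\zeta_p)\cdot(x^{(l)}_{p,1}-x_{\infty,1})$) to control $|x^{(l)}_{p,1}-x_{\infty,1}|$ and conclude by the triangle inequality.
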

\begin{proof}
First,  Proposition \ref{lem3-8} with $k=1$ becomes
\begin{equation}\label{aluo-gil}
  \frac{\partial R(x^{(l)}_{p,1})}{\partial{x_i}}
=O\Big(\frac{\varepsilon^{(l)}_{p,1} }{p^{1-\delta}}\Big)  ~\mbox{for}~l,i=1,2.
\end{equation}
Since $x_{\infty,1}$ is a nondegenerate critical point of $R(x)$,
it holds
\begin{equation*}
\nabla R\big(x^{(l)}_{p,1}\big)=\nabla^2R(\zeta_p)\cdot \big(x^{(l)}_{p,1}-x_{\infty,1}\big),~~\mbox{for}~
\zeta_p~~\mbox{between}~x_{p,1}~\mbox{and}~x_{\infty,1},
\end{equation*}
which, together with \eqref{aluo-gil}, gives
\begin{equation}\label{lsls}
\big|x^{(l)}_{p,1}-x_{\infty,1}\big| =O\Big(\frac{\varepsilon^{(l)}_{p,1} }{p^{1-\delta}}\Big).
\end{equation}
So, we have proved \eqref{aaaa}.
\end{proof}

In the following, we will consider the same quadric forms already introduced in \eqref{07-08-20} and \eqref{abd}. In particular, thanks to the previous two lemmas, and since
 $k=1$ in all this section, it will be enough to consider only the two following quadric forms
\begin{equation}\label{5.3B}
\begin{split}
P^{(1)}_1(u,v):=&- 2\theta\int_{\partial B_\theta(x^{(1)}_{p,1})}
\big\langle \nabla u ,\nu\big\rangle
\big\langle \nabla v,\nu\big\rangle
+  \theta  \int_{\partial B_\theta(x^{(1)}_{p,1})}
\big\langle \nabla u , \nabla v \big\rangle,
\end{split}
\end{equation}
and
\begin{equation}\label{5.3BB}
Q^{(1)}_1(u,v):=- \int_{\partial B_\theta(x^{(1)}_{p,1})}\frac{\partial v}{\partial \nu}\frac{\partial u}{\partial x_i}-
 \int_{\partial B_\theta(x^{(1)}_{p,1})}\frac{\partial u}{\partial \nu}\frac{\partial v}{\partial x_i}
+ \int_{\partial B_\theta(x^{(1)}_{p,1})}\big\langle \nabla u,\nabla v \big\rangle \nu_i.
\end{equation}
Note that if $u$ and $v$ are harmonic in $ B_d(x^{(1)}_{p,1})\backslash \{x^{(1)}_{p,1}\}$, then similarly as in Lemma \ref{llas}, we deduce that $P_1^{(1)}(u,v)$ and  $Q^{(1)}_1(u,v)$ are independent of $\theta\in (0,d]$.
 Moreover, replacing $x_{p,1}$ by $x^{(1)}_{p,1}$ in Proposition \ref{lem2-1}, we have the following computations.
\begin{Prop}
It holds
\begin{equation*}
P_1^{(1)}\Big(G(x^{(1)}_{p,1},x), G(x^{(1)}_{p,1},x)\Big)=
-\frac{1}{2\pi},
\end{equation*}
 \begin{equation}\label{bb1-1}
P_1^{(1)}\Big(G(x^{(1)}_{p,1},x),\partial_hG(x^{(1)}_{p,1},x)\Big)=
\frac{1}{2}\frac{\partial R(x^{(1)}_{p,1})}{\partial h},
\end{equation}

\begin{equation}\label{luo1}
Q^{(1)}_1\Big(G(x^{(1)}_{p,1},x),G(x^{(1)}_{p,1},x)\Big)=
-\frac{\partial R(x^{(1)}_{p,1})}{\partial{x_i}},
\end{equation}
and
\begin{equation}\label{luo41}
Q^{(1)}_1\Big(G(x^{(1)}_{p,1},x),\partial_h G(x^{(1)}_{p,1},x)\Big)=
- \frac{\partial^2 R(x^{(1)}_{p,1})}{\partial{x_ix_h}},
\end{equation}
where $G$ and $R$ are the Green and Robin functions respectively (see \eqref{greensyst}, \eqref{GreenS-H}, \eqref{Robinf}), $\partial_hG(y,x):=\frac{\partial G(y,x)}{\partial y_h}$.
\end{Prop}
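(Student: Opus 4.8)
The plan is to observe that the four identities are exactly the $k=1$, $s=m=j=1$ cases of Proposition \ref{lem2-1}, with $x_{p,1}$ replaced by $x^{(1)}_{p,1}$, so one can repeat that proof verbatim (the nontrivial part being the one in Appendix \ref{s6}). First I would record that $P^{(1)}_1$ and $Q^{(1)}_1$ in \eqref{5.3B}--\eqref{5.3BB} coincide with the forms $P_j$, $Q_j$ of \eqref{07-08-20}, \eqref{abd} upon relabeling the center, and that --- exactly as in Lemma \ref{llas} --- both are independent of $\theta\in(0,d]$ whenever their two arguments are harmonic in $B_d(x^{(1)}_{p,1})\setminus\{x^{(1)}_{p,1}\}$; this applies to $G(x^{(1)}_{p,1},\cdot)$ and to $\partial_h G(x^{(1)}_{p,1},\cdot)$, since differentiating the Green's function in its pole preserves harmonicity in the remaining variable away from the pole. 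Hence each identity may be evaluated by letting $\theta\to 0$.

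For the first identity I would use the bilinearity of $P^{(1)}_1$ together with the splitting $G=S-H$ of \eqref{GreenS-H}, writing
\[
P^{(1)}_1\big(G,G\big)=P^{(1)}_1\big(S,S\big)-2P^{(1)}_1\big(S,H\big)+P^{(1)}_1\big(H,H\big),
\]
with $S=S(x^{(1)}_{p,1},\cdot)$, $H=H(x^{(1)}_{p,1},\cdot)$. Plugging the explicit formulas for $D_{x_i}S(x^{(1)}_{p,1},x)$ and for $\nu_i$ as in \eqref{laa} into $P^{(1)}_1(S,S)$ gives $-\tfrac1{2\pi}$ by a direct computation, while $P^{(1)}_1(S,H)=O(\theta)$ and $P^{(1)}_1(H,H)=O(\theta^2)$ because $H$ and $DH$ are bounded near $x^{(1)}_{p,1}$; letting $\theta\to 0$ yields $P^{(1)}_1(G,G)=-\tfrac1{2\pi}$. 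The identity \eqref{luo1} for $Q^{(1)}_1(G,G)$ is obtained in the same way from the bookkeeping in \eqref{abd}: the $S$--$S$ interaction produces, after cancellations, exactly $-\partial_{x_i}R(x^{(1)}_{p,1})$, while the mixed and $H$--$H$ pieces vanish as $\theta\to 0$. These are precisely the computations carried out in Appendix \ref{s6} for Proposition \ref{lem2-1}.

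For \eqref{bb1-1} and \eqref{luo41} the only new feature is that one argument carries a $\partial_h$, so the relevant singular factor is $\partial_h S(x^{(1)}_{p,1},x)$, which behaves like $|x-x^{(1)}_{p,1}|^{-1}$. Expanding again via $G=S-H$ and its $y$-derivative, the products of the singular factors over $\partial B_\theta(x^{(1)}_{p,1})$ contain pieces that are individually $O(\theta^{-1})$ but whose sum is $O(\theta)$ once the angular integrals over the circle are evaluated; the surviving contribution is the interaction of $S$ with $\partial_h H$, producing $\tfrac12\,\partial_h R(x^{(1)}_{p,1})$ in the $P$ case and $-\partial^2_{x_i x_h}R(x^{(1)}_{p,1})$ in the $Q$ case (recall $R(x)=H(x,x)$, so the derivatives of $R$ on the diagonal collect both the $y$- and $x$-derivatives of $H$). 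Since $x^{(1)}_{p,1}$ plays here exactly the role of $x_{p,j}$ with $s=m=j$, this is word for word the argument of Appendix \ref{s6}, and I expect the only delicate point to be the bookkeeping of these canceling $O(\theta^{-1})$ terms --- matching the angular integrals against the Taylor expansion of $H$ at $x^{(1)}_{p,1}$.
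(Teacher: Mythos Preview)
Your approach matches the paper exactly: the paper gives no separate proof and simply remarks that these identities are the $s=m=j$ cases of Proposition~\ref{lem2-1} with $x_{p,j}$ replaced by $x^{(1)}_{p,1}$, deferring to the computations in Appendix~\ref{s6}.

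One correction to your sketch of those computations (which does not affect the validity of the plan, since you correctly defer to Appendix~\ref{s6}): for $Q^{(1)}_1(G,G)$ the $S$--$S$ piece vanishes by oddness, and it is the mixed $S$--$H$ term that yields $-\partial_{x_i}R(x^{(1)}_{p,1})$, not the other way around. Likewise, for \eqref{bb1-1} the surviving contribution comes from $P^{(1)}_1(H,\partial_h S)$ rather than $P^{(1)}_1(S,\partial_h H)$, and for \eqref{luo41} both mixed terms $Q^{(1)}_1(S,\partial_h H)$ and $Q^{(1)}_1(\partial_h S,H)$ contribute (their sum giving $-D_{x_i}\partial_h H - D^2_{x_ix_h}H = -\partial^2_{x_ix_h}R$ on the diagonal).
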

Furthermore, we can also show the following results.
\begin{Prop}
For some small fixed $\delta>0$, it holds
\begin{equation}\label{sbb1-1}
P_1^{(1)}\Big(G(x^{(2)}_{p,1},x), G(x^{(1)}_{p,1},x)\Big)=
-\frac{1}{2\pi}+O\Big(\frac{\widetilde{\varepsilon}_{p} }{p^{1-\delta}}\Big),
\end{equation}
 \begin{equation}\label{dbb1-1}
P_1^{(1)}\Big(G(x^{(2)}_{p,1},x),\partial_hG(x^{(1)}_{p,1},x)\Big)=
\frac{1}{2}\frac{\partial R(x_{\infty,1})}{\partial h}+O\Big(\frac{\widetilde{\varepsilon}_{p} }{p^{1-\delta}}\Big),
\end{equation}

\begin{equation}\label{dluo1}
Q^{(1)}_1\Big(G(x^{(2)}_{p,1},x),G(x^{(1)}_{p,1},x)\Big)=
-\frac{\partial R(x_{\infty,1})}{\partial{x_i}}+O\Big(\frac{\widetilde{\varepsilon}_{p} }{p^{1-\delta}}\Big),
\end{equation}
and
\begin{equation}\label{dluo41}
Q^{(1)}_1\Big(G(x^{(2)}_{p,1},x),\partial_h G(x^{(1)}_{p,1},x)\Big)=
- \frac{\partial^2 R(x_{\infty,1})}{\partial{x_ix_h}}+O\Big(\frac{\widetilde{\varepsilon}_{p} }{p^{1-\delta}}\Big),
\end{equation}
where $G$ and $R$ are the Green and Robin functions respectively (see \eqref{greensyst}, \eqref{GreenS-H}, \eqref{Robinf}).
\end{Prop}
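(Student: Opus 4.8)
The plan is to deduce the four estimates from the exact identities of the preceding Proposition (in which both poles are taken at $x^{(1)}_{p,1}$), combined with the smallness of $\big|x^{(1)}_{p,1}-x^{(2)}_{p,1}\big|$ from \eqref{aaaa} and of $\big|x^{(1)}_{p,1}-x_{\infty,1}\big|$ from \eqref{lsls}. Fix once and for all $\theta=d>0$ with $B_{2d}(x^{(1)}_{p,1})\subset\Omega$. By Lemma \ref{ssag} and \eqref{lsls}, for $p$ large both poles $x^{(1)}_{p,1}$ and $x^{(2)}_{p,1}$ lie in $B_{d/2}(x_{\infty,1})$, so the sphere $\partial B_d(x^{(1)}_{p,1})$ stays at distance bounded below by a fixed positive constant from each pole; consequently $G(x^{(1)}_{p,1},\cdot)$, $G(x^{(2)}_{p,1},\cdot)$, $\partial_hG(x^{(1)}_{p,1},\cdot)$, $\partial_hG(x^{(2)}_{p,1},\cdot)$ together with all their $x$-derivatives are smooth and bounded on $\partial B_d(x^{(1)}_{p,1})$, uniformly in $p$. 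Note that the diagonal forms $P^{(1)}_1\big(G(x^{(1)}_{p,1},x),\cdot\big)$ and $Q^{(1)}_1\big(G(x^{(1)}_{p,1},x),\cdot\big)$ are $\theta$-independent, so the values given by the previous Proposition hold in particular at $\theta=d$.

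First I would use the bilinearity of $P^{(1)}_1$ and $Q^{(1)}_1$ and write $G(x^{(2)}_{p,1},x)=G(x^{(1)}_{p,1},x)+\big(G(x^{(2)}_{p,1},x)-G(x^{(1)}_{p,1},x)\big)$. The resulting leading terms $P^{(1)}_1\big(G(x^{(1)}_{p,1},x),G(x^{(1)}_{p,1},x)\big)$, $P^{(1)}_1\big(G(x^{(1)}_{p,1},x),\partial_hG(x^{(1)}_{p,1},x)\big)$, $Q^{(1)}_1\big(G(x^{(1)}_{p,1},x),G(x^{(1)}_{p,1},x)\big)$ and $Q^{(1)}_1\big(G(x^{(1)}_{p,1},x),\partial_hG(x^{(1)}_{p,1},x)\big)$ are exactly those computed in the previous Proposition. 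For the remaining ``cross'' terms, the mean value theorem together with the uniform $C^1$-bounds on $z\mapsto G(z,x)$ near $\{x^{(1)}_{p,1},x^{(2)}_{p,1}\}$ and \eqref{aaaa} give
\begin{equation*}
\big|G(x^{(2)}_{p,1},x)-G(x^{(1)}_{p,1},x)\big|+\big|\nabla_x\big(G(x^{(2)}_{p,1},x)-G(x^{(1)}_{p,1},x)\big)\big|=O\Big(\big|x^{(1)}_{p,1}-x^{(2)}_{p,1}\big|\Big)=O\Big(\frac{\widetilde\varepsilon_p}{p^{1-\delta}}\Big)
\end{equation*}
uniformly on $\partial B_d(x^{(1)}_{p,1})$, and likewise for $\partial_hG(x^{(2)}_{p,1},x)-\partial_hG(x^{(1)}_{p,1},x)$. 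Since $P^{(1)}_1$ and $Q^{(1)}_1$ are fixed integrals over the bounded sphere $\partial B_d(x^{(1)}_{p,1})$ with integrands bilinear in the gradients, a summand with one gradient factor of size $O(\widetilde\varepsilon_p/p^{1-\delta})$ and the other $O(1)$ contributes $O(\widetilde\varepsilon_p/p^{1-\delta})$. This already proves \eqref{sbb1-1}, and reduces \eqref{dbb1-1}, \eqref{dluo1}, \eqref{dluo41} to the identities \eqref{bb1-1}, \eqref{luo1}, \eqref{luo41} up to an additional error $O(\widetilde\varepsilon_p/p^{1-\delta})$.

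It then remains to replace $x^{(1)}_{p,1}$ by $x_{\infty,1}$ in the right-hand sides. Since $R$ is smooth near $x_{\infty,1}$ and $\big|x^{(1)}_{p,1}-x_{\infty,1}\big|=O(\varepsilon^{(1)}_{p,1}/p^{1-\delta})=O(\widetilde\varepsilon_p/p^{1-\delta})$ by \eqref{lsls}, Taylor's formula gives $\partial_hR(x^{(1)}_{p,1})=\partial_hR(x_{\infty,1})+O(\widetilde\varepsilon_p/p^{1-\delta})$ and $\partial^2_{x_ix_h}R(x^{(1)}_{p,1})=\partial^2_{x_ix_h}R(x_{\infty,1})+O(\widetilde\varepsilon_p/p^{1-\delta})$; substituting these into the bounds obtained above yields \eqref{dbb1-1}, \eqref{dluo1} and \eqref{dluo41}. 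The only mildly delicate point of the argument is the uniform-in-$p$ control of $G$ and of its derivatives on $\partial B_d(x^{(1)}_{p,1})$, but this is immediate once one observes that this sphere stays a fixed positive distance away from both poles; everything else is a Lipschitz-in-the-pole estimate fed into bounded boundary integrals.
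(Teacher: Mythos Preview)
Your proof is correct and follows essentially the same approach as the paper: reduce to the diagonal identities of the previous Proposition via Lipschitz continuity of the Green function in the pole variable, and absorb the discrepancy using \eqref{aaaa} and \eqref{lsls}. The only cosmetic difference is that the paper first decomposes $G=S-H$ and applies the Taylor expansion in the pole to $S$ (writing $P_1^{(1)}\big(S(x^{(2)}_{p,1},x),S(x^{(1)}_{p,1},x)\big)=P_1^{(1)}\big(S(x^{(1)}_{p,1},x),S(x^{(1)}_{p,1},x)\big)+O\big(|x^{(1)}_{p,1}-x^{(2)}_{p,1}|\big)$), whereas you work directly with $G$ on the fixed sphere $\partial B_d(x^{(1)}_{p,1})$; your version is in fact slightly cleaner, since it avoids the need to track separately the $S$--$H$ cross terms.
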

\begin{proof}
First  similar to the proof of \eqref{1-1} in Lemma \ref{lem2-1}, we find
\begin{equation}\label{tgil21}
\begin{split}
P_{1}^{(1)}\Big(G(x^{(2)}_{p,1},x),G(x^{(1)}_{p,1},x)\Big)=
 P_{1}^{(1)}\Big(S(x^{(2)}_{p,1},x),S(x^{(1)}_{p,1},x)\Big).
 \end{split}
\end{equation}
Also using Taylor's expansion, we can get
\begin{equation}\label{ytgil21}
\begin{split}
 P_{1}^{(1)}\Big(S(x^{(2)}_{p,1},x),S(x^{(1)}_{p,1},x)\Big) =&
 P_{1}^{(1)}\Big(S(x^{(1)}_{p,1},x),S(x^{(1)}_{p,1},x)\Big)+O\Big(\big|x^{(2)}_{p,1}-x^{(1)}_{p,1}\big|\Big).
 \end{split}
\end{equation}
Hence \eqref{sbb1-1} follows by \eqref{1-1}, \eqref{aaaa}, \eqref{lsls}, \eqref{tgil21} and \eqref{ytgil21}.
\vskip 0.2cm
Similarly by similar processes, we can obtain \eqref{dbb1-1}--\eqref{dluo41}.
\end{proof}

\vskip 0.4cm

Now if $u_{p}^{(1)}\not \equiv u_{p}^{(2)}$ in $\Omega$, we set
\begin{equation}\label{a3.1}
\eta_{p}:=\frac{u_{p}^{(1)}-u_{p}^{(2)}}
{\|u_{p}^{(1)}-u_{p}^{(2)}\|_{L^{\infty}(\Omega)}},
\end{equation}
then $\eta_{p}$ satisfies $\|\eta_{p}\|_{L^{\infty}(\Omega)}=1$ and
\begin{equation}\label{aaad}
- \Delta \eta_{p}=-\frac{\Delta u_{p}^{(1)}-\Delta u_{p}^{(2)}}
{\|u_{p}^{(1)}-u_{p}^{(2)}\|_{L^{\infty}(\Omega)}}=\frac{\big(u_{p}^{(1)}\big)^p
-\big(u_{p}^{(2)}\big)^p}
{\|u_{p}^{(1)}-u_{p}^{(2)}\|_{L^{\infty}(\Omega)}}=D_{p,1}\eta_{p},
\end{equation}
where
\begin{equation}\label{5.14BIS}
D_{p,1}(x):=p\displaystyle\int_{0}^1
\Big(tu_{p}^{(1)}(x)+(1-t)u_{p}^{(2)}(x)\Big)
^{p-1}dt.
\end{equation}

\begin{Lem}\label{ttst}
It holds
\begin{equation}\label{asd0}
\big(\varepsilon^{(1)}_{p,1}\big)^{2}D_{p,1}\big(\varepsilon^{(1)}_{p,1}x+x^{(1)}_{p,1}\big)  =e^{U(x)} +O\Big(\frac{e^{U(x)}}{p^{1-\delta}}\Big),
\end{equation}
uniformly on compact sets, in particular
\begin{equation}\label{asd}
\big(\varepsilon^{(1)}_{p,1}\big)^{2}D_{p,1}\big(\varepsilon^{(1)}_{p,1}x+x^{(1)}_{p,1}\big)  \rightarrow e^{U(x)}~\, ~\mbox{in}~C_{loc}\big(\R^2\big).
\end{equation}
Also for some small fixed $\gamma,d>0$, it follows
\begin{equation}\label{5.15BIS}
\big(\varepsilon^{(1)}_{p,1}\big)^{2}D_{p,1}\big(\varepsilon^{(1)}_{p,1}x+x^{(1)}_{p,1}\big) =O\Big(\frac{1}{1+|x|^{4-\gamma}}\Big)~\,\,~\mbox{for}~~ |x|\leq \frac{d}{\e^{(1)}_{p,1}}.
\end{equation}
\end{Lem}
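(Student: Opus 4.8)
The plan is to pass to the rescaled variable centered at $x^{(1)}_{p,1}$ and reduce the statement to the sharp asymptotics of the two families $u^{(l)}_p$ already available in the excerpt. Write $y=(x-x^{(1)}_{p,1})/\varepsilon^{(1)}_{p,1}$ and let $w^{(l)}_{p,1}$ denote the rescaled functions of \eqref{defwpj} associated to $u^{(l)}_p$. By the definition of $\varepsilon^{(1)}_{p,1}$ one has $p\big(\varepsilon^{(1)}_{p,1}\big)^2\big(u^{(1)}_p(x^{(1)}_{p,1})\big)^{p-1}=1$, and $u^{(1)}_p(\varepsilon^{(1)}_{p,1}y+x^{(1)}_{p,1})=u^{(1)}_p(x^{(1)}_{p,1})\big(1+p^{-1}w^{(1)}_{p,1}(y)\big)$. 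For $u^{(2)}_p$ I would change variables a second time into its own natural frame, setting $z=\big(\varepsilon^{(1)}_{p,1}/\varepsilon^{(2)}_{p,1}\big)y+\big(x^{(1)}_{p,1}-x^{(2)}_{p,1}\big)/\varepsilon^{(2)}_{p,1}$, so that $\varepsilon^{(1)}_{p,1}y+x^{(1)}_{p,1}=\varepsilon^{(2)}_{p,1}z+x^{(2)}_{p,1}$ and hence $u^{(2)}_p(\varepsilon^{(1)}_{p,1}y+x^{(1)}_{p,1})=u^{(2)}_p(x^{(2)}_{p,1})\big(1+p^{-1}w^{(2)}_{p,1}(z)\big)$. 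Lemma \ref{assd} and Lemma \ref{ssag}, together with $\widetilde\varepsilon_p\le C\varepsilon^{(l)}_{p,1}$, give $z=y+O\big((1+|y|)/p^{1-\delta}\big)$; in particular $z$ stays in a compact set when $y$ does.

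On a fixed ball $|y|\le R$ I would invoke \eqref{lst} with $k=1$, which yields $w^{(l)}_{p,1}=U+O(1/p)$ in $C^2_{loc}$, and combine it with the Lipschitz continuity of $U$ to obtain $w^{(1)}_{p,1}(y)=U(y)+O(1/p)$ and $w^{(2)}_{p,1}(z)=U(z)+O(1/p)=U(y)+O(1/p^{1-\delta})$, uniformly for $|y|\le R$. Moreover \eqref{5-7-52} with $k=1$ (so $\Psi_{1,1}=R$) shows that $u^{(1)}_p(x^{(1)}_{p,1})$ and $u^{(2)}_p(x^{(2)}_{p,1})$ coincide up to $O(1/p^{2-\delta})$, hence $u^{(2)}_p(x^{(2)}_{p,1})=u^{(1)}_p(x^{(1)}_{p,1})\big(1+O(1/p^{2-\delta})\big)$. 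Inserting all this, the convex combination $tu^{(1)}_p+(1-t)u^{(2)}_p$ evaluated at $\varepsilon^{(1)}_{p,1}y+x^{(1)}_{p,1}$ equals $u^{(1)}_p(x^{(1)}_{p,1})\big(1+p^{-1}U(y)+O(1/p^{2-\delta})\big)$ uniformly in $t\in[0,1]$; raising to the power $p-1$ and expanding the logarithm gives $\big(1+p^{-1}U(y)+O(1/p^{2-\delta})\big)^{p-1}=e^{U(y)}\big(1+O(1/p^{1-\delta})\big)$ on $|y|\le R$. Multiplying by $p\big(\varepsilon^{(1)}_{p,1}\big)^2\big(u^{(1)}_p(x^{(1)}_{p,1})\big)^{p-1}=1$ and integrating over $t$ yields \eqref{asd0}, and \eqref{asd} follows at once.

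For the global estimate \eqref{5.15BIS} I would use the elementary inequality $(ta+(1-t)b)^{p-1}\le a^{p-1}+b^{p-1}$ for $a,b>0$, so that it suffices to bound $p\big(\varepsilon^{(1)}_{p,1}\big)^2\big(u^{(l)}_p(\varepsilon^{(1)}_{p,1}x+x^{(1)}_{p,1})\big)^{p-1}$ for $l=1,2$ separately. For $l=1$ this quantity is exactly $\big(1+p^{-1}w^{(1)}_{p,1}(x)\big)^{p-1}\le Ce^{\frac{p-1}{p}w^{(1)}_{p,1}(x)}$, which by the upper bound in Lemma \ref{llma} is $O\big((1+|x|)^{-(4-\gamma)}\big)$ for $|x|\le d/\varepsilon^{(1)}_{p,1}$; the passage from $4-\delta$ to $4-\gamma$ absorbs the harmless factor $(p-1)/p$ in the exponent. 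For $l=2$ I would pass once more to the variable $z$: since $\varepsilon^{(1)}_{p,1}/\varepsilon^{(2)}_{p,1}=1+O(1/p^{1-\delta})$ and the translation has size $O(1/p^{1-\delta})$, one has $|z|\ge\tfrac12|x|$ for $|x|$ large and $|z|\le Cd/\varepsilon^{(2)}_{p,1}$ provided $d$ is small, so the same argument applied to $w^{(2)}_{p,1}$ gives $p\big(\varepsilon^{(2)}_{p,1}\big)^2\big(u^{(2)}_p(\varepsilon^{(2)}_{p,1}z+x^{(2)}_{p,1})\big)^{p-1}=O\big((1+|x|)^{-(4-\gamma)}\big)$, and the ratio $\big(\varepsilon^{(1)}_{p,1}/\varepsilon^{(2)}_{p,1}\big)^2=1+o(1)$ is immaterial. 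The main technical obstacle is precisely this bookkeeping in the change of variables between the two rescalings: one has to make sure that the $O\big((1+|y|)/p^{1-\delta}\big)$ perturbation relating $z$ to $y$ neither destroys the pointwise expansion on compact sets nor the polynomial decay in the far region, which is exactly the place where Lemma \ref{assd}, Lemma \ref{ssag} and the sharp expansion \eqref{5-7-52} are used.
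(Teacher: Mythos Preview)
Your argument is correct and, for \eqref{asd0}--\eqref{asd}, essentially coincides with the paper's: both rescale around $x^{(1)}_{p,1}$, compare the two frames via Lemma~\ref{assd} and Lemma~\ref{ssag}, use \eqref{5-7-52} to match the two maxima up to $O(p^{-(2-\delta)})$, and plug in the expansion \eqref{lst} of $w^{(l)}_{p,1}$ before expanding $(1+\cdot/p)^{p-1}$ logarithmically. The only difference is notational: the paper packages the same information into the auxiliary quantities $a(t)$ and $b_t(y)$ and carries them through a single formula, whereas you write the convex combination out directly.

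For \eqref{5.15BIS} your route genuinely differs. The paper keeps the convex combination intact, rewrites it through the same $a(t),b_t(y)$ decomposition, and observes that in the far region $1+b_t(y)-w^{(1)}_{p,1}(y)=O(\log|y|)$, so that the product $(1+a(t))^{p-2}e^{w^{(1)}_{p,1}}\cdot O(\log|y|)$ together with Lemma~\ref{llma} yields the polynomial decay in one stroke. You instead invoke the convexity inequality $(ta+(1-t)b)^{p-1}\le a^{p-1}+b^{p-1}$ to split off the two families and bound each via Lemma~\ref{llma} after the appropriate change of variables; the $\varepsilon^{(1)}_{p,1}/\varepsilon^{(2)}_{p,1}\to 1$ and $|z|\asymp|x|$ bookkeeping you sketch is exactly what is needed to make the $l=2$ piece go through. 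Your approach is slightly more elementary and sidesteps the somewhat delicate expansion \eqref{hhls} in the far region, at the price of treating the two solutions separately; the paper's approach is more unified but relies on controlling $b_t-w^{(1)}_{p,1}$ globally. Either way, the same three inputs (Lemma~\ref{llma}, Lemma~\ref{assd}, Lemma~\ref{ssag}) do all the work.
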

\begin{proof}
First we know
\begin{equation}\label{adv}
 \big(\e^{(1)}_{p,1}\big)^2D_{p,1}\big(\e^{(1)}_{p,1}y+x^{(1)}_{p,1}\big)
= p\big(\e^{(1)}_{p,1}\big)^2\int^1_0\big(F_t(y)\big)^{p-1}dt,
\end{equation}
where
\begin{equation*}
 F_t(y):= tu^{(1)}_p\big(\e^{(1)}_{p,1}y+x^{(1)}_{p,1}\big) +(1-t)u^{(2)}_p\big(\e^{(1)}_{p,1}y+x^{(1)}_{p,1}\big).
\end{equation*}
Then we can write $F_t(y)$ as follows
\begin{equation*}
 F_t(y)= u^{(1)}_p\big( x^{(1)}_{p,1}\big) \left(  t \big(1+ \frac{w^{(1)}_{p,1}(y)}{p}\big) +(1-t)\frac{u^{(2)}_p\big( x^{(2)}_{p,1}\big)}{u^{(1)}_p\big( x^{(1)}_{p,1}\big)}
 \Big(1+\frac{1}{p}w^{(2)}_{p,1}\big(\frac{\e^{(2)}_{p,1}}{\e^{(1)}_{p,1}}y+
 \frac{x^{(2)}_{p,1}-x^{(1)}_{p,1}}{\e^{(1)}_{p,1}}\big)\Big)\right).
\end{equation*}
where $w^{(1)}_{p,1}$ and $w^{(2)}_{p,1}$ are defined as in \eqref{defwpj}, taking the solutions $u^{(1)}_p$ and $u^{(2)}_p$ in place of $u_{p}$ respectively.

Let $a(t):=t+(1-t)\frac{u^{(2)}_p\big( x^{(2)}_{p,1}\big)}{u^{(1)}_p\big( x^{(1)}_{p,1}\big)}-1$, then we can write
\begin{equation*}
\begin{split}
 F_t(y)= & u^{(1)}_p\big( x^{(1)}_{p,1}\big) \left(1+a(t)+  \frac{1}{p} \Big( tw^{(1)}_{p,1}(y) +(1-t)\frac{u^{(2)}_p\big( x^{(2)}_{p,1}\big)}{u^{(1)}_p\big( x^{(1)}_{p,1}\big)}
 w^{(2)}_{p,1}\big(\frac{\e^{(2)}_{p,1}}{\e^{(1)}_{p,1}}y+\frac{x^{(2)}_{p,1}-x^{(1)}_{p,1}}{\e^{(1)}_{p,1}}\big)
 \Big)\right)\\=&
 u^{(1)}_p\big( x^{(1)}_{p,1}\big)\Big(1+a(t)\Big)
 \left(1+  \frac{b_t(y)}{p\big(1+a(t)\big)} \right),
 \end{split}
\end{equation*}
where $$b_t(y):= tw^{(1)}_{p,1}(y) +(1-t)\frac{u^{(2)}_p\big( x^{(2)}_{p,1}\big)}{u^{(1)}_p\big( x^{(1)}_{p,1}\big)}
 w^{(2)}_{p,1}\Big(\frac{\e^{(2)}_{p,1}}{\e^{(1)}_{p,1}}y+\frac{x^{(2)}_{p,1}-x^{(1)}_{p,1}}{\e^{(1)}_{p,1}}\Big).$$
Then it holds
\begin{equation}\label{hhls}
\begin{split}
p\big(\e^{(1)}_{p,1}\big)^2\int^1_0\big(F_t(y)\big)^{p-1}dt=&\int^1_0\Big(1+a(t)\Big)^{p-1}
 \left(1+  \frac{b_t(y)}{p\big(1+a(t)\big)} \right)^{p-1}dt\\=&
 \int^1_0\Big(1+a(t)\Big)^{p-1}
e^{(p-1)\log  \left(1+  \frac{b_t(y)}{p\big(1+a(t)\big)} \right)}dt
\\=&
 \int^1_0\Big(1+a(t)\Big)^{p-2}e^{w^{(1)}_{p,1}}
\left( 1+b_t(y)-w^{(1)}_{p,1}(y) +O\Big(\frac{1}{p}\Big)\right)dt.
 \end{split}
\end{equation}
Also we recall that from Proposition \ref{prop3-2},
\begin{equation}\label{hls1}
\begin{split}
 w^{(l)}_{p,1}(y)=U(y)+O\Big(\frac{1}{p}\Big),~~\mbox{for}~l=1,2,
 \end{split}
\end{equation}
uniformly in any compact set, from \eqref{5-7-52},
\begin{equation}\label{hls2}
\begin{split}
\frac{u^{(2)}_p\big( x^{(2)}_{p,1}\big)}{u^{(1)}_p\big( x^{(1)}_{p,1}\big)}=1+O\Big(\frac{1}{p^{2-\delta}}\Big),
 \end{split}
\end{equation}
and from Lemma \ref{ssag},
\begin{equation}\label{hls3}
\begin{split}
\frac{\big|x^{(1)}_{p,1}-x^{(2)}_{p,1}\big|}{\widetilde{\varepsilon}_{p}}=O\Big(\frac{1}{p^{1-\delta}}\Big).
 \end{split}
\end{equation}
Then using \eqref{hls1} \eqref{hls2} and \eqref{hls3}, we have
\begin{equation*}
 a(t)=O\Big(\frac{1}{p^{2-\delta}}\Big),
 \end{equation*}
and \begin{equation*}\quad b_t(y)-w^{(1)}_{p,1}(y) = O\Big(\frac{1}{p^{1-\delta}}\Big)~\,~\mbox{uniformly in any compact set}.
\end{equation*}
Finally, we obtain
\begin{equation*}
\begin{split}
p&\big(\e^{(1)}_{p,1}\big)^2\int^1_0\big(F_t(y)\big)^{p-1}dt\\=&
 \int^1_0\Big(1+a(t)\Big)^{p-2}e^{U(y)}
\left( 1+(t-1)U(y) +(1-t)U\Big(\frac{\e^{(2)}_{p,1}}{\e^{(1)}_{p,1}}y\Big)\right)dt+O\Big(\frac{e^{U(y)}}{p^{1-\delta}}\Big)\\=&
e^{U(y)} +O\Big(\frac{e^{U(y)}}{p^{1-\delta}}\Big),
 \end{split}
\end{equation*}
which implies \eqref{asd0}.

\vskip 0.1cm

 On the other hand, using Lemma \ref{llma}, we have
 \begin{equation}\label{hka}
 1+b_t(y)-w^{(1)}_{p,1}(y)=O\Big(\log |y|\Big)~\,\,~\mbox{for}~~R\leq |y|\leq \frac{d}{\e^{(1)}_{p,1}}.
 \end{equation}
Hence from \eqref{asd}, \eqref{hhls} and  \eqref{hka}, we find
\begin{equation*}
\begin{split}
p\big(\e^{(1)}_{p,1}\big)^2\int^1_0\big(F_t(y)\big)^{p-1}dt= O\Big(\frac{1}{1+|y|^{4-\gamma}}\Big)~\,\,~\mbox{for}~~|y|\leq \frac{d}{\e^{(1)}_{p,1}},
 \end{split}
\end{equation*}
which, together with \eqref{adv}, gives \eqref{5.15BIS}.
\end{proof}
\begin{Prop}\label{aprop3-2}
Let $\eta_{p,1}(x):=\eta_{p}\big(\varepsilon^{(1)}_{p,1}x+x^{(1)}_{p,1}\big)$, where $\eta_{p}$ is defined in \eqref{a3.1}. Then by taking
a subsequence if necessary, we have
\begin{equation}\label{a111}
 \eta_{p,1}(x)=\widetilde{a}_1\frac{8-|x|^2}{8+|x|^2}+\sum_{i=1}^2\frac{\widetilde{b}_{i,1}x_i}{8+|x|^2}+o(1)
 ~\mbox{in}~C^1_{loc}\big(\R^2\big), ~\mbox{as}~p\rightarrow +\infty,
\end{equation}
 where $\widetilde{a}_1$ and $\widetilde{b}_{i,1}$ with $i=1,2$ are some constants.

\end{Prop}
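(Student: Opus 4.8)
The plan is to mimic the argument of Proposition \ref{daprop3-2}, with the linearized equation $\mathcal L_p\xi_p=0$ replaced by the difference equation \eqref{aaad} for $\eta_p$. First I would record that, by \eqref{a3.1}--\eqref{5.14BIS}, $\eta_p\in H^1_0(\Omega)$ satisfies $\|\eta_p\|_{L^\infty(\Omega)}=1$ and $-\Delta\eta_p=D_{p,1}\eta_p$ in $\Omega$, so the rescaled function $\eta_{p,1}$ solves
\begin{equation*}
-\Delta\eta_{p,1}=\big(\varepsilon^{(1)}_{p,1}\big)^2 D_{p,1}\big(\varepsilon^{(1)}_{p,1}x+x^{(1)}_{p,1}\big)\,\eta_{p,1}\qquad\text{in }\ \frac{\Omega-x^{(1)}_{p,1}}{\varepsilon^{(1)}_{p,1}},
\end{equation*}
with $\|\eta_{p,1}\|_{L^\infty}\le 1$. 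By \eqref{asd} in Lemma \ref{ttst} the potential $\big(\varepsilon^{(1)}_{p,1}\big)^2 D_{p,1}\big(\varepsilon^{(1)}_{p,1}x+x^{(1)}_{p,1}\big)$ converges to $e^U$ in $C_{loc}(\R^2)$; combining this with the uniform $L^\infty$ bound and interior elliptic estimates (as in \cite{GT1983}) I would obtain $\|\eta_{p,1}\|_{C^{1,\alpha}_{loc}(\R^2)}\le C$ for some $\alpha\in(0,1)$ and some $C$ independent of $p$. Hence, along a subsequence, $\eta_{p,1}\to\eta_1$ in $C^1_{loc}(\R^2)$, and passing to the limit in the equation (using \eqref{asd} again) yields a bounded solution $\eta_1$ of $-\Delta\eta_1=e^U\eta_1$ in $\R^2$.

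The second step is to check the finite-energy hypothesis needed to invoke Lemma \ref{llm}, namely $\int_{\R^2}|\nabla\eta_1|^2<\infty$. Since $\eta_{p,1}$ belongs to $H^1_0$ of the rescaled domain, testing its equation with $\eta_{p,1}$ itself and using $\|\eta_{p,1}\|_{L^\infty}\le 1$ gives
\begin{equation*}
\int|\nabla\eta_{p,1}|^2\ \le\ \int\big(\varepsilon^{(1)}_{p,1}\big)^2 D_{p,1}\big(\varepsilon^{(1)}_{p,1}x+x^{(1)}_{p,1}\big)\,dx,
\end{equation*}
and the right-hand side is $O(1)$: on $\{|x|\le d/\varepsilon^{(1)}_{p,1}\}$ it is dominated by \eqref{5.15BIS} together with $\int_{\R^2}(1+|x|^{4-\gamma})^{-1}dx<\infty$, while outside this ball the corresponding point lies at distance $\ge d$ from $x^{(1)}_{p,1}$, where $D_{p,1}$ is super-exponentially small by \eqref{11-14-03N}. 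By the scale invariance of the Dirichlet integral in dimension two, $\int|\nabla\eta_{p,1}|^2=\int_\Omega|\nabla\eta_p|^2=O(1)$, so Fatou's lemma gives $\int_{\R^2}|\nabla\eta_1|^2<\infty$.

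Finally, Lemma \ref{llm} applies and $\eta_1$ lies in the kernel of $-\Delta-e^U$ on functions with finite Dirichlet energy. A direct computation for $U(x)=-2\log(1+|x|^2/8)$ shows that $\frac{8-|x|^2}{8+|x|^2}$, $\frac{x_1}{8+|x|^2}$ and $\frac{x_2}{8+|x|^2}$ are three linearly independent such kernel elements (indeed $\partial_{x_i}U(x)=-\frac{4x_i}{8+|x|^2}$ and the dilation generator is proportional to $\frac{8-|x|^2}{8+|x|^2}$), hence they span it; writing $\eta_1$ in this basis produces \eqref{a111} with constants $\widetilde a_1,\widetilde b_{1,1},\widetilde b_{2,1}$. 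I expect the only genuinely non-routine point to be the finite-energy verification in the second step (equivalently, the uniform $H^1_0(\Omega)$ bound on $\eta_p$ combined with two-dimensional scale invariance); the rest is a direct transcription of Lemma \ref{ttst}, interior regularity, and Lemma \ref{llm}, exactly as in Proposition \ref{daprop3-2}.
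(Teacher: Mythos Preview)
Your proposal is correct and follows essentially the same route as the paper: both rescale, use Lemma~\ref{ttst} to pass to the limit in the equation, invoke interior elliptic regularity for compactness, and then apply Lemma~\ref{llm}. You add an explicit verification of the finite-energy hypothesis $\int_{\R^2}|\nabla\eta_1|^2<\infty$ (via testing the equation with $\eta_{p,1}$, the decay \eqref{5.15BIS}, and Fatou/$C^1_{loc}$-convergence), which the paper simply asserts; this extra care is legitimate and the argument you give is sound.
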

\begin{proof}
Since $|\eta_{p,1}|\leq 1$, then we can suppose that
$$\eta_{p,1}\to \eta_0~~\mbox{in}~C^1_{loc}\big(\R^2\big).$$
Since, by \eqref{aaad}, $\eta_{p,1}$ solves $$-\Delta \eta_{p,1} =\big(\e^{(1)}_{p,1}\big)^2D_{p,1}\big(\e^{(1)}_{p,1}+x^{(1)}_{p,1} x\big)\eta_{p,1},$$
by Lemma \ref{ttst}, we find that $\eta_0$ is a solution of \eqref{3-29-01}. Hence
\eqref{a111} holds by Lemma \ref{llm}.
\end{proof}
\begin{Prop}
Let $\eta_{p}$ be defined as  in  \eqref{a3.1}, then it holds
\begin{equation}\label{laa1}
\begin{split}
\eta_{p}(x)=& \widetilde{A}_{p}G(x^{(1)}_{p,1},x)+
\sum^2_{i=1}\widetilde{B}_{p,i}\varepsilon^{(1)}_{p,1}  \partial_iG(x^{(1)}_{p,1},x)
 +o\Big(\widetilde{\varepsilon}_{p}\Big) \,~\mbox{in}~ C^1
\Big(\Omega\backslash  B_{2d}(x^{(1)}_{p,1})\Big),
\end{split}\end{equation}
where $d>0$ is any small fixed constant, $\widetilde{\varepsilon}_{p}=\max\big\{\varepsilon^{(1)}_{p,1}, \varepsilon^{(2)}_{p,1}\big\}$,
$\partial_iG(y,x)=\frac{\partial G(y,x)}{\partial y_i}$,
\begin{equation}\label{5.17B}
\widetilde{A}_{p}:=
\int_{B_d\big(x^{(1)}_{p,1}\big)}  D_{p,1}(x)\eta_{p}(x)dx
\end{equation}
and
\begin{equation}\label{5.17BB}
\widetilde{B}_{p,i}:= \frac{1}{\varepsilon^{(1)}_{p,1}}\int_{B_d\big(x^{(1)}_{p,1}\big)}(x_i-(x^{(1)}_{p,1})_{i})  D_{p,1}(x)\eta_{p}(x)dx,
\end{equation}
with $D_{p,1}$ as in \eqref{5.14BIS}.
\end{Prop}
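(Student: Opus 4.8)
The plan is to follow the proof of Proposition~\ref{prop-2-2} almost verbatim, replacing $\xi_p$ by $\eta_p$, the potential $p u_p^{p-1}$ by the averaged potential $D_{p,1}$ of \eqref{5.14BIS}, the spike $x_{p,j}$ by $x^{(1)}_{p,1}$, and the scale $\varepsilon_{p,j}$ by $\varepsilon^{(1)}_{p,1}$. Since $\eta_p$ vanishes on $\partial\Omega$ and solves $-\Delta\eta_p=D_{p,1}\eta_p$ in $\Omega$ by \eqref{aaad}, the Green representation gives
$$\eta_p(x)=\int_\Omega G(y,x)\,D_{p,1}(y)\,\eta_p(y)\,dy,$$
which I would split into the integral over $B_d(x^{(1)}_{p,1})$ and the integral over $\Omega\setminus B_d(x^{(1)}_{p,1})$. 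Since $x^{(2)}_{p,1}\to x_{\infty,1}$ and $|x^{(1)}_{p,1}-x^{(2)}_{p,1}|=O(\widetilde\varepsilon_p/p^{1-\delta})$ by Lemma~\ref{ssag}, for $p$ large both $u^{(1)}_p$ and $u^{(2)}_p$ concentrate inside $B_d(x^{(1)}_{p,1})$, so outside $B_d(x^{(1)}_{p,1})$ the estimate \eqref{11-14-03N} applied to both solutions yields $D_{p,1}(y)=O(C^p/p^{p-2})$ uniformly; together with $\int_\Omega G(y,x)\,dy\le C$ uniformly for $x$ bounded away from $x^{(1)}_{p,1}$, this makes the complement contribution exponentially small, hence $o(\widetilde\varepsilon_p)$.

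For the principal part I would Taylor expand $G(y,x)$ in $y$ around $x^{(1)}_{p,1}$: since $G(\cdot,x)$ is smooth on $B_d(x^{(1)}_{p,1})$ with uniformly bounded derivatives for $x\in\Omega\setminus B_{2d}(x^{(1)}_{p,1})$, one has
$$G(y,x)=G(x^{(1)}_{p,1},x)+\big\langle\nabla_y G(x^{(1)}_{p,1},x),\,y-x^{(1)}_{p,1}\big\rangle+O\big(|y-x^{(1)}_{p,1}|^2\big),$$
uniformly in $x$. Inserting this into $\int_{B_d(x^{(1)}_{p,1})}$, the zeroth-order term gives $\widetilde A_p\,G(x^{(1)}_{p,1},x)$ with $\widetilde A_p$ as in \eqref{5.17B}, the first-order term gives $\sum_{i=1}^2\varepsilon^{(1)}_{p,1}\widetilde B_{p,i}\,\partial_i G(x^{(1)}_{p,1},x)$ with $\widetilde B_{p,i}$ as in \eqref{5.17BB}, and the remainder is bounded by $\int_{B_d(x^{(1)}_{p,1})}|y-x^{(1)}_{p,1}|^2\,|D_{p,1}(y)|\,dy$ (using $|\eta_p|\le 1$). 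Rescaling $y=\varepsilon^{(1)}_{p,1}z+x^{(1)}_{p,1}$ and invoking the pointwise decay \eqref{5.15BIS} of Lemma~\ref{ttst}, namely $(\varepsilon^{(1)}_{p,1})^2 D_{p,1}(\varepsilon^{(1)}_{p,1}z+x^{(1)}_{p,1})=O\big((1+|z|)^{-(4-\gamma)}\big)$, this remainder becomes $O\big((\varepsilon^{(1)}_{p,1})^{2}\!\int_{B_{d/\varepsilon^{(1)}_{p,1}}}|z|^2(1+|z|)^{-(4-\gamma)}dz\big)=O\big((\varepsilon^{(1)}_{p,1})^{2-\gamma}\big)=o(\widetilde\varepsilon_p)$, using Lemma~\ref{assd} to see $\widetilde\varepsilon_p\asymp\varepsilon^{(1)}_{p,1}$ and $\gamma<1$; the same rescaling together with \eqref{5.15BIS} gives $\widetilde B_{p,i}=O(1)$, so the first-order term has genuine size $O(\widetilde\varepsilon_p)$. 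This yields the expansion in $C^0\big(\Omega\setminus B_{2d}(x^{(1)}_{p,1})\big)$, and the $C^1$ statement follows identically after differentiating the Green representation, i.e. from $\partial_{x_i}\eta_p(x)=\int_\Omega D_{x_i}G(y,x)\,D_{p,1}(y)\,\eta_p(y)\,dy$ and the analogous Taylor expansion of $D_{x_i}G(y,x)$ in $y$ near $x^{(1)}_{p,1}$.

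The only feature that is new relative to Proposition~\ref{prop-2-2} is that $\eta_p$ is built from two solutions with a priori distinct spikes and scaling parameters; but this is exactly what Lemma~\ref{assd}, Lemma~\ref{ssag} and Lemma~\ref{ttst} are designed to absorb, so once \eqref{5.15BIS} is in hand the argument is essentially routine bookkeeping. The one point that needs a little care, and which I would highlight, is that one must keep the full second-order Taylor bound on $G$ (rather than stopping at first order with a $o(|y-x^{(1)}_{p,1}|)$ error) in order to get the remainder to be $o(\widetilde\varepsilon_p)$ rather than merely $O(\widetilde\varepsilon_p)$ — this is what makes the decay exponent $4-\gamma$ in \eqref{5.15BIS}, which is strictly better than the borderline exponent $3$, indispensable.
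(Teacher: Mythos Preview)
Your proposal is correct and follows essentially the same route as the paper, which also says the proof mimics that of Proposition~\ref{prop-2-2} with $p u_p^{p-1}$ replaced by $D_{p,1}$ and invokes Lemma~\ref{ttst} (in particular \eqref{5.15BIS}) to control the remainder after Taylor-expanding $G(\cdot,x)$ around $x^{(1)}_{p,1}$. Your explicit use of the second-order Taylor bound and the computation $\int_{B_{d/\varepsilon}}|z|^2(1+|z|)^{-(4-\gamma)}dz=O(\varepsilon^{-\gamma})$ leading to an $O(\varepsilon^{2-\gamma})=o(\widetilde\varepsilon_p)$ error is a clean way to make precise what the paper leaves implicit.
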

\begin{proof}
The proof is similar to the one of Proposition \ref{prop-2-2}. Indeed,
replacing $pu_{p}^{p-1}(y)$ by $D_{p,1}(y)$ in \eqref{5-21-1}, then similarly as in \eqref{agil45} (with $k=1$), we find
\begin{equation*}
\begin{split}
\eta_{p}(x)=&\widetilde{A}_{p}G(x^{(1)}_{p,1},x)+
\sum^2_{i=1} \widetilde{B}_{p,i}\varepsilon^{(1)}_{p,1}  \partial_iG(x^{(1)}_{p,1},x)
 +o\Big(\widetilde{\varepsilon}_{p}\Big)\,\,~\mbox{uniformly in}~
 \Omega\backslash  B_{2d}(x^{(1)}_{p,1}).
 \end{split}
\end{equation*}
The convergence of $\frac{\partial{\eta}_p}{\partial x_i}$  follows in a similar way, since
\begin{equation*}
\begin{split}
\frac{\partial{\eta}_p(x)}{\partial x_i}=& \int_{\Omega}D_{x_i} G(y,x)
 D_{p,1}(y) \eta_{p}(y)dy.
\end{split}
\end{equation*}
\end{proof}

In order to show that the constants in Proposition \ref{aprop3-2} satisfy $\widetilde{a}_1=\widetilde{b}_{1,1}=\widetilde{b}_{2,1}=0$, following the same strategy  exploited in Section \ref{s2}, we will need the following local Pohozaev identities, which are a direct consequence of \eqref{aclp-1} and \eqref{aclp-10}.
\begin{Prop}
For $\eta_{p}$ defined by \eqref{a3.1} and small fixed $\theta>0$, we have the following local Pohozaev identities:
 \begin{equation}\label{aa2}
 \begin{split}
Q^{(1)}_1\big(u_p^{(1)}+u_p^{(2)},\eta_p\big)=\frac{2}{p+1}
 \int_{\partial  B_{\theta}(x^{(1)}_{p,1})}\widetilde{D}_{p,1} \eta_p \nu_i\,d\sigma,
\end{split}
\end{equation}
and
\begin{equation}\label{aaclp1}
\begin{split}
P^{(1)}_1\big(u_p^{(1)}+u_p^{(2)},\eta_p\big)
= \frac{2\theta}{p+1}\int_{\partial  B_{\theta}(x^{(1)}_{p,1})} \widetilde{D}_{p,1}\eta_p\,d\sigma
-\frac{4}{p+1}\int_{  B_{\theta}(x^{(1)}_{p,1})} \widetilde{D}_{p,1}\eta_p\,dx,
\end{split}
\end{equation}
where $P_1^{(1)}$ and $Q^{(1)}_1$ are the quadric forms in \eqref{5.3B} and \eqref{5.3BB},   $\nu=\big(\nu_{1},\nu_2\big)$ is the outward unit normal of $\partial  B_{\theta}(x^{(1)}_{p,1})$ and
\begin{equation}\label{5.19B}
\widetilde{D}_{p,1}(x):=(p+1)\int_{0}^1\Big(tu_{p}^{(1)}(x)+(1-t)u_{p}^{(2)}(x)\Big)^{p}
dt.
\end{equation}
\end{Prop}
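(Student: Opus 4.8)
The plan is to apply the two local Pohozaev identities \eqref{aclp-1} and \eqref{aclp-10} — whose derivation only uses that the function inserted solves \eqref{1.1}, and which therefore remain valid for \emph{any} choice of the center of the balls — separately to $u=u_p^{(1)}$ and to $u=u_p^{(2)}$, in both cases taking $x^{(1)}_{p,1}$ as the common center, and then to subtract the two resulting identities. Since $\theta$ is kept fixed throughout and no $\theta$-independence is invoked, it is harmless that $x^{(1)}_{p,1}$ is not the concentration point of $u_p^{(2)}$; and $u_p^{(1)},u_p^{(2)}\in C^2(\Omega)$ by elliptic regularity, so $\eta_p\in C^2(\Omega)$ and all the quantities involved make sense.

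For the left-hand sides I would use that the quadric forms $P_1^{(1)}$ and $Q_1^{(1)}$ defined in \eqref{5.3B} and \eqref{5.3BB} are symmetric \emph{bilinear} forms (this is immediate from their defining expressions), so that for any such form $B$ one has the elementary identity $B(a,a)-B(b,b)=B(a-b,a+b)$. Taking $a=u_p^{(1)}$, $b=u_p^{(2)}$, pulling the scalar out, and recalling the normalisation \eqref{a3.1}, the difference of the two Pohozaev left-hand sides becomes $\|u_p^{(1)}-u_p^{(2)}\|_{L^\infty(\Omega)}\,Q_1^{(1)}\big(u_p^{(1)}+u_p^{(2)},\eta_p\big)$ in the first case, and $\|u_p^{(1)}-u_p^{(2)}\|_{L^\infty(\Omega)}\,P_1^{(1)}\big(u_p^{(1)}+u_p^{(2)},\eta_p\big)$ in the second, where symmetry of the forms is used to exchange the two slots.

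For the right-hand sides I would rewrite the power differences in integral form: setting $\phi(t):=\big(tu_p^{(1)}+(1-t)u_p^{(2)}\big)^{p+1}$, the identity $\phi(1)-\phi(0)=\int_0^1\phi'(t)\,dt$ yields $\big(u_p^{(1)}\big)^{p+1}-\big(u_p^{(2)}\big)^{p+1}=\widetilde D_{p,1}\,\big(u_p^{(1)}-u_p^{(2)}\big)=\widetilde D_{p,1}\,\|u_p^{(1)}-u_p^{(2)}\|_{L^\infty(\Omega)}\,\eta_p$, with $\widetilde D_{p,1}$ exactly the coefficient in \eqref{5.19B}. Substituting this into the right-hand side of \eqref{aclp-1} written for $u_p^{(1)}$ minus that written for $u_p^{(2)}$, and likewise for \eqref{aclp-10}, and then dividing the whole resulting equality by $\|u_p^{(1)}-u_p^{(2)}\|_{L^\infty(\Omega)}$, produces \eqref{aa2} and \eqref{aaclp1} respectively. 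There is no genuine obstacle here; the only two points deserving a line of verification are the symmetry and bilinearity of $P_1^{(1)}$ and $Q_1^{(1)}$, and the observation that the proof of the Pohozaev identities above applies verbatim with $x^{(1)}_{p,1}$ as center for both solutions.
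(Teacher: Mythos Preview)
Your proposal is correct and follows essentially the same approach as the paper's proof: apply the Pohozaev identities \eqref{aclp-1} and \eqref{aclp-10} to each of $u_p^{(1)}$ and $u_p^{(2)}$ with common center $x^{(1)}_{p,1}$, subtract, and divide by $\|u_p^{(1)}-u_p^{(2)}\|_{L^\infty(\Omega)}$. You simply spell out in more detail the two algebraic facts the paper leaves implicit, namely the polarization identity $B(a,a)-B(b,b)=B(a+b,a-b)$ for the symmetric bilinear forms $P_1^{(1)},Q_1^{(1)}$ and the integral representation $(u_p^{(1)})^{p+1}-(u_p^{(2)})^{p+1}=\widetilde D_{p,1}\,(u_p^{(1)}-u_p^{(2)})$.
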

\begin{proof}
Taking $u=u_{p}^{(l)}$ with $l=1,2$
 in \eqref{aclp-1} (with $j=1$), then it reduces to
 \begin{equation*}
Q^{(1)}_{1}(u_{p}^{(l)}, u_{p}^{(l)})=
\frac{2}{p+1}\int_{\partial B_{\theta}(x^{(1)}_{p,1})}  \big(u_{p}^{(l)}\big)^{p+1} \nu_i.
\end{equation*}
Hence \eqref{aa2} follows by making the difference.
Similarly taking   $u=u_{p}^{(l)}$ with $l=1,2$ in \eqref{aclp-10} (with $j=1$) and repeating the above process, we can deduce \eqref{aaclp1}.
\end{proof}
\begin{Prop}\label{adprop-luo1}
Let $\widetilde{A}_p$ be as in \eqref{5.17B} and $\widetilde{a}_1$ be the constant in \eqref{a111}, then
\begin{equation*}
\widetilde{A}_{p}=
o\big(\frac{1}{p}\big)~\,\mbox{and}~\,\widetilde{a}_1=0.
\end{equation*}
\end{Prop}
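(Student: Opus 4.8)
The plan is to mimic closely the proof of Proposition \ref{dprop-luo1}, now applied to the function $\eta_p$ and the pair of solutions $u_p^{(1)},u_p^{(2)}$, with $k=1$. The starting point is the local Pohozaev identity \eqref{aaclp1}, which plays here the role that \eqref{07-08-22} played in Section \ref{s2}. First I would compute the left-hand side $P^{(1)}_1\big(u_p^{(1)}+u_p^{(2)},\eta_p\big)$ by inserting the expansion $u_p^{(l)} = C^{(l)}_{p,1}G(x^{(l)}_{p,1},x) + O(\varepsilon^{(l)}_{p,1}/p^{2-\delta})$ coming from Lemma \ref{prop:expansionupwithdelta} (with $k=1$), together with the expansion \eqref{laa1} of $\eta_p$. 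Using the bilinearity of $P^{(1)}_1$ and the Green's function computations \eqref{sbb1-1}, \eqref{dbb1-1} (and the fact, from \eqref{aluo-gil}, that $\nabla R(x^{(1)}_{p,1}) = O(\varepsilon^{(1)}_{p,1}/p^{1-\delta})$, which kills the $\widetilde{B}_{p,i}$-terms just as \eqref{hfgil61} did in Section \ref{s2}), this should give
\[
P^{(1)}_1\big(u_p^{(1)}+u_p^{(2)},\eta_p\big) = \frac{\widetilde{A}_p\big(C^{(1)}_{p,1}+C^{(2)}_{p,1}\big)}{2\pi} + o\Big(\frac{\widetilde{\varepsilon}_p}{p}\Big).
\]

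Next I would estimate the right-hand side of \eqref{aaclp1}. The boundary term $\frac{2\theta}{p+1}\int_{\partial B_\theta(x^{(1)}_{p,1})}\widetilde{D}_{p,1}\eta_p$ is exponentially small by \eqref{11-14-03N} (since $\widetilde{D}_{p,1}$ is controlled by $u_p^{p}$ near $\partial B_\theta$ and $|\eta_p|\le 1$), exactly as in \eqref{adgil61}. For the bulk term $\frac{4}{p+1}\int_{B_\theta(x^{(1)}_{p,1})}\widetilde{D}_{p,1}\eta_p$, I would use a Green-type identity analogous to \eqref{5-22-03}: writing $\widetilde{D}_{p,1}\eta_p$ in terms of $\big((u_p^{(1)})^p - (u_p^{(2)})^p\big)\eta_p = \|u_p^{(1)}-u_p^{(2)}\|_\infty^{-1}(\ldots)$ and integrating by parts against the boundary expansions \eqref{laa1} and \eqref{dluo-1}, one obtains a bound of order $O(\widetilde{A}_p C^{(1)}_{p,1}/p) + O(\widetilde{\varepsilon}_p/p^2)$. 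Combining the two sides, since $C^{(l)}_{p,1} = \frac1p(8\pi\sqrt e + o(1))$ by \eqref{luoluo1}, yields $\widetilde{A}_p\big(C^{(1)}_{p,1}+C^{(2)}_{p,1}\big)/(2\pi) + o(\widetilde{\varepsilon}_p/p) = O(\widetilde{A}_p C^{(1)}_{p,1}/p)$, whence $\widetilde A_p = o(\widetilde\varepsilon_p) = o(1/p)$ (using $\widetilde\varepsilon_p \to 0$ exponentially, so in particular $\widetilde\varepsilon_p = o(1/p)$... — more precisely one gets $\widetilde A_p = o(\widetilde\varepsilon_p)$, and since $\widetilde\varepsilon_p$ is super-exponentially small this certainly gives $\widetilde A_p = o(1/p)$).

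Finally, to get $\widetilde a_1 = 0$, I would rescale: by the change of variables in \eqref{5.17B},
\[
\widetilde{A}_p = \int_{B_{d/\varepsilon^{(1)}_{p,1}}(0)} \big(\varepsilon^{(1)}_{p,1}\big)^2 D_{p,1}\big(\varepsilon^{(1)}_{p,1}x + x^{(1)}_{p,1}\big)\,\eta_{p,1}(x)\,dx,
\]
and by \eqref{5.15BIS} the integrand is dominated by $C(1+|x|)^{-(4-\gamma)}$, so the dominated convergence theorem together with \eqref{asd} and \eqref{a111} gives
\[
\lim_{p\to\infty}\widetilde{A}_p = \int_{\R^2} e^{U(x)}\Big(\widetilde a_1\frac{\partial U(x/\lambda)}{\partial\lambda}\Big|_{\lambda=1} + \sum_{i=1}^2 \widetilde b_{i,1}\frac{\partial U(x)}{\partial x_i}\Big)dx = 16\pi\,\widetilde a_1,
\]
using the orthogonality relations $\int_{\R^2}e^U \partial_{x_i}U = 0$ and $\int_{\R^2}e^U \frac{\partial U(x/\lambda)}{\partial\lambda}\big|_{\lambda=1} = \frac{d}{d\lambda}\big(\int_{\R^2}e^{U(x/\lambda)}\big)\big|_{\lambda=1} = 16\pi$, exactly as in \eqref{sy5-8-52}. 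Since $\widetilde A_p \to 0$, this forces $\widetilde a_1 = 0$. The main obstacle I anticipate is the bookkeeping in the right-hand-side estimate of \eqref{aaclp1}: one must carefully handle the difference $(u_p^{(1)})^p - (u_p^{(2)})^p$ via the integral representation $D_{p,1}$ and check that the cross terms from the two slightly-different concentration points $x^{(1)}_{p,1}, x^{(2)}_{p,1}$ (controlled by \eqref{aaaa} and Lemma \ref{assd}) do not produce a term of order larger than $o(\widetilde\varepsilon_p/p)$ — this is where the improved expansion \eqref{dluo-1} (rather than merely \eqref{luo-1}) and the sharp estimates \eqref{aluo-gil}, \eqref{sbb1-1}--\eqref{dluo41} are essential.
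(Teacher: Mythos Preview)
Your overall strategy---Pohozaev identity \eqref{aaclp1}, Green function expansions on the left, dominated convergence on $\widetilde A_p$ to identify $\widetilde a_1$---is exactly the paper's. The gap is in the right-hand side estimate. You claim the bulk term $\tfrac{4}{p+1}\int_{B_\theta}\widetilde D_{p,1}\eta_p$ is $O(\widetilde A_p C^{(1)}_{p,1}/p)+O(\widetilde\varepsilon_p/p^2)$, but the identity \eqref{5-22-03} does not carry over: there is no $(p-1)$ factor available here, because $-\Delta\eta_p=D_{p,1}\eta_p$ involves $D_{p,1}$, not $\widetilde D_{p,1}$, and your rewriting ``$\widetilde D_{p,1}\eta_p$ in terms of $((u_p^{(1)})^p-(u_p^{(2)})^p)\eta_p$'' is not an identity. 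The correct decomposition (used in the paper) is
\[
\widetilde D_{p,1}\eta_p=\|u_p^{(1)}-u_p^{(2)}\|_\infty^{-1}\Big[(u_p^{(2)})^p(u_p^{(1)}-u_p^{(2)})+\big((u_p^{(1)})^p-(u_p^{(2)})^p\big)u_p^{(1)}\Big]=(u_p^{(2)})^p\eta_p-u_p^{(1)}\Delta\eta_p,
\]
which after one integration by parts gives
\[
\int_{B_\theta}\widetilde D_{p,1}\eta_p=\int_{B_\theta}\eta_p\big((u_p^{(1)})^p+(u_p^{(2)})^p\big)+O\Big(\frac{|\widetilde A_p|}{p}+\frac{\widetilde\varepsilon_p}{p}\Big).
\]
The first term on the right is only $O(1/p)$ a priori (since $|\eta_p|\le1$ and $\int(u_p^{(l)})^p=O(1/p)$), so the RHS of \eqref{aaclp1} is $O(1/p^2)$, not $O(\widetilde\varepsilon_p/p^2)$. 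Combining with the LHS you only get $\widetilde A_p=O(1/p)$, not $o(\widetilde\varepsilon_p)$.

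The missing ingredient is a bootstrap: from $\widetilde A_p=O(1/p)$ your dominated convergence argument (which is correct) already gives $\widetilde a_1=0$; \emph{then} you go back and use $\widetilde a_1=0$ together with \eqref{a111} to show
\[
\int_{B_\theta}\eta_p\big((u_p^{(1)})^p+(u_p^{(2)})^p\big)=\frac{2}{p}\int_{\R^2}e^{U}\Big(\widetilde a_1\,\partial_\lambda U(\tfrac{x}{\lambda})\big|_{\lambda=1}+\sum_i\widetilde b_{i,1}\partial_{x_i}U\Big)+o\big(\tfrac1p\big)=o\big(\tfrac1p\big),
\]
which improves the RHS of \eqref{aaclp1} to $o(1/p^2)$ and finally yields $\widetilde A_p=o(1/p)$. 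Without this second pass the sharp estimate on $\widetilde A_p$ is not established.
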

\begin{proof}
The proof is similar to the one of Proposition \ref{dprop-luo1}. Indeed using \eqref{luo-1},  \eqref{luoluo1}, \eqref{sbb1-1} and \eqref{laa1}, we can deduce that
\begin{equation}\label{sagil61}
\begin{split}
 \text{LHS of (\ref{aaclp1})}=&\frac{\widetilde{A}_{p}}{2}
\sum^2_{l=1}C^{(l)}_{p,1}
P_1^{(1)}\Big(G(x^{(l)}_{p,1},x),G(x^{(1)}_{p,1},x)\Big) \\&
+  \e^{(1)}_{p,1} \sum^2_{l=1} \sum^2_{i=1}\big(\widetilde{B}_{p,i}C^{(l)}_{p,1}\big)
P_1^{(1)}\Big(G(x^{(l)}_{p,1},x),\partial_iG(x^{(1)}_{p,1},x)\Big) +
o\Big(\frac{\widetilde{\varepsilon}_{p}}{p} \Big) \\=&
\widetilde{A}_{p} \sum^2_{l=1} \frac{C^{(l)}_{p,1}}{4\pi} + \e^{(1)}_{p,1}
 \sum^2_{i=1}\sum^2_{l=1} \big(\widetilde{B}_{p,i}C^{(l)}_{p,1}\big)
P_1^{(1)}\Big(G(x^{(l)}_{p,1},x),\partial_iG(x^{(1)}_{p,1},x)\Big)+
o\Big(\frac{\widetilde{\varepsilon}_{p}}{p}\Big).
\end{split}
\end{equation}
By \eqref{asd}, \eqref{5.15BIS}, the definition of $\widetilde{B}_{p,i}$ in \eqref{5.17BB} and the dominated convergence theorem, we know
\begin{equation*}
\begin{split}
|\widetilde{B}_{p,i}|=&\bigl|\big(\varepsilon^{(1)}_{p,1}\big)^2\int_{B_{\frac{d}{\e^{(1)}_{p,1}}(0)} } D_{p,1}\big(x^{(1)}_{p,1}+\varepsilon^{(1)}_{p,1}y\big)\eta_{p,1}(y)y_idy\bigr|\\
\leq &\big(\varepsilon^{(1)}_{p,1}\big)^2\int_{B_{\frac{d}{\e^{(1)}_{p,1}}(0)} } D_{p,1}\big(x^{(1)}_{p,1}+\varepsilon^{(1)}_{p,1}y\big)\cdot |y|dy\to
\int_{\R^2}e^{U(y)}\cdot |y|dy,
\end{split}\end{equation*}
which implies $\widetilde{B}_{p,i}=O\big(1\big)$.
Also using Proposition \ref{lem3-8} (which reduces to \eqref{aluo-gil} when $k=1$, as already observed),  \eqref{bb1-1} and \eqref{dbb1-1}, we find
\begin{equation}\label{sfgil61}
\begin{split}
 \sum^2_{i=1}&\sum^2_{l=1} \big(\widetilde{B}_{p,i}C^{(l)}_{p,1}\big)
P_1^{(1)}\Big(G(x^{(l)}_{p,1},x),\partial_iG(x^{(1)}_{p,1},x)\Big)\\=&
  2\sum^2_{i=1}\big(\widetilde{B}_{p,i}C^{(1)}_{p,1}\big)
P_1^{(1)}\Big(G(x^{(1)}_{p,1},x),\partial_iG(x^{(1)}_{p,1},x)\Big)+o\Big(\frac{ \widetilde{\varepsilon}_{p}}{p} \Big)=
o\Big(\frac{ \widetilde{\varepsilon}_{p}}{p} \Big).
\end{split}
\end{equation}
Then from \eqref{luoluo1}, \eqref{sagil61} and \eqref{sfgil61}, we have
\begin{equation}\label{tabgil61}
\begin{split}
 \text{LHS of (\ref{aaclp1})}  =&\Big(
\sum^2_{l=1} \frac{C^{(l)}_{p,1}}{4\pi}\Big)\widetilde{A}_{p}  +
o\Big(\frac{ \widetilde{\varepsilon}_{p}}{p} \Big)=
 \widetilde{A}_{p}\Big( \frac{4\sqrt{e}}{p}+o\big(\frac{1}{p}\big)\Big) +
o\Big(\frac{ \widetilde{\varepsilon}_{p}}{p} \Big).
\end{split}
\end{equation}

On the other hand,   using \eqref{11-14-03N} and similar to \eqref{sy5-8-52}, we know
 \begin{equation}\label{bgil61}
\begin{split}
 \text{RHS of (\ref{aaclp1})}=&
 \frac{d}{p+1}\int_{\partial B_d(x^{(1)}_{p,1})} \widetilde{D}_{p,1}\eta_p-\frac{2}{p+1}\int_{B_d(x^{(1)}_{p,1})} \widetilde{D}_{p,1}\eta_p \\=&
-\frac{2}{p+1}\int_{B_d(x^{(1)}_{p,1})} \widetilde{D}_{p,1}\eta_p +O\Big(\frac{C^p}{p^p} \Big).
\end{split}
\end{equation}
Also by the definitions of $\eta_p $ and $\widetilde{D}_{p,1}$ in \eqref{a3.1} and \eqref{5.19B}, it holds
 \begin{equation*}
\begin{split}
 \int_{B_d(x^{(1)}_{p,1})}& \widetilde{D}_{p,1}\eta_p
 \\=& \frac{1}{\|u_p^{(1)}-u_p^{(2)}\|_{L^{\infty}(\Omega)}}
 \int_{B_d(x^{(1)}_{p,1})}  \big(u_p^{(2)}\big)^p \big(u_p^{(1)}-u_p^{(2)}\big) \\&
 + \frac{1}{\|u_p^{(1)}-u_p^{(2)}\|_{L^{\infty}(\Omega)}}
 \int_{B_d(x^{(1)}_{p,1})}
 \Big(\big(u_p^{(1)}\big)^p -\big(u_p^{(2)}\big)^p \Big)u_p^{(1)}
  \\=&
 \int_{B_d(x^{(1)}_{p,1})} \eta_p   \big(u_p^{(2)}\big)^p -
 \int_{B_d(x^{(1)}_{p,1})} u_p^{(1)}  \Delta \eta_p ,
\end{split}
\end{equation*}
and by integration by parts, \eqref{luo-1} and \eqref{laa1}, we find
 \begin{equation*}
\begin{split}
 -\int_{B_d(x^{(1)}_{p,1})}&u^{(1)}_p   \Delta \eta_p
\\=&-\int_{\partial B_d(x^{(1)}_{p,1})} \frac{\partial \eta_p}{\partial \nu}u_p^{(1)}
+ \int_{ B_d(x^{(1)}_{p,j})} \nabla \eta_p\cdot \nabla u_p^{(1)}\\
=&-\int_{\partial B_d(x^{(1)}_{p,1})} \frac{\partial \eta_p}{\partial \nu}u_p^{(1)}
+ \int_{\partial B_d(x^{(1)}_{p,1})}\frac{\partial u_p^{(1)}}{\partial \nu} \eta_p
+ \int_{B_d(x^{(1)}_{p,1})} \big(u_p^{(1)}\big)^p \eta_p \\=&
 \int_{B_d(x^{(1)}_{p,1})} \eta_p  \big(u_p^{(1)}\big)^p
 +O\Big(\frac{\big|\widetilde{A}_{p}\big|}{p}+\frac{\widetilde{\varepsilon}_p}{p}\Big).
\end{split}
\end{equation*}
Hence from above estimates, we have
 \begin{equation}\label{s5-22-03}
\begin{split}
 \int_{B_d(x^{(1)}_{p,1})} \widetilde{D}_{p,1}\eta_p =
 \int_{B_d(x^{(1)}_{p,1})} \eta_p  \Big( \big(u_p^{(1)}\big)^p + \big(u_p^{(2)}\big)^p\Big)
 +O\Big(\frac{\big|\widetilde{A}_{p}\big|}{p} +\frac{\widetilde{\varepsilon}_p}{p}\Big).
\end{split}
\end{equation}
Also by \eqref{luoluo1}, it follows
 \begin{equation}\label{s5-22-03a}
\begin{split}
\int_{B_d(x^{(1)}_{p,1})}& \eta_p  \Big( \big(u_p^{(1)}\big)^p + \big(u_p^{(2)}\big)^p\Big)
\\=& O\left(  \int_{B_d(x^{(1)}_{p,1})} \Big( \big(u_p^{(1)}\big)^p + \big(u_p^{(2)}\big)^p\Big) \right)
=O\left(\frac{1}{p}\right).
\end{split}
\end{equation}
Then from \eqref{bgil61}, \eqref{s5-22-03} and \eqref{s5-22-03a},  we find
 \begin{equation}\label{s5-22-03t}
\begin{split}
 \text{RHS of (\ref{aaclp1})}= O\Big(\frac{\big|\widetilde{A}_{p}\big|}{p^2} +\frac{1}{p^2}\Big).
\end{split}
\end{equation}
Hence \eqref{tabgil61} and \eqref{s5-22-03t} give us that
\begin{equation}\label{a5-8-52sy}\widetilde{A}_{p}=O\Big(\frac{1}{p}\Big).\end{equation}

On the other hand,   using \eqref{asd} we know
\begin{equation}\label{a5-8-52}
\begin{split}
\widetilde{A}_{p}=&
\widetilde{a}_1p\big(u^{(1)}_p(x_{p,1})\big)^{p-1}\big(\varepsilon^{(1)}_{p,1}\big)^2\int_{\R^2}
e^{U(x)}\frac{\partial U(\frac{x}{\lambda})}{\partial \lambda}\big|_{\lambda=1}dx +o(1)\\
=&\widetilde{a}_1\int_{\R^2}
e^{U(x)}\frac{\partial U(\frac{x}{\lambda})}{\partial \lambda}\big|_{\lambda=1}dx+o(1)
=16\pi \widetilde{a}_1+o(1).
\end{split}\end{equation}
Hence from \eqref{a5-8-52sy} and \eqref{a5-8-52}, we have $\widetilde{a}_1=0$.

\vskip 0.2cm

Also we can compute \eqref{s5-22-03a} more precisely,
\begin{equation}\label{s5-22-03adt}
\begin{split}
\int_{B_d(x^{(1)}_{p,1})}& \eta_p (x) \Big( \big(u_p^{(1)}(x)\big)^p + \big(u_p^{(2)}(x)\big)^p\Big)\,dx
\\=& \frac{2}{p}\left(\int_{\R^2}e^{U(x)}\Big( \widetilde{a}_1\frac{\partial U(\frac{x}{\lambda})}{\partial \lambda}\big|_{\lambda=1}+\sum_{i=1}^2 \widetilde{b}_{i,1}\frac{\partial U(x)}{\partial x_i}\Big)\,dx+o\big(1\big)\right)
=o\left(\frac{1}{p}\right).
\end{split}
\end{equation}
Hence \eqref{tabgil61}, \eqref{bgil61}, \eqref{s5-22-03} and \eqref{s5-22-03adt} give us that $\widetilde{A}_{p}=o\Big(\frac{1}{p}\Big)$.
\end{proof}

\begin{Prop}\label{adprop-luo2}
Let $\widetilde{B}_{p,i}$ be as in \eqref{5.17BB} and $\widetilde{b}_{i,1}$ as in Proposition \ref{aprop3-2}, then
\begin{equation}\label{3-29-02}
\widetilde{B}_{p,i}=o(1)\quad\mbox{and}\quad\widetilde{b}_{i,1}=0~\mbox{for}~i=1,2.
\end{equation}
\end{Prop}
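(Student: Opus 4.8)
The plan is to run the argument of Proposition~\ref{prop-gl}, now based on the Pohozaev identity \eqref{aa2} for the pair $\big(u_p^{(1)}+u_p^{(2)},\eta_p\big)$ with $\theta=d$. Concretely, I would insert into \eqref{aa2} the expansion \eqref{luo-1} (with $k=1$) of $u_p^{(l)}$, the expansion \eqref{laa1} of $\eta_p$, use the bilinearity of $Q_1^{(1)}$, and evaluate the resulting quadratic forms by means of \eqref{luo1}, \eqref{luo41}, \eqref{dluo1} and \eqref{dluo41}. In the expansion of the left-hand side, the terms $C_{p,1}^{(l)}\widetilde A_p\,Q_1^{(1)}\big(G(x_{p,1}^{(l)},\cdot),G(x_{p,1}^{(1)},\cdot)\big)$ would be $o(\widetilde\varepsilon_p/p)$: by \eqref{luo1}--\eqref{dluo1} they involve $\nabla R(x_{p,1}^{(l)})=O(\varepsilon_{p,1}^{(l)}/p^{1-\delta})$ (by \eqref{aluo-gil}) and $\nabla R(x_{\infty,1})=0$, while $C_{p,1}^{(l)}=O(1/p)$ by \eqref{luoluo1} and $\widetilde A_p=o(1/p)$ by Proposition~\ref{adprop-luo1}. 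The $o(\varepsilon_p/p)$-remainder of \eqref{luo-1} paired with $\eta_p=o(1/p)$ on $\partial B_d(x_{p,1}^{(1)})$, and the $o(\widetilde\varepsilon_p)$-remainder of \eqref{laa1} paired with $u_p^{(l)}=O(1/p)$, would likewise be $o(\widetilde\varepsilon_p/p)$. Hence the only surviving contribution would be
\[
\varepsilon_{p,1}^{(1)}\sum_{l=1}^2\sum_{h=1}^2 C_{p,1}^{(l)}\widetilde B_{p,h}\,Q_1^{(1)}\big(G(x_{p,1}^{(l)},\cdot),\partial_h G(x_{p,1}^{(1)},\cdot)\big),
\]
which, using \eqref{luo41}, \eqref{dluo41}, $C_{p,1}^{(l)}=(8\pi\sqrt e+o(1))/p$, $x_{p,1}^{(l)}\to x_{\infty,1}$, continuity of $D^2R$, and the boundedness of the $\widetilde B_{p,h}$ established in the proof of Proposition~\ref{adprop-luo1}, would equal
\[
-\frac{16\pi\sqrt e\,\varepsilon_{p,1}^{(1)}}{p}\sum_{h=1}^2\widetilde B_{p,h}\,\frac{\partial^2 R(x_{\infty,1})}{\partial x_i\partial x_h}+o\Big(\frac{\widetilde\varepsilon_p}{p}\Big).
\]
On the other hand, by \eqref{11-14-03N} one has $u_p^{(l)}=O(1/p)$ on $\partial B_d(x_{p,1}^{(1)})$, so the right-hand side of \eqref{aa2} would be $O(C^p/p^p)=o(\widetilde\varepsilon_p/p)$.

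Comparing the two sides, dividing by $\varepsilon_{p,1}^{(1)}/p$, and using $\varepsilon_{p,1}^{(1)}\sim\widetilde\varepsilon_p$ (Lemma~\ref{assd}), I would obtain $\sum_{h=1}^2\widetilde B_{p,h}\,\partial^2_{x_ix_h}R(x_{\infty,1})=o(1)$ for $i=1,2$. Since the $\widetilde B_{p,h}$ are bounded, every subsequence has a convergent sub-subsequence, and non-degeneracy of $x_{\infty,1}$ (invertibility of $\nabla^2 R(x_{\infty,1})$) would force the limits to vanish; thus $\widetilde B_{p,i}=o(1)$. To conclude I would pass to $\widetilde b_{i,1}$: scaling $x=x_{p,1}^{(1)}+\varepsilon_{p,1}^{(1)}y$ in \eqref{5.17BB} gives $\widetilde B_{p,i}=\big(\varepsilon_{p,1}^{(1)}\big)^2\int y_i\,D_{p,1}\big(x_{p,1}^{(1)}+\varepsilon_{p,1}^{(1)}y\big)\eta_{p,1}(y)\,dy$; then \eqref{asd0}--\eqref{5.15BIS} supply an integrable dominating function, Proposition~\ref{aprop3-2} gives $\eta_{p,1}\to\widetilde a_1\frac{8-|y|^2}{8+|y|^2}+\sum_h\widetilde b_{h,1}\frac{y_h}{8+|y|^2}$, and Proposition~\ref{adprop-luo1} gives $\widetilde a_1=0$, so dominated convergence yields $\widetilde B_{p,i}\to\int_{\R^2}y_i e^{U}\big(\sum_h\widetilde b_{h,1}\frac{y_h}{8+|y|^2}\big)\,dy$. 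Since $\frac{y_h}{8+|y|^2}=-\tfrac14\partial_h U$ and $\int_{\R^2}y_i\,\partial_h e^{U}\,dy=-8\pi\delta_{ih}$, this limit equals $2\pi\widetilde b_{i,1}$, and together with $\widetilde B_{p,i}=o(1)$ this gives $\widetilde b_{i,1}=0$.

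The main obstacle is the bookkeeping already familiar from Section~\ref{s2}: checking that every error term produced on the left-hand side is genuinely $o(\widetilde\varepsilon_p/p)$. This hinges on inputs already secured for the one-spike case — the estimate $\nabla R(x_{p,1}^{(l)})=O(\varepsilon_{p,1}^{(l)}/p^{1-\delta})$ from Proposition~\ref{lem3-8} (i.e.\ \eqref{aluo-gil}), which annihilates the would-be leading $\widetilde A_p$-contribution, together with $\widetilde A_p=o(1/p)$ and $\widetilde a_1=0$ from Proposition~\ref{adprop-luo1}, the comparison $\varepsilon_{p,1}^{(1)}/\varepsilon_{p,1}^{(2)}=1+O(p^{-1+\delta})$ of Lemma~\ref{assd}, and the closeness $|x_{p,1}^{(1)}-x_{p,1}^{(2)}|=O(\widetilde\varepsilon_p/p^{1-\delta})$ of Lemma~\ref{ssag}, so that evaluating $D^2R$ at $x_{p,1}^{(1)}$, $x_{p,1}^{(2)}$ or $x_{\infty,1}$ agrees to leading order. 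Once the error estimates are in place, the non-degeneracy of $x_{\infty,1}$ turns the leading identity into the desired conclusion \eqref{3-29-02}.
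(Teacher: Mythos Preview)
Your proposal is correct and follows essentially the same approach as the paper's proof: both compute the two sides of the Pohozaev identity \eqref{aa2}, use the expansions \eqref{luo-1} and \eqref{laa1} together with the quadratic-form evaluations \eqref{luo1}--\eqref{dluo41}, kill the $\widetilde A_p$-contributions via \eqref{aluo-gil} and Proposition~\ref{adprop-luo1}, and then invoke the non-degeneracy of $x_{\infty,1}$ to obtain $\widetilde B_{p,i}=o(1)$ before passing to the limit to get $\widetilde b_{i,1}=0$. Your final constant $2\pi$ (rather than the paper's $-8\pi$) is in fact the one consistent with the normalization chosen in \eqref{a111}, but either way the constant is nonzero and the conclusion is the same.
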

\begin{proof}
The proof is similar to the one of Proposition \ref{prop-gl}. Indeed
using \eqref{11-14-03N}, Lemma \ref{assd} and Lemma \ref{ssag}, it holds
\begin{equation}\label{luo22}
\begin{split}
 \text{RHS of (\ref{aa2})}= &\frac{2}{p+1} \int_{\partial B_d(x^{(1)}_{p,1})}\widetilde{D}_{p,1} \eta_p \nu_i =o\Big(\frac{\widetilde{\varepsilon}_p}{p}\Big).
\end{split}
\end{equation}
Moreover using \eqref{luo1}, \eqref{luo41},
\eqref{dluo1} and \eqref{dluo41}, we have
\begin{equation}\label{luo21}
\begin{split}
 \text{LHS of (\ref{aa2})}=& \widetilde{A}_{p} \sum^2_{l=1}C^{(l)}_{p,1}
 Q^{(1)}_1
 \Big(G(x^{(l)}_{p,1},x),G(x^{(1)}_{p,1},x)\Big)
\\&+  \e^{(1)}_{p,1} \sum^2_{l=1} \sum^2_{h=1}\big(\widetilde{B}_{p,h}C^{(l)}_{p,1}\big)  Q^{(1)}_1
\Big(G(x^{(l)}_{p,1},x),\partial_hG(x^{(1)}_{p,1},x)\Big)
+o\Big(\frac{\widetilde{\varepsilon}_p}{p}\Big).
\end{split}
\end{equation}
From Proposition \ref{lem3-8} (which reduces to \eqref{aluo-gil} when $k=1$, as already observed), \eqref{luo1} and \eqref{dluo1}, we know
\begin{equation}\label{5-7-1}
\begin{split}
 \sum^2_{l=1} C^{(l)}_{p,1}
 Q^{(1)}_1\Big(G(x^{(l)}_{p,1},x),G(x^{(1)}_{p,1},x)\Big) =
o\Big(\frac{\widetilde{\varepsilon}_p}{p}\Big).
\end{split}
\end{equation}
Then from Proposition \ref{adprop-luo1}, \eqref{luo22}, \eqref{luo21} and \eqref{5-7-1}, we have
\begin{equation} \label{5-7-3}
\begin{split}
 \e^{(1)}_{p,1} \sum^2_{l=1} \sum^2_{h=1}\big(\widetilde{B}_{p,h}C^{(l)}_{p,1}\big)  Q^{(1)}_1
\Big(G(x^{(l)}_{p,1},x),\partial_hG(x^{(1)}_{p,1},x)\Big)
=o\Big(\frac{\widetilde{\varepsilon}_p}{p}\Big).
\end{split}
\end{equation}
 Putting \eqref{luoluo1}, \eqref{luo41} and \eqref{dluo41} into \eqref{5-7-3},  we then have
\begin{equation}\label{7-30-2}
\begin{split}
\sum^2_{h=1} &\widetilde{B}_{p,h} \Big(\frac{\partial^2 R(x_{\infty,1})}{\partial{x_ix_h}}+o(1)\Big)
=o(1).
\end{split}
\end{equation}
Since $x_{\infty,1}$
is a nondegenerate critical point of $R(x)$, from \eqref{7-30-2},
we deduce
$$\widetilde{B}_{p,i}=o(1),~\mbox{for}~i=1,2.$$
Finally, similarly as in \eqref{a5-8-53}, it holds
\begin{equation}\label{5-8-53}
\begin{split}
\widetilde{B}_{p,i}= &
\widetilde{b}_{i,1}p\Big(u^{(1)}_p(x^{(1)}_{p,1})\Big)^{p-1}\Big(\varepsilon^{(1)}_{p,1}\Big)^2
\Big(\int_{\R^2}
x_ie^{U(x)}\frac{\partial U(x)}{\partial x_i}dx +o(1)\Big)\\=&-8\pi \widetilde{b}_{i,1}+o(1)\,\,~\mbox{for}~i=1,2.
\end{split}\end{equation}
Then  $\widetilde{b}_{1,1}=\widetilde{b}_{2,1}=0$  by \eqref{3-29-02} and \eqref{5-8-53}.
\end{proof}

 \vskip 0.1cm

\begin{proof}[\underline{\textbf{Proof of Theorem \ref{th1-1}}}]
Suppose $u^{(1)}_p\not\equiv u^{(2)}_p$, then let $\eta_p:=\frac{u^{(1)}_p- u^{(2)}_p}{
\|u^{(1)}_p-u^{(2)}_p\|_{L^\infty{(\Omega)}}}$, we have $\|\eta_p\|_{L^\infty{(\Omega)}}=1$.
Taking $\eta_{p,1}(x):=
\eta_{p}\big(\varepsilon^{(1)}_{p,1}x+x^{(1)}_{p,1}\big)$, by propositions \ref{aprop3-2}, \ref{adprop-luo1} and \ref{adprop-luo2}, we have, for any $R>0$,
\begin{equation*}
\|\eta_{p,1}\|_{L^{\infty}\big(B_R(0)\big)}=o(1).
\end{equation*}
Let $y_p$ be a maximum point of $\eta_p$ in
$\Omega$. We can assume that $\eta_p(y_p)=1$, then $y_p\in \Omega\backslash  B_{R\varepsilon^{(1)}_{p,1}}(x^{(1)}_{p,1})$.
Now we write   $$
\Omega\backslash  B_{R\varepsilon^{(1)}_{p,1}}(x^{(1)}_{p,1})=\Big(\Omega\backslash B_{d}(x^{(1)}_{p,1})\Big) \bigcup
 \Big(   B_{d}(x^{(1)}_{p,1})\backslash  B_{2p\varepsilon^{(1)}_{p,1}}(x^{(1)}_{p,1})\Big)
  \bigcup  \Big(  B_{2p\varepsilon^{(1)}_{p,1}}(x^{(1)}_{p,1})\backslash  B_{R\varepsilon^{(1)}_{p,1}}(x^{(1)}_{p,1})\Big),$$
and divide the proof into following three steps.

\vskip 0.2cm

\noindent \textbf{Step 1. We show that $y_p\not\in \Omega\backslash B_{d}(x^{(1)}_{p,1})$.}
It is enough to prove
\begin{equation}\label{ttsts}
 \eta_{p}=o(1)~~\mbox{uniformly in}~~\Omega\backslash B_{d}(x^{(1)}_{p,1}).
\end{equation}
By Green's representation theorem and similarly as in the proof of Proposition \ref{prop-2-2},
then from \eqref{laa1} and \eqref{3-29-02}, we can deduce that
\begin{equation*}
\begin{split}
 {\eta}_p(x)=&   \widetilde{A}_{p}
  G(x^{(1)}_{p,1},x) +o(\widetilde{\e}_p),~\,\mbox{uniformly in}~\Omega\backslash  B_{d}(x^{(1)}_{p,1}).
\end{split}
\end{equation*}
Hence \eqref{ttsts} follows since $\widetilde{A}_{p}=o\big(\frac{1}{p}\big)$
by Proposition \ref{adprop-luo1} and observing that
\begin{equation*}
 \sup_{\Omega\backslash  B_{d}(x^{(1)}_{p,1})}G(x^{(1)}_{p,1},x) =
  \sup_{\Omega\backslash  B_{d}(x^{(1)}_{p,1})}\left(-\frac{1}{2\pi}\log |x-x^{(1)}_{p,1}|-H(x,x^{(1)}_{p,1})\right)
 =O\Big(\log \e^{(1)}_{p,1}\Big).
\end{equation*}

\vskip 0.2cm

\noindent \textbf{Step 2. We show that $y_p \not\in  B_{d}(x^{(1)}_{p,1})\backslash  B_{2p\varepsilon^{(1)}_{p,1}}(x^{(1)}_{p,1})$.} We just need to prove
\begin{equation}\label{ttstsa}
\eta_{p}(x)=o(1) ~\mbox{uniformly in}~B_{d}(x^{(1)}_{p,1})\backslash  B_{2p\varepsilon^{(1)}_{p,1}}(x^{(1)}_{p,1}).
\end{equation}
First we know that for $pu_{p}^{p-1}$
and $D_{p,1}$ the  following similar estimates holds true:
$$pu_{p}^{p-1}(x)=O\left(\sum^k_{j=1}\frac{1}{\e_{p,j}^2\Big(1+\frac{|x-x_{p,j}|}{\e_{p,j}}\Big)^{4-\delta}} \right)~\mbox{and}~ D_{p,1}(x) =O\left( \frac{1}{\big(\e^{(1)}_{p,1}\big)^2\Big(1+\frac{|x-x^{(1)}_{p,1}|}{\e^{(1)}_{p,1}}\Big)^{4-\delta}} \right).$$
Then using  some basic inequalities, we can  prove \eqref{ttstsa},
 which is similar to Step 2 in the proof of Theorem \ref{th1.1}.

 \vskip 0.2cm

\noindent \textbf{Step 3.} We now have $ y_p\in   B_{2p\varepsilon^{(1)}_{p,1}}(x^{(1)}_{p,1})\backslash  B_{R\varepsilon^{(1)}_{p,1}}(x^{(1)}_{p,1})$.
Let $t_p:=|y_p|,$ then $\frac{t_p}{\e^{(1)}_{p,1}}\geq R\gg 1$. By translation we may assume that $x^{(1)}_{p,1}=0$.  Taking
\begin{equation}\label{lltsa}
\widetilde{\eta}_p(y):=\eta_p(t_py),
\end{equation} we find, by \eqref{aaad}, that
\begin{equation*}
\begin{split}
-\Delta \widetilde{\eta}_p(y)=&t^2_p D_{p,1}(t_py)\widetilde{\eta}_p(y) \leq C \Big(\frac{t_p}{\e^{(1)}_{p,1}}\Big)^2\frac{1}{\Big(1+|\frac{t_py}{\e^{(1)}_{p,1}}|\Big)^{4-\delta}}\rightarrow 0,~~\forall y\in \Omega_{t_p}:=\big\{x,t_px\in\Omega\big\}.
\end{split}\end{equation*}
Then there exists a bounded function $\eta$ such that
$$\widetilde{\eta}_p \rightarrow \eta~\mbox{in}~C\big(B_R(0)\backslash B_\delta(0)\big)
~\mbox{and}~\Delta \eta =0~~\mbox{in}~\R^2,$$
which implies $\eta= \widehat{C}$. From   $\widetilde{\eta}_p(\frac{y_p}{t_p})=\eta_p(y_p)=1$, we find $\widehat{C}=1$, namely
\begin{equation}\label{alttsa}
\widetilde{\eta}_p \rightarrow 1~\mbox{in}~C\big(B_R(0)\backslash B_\delta(0)\big).
\end{equation}
Now let $\eta_{p,1}(y):=\eta_p\big(\e^{(1)}_{p,1}y+x^{(1)}_{p,1}\big)$, then using \eqref{aaad}, $\eta_{p,1}$ solves $$-\Delta \eta_{p,1} =\big(\e^{(1)}_{p,1}\big)^2D_{p,1}\big(\e^{(1)}_{p,1}+x^{(1)}_{p,1} y\big)\eta_{p,1},$$
so by \eqref{asd0} in Lemma \ref{ttst} we have
\begin{equation}\label{jla}
-\Delta \eta_{p,1}- e^U\eta_{p,1}= f,
\end{equation}
 where \[ f(y):= \left[\big(\e^{(1)}_{p,1}\big)^2D_{p,1}\big(\e^{(1)}_{p,1}+x^{(1)}_{p,1} y\big)-e^{U(y)}\right]\eta_{p,1} (y)\overset{\eqref{asd0}}{=}O\Big(\frac{e^{U(y)}}{p^{1-\delta}}\Big), \mbox{uniformly on compact sets}.\]
Then the average
$\eta^*_{p,1}(r):=\displaystyle\int^{2\pi}_0\eta_{p,1}(r,\theta)d\theta$, solves the ODE
\begin{equation*}
-\left(\eta^*_{p,1}\right)''-\frac{1}{r}\left(\eta^*_{p,1}\right)'- e^U \eta^*_{p,1}= f^*,\ \mbox{ with }\quad f^*(r):= \displaystyle\int^{2\pi}_0f (r,\theta)d\theta.
\end{equation*}
Then similarly as in Step 3 in the proof of Theorem \ref{th1.1}, by ODE's theory, we have
\begin{equation*}
\eta^*_{p,1}(r)=c_{0,p}u_0(r)+W(r),
\end{equation*}
with
\begin{equation}\label{bds}
c_{0,p}=o(1)~~\mbox{and}~~\big|W(r)\big|\leq \frac{C\log r}{p^{1-\delta}}.\end{equation}
Hence it follows
\begin{equation}\label{Stsad}
\big|\eta^*_{p,1}\big(\frac{t_p}{\e^{(1)}_{p,j}}\big)\big|=
C\frac{\log \frac{t_p}{\e^{(1)}_{p,j}}}{p^{1-\delta}}+o(1)\to 0.
\end{equation}
However by \eqref{alttsa}, we know
\begin{equation}\label{lltsb}\widetilde{\eta}_p(y)\to 1,~~\mbox{for any}~~|y|=1.\end{equation}
Then from \eqref{lltsa} and \eqref{lltsb}, we find
\begin{equation*}
\begin{split}
\eta^*_{p,1}\big(\frac{t_p}{\e^{(1)}_{p,j}}\big) =&
 \int^{2\pi}_0 \eta_{p,1}\big(\frac{t_p}{\e^{(1)}_{p,j}},\theta\big) d\theta =
 \int^{2\pi}_0 \eta_{p}\big(t_p,\theta\big) d\theta
 \\=&\int^{2\pi}_0\widetilde{\eta}_p\big(1,\theta\big)d\theta  \geq \widetilde{c}_0>0,~\mbox{for some constant}~\widetilde{c}_0,
\end{split}\end{equation*}
which is a contraction to \eqref{Stsad}. This completes the proof of  Theorem \ref{th1-1}.
\end{proof}
\begin{Rem}\label{js}
We point out that the improved expansion \eqref{dluo-1} of $u_{p}$ is crucial to prove Theorem \ref{th1-1}.
In fact if one uses the expansion  \eqref{luo-1} of $u_{p}$
 instead of \eqref{dluo-1}, then Proposition \ref{lem3-8} changes into
\begin{equation}\label{bjs5}
C_{p,j} \frac{\partial R(x_{p,j})}{\partial{x_i}}-2\displaystyle\sum^k_{m=1,m\neq j}
C_{p,m}
D_{x_i} G(x_{p,m},x_{p,j})=o\big( \frac{\varepsilon_{p}}{p} \big)
.
\end{equation}
Then using \eqref{bjs5}, we find that \eqref{aaaa} is replaced  by
\begin{equation*}
\Big|x^{(1)}_{p,1}-x^{(2)}_{p,1}\Big|=o\big( \widetilde{\varepsilon}_{p} \big),
\end{equation*}
which implies
\begin{equation}\label{asd0d}
\big(\varepsilon^{(1)}_{p,1}\big)^{2}D_{p,1}\big(\varepsilon^{(1)}_{p,1}x+x^{(1)}_{p,1}\big)  =e^{U(y)} +o\big( e^{U(y)} \big),
\end{equation}
rather than \eqref{asd0} in Lemma \ref{ttst}.
We claim that \eqref{asd0d} is not enough to
get a contradiction in Step 3 in the proof of Theorem \ref{th1-1}. Indeed, using \eqref{asd0d} instead of \eqref{asd0},  \eqref{jla} changes into
\begin{equation*}
-\Delta \eta_{p,1} - e^U\eta_{p,1}= f,\quad\mbox{ where }~~f(y)=o\big(e^{U(y)}\big),
\end{equation*}
and so \eqref{bds} becomes
\[c_{0,p}=o(1)~~\mbox{and}~~W(r)= o\big(\log r\big).\]
Hence  \eqref{Stsad} changes into
\begin{equation*}
\Big|\eta^*_{p,1}\big(\frac{t_p}{\e^{(1)}_{p,j}}\big)\Big|=
o\big(\log p\big)+o(1),
\end{equation*}
which may not tend to $0$.
\end{Rem}

 \begin{Rem}[Local uniqueness when $k\geq 2$] \label{rem5}  Let $u_p^{(1)}$ and $u_p^{(2)}$ be  two different positive solutions to \eqref{1.1} with
\begin{equation*}
\lim_{p\rightarrow +\infty} p \int_{\Omega}|\nabla u^{(l)}_{p}(x)|^2dx=8k\pi e,\,~\mbox{for}\,~l=1,2
~\mbox{and}~k\geq 2,
\end{equation*}
which concentrate at the same point $x_\infty=\big(x_{\infty,1},\cdots,x_{\infty,k}\big)$. By Proposition \ref{lem3-8}, we know  that \begin{equation}\label{aluo-gilt}
C^{(l)}_{p,j} \frac{\partial R(x^{(l)}_{p,j})}{\partial{x_i}}-2\displaystyle\sum^k_{m=1,m\neq j}
C^{(l)}_{p,m}
D_{x_i} G(x^{(l)}_{p,m},x^{(l)}_{p,j})
=O\Big(\frac{\widetilde{\e}_p}{p^{2-\delta}}\Big)~~\mbox{with some}~\delta\in(0,1).
\end{equation}
Anyway \eqref{aluo-gilt} is not enough in order to generalize properly Lemma \ref{ssag}. As a consequence, we are not able to deduce the analogous of Proposition  \ref{aprop3-2} and hence
the local uniqueness result for the general case $k\geq 2$.
\end{Rem}


\appendix
\renewcommand{\theequation}{A.\arabic{equation}}

\setcounter{equation}{0}

\section{Proofs of (\ref{abb1-1})--(\ref{aluo41}) involving Green's function}\label{s6}
\setcounter{equation}{0}

In this appendix, we give the proofs of \eqref{abb1-1}--\eqref{aluo41} involving Green's function. Part of these can be found in \cite{CGPY2019} and we give the details for completeness.

\begin{proof}[\underline{\textbf{Proof of \eqref{abb1-1}}}]
By the bilinearity of $P_{j}(u,v)$, we have
\begin{equation}\label{ddgil21}
\begin{split}
P_{j}\Big(G(x_{p,j},x),\partial_hG(x_{p,j},x)\Big)=&
 P_{j}\Big(S(x_{p,j},x),\partial_hS(x_{p,j},x)\Big) - P_{j}\Big(H(x_{p,j},x),\partial_h S(x_{p,j},x)\Big)
 \\&
+P_{j}\Big(G(x_{p,j},x),\partial_h H(x_{p,j},x)\Big).
 \end{split}
\end{equation}
For $P_{j}\Big(S(x_{p,j},x),\partial_hS(x_{p,j},x)\Big)$, the oddness of the integrand function yields
\begin{equation}\label{ddgil22}
 P_{j}\Big(S(x_{p,j},x),\partial_hS(x_{p,j},x)\Big)=0.
\end{equation}

Next, we calculate $P_{j}\Big(H(x_{p,j},x),\partial_h S(x_{p,j},x)\Big)$.
First, let  $y=x-x_{p,j}$, then we get
\begin{equation}\label{8-17-1}
\partial_{h} S\big(x_{p,j},x\big)=-\frac{y_h}{2\pi|y|^{2}}, ~
\big\langle D \partial_{h} S\big(x_{p,j},x\big),\nu \big\rangle =-\frac{y_h}{2\pi|y|^{3}}
~\mbox{and}~
D_{x_l}\partial_{h} S\big(x_{p,j},x\big) =\frac{\delta_{hl}}{2\pi|y|^{2}}-
\frac{y_hy_l}
{\pi|y|^{4}}.
\end{equation}
Then we know
\begin{equation*}
\begin{split}
\theta\int_{\partial B_\theta(x_{p,j})}& \big\langle \partial_h D S(x_{p,j},x),\nu\big\rangle \big\langle
 D H(x_{p,j},x),\nu \big\rangle \\
=&-\theta D_{h} H\big(x_{p,j},x_{p,j}\big) \int_{\partial B_\theta(0)}\frac{y_h^2}{2\pi|y|^{4}} +O\big(\theta\big)=-\frac{1}{2}D_h H\big(x_{p,j},x_{p,j}\big)+O\big(\theta\big),
\end{split}
\end{equation*}
and
\begin{equation*}
\begin{split}
\theta\int_{\partial B_\theta(x_{p,j})}& \big\langle \partial_h  D S(x_{p,j},x),
D H(x_{p,j},x) \big\rangle \\
=& \theta D_{h}H\big(x_{p,j},x_{p,j}\big)\int_{\partial B_\theta(0)}
\big(\frac{1}{2\pi|y|^{2}}-
\frac{ y^2_h}
{\pi |y|^{4}}\big)
+O\big(\theta\big)=O\big(\theta\big),
\end{split}
\end{equation*}
which together imply
\begin{equation}\label{ddll-2}
P_{j}\Big(H\big(x_{p,j},x\big),\partial_h S\big(x_{p,j},x\big)\Big)
=-D_hH\big(x_{p,j},x_{p,j}\big)+O\big(\theta\big).
\end{equation}
Now we calculate $P_{j}\Big(G(x_{p,j},x),\partial_h H(x_{p,j},x)\Big)$.  Since $\partial_hD_{\nu} H(x_{p,j},x)$   is bounded in $B_d(x_{p,j})$,
then  we get
\begin{equation}\label{ddgil27}
P_{j}\Big(G(x_{p,j},x),\partial_h H(x_{p,j},x)\Big)=
O\Big(\theta\int_{\partial B_\theta(x_{p,j})} \big| D G(x_{p,j},x)\big|\Big)=O\big(\theta \big).
\end{equation}
Letting $\theta\rightarrow 0$, from \eqref{ddgil21}, \eqref{ddgil22},
\eqref{ddll-2} and \eqref{ddgil27}, we get
\begin{equation*}
\begin{split}
P_{j}\Big(G(x_{p,j},x),\partial_h G(x_{p,j},x)\Big)=D_h H\big(x_{p,j},x_{p,j}\big)=\frac{1}{2}\frac{\partial R(x_{p,j})}{\partial h}.
 \end{split}
\end{equation*}
Now we calculate  the term $
P_{j}\Big(G(x_{p,m},x),\partial_hG(x_{p,j},x)\Big)$. Similar to the estimate of \eqref{ddll-2}, we find
\begin{equation*}
P_{j}\Big(G(x_{p,m},x),\partial_hG(x_{p,j},x)\Big)=-D_hG \big(x_{p,m},x_{p,j}\big)\,\,~\mbox{for}~m\neq j.
\end{equation*}
Next, for $m\neq j$, since $\partial_hD_{\nu} G(x_{p,m},x)$   is bounded in $B_d(x_{p,j})$, we know
\begin{equation}\label{ddgil29}
\begin{split}
P_{j}\Big(G(x_{p,s},x),\partial_hG(x_{p,m},x)\Big)=&
O\Big(\theta\int_{\partial B_\theta(x_{p,j})} \big| D G(x_{p,s},x)\big|\Big)=O\big(\theta \big)\,\,~\mbox{for}~s=1,\cdots,k.
 \end{split}
\end{equation}
So letting $\theta\rightarrow 0$ in \eqref{ddgil29}, we know
\begin{equation*}
P_{j}\Big(G(x_{p,s},x),\partial_hG(x_{p,m},x)\Big)=0\,\,~\mbox{for}~m\neq j~\mbox{and}~s=1,\cdots,k.
\end{equation*}
\end{proof}
\begin{proof} [\underline{\textbf{Proof of \eqref{aluo1}}}]
First, by the bilinearity of $Q_{j}(u,v)$, we have
\begin{equation}\label{gil123}
\begin{split}
Q_{j} \Big(G(x_{p,j},x),G(x_{p,j},x)\Big)=&
 Q_{j}\Big(S(x_{p,j},x),S(x_{p,j},x)\Big)
 -2Q_{j}\Big(S(x_{p,j},x),H(x_{p,j},x)\Big)\\&
 +Q_{j}\Big(H(x_{p,j},x),H(x_{p,j},x)\Big).
 \end{split}
\end{equation}
Then for $Q_{j}\Big(S(x_{p,j},x),S(x_{p,j},x)\Big)$, the oddness of integrand functions  means
\begin{equation}\label{gil-7}
Q_{j}\Big(S(x_{p,j},x),S(x_{p,j},x)\Big)=0.
\end{equation}
Now we calculate the term $Q_{j}\Big(S(x_{p,j},x),H(x_{p,j},x)\Big)$. First, we know
\begin{equation}\label{gil-4}
\begin{split}
\int_{\partial B_\theta(x_{p,j})}& D_{\nu} S(x_{p,j},x)
D_{x_i}H(x_{p,j},x) \\
=&D_{x_i} H \big(x_{p,j},x_{p,j} \big) \int_{\partial B_\theta(x_{p,j})}D_{\nu} S(x_{p,j},x) +O \big(\theta \big)\int_{\partial B_\theta(x_{p,j})} \big|D_{\nu} S(x_{p,j},x) \big|\\
=&-D_{x_i} H \big(x_{p,j},x_{p,j} \big)+O \big(\theta \big),
\end{split}
\end{equation}
\begin{equation}\label{gil-5}
\begin{split}
\int_{\partial B_\theta(x_{p,j})}&D_{\nu} H(x_{p,j},x)
D_{x_i} S(x_{p,j},x) \\
=&\sum^2_{l=1} D_{x_l} H \big(x_{p,j},x_{p,j} \big)\int_{\partial B_\theta(x_{p,j})}D_{x_i} S(x_{p,j},x) \nu_l+O \big(\theta \big)\int_{\partial B_\theta(x_{p,j})} \big|D_{x_i} S(x_{p,j},x) \big|\\
=&D_{x_i} H \big(x_{p,j},x_{p,j} \big) \int_{\partial B_\theta(x_{p,j})}D_{x_i} S(x_{p,j},x) \nu_i+O \big(\theta \big),
\end{split}
\end{equation}
and
\begin{equation}\label{gil-6}
\begin{split}
\int_{\partial B_\theta(x_{p,j})}& \big\langle D S(x_{p,j},x),D H(x_{p,j},x) \big\rangle \nu_i\\
=&\sum^2_{l=1} D_{x_l} H \big(x_{p,j},x_{p,j} \big) \int_{\partial B_\theta(x_{p,j})}D_{x_l} S(x_{p,j},x) \nu_i+O \big(\theta \big)\int_{\partial B_\theta(x_{p,j})} \big|D S(x_{p,j},x) \big| \\
=&D_{x_i} H \big(x_{p,j},x_{p,j} \big) \int_{\partial B_\theta(x_{p,j})}D_{x_i} S(x_{p,j},x) \nu_i+O \big(\theta \big),
\end{split}
\end{equation}
which together imply
\begin{equation}\label{gil-8}
Q_{j}\Big(S(x_{p,j},x),H(x_{p,j},x)\Big)=D_{x_i} H \big(x_{p,j},x_{p,j} \big)+O \big(\theta \big).
\end{equation}
Also since $D_{\nu} H(x_{p,j},x)$ is bounded in $B_d(x_{p,j})$, it holds
\begin{equation}\label{gil-9}
Q_{j}\Big(H(x_{p,j},x),H(x_{p,j},x)\Big)=O \big(\theta\big).
\end{equation}
Letting $\theta\rightarrow 0$, from \eqref{gil123}, \eqref{gil-7}, \eqref{gil-8} and \eqref{gil-9},  we get
\begin{equation*}
\begin{split}
Q_{j}\Big(G(x_{p,j},x),G(x_{p,j},x)\Big)=D_{x_i} H \big(x_{p,j},x_{p,j} \big).
 \end{split}
\end{equation*}
Next, for $m\neq j$,
\begin{equation}\label{gil1234}
\begin{split}
Q_{j}\Big(G(x_{p,j},x),G(x_{p,m},x)\Big)=&
 Q_{j}\Big(S(x_{p,j},x),G(x_{p,m},x)\Big)
 - Q_{j}\Big(H(x_{p,j},x),G(x_{p,m},x)\Big).
 \end{split}
\end{equation}
Similar to  \eqref{gil-4}, \eqref{gil-5} and \eqref{gil-6}, we know
\begin{equation*}
\begin{split}
\int_{\partial B_\theta(x_{p,j})}&D_{\nu} S(x_{p,j},x)
D_{x_i} G(x_{p,m},x) \\
=&D_{x_i} G\big(x_{p,m},x_{p,j}\big) \int_{\partial B_\theta(x_{p,j})}D_{\nu} S(x_{p,j},x) +O\big(\theta\big)\int_{\partial B_\theta(x_{p,j})}|D_{\nu} S(x_{p,j},x) |\\
=&-D_{x_i} G\big(x_{p,m},x_{p,j}\big) +O\big(\theta\big),
\end{split}
\end{equation*}
\begin{equation*}
\begin{split}
\int_{\partial B_\theta(x_{p,j})}&D_{\nu}G(x_{p,m},x)
D_{x_i}S(x_{p,j},x) \\
=&\sum^2_{l=1} D_{x_l} G\big(x_{p,m},x_{p,j}\big) \int_{\partial B_\theta(x_{p,j})}D_{x_i} S(x_{p,j},x) \nu_l+O\big(\theta\big)\int_{\partial B_\theta(x_{p,j})}\big|D_{x_i} S(x_{p,j},x) \big| \\
=&D_{x_i}G\big(x_{p,m},x_{p,j}\big) \int_{\partial B_\theta(x_{p,j})}D_{x_i} S(x_{p,j},x) \nu_i+O\big(\theta\big),
\end{split}
\end{equation*}
and
\begin{equation*}
\begin{split}
\int_{\partial B_\theta(x_{p,j})}&\big\langle \nabla S(x_{p,j},x),\nabla G(x_{p,m},x) \big\rangle \nu_i\\
=&\sum^2_{l=1}  D_{x_l} G\big(x_{p,m},x_{p,j}\big) \int_{\partial B_\theta(x_{p,j})}D_{x_l} S(x_{p,j},x) \nu_i+O\Big(\theta \int_{\partial B_\theta(x_{p,j})}\big|D_{\nu} S(x_{p,j},x) \big|\Big)\\
=& D_{x_i} G\big(x_{p,m},x_{p,j}\big) \int_{\partial B_\theta(x_{p,j})}D_{x_i} S(x_{p,j},x) +O\big(\theta\big),
\end{split}
\end{equation*}
which together imply
\begin{equation}\label{gil-13}
Q_{j}\Big(S(x_{p,j},x),G(x_{p,m},x)\Big)= D_{x_i} G\big(x_{p,m},x_{p,j}\big)+O\big(\theta\big).
\end{equation}
Since $D_{\nu} H(x_{p,j},x)$ and $D_{\nu}G(x_{p,m},x)$ are bounded in $B_d(x_{p,j})$, it holds
\begin{equation}\label{gil-14}
Q_{j}\Big(H(x_{p,j},x),G(x_{p,m},x)\Big)=O\big(\theta\big).
\end{equation}
Letting $\theta\rightarrow 0$, from \eqref{gil1234}, \eqref{gil-13} and \eqref{gil-14}, we know
\begin{equation}\label{la1}
Q_{j}\Big(G(x_{p,j},x),G(x_{p,m},x)\Big)= D_{x_i} G\big(x_{p,m},x_{p,j}\big)\,\,~\mbox{for}~m\neq j.
\end{equation}
By the symmetry of $Q_{j}(u,v)$, \eqref{la1} imply
$$
Q_{j}\Big(G(x_{p,m},x),G(x_{p,j},x)\Big)= D_{x_i} G\big(x_{p,m},x_{p,j}\big)\,\, ~\mbox{for}~m\neq j.$$

Finally, since $D_{\nu}G(x_{p,s},x)$ and $D_{\nu}G(x_{p,m},x)$ are bounded in $B_d(x_{p,j})$ for $s,m\neq j$, it holds that  $
Q_{j}\Big(G(x_{p,s},x),G(x_{p,m},x)\Big)=O\big(\theta\big)$.
So letting $\theta\rightarrow 0$, we know
\begin{equation*}
Q_{j}\Big(G(x_{p,s},x),G(x_{p,m},x)\Big)=0\,\,~\mbox{for}~s,m\neq j.
\end{equation*}
\end{proof}
\begin{proof} [\underline{\textbf{Proof of \eqref{aluo41}}}]
By the bilinearity of $Q_{j}(u,v)$, we have

\begin{small}
\begin{equation}\label{gil71}
\begin{split}
Q_{j}\Big(G(x_{p,j},x),\partial _hG(x_{p,j},x)\Big)=&
Q_{j}\Big(S(x_{p,j},x),\partial _h S(x_{p,j},x)\Big)
  - Q_{j}\Big(H(x_{p,j},x),\partial _hS
  (x_{p,j},x)\Big)  \\&- Q_{j}\Big(S(x_{p,j},x),\partial _hH(x_{p,j},x)\Big)+Q_{j}\Big(H(x_{p,j},x),\partial_h H(x_{p,j},x)\Big).
 \end{split}
\end{equation}
\end{small}
Also by direct computations, we find
\begin{equation*}
Q_{j}\Big(S(x_{p,j},x),\partial _h S(x_{p,j},x)\Big)=
\frac{1}{2}\partial_h\Big(Q_{j}\big(S(x_{p,j},x),S(x_{p,j},x)\big)\Big)=0.
\end{equation*}
Now we calculate the term $Q_{j}\Big(S(x_{p,j},x),\partial_h H(x_{p,j},x)\Big)$. First, we know
\begin{equation*}
\begin{split}
\int_{\partial B_\theta(x_{p,j})}& D_{\nu} S\big(x_{p,j},x\big)
D_{x_i}\partial _h H\big(x_{p,j},x\big) \\
=&D_{x_i}\partial _h H\big(x_{p,j},x_{p,j}\big) \int_{\partial B_\theta(x_{p,j})}D_{\nu} S\big(x_{p,j},x\big) +O\big(\theta\big)\int_{\partial B_\theta(x_{p,j})}\big|D_{\nu} S(x_{p,j},x)\big|\\
=&-D_{x_i}\partial _h H\big(x_{p,j},x_{p,j}\big)+O\big(\theta\big),
\end{split}
\end{equation*}
\begin{equation*}
\begin{split}
\int_{\partial B_\theta(x_{p,j})}&D_{\nu} \partial _hH\big(x_{p,j},x\big)
D_{x_i} S\big(x_{p,j},x\big) \\
=&\sum^2_{l=1} D_{x_l} \partial _hH\big(x_{p,j},x_{p,j}\big)\int_{\partial B_\theta(x_{p,j})}D_{x_i} S\big(x_{p,j},x\big) \nu_l+O\big(\theta\big)\int_{\partial B_\theta(x_{p,j})}\big|D S(x_{p,j},x)\big|\\
=&D_{x_i} \partial _hH\big(x_{p,j},x_{p,j}\big) \int_{\partial B_\theta(x_{p,j})}D_{x_i} S\big(x_{p,j},x\big) \nu_i+O\big(\theta\big),
\end{split}
\end{equation*}
and
\begin{equation*}
\begin{split}
\int_{\partial B_\theta(x_{p,j})}&\big\langle D S(x_{p,j},x),D \partial _hH\big(x_{p,j},x\big) \big\rangle \nu_i\\
=&\sum^2_{l=1}D_{x_l} \partial _h H\big(x_{p,j},x_{p,j}\big) \int_{\partial B_\theta(x_{p,j})}D_{x_l} S\big(x_{p,j},x\big) \nu_i+O\big(\theta\big)\int_{\partial B_\theta(x_{p,j})}\big|D S\big(x_{p,j},x\big)\big|\\
=&D_{x_i}\partial _h H\big(x_{p,j},x_{p,j}\big) \int_{\partial B_\theta(x_{p,j})}D_{x_i} S\big(x_{p,j},x\big) \nu_i+O\big(\theta\big).
\end{split}
\end{equation*}
These together imply
\begin{equation}\label{ll-2}
Q_{j}\Big(S\big(x_{p,j},x\big),\partial_hH\big(x_{p,j},x\big)\Big)
=D_{x_i}\partial _h H\big(x_{p,j},x_{p,j}\big)+O\big(\theta\big).
\end{equation}
Next  we calculate the term $Q_{j}\Big(\partial_h S(x_{p,j},x), H(x_{p,j},x)\Big)$. Using \eqref{8-17-1}, we find
\begin{equation}\label{gil76}
\begin{split}
\int_{\partial B_\theta\big(x_{p,j}\big)}& D_{\nu} \partial _h  S\big(x_{p,j},x\big)
D_{x_i}H\big(x_{p,j},x\big) \\
=&\int_{\partial B_\theta(x_{p,j})}D_{\nu} \partial _h  S\big(x_{p,j},x\big)
\big\langle D D_{x_i}H\big(x_{p,j},x_{p,j}\big), x-x_{p,j}\big\rangle+O\big(\theta\big)\\=&-\frac{1}{2\pi}
\sum^2_{s=1} D^2_{x_ix_s}H\big(x_{p,j},x_{p,j}\big)
\int_{|y|=\theta} \frac{y_hy_s}{|y|^{3}}+O\big(\theta\big)  \\
=&-\frac{1}{2}D^2_{x_ix_h} H\big(x_{p,j},x_{p,j}\big)+O\big(\theta\big),
\end{split}
\end{equation}
\begin{equation}\label{gil77}
\begin{split}
\int_{\partial B_\theta(x_{p,j})}&D_{\nu} H\big(x_{p,j},x\big)
D_{x_i}\partial _h S\big(x_{p,j},x\big) \\
=&\int_{\partial B_\theta(x_{p,j})}D_{x_i} \partial _h  S\big(x_{p,j},x\big)
\big\langle D^2H\big(x_{p,j},x_{p,j}\big)\big(x-x_{p,j}\big),
\nu\big\rangle+O\big(\theta\big)\\
=&\frac{1}{2\pi}\sum^2_{s=1}\sum^2_{t=1} D^2_{x_tx_s}H\big(x_{p,j},x_{p,j}\big)\int_{|y|=\theta} \frac{y_ty_s}{|y|}
\big(\frac{\delta_{hi}}{|y|^{2}}-
\frac{2 y_hy_i}{|y|^{4}}\big)+O\big(\theta\big)\\
=&\begin{cases}
-D^2_{x_ix_h} H\big(x_{p,j},x_{p,j}\big)+O\big(\theta\big) ~& \mbox{for}~i\neq h,\\[1mm]
O\big(\theta\big) ~& \mbox{for}~i= h,
\end{cases}
\end{split}
\end{equation}
and
\begin{equation}\label{gil78}
\begin{split}
\int_{\partial B_\theta(x_{p,j})}&\big\langle D\partial _h S\big(x_{p,j},x\big),D H\big(x_{p,j},x\big) \big\rangle \nu_i\\
=&\int_{\partial B_\theta(x_{p,j})}
\langle D^2H\big(x_{p,j},x_{p,j}\big)\big(x-x_{p,j}\big),D \partial _h  S\big(x_{p,j},x\big)\big\rangle \nu_i+O\big(\theta\big)\\
=&\frac{1}{2\pi}\sum^2_{s=1}\sum^2_{t=1} D^2_{x_tx_s}H\big(x_{p,j},x_{p,j}\big)\int_{|y|=\theta}  y_t
\big(\frac{\delta_{hs}}{|y|^{2}}-
\frac{2y_hy_s}{|y|^{4}}\big)\frac{y_i}{|y|}+O\big(\theta\big)\\
=&\begin{cases}
-\frac{1}{2}D^2_{x_ix_h} H\big(x_{p,j},x_{p,j}\big)+O\big(\theta\big) ~& \mbox{for}~i\neq h,\\[1mm]
\frac{1}{2}D^2_{x_ix_h} H\big(x_{p,j},x_{p,j}\big)+O\big(\theta\big) ~& \mbox{for}~i= h.
\end{cases}
\end{split}
\end{equation}
From \eqref{gil76}, \eqref{gil77} and \eqref{gil78}, we get
\begin{equation}\label{ll-1}
Q_{j}\Big(S\big(x_{p,j},x\big),\partial_hH\big(x_{p,j},x\big)\Big)= D^2_{x_ix_h} H\big(x_{p,j},x_{p,j}\big)+O\big(\theta\big).
\end{equation}
Here the last two equalities hold by the fact $\Delta G\big(x,x_{p,j}\big)=0$ for $x\in \Omega\backslash B_{\theta}(x_{p,j})$.
Also since $H\big(x_{p,j},x\big)$ and $D_{\nu} H\big(x_{p,j},x\big)$ are bounded in $B_d\big(x_{p,j}\big)$, it holds that
\begin{equation}\label{gil79}
Q_{j}\Big(H\big(x_{p,j},x\big),\partial _h H\big(x_{p,j},x\big)\Big)=O\big(\theta\big).
\end{equation}
Letting $\theta\rightarrow 0$, from \eqref{gil71},\eqref{ll-2}, \eqref{ll-1} and \eqref{gil79}, we get
\begin{equation*}
\begin{split}
Q_{j}\Big(G\big(x_{p,j},x\big), \partial _hG\big(x_{p,j},x\big)\Big)=-D_{x_i}\partial _h H\big(x_{p,j},x_{p,j}\big)-D^2_{x_ix_h} H\big(x_{p,j},x_{p,j}\big)=-\frac{\partial^2 R(x_{p,j})}{\partial x_i\partial x_h}.
 \end{split}
\end{equation*}
Next, similar to the estimates of  \eqref{ll-2}, for $m\neq j$, we know
\begin{equation}\label{gil81}
\begin{split}
Q_{j} \Big(G\big(x_{p,j},x\big),\partial _h G\big(x_{p,m},x\big)\Big) =&
 Q_{j}\Big(S\big(x_{p,j},x\big),\partial _hG\big(x_{p,m},x\big)\Big)
 - Q_{j}\Big(H\big(x_{p,j},x\big),\partial _hG\big(x_{p,m},x\big)\Big)\\=&
D_{x_i}\partial _h G\big(x_{p,m},x_{p,j}\big)+O\big(\theta\big).
 \end{split}
\end{equation}
Letting $\theta\rightarrow 0$ in \eqref{gil81},  we obtain
\begin{equation*}
\begin{split}
Q_{j}\Big(G\big(x_{p,j},x\big), \partial _hG\big(x_{p,m},x\big)\Big)= D^2_{x_ix_h} H\big(x_{p,m},x_{p,j}\big)\,\,~\mbox{for}~m\neq j.
 \end{split}
\end{equation*}
Also similar to the estimates of \eqref{ll-1}, for $m\neq j$, we know
\begin{equation}\label{gil84}
\begin{split}
Q_{j} \Big(G\big(x_{p,m},x\big),\partial _h G\big(x_{p,j},x\big)\Big)=&
 Q_{j}\Big(S\big(x_{p,m},x\big),\partial _hG\big(x_{p,j},x\big)\Big)
 - Q_{j}\Big(H\big(x_{p,m},x\big),\partial _hG\big(x_{p,j},x\big)\Big)\\=&
 D^2_{x_ix_h} G\big(x_{p,m},x_{p,j}\big)+O\big(\theta\big). \end{split}
\end{equation}
Letting $\theta\rightarrow 0$ in \eqref{gil84},  we obtain
\begin{equation*}
\begin{split}
Q_{j}\Big(G\big(x_{p,m},x\big), \partial _hG\big(x_{p,j},x\big)\Big)= D_{x_i}\partial _h H\big(x_{p,m},x_{p,j}\big)\,\,~\mbox{for}~m\neq j.
 \end{split}
\end{equation*}
Finally,  since $G\big(x_{p,s},x\big)$ and $D_{\nu} G\big(x_{p,s},x\big)$ are bounded in $B_d\big(x_{p,j}\big)$ for $s\neq j$, it holds
\begin{equation*}
Q_{j}\Big(G\big(x_{p,m},x\big),\partial _h G\big(x_{p,s},x\big)\Big)=0\,\,~\mbox{for}~m,s\neq j.
\end{equation*}
\end{proof}

\section{Proof of Proposition \ref{blem3-1a}}\label{76}
\setcounter{equation}{0}
\renewcommand{\theequation}{B.\arabic{equation}}

\setcounter{equation}{0}

Let $M_p:=\displaystyle\max_{|x|\leq \frac{d_0}{\e_{p,j}}}\frac{|k_{p,j}|}{(1+|x|)^{\tau_1}}$,
then $\eqref{blpy1} \Leftrightarrow M_p \leq C$.
We will prove that $M_p \leq C$ by contradiction.  Set
\begin{equation*}
M_p^*:=\max_{|x|\leq \frac{d_0}{\e_{p,j}}}\max_{|x'|=|x|}\frac{|k_{p,j}(x)-k_{p,j}(x')|}{(1+|x|)^{{\tau_1}}}.
\end{equation*}

\begin{Lem}\label{hnb}
If $M_p\to +\infty$, then it holds
\begin{equation}\label{sstd}
M_p^*=o(1)M_p.
\end{equation}
\end{Lem}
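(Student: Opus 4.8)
The plan is to mimic \emph{Step 1} in the proof of Proposition \ref{lem3-1a}, adapting the argument from the function $v_{p,j}$ to $k_{p,j}$. Assume by contradiction that $M_p^*\geq c_0 M_p$ for some $c_0>0$ along a subsequence. Choose $x'_p$, $x''_p$ with $|x'_p|=|x''_p|\leq d_0/\e_{p,j}$ realizing $M_p^*$, and WLOG (by a rotation) assume they are symmetric with respect to the $x_1$-axis. Set $\omega^*_p(x):=k_{p,j}(x)-k_{p,j}(x^-)$ for $x_2>0$, where $x^-=(x_1,-x_2)$, and $\omega_p(x):=\omega^*_p(x)/(1+x_2)^{\tau_1}$. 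Since $k_{p,j}$ solves \eqref{b5-7-33}, a direct computation gives that $\omega_p$ solves an equation of the same shape as \eqref{fs}, namely
\begin{equation*}
-\Delta\omega_p-\frac{2\tau_1}{1+x_2}\frac{\partial\omega_p}{\partial x_2}+\frac{\tau_1(1-\tau_1)}{(1+x_2)^2}\omega_p=\frac{g_p(x)}{(1+x_2)^{\tau_1}}+\frac{\widetilde g_p(x)}{(1+x_2)^{\tau_1}},
\end{equation*}
where now $g_p(x):=h^*_p(x)-h^*_p(x^-)$ with $h^*_p$ as in \eqref{b5-7-33}, so $g_p=O\big(\log(1+|x|)/(1+|x|^{4-\delta-2\tau})\big)$, and $\widetilde g_p(x):=e^{U(x)}\omega^*_p(x)+$ (lower order), coming from the difference $k_{p,j}(x)e^{U(x)}-k_{p,j}(x^-)e^{U(x^-)}$ together with the fact that $U$ is radial.

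Next I would let $x^{**}_p$ with $x^{**}_{p,2}\geq 0$ be a maximum point of $|\omega_p|$ over $\{|x|\leq d_0/\e_{p,j},\ x_2\geq 0\}$, call this max $N^{**}_p$, and note $N^{**}_p\geq M^*_p\to+\infty$. The core is the claim $|x^{**}_p|\leq C$, proved in two parts exactly as in Proposition \ref{lem3-1a}. For \emph{Part 1} ($|x^{**}_p|\leq d_0/(2\e_{p,j})$): if not, then expand $\omega^*_p(x^{**}_p)$ in terms of $u_p$ using $k_{p,j}=p(v_{p,j}-w_0)=p\big(p(w_{p,j}-U)-w_0\big)$, then invoke the \emph{improved} expansion \eqref{dluo-1} of $u_p$ (rather than \eqref{luo-1}), together with the radiality of $U$ and of $w_0$ (recall from Remark \ref{rem1-1} and \eqref{5-26-2} that $w_0$ is radial up to kernel terms, but in fact $w_0$ itself can be taken radial), to obtain the bound $\omega^*_p(x^{**}_p)=O\big(p^2\e_{p,j}|x^{**}_{p,2}|/p^{2-\delta}\big)+\cdots$; dividing by $(1+x^{**}_{p,2})^{\tau_1}$ yields $N^{**}_p=o(1)$, a contradiction. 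For \emph{Part 2} ($|x^{**}_p|\leq C$): in the region $d_0/(2\e_{p,j})\geq|x^{**}_p|\to\infty$ one has $\nabla\omega_p(x^{**}_p)=0$, $\Delta\omega_p(x^{**}_p)\leq0$; plugging into the equation and using that the $\widetilde g_p$-coefficient and the zeroth-order perturbation are $o\big(1/(1+x^{**}_{p,2})^2\big)$, one gets $\omega_p(x^{**}_p)\leq C g_p(x^{**}_p)(1+x^{**}_{p,2})^{2-\tau_1}$, hence $N^{**}_p\leq C(\log(1+|x^{**}_p|))/(1+|x^{**}_p|)^{2+\tau_1-\delta-2\tau}\to0$ (choosing $\delta,\tau$ small relative to $\tau_1$), again a contradiction.

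Finally, with $|x^{**}_p|\leq C$, set $\omega^{**}_p:=\omega_p/N^{**}_p$; it solves the rescaled equation with right-hand side tending to $0$, is bounded by $1$ with $|\omega^{**}_p(x^{**}_p)|=1$, hence converges in $C^1_{loc}$ to some $\omega\not\equiv0$. Passing to the limit, using Lemma \ref{llma}, the bound \eqref{lpy1} on $v_{p,j}$, and dominated convergence to handle the $\widetilde g_p$ term, $\bar\omega:=(1+x_2)^{\tau_1}\omega$ satisfies $-\Delta\bar\omega-e^U\bar\omega=0$ in $\{x_2>0\}$ with $\bar\omega(x_1,0)=0$. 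By Lemma \ref{llm}, $\bar\omega$ is a combination of $\partial U/\partial x_1$, $\partial U/\partial x_2$, and $\partial U(x/\lambda)/\partial\lambda|_{\lambda=1}$; the boundary condition $\bar\omega(x_1,0)=0$ kills the first and third, and the fact that $x_{p,j}$ is a critical point of $u_p$ forces $\nabla\omega^*_p(0)=0$, hence $\nabla\bar\omega(0)=0$, which kills the second. So $\bar\omega\equiv0$, contradicting $\omega\not\equiv0$. This proves \eqref{sstd}, i.e. Lemma \ref{hnb}. The main obstacle I anticipate is \emph{Part 1}: one must be careful that the gain from the improved expansion \eqref{dluo-1} (the factor $p^{-(2-\delta)}$ instead of merely $o(1/p)$) is exactly what is needed so that, after multiplying by the two extra powers of $p$ in the definition of $k_{p,j}$, the resulting error still beats $N^{**}_p(1+x^{**}_{p,2})^{\tau_1}$; tracking these powers of $p$ against $\e_{p,j}^{\tau_1}=O(e^{-c p})$ is the delicate bookkeeping step.
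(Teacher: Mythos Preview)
Your overall strategy coincides with the paper's: argue by contradiction, introduce the reflected function $\omega^*_p=k_{p,j}(x)-k_{p,j}(x^-)$, study its weighted version $\omega_p$, locate the maximum point $x^{**}_p$, rule out $|x^{**}_p|\to\infty$ in two parts, then pass to the limit and apply Lemma~\ref{llm}. This is exactly what the paper does in Appendix~\ref{76}.

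There are, however, two points where your proposal diverges from the paper and should be adjusted. First, in \emph{Part 1} you invoke the improved expansion \eqref{dluo-1}; the paper does not, and it is not needed. The paper simply reuses the estimate \eqref{llls} for $v_{p,j}(x)-v_{p,j}(x^-)$ (which relied only on \eqref{luo-1}), multiplies by $p$, and observes that since $\e_{p,j}$ decays like $e^{-p/4}$ by \eqref{nn3-29-03}, any power of $p$ times $\e_{p,j}^{\tau_1}$ still goes to zero. So the ``delicate bookkeeping'' you anticipate is not delicate at all: exponential decay beats polynomial growth. Using \eqref{dluo-1} here is harmless logically (its proof only needs Proposition~\ref{lem3-1a}), but unnecessary, and in the paper's linear ordering it appears only later.

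Second, you assert that ``$w_0$ can be taken radial''. This is not established in the paper and you should not rely on it. What the paper does instead, both in \emph{Part 1} and in the final step, is use the explicit decomposition \eqref{5-26-2}: $w_0=w+e_0\,\partial_\lambda U|_{\lambda=1}+e_1\,\partial_{x_1}U+e_2\,\partial_{x_2}U$ with $w$ radial. For \emph{Part 1} this gives $w_0(y^{**}_p)-w_0(y^{**-}_p)=O(|y^{**}_p|^{-1})$, hence $p\cdot O(|y^{**}_p|^{-1})=O(p\e_{p,j})\to0$. For the identity $\nabla\omega^*_p(0)=0$, the critical point property of $x_{p,j}$ only kills the $v_{p,j}$ contribution; you must also check $\nabla\big(w_0(x)-w_0(x^-)\big)\big|_{x=0}=0$, which the paper verifies directly from this decomposition. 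With these two corrections your argument matches the paper's.
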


\begin{proof}
Suppose this is not true. Then there exists $c_0>0$ such that $M^*_p\geq c_0M_p$. Let $x'_p$ and
$x''_p$ satisfy $|x'_p|=|x''_p| {\leq\frac{d_{0}}{\varepsilon_{p,j}}}$ and
\begin{equation*}
M_p^*=  \frac{|k_{p,j}(x'_p)-k_{p,j}(x''_p)|}{(1+|x'_p|)^{{\tau_1}}}.
\end{equation*}
Without loss of generality, we may assume that $x'_p$ and
$x''_p$ are symmetric with respect to the $x_1$ axis. Set
\begin{equation*}
l^*_p(x):=k_{p,j}(x)-k_{p,j}(x^-),~\,~x^-:=(x_1,-x_2),\  \mbox{ for }x=(x_{1},x_{2}),  ~\,x_2>0.
\end{equation*}
 Hence $x^{''}_{p}={x^{'}_{p}}^{-}$ and
$M_p^*=\frac{ l^*_p(x'_{p}) }{ (1+|x'_{p}| )^{\tau_1} }$.
Let us define
\begin{equation}\label{assfd}
l_p(x):=\frac{l^*_p(x)}{(1+x_2)^{\tau_1}}.
\end{equation}
Then direct calculations show that $l_p$ satisfies
\begin{equation}\label{fsd}
-\Delta l_p-\frac{2{\tau_1}}{1+x_2}\frac{\partial l_p}{\partial x_2}+
\frac{{\tau_1}(1-{\tau_1})}{(1+x_2)^2}l_p=\frac{h_p(x)}{(1+x_2)^{\tau_1}},
\end{equation}
where $h_p(x):=h^*_p(x)-h^*_p(x^-)$ and $h^*_p=O\Big(\frac{ \log(1+|x|)}{ 1+|x|^{4-\delta-2\tau}}\Big)$.
Also let $y^{**}_p$ satisfy
\begin{equation}\label{assf1d}
\mbox{$|y^{**}_p|\leq\frac{d_{0}}{\varepsilon_{p,j}}$, $y^{**}_{p,2}\geq 0$ and}~~~ |l_p(y^{**}_p)|= M^{**}_p:=\max_{|x|\leq \frac{d_0}{\e_{p,j}},~x_2\geq 0} |l_p(x)|.
\end{equation}
Then it follows
\begin{equation}\label{dsdd}
M^{**}_p\geq \frac{|k_{p,j}(x'_p)-k_{p,j}(x''_p)|}{(1+|x'_p|)^{\tau_1}}=M^*_p\to +\infty.
\end{equation}
We may assume that $l_p(x^{**}_p)>0$. We claim that
\begin{equation}\label{1hd}
|y^{**}_p|\leq C.
\end{equation}
The proof of  \eqref{1hd} is divided into   two steps.

\vskip 0.2cm

\noindent
\emph{Step 1. We prove that $|y^{**}_p|\leq \frac{d_0}{2\e_{p,j}}$.}

\vskip 0.2cm
Suppose this is not true. By the definition of  $k_{p,j}$ and \eqref{llls}, we have
\begin{equation*}
\begin{split}
k_{p,j}(y^{**}_p)-k_{p,j}(y^{**-}_p)
=& p\Big(\big(v_{p,j}(y^{**}_p)-v_{p,j}(y^{**-}_p)\big)-\big(w_0(y^{**}_p)-w_0(y^{**-}_p)\big)\Big)\\=&
 O\Big( p {\e_{p,j}|y^{**}_{p,2}|}\Big)+o\Big( p\sum^k_{l=1} \e_{p,l}\Big)-p\Big(w_0(y^{**}_p)-w_0(y^{**-}_p)\Big).
\end{split}\end{equation*}
Also we recall that
 \begin{equation*}
\begin{split}
w_0(x)=w(x)+ c_0\frac{\partial U(\frac{x}{\lambda})}{\partial \lambda}\Big|_{\lambda=1}+c_1  \frac{\partial U(x)}{\partial x_1}+c_2
\frac{\partial U(x)}{\partial x_2},
\end{split}\end{equation*}
where $w(x)$ is the radial solution of $-\Delta u-e^{U(x)}u=-\frac{U^2(x)}{2}e^{U(x)}$.
Hence, in view of $|U'(r)|\le \frac{C}{1+r}$, we obtain
\begin{equation*}
\begin{split}
 w_0(y^{**}_p)-w_0(y^{**-}_p) =&
 c_0\frac{\partial U(\frac{y^{**}_p}{\lambda})}{\partial \lambda}\Big|_{\lambda=1}+c_1  \frac{\partial U(y^{**}_p)}{\partial x_1}+c_2
\frac{\partial U(y^{**}_p)}{\partial x_2}\\&
-c_0\frac{\partial U(\frac{y^{**-}_p}{\lambda})}{\partial \lambda}\Big|_{\lambda=1}-c_1  \frac{\partial U(y^{**-}_p)}{\partial x_1}-c_2
\frac{\partial U(y^{**-}_p)}{\partial x_2}\\=&
O\Big(|y^{**}_p|^{-1}\Big).
\end{split}\end{equation*}
As a result,
\begin{equation*}
M^{**}_p\leq C p  \e_{p,j} |y^{**}_{p,2}|^{1-\tau}+o(1)+O\Big(p|y^{**}_p|^{-1}\Big)
\leq  C p  \e_{p,j}^{\tau}  +o(1)+O\Big(p \e_{p,j}\Big)=o(1).
\end{equation*}
This is a contradiction.

\vskip 0.2cm

\noindent
\emph{Step 2. We prove that $|y^{**}_p|\leq C$.}

\vskip 0.2cm
 Suppose this is not true.
Now by Step 1, we have $y^{**}_p\in B_{\frac{d_0}{2\e_{p,j}}}(0)$. Thus
\begin{equation}\label{stsd}|\nabla l_p(y^{**}_p)|=0~\mbox{and}~\Delta l_p(y^{**}_p)\leq 0.
\end{equation}
So from \eqref{fsd}  and \eqref{stsd}, we find
\begin{equation}\label{fs2d}
\begin{split}
0\leq &-\Delta l_p(y^{**}_p)=-
\frac{\tau_1(1-\tau_1)}{(1+y^{**}_{p,2})^2}l_p(y^{**}_p)
+\frac{h_p(y^{**}_p)}{(1+y^{**}_{p,2})^{\tau_1}}.
\end{split}\end{equation}
Hence using \eqref{fs2d}  and the fact that $|y^{**}_p|\to \infty$, we deduce
\begin{equation*}
\frac{l_p(y^{**}_p)}{(1+y^{**}_{p,2})^2}\leq \frac{Ch_p(y^{**}_p)}{(1+y^{**}_{p,2})^{\tau_1}}.
\end{equation*}
Then it follows
\begin{equation}\label{shd}
l_p(y^{**}_p) \leq Ch_p(y^{**}_p)(1+y^{**}_{p,2})^{2-\tau_1}.
\end{equation}
Combining $h^*_p=O\Big(\frac{ \log(1+|x|)}{ 1+|x|^{4-\delta-2\tau}}\Big)$ and \eqref{shd}, we obtain
\begin{equation*}
M^{**}_p\leq
\frac{C\big(\log (1+|y_p^{**}|)\big)^2}{(1+|y_p^{**}|)^{2+\tau_1-\delta-2\tau}}\to 0~~\mbox{as}~~p\to +\infty,
\end{equation*}
which is a contradiction with \eqref{dsdd}. Then \eqref{1hd} follows.\\

Now let $l_p^{**}(x)=\frac{l_p(x)}{M_p^{**}}$,
where $l_p$ is defined in \eqref{assfd} and $M_p^{**}$ is defined in \eqref{assf1d}.
From \eqref{fsd}, it follows that $l_p^{**}(x)$ solves
\begin{equation}\label{afsd}
-\Delta l^{**}_p-\frac{2\tau_1}{1+x_2}\frac{\partial l^{**}_p}{\partial x_2}+
\frac{\tau_1(1-\tau_1)}{(1+x_2)^2}l^{**}_p=\frac{h_p(x)}{M_p^{**}(1+x_2)^{\tau_1}}.
\end{equation}
Moreover $|l^{**}_{p}(x)|\leq 1$ and
$|l^{**}_{p}(y^{**}_p)|=1$, hence $l^{**}_p(x)\to \gamma(x)$ uniformly
in any compact subset of $\R^2$. And   $\gamma(x)\not\equiv0$ because $l_p^{**}(y_{p}^{**})=1$ and  $|y_{p}^{**}|\le C$. Observe that
$$\frac{h_p(x)}{M_p^{**}(1+x_2)^{\tau_1}}\to 0~\mbox{uniformly in any compact set of}~\big\{|x|\leq \frac{d_0}{\e_{p,j}}\big\}~\mbox{as}~p\to +\infty.$$
Hence passing to the limit $p\to \infty$ into \eqref{afsd},  we can deduce that   $\gamma$ solves
\begin{equation*}
\begin{cases}
-\Delta \gamma -\frac{2\tau}{1+x_2}\frac{\partial \gamma}{\partial x_2}
+\left(\frac{\tau_1(1-\tau_1)}{(1+x_2)^2}-e^{U(x)} \right)\gamma =0,~x_2>0,\\[2mm]
\omega(x_1,0)=0.
\end{cases}
\end{equation*}
Hence $\bar \gamma=(1+x_2)^{\tau}\gamma$ satisfies
\begin{equation*}
\begin{cases}
-\Delta \bar \gamma  -e^{U(x)} \bar \gamma =0,~x_2>0,\\[2mm]
\bar\gamma(x_1,0)=0.
\end{cases}
\end{equation*}
And then using Lemma \ref{llm}, we have
\begin{equation}\label{lssd}
\bar \gamma(x)=c_0\frac{\partial U(\frac{x}{\lambda})}{\partial \lambda}\Big|_{\lambda=1}+c_1  \frac{\partial U(x)}{\partial x_1}+c_2
\frac{\partial U(x)}{\partial x_2},~~
\mbox{for some constants $c_0$, $c_1$ and $c_2$}.
\end{equation}
On the other hand, by the definition of $k_{p,j}(x)$ and $l^*_p(x)$, we know
\begin{equation*}
\begin{split}
\nabla l_p^*(0)=&\nabla \big( k_{p,j}(x)-k_{p,j}(x^-)\big)\big|_{x=0}\\=&
p\big(\nabla v_{p,j}(x)-\nabla w_0(x)\big)\big|_{x=0}-p\big(\nabla v_{p,j}(x^-)-\nabla w_0(x^-)\big)\big|_{x=0} \\=& -p \nabla  \big( w_0(x)- w_0(x^-)\big)\big|_{x=0}.
\end{split}
\end{equation*}
Also we recall  that
\begin{equation*}
\begin{split}
w_0(x)=w(x)+e_0\frac{\partial U(\frac{x}{\lambda})}{\partial \lambda}\Big|_{\lambda=1}+e_1  \frac{\partial U(x)}{\partial x_1}+e_2
\frac{\partial U(x)}{\partial x_2}~~
\mbox{for some constants $e_0$, $e_1$ and $e_2$},
\end{split}
\end{equation*}
where $w(x)$ is a radial function.
Hence we can verify that
\begin{equation*}
\begin{split}
 \nabla \big(w_0(x)-w_0(x^-)\big)\big|_{x=0}=0.
\end{split}
\end{equation*}
And then it holds
\begin{equation*}
\nabla \big((1+x_2)^{\tau_1}l_p^{**}\big)  \big|_{x=0}=
\frac{\nabla l_p^*(0)}{M_p^{**}}= 0,
\end{equation*}
which means $\nabla \bar\gamma(0)=0$. Then from \eqref{lssd}, we find $e_1=e_2=0$.
Moreover from $\bar\gamma(x_1,0)=0$, we also have $e_0=0$. So $\bar\gamma=0$, which is a contradiction.
Hence \eqref{sstd} follows.
\end{proof}

  \vskip 0.1cm

\begin{proof}[\underline{\textbf{Proof of Proposition \ref{blem3-1a}}}] First we recall that
$\eqref{blpy1} \Leftrightarrow M_p \leq C$.
Suppose by contradiction that $M_p\to +\infty$, as $p\to +\infty$ and set
\begin{equation*}
 \varphi_{p,j}(r)=\frac{1}{2\pi}\int^{2\pi}_0k_{p,j}(r,\theta)d \theta,~\,\,r=|x|.
\end{equation*}
Then by Lemma \ref{hnb}, it follows
\begin{equation*}
\max_{r\leq \frac{d_0}{\e_{p,j}}}\frac{|\varphi_{p,j}(r)|}{(1+r)^{\tau_1}}=M_p\big(1+o(1)\big).
\end{equation*}
Assume that $\frac{|\varphi_{p,j}(r)|}{(1+r)^{\tau_1}}$ attains its maximum at $s_p$.
Then we claim
\begin{equation}\label{tty}
s_p\leq C.
\end{equation}
In fact, let $\phi(x)=\frac{\partial U(\frac{x}{\lambda})}{\partial \lambda}\big|_{\lambda=1}$ and  we recall that
$-\Delta\phi(x)=e^{U(x)}\phi(x)$.
Now multiplying \eqref{5-7-33}   by $\phi(x)$ and using integration by parts, we have
\begin{equation*}
\begin{split}
\int_{|x|=r}&\Big[\frac{\partial k_{p,j}(x)}{\partial \nu}\phi(x)-  \frac{\partial \phi(x)}{\partial \nu}k_{p,j}(x)\Big]d\sigma=
o(1)M_p+O(1).
\end{split}
\end{equation*}
Hence similar to \eqref{ss2}, \eqref{aas} and \eqref{aasa}, we have
$(1+s_p)^{\tau_1}M_p= O\big(M_p \big)$,
 which implies \eqref{tty}.
\vskip 0.2cm

Now integrating \eqref{b5-7-33} and using Lemma \ref{llma}, we get
\begin{equation}\label{equazionePsid}
-\Delta \varphi_{p,j}=\varphi_{p,j}e^{U(x)}+\frac{1}{2\pi}\displaystyle\int^{2\pi}_0h^*_p(r,\theta)d \theta,~\,~\mbox{for}~|x|\leq \frac{d_0}{\e_{p,j}}.
\end{equation}
Next we define $\varphi^*_{p,j}(x)=\frac{\varphi_{p,j}(|x|)}{\varphi_{p,j}(s_p)}$ and   pass to the limit in the equation \eqref{equazionePsid} divided by $\varphi_{p,j}(s_p)$. One has
\begin{equation*}
|\varphi^*_{p,j}(x)|\leq \frac{C\big|\varphi_{p,j}(|x|)\big|}{M_p}\leq C\big(1+|x|\big)^{\tau_1}.
\end{equation*}
Also we know
\begin{equation*}
\frac{1}{\varphi_{p,j}(s_p)} \displaystyle\int^{2\pi}_0h^*_p(r,\theta)d \theta \leq  \frac{C}{M_p}\to 0.
\end{equation*}
Hence from the above computations and dominated convergence theorem, we can deduce that $\varphi^*_{p,j}\to \varphi(|x|)$ in  $C^2_{loc}(\R^2)$ and $\varphi$ satisfies
\begin{equation*}
-\varphi''-\frac{1}{r}\varphi'=e^{U}\varphi.
\end{equation*}
Therefore, it follows
\begin{equation}\label{stsy}
\varphi=c_0\frac{8-|x|^2}{8+|x|^2},~\mbox{with some constant}~c_0>0.
\end{equation}
Since $\varphi^*_{p,j}{ (s_{p})}=1$ and $s_p\leq C$, we find $\varphi(|x|)\not\equiv 0$.

\vskip 0.1cm

On the other hand, we know that
$$\varphi_{p,j}(0) =k_{p,j}(0)=p\big(v_{p,j}(x)-w_0\big)=-pw_0~
~\mbox{and}~~w_0(0)=\displaystyle\lim_{p\to \infty} v_{p,j}(0)=0.$$ Hence $\varphi_{p,j}(0)=0$ and then  we have $\varphi(0)=0$, which, together with
\eqref{stsy}, implies $\varphi\equiv 0$. This is a contradiction. This completes the proof of \eqref{blpy1}.
\end{proof}

\vskip 0.5cm

\noindent\textbf{Acknowledgments} This work was done while Peng Luo was visiting the Mathematics Department of University of Rome ``La Sapienza"
whose members he would like to thank for their warm hospitality. Isabella Ianni was supported by PRIN 2017JPCAPN-003 project and by VALERE project.
 Peng Luo was supported by NNSF of China(No.11701204,11831009) and  the China Scholarship Council. Shusen Yan was supported
by NNSF of China(No.11629101).

\renewcommand\refname{References}
\renewenvironment{thebibliography}[1]{%
\section*{\refname}
\list{{\arabic{enumi}}}{\def\makelabel##1{\hss{##1}}\topsep=0mm
\parsep=0mm
\partopsep=0mm\itemsep=0mm
\labelsep=1ex\itemindent=0mm
\settowidth\labelwidth{\small[#1]}%
\leftmargin\labelwidth \advance\leftmargin\labelsep
\advance\leftmargin -\itemindent
\usecounter{enumi}}\small
\def\newblock{\ }
\sloppy\clubpenalty4000\widowpenalty4000
\sfcode`\.=1000\relax}{\endlist}
\bibliographystyle{model1b-num-names}

\end{document}